\numberwithin{equation}{section}
\newtheorem{theorem}{THEOREM}[section]
\newtheorem{lemma}[theorem]{LEMMA}
\newtheorem{corollary}[theorem]{COROLLARY}
\newtheorem{proposition}[theorem]{PROPOSITION}
\newtheorem{remark}[theorem]{REMARK}
\newtheorem{problem}[theorem]{PROBLEM}
\newtheorem{defin}[theorem]{DEFINITION}
\newtheorem{fact}[theorem]{FACT}
\newenvironment{definition}{\begin{defin}\rm}{\end{defin}}
\newcommand{\rats}{\mbox{\(\mathbb Q\)}}
\newcommand{\reals}{\mbox{\(\mathbb R\)}}
\def\zilch{\vrule height0pt width0pt depth0pt}
\def\endproof{\ifvmode\else\unskip\fi\zilch%
\penalty9999 \hbox{}\nobreak\hfill\quad$\Box$\endtrivlist}
\def\ba{boolean algebra}
\def\claim{\par\medskip\noindent{\bf Claim. }}
\def\Claim#1{\par\medskip\noindent{\bf Claim #1. }}
\def\pfclaim{\par\noindent{\bf Proof of claim. }}
\def\cl{\mathop{\rm cl}}
\def\dom{\mathop{\rm dom}}
\def\int{\mathop{\rm int}} 
\def\rng{\mathop{\rm rng}}  
\def\Var{\mathop{\rm Var}} 
\def\G{{\sf G}}
\def\g{{\sf g}}
\def\bb{{\sf b}} 
\def\r{{\sf r}}
\def\ws{winning strategy}
\def \rb{\,\right\}}
\def \halfthinspace{\relax\ifmmode\mskip.5\thinmuskip\relax\else\kern.8888em\fi}
\def\axC{\mathrm{C}}
\def\axD{\mathrm{D}}
\def\axG{\mathrm{G}}
\def\axK{\mathrm{K}}
\def\axS{\mathrm{S}}
\def\axU{\mathrm{U}}
\def\a{a}
\def \rb{\,\right\}}
\def\ang#1{\langle #1\rangle}
\def\vec#1#2{#1_1,\ldots{},#1_{#2}}
\def\Vec#1#2{#1_0,\ldots{},#1_{#2}}
\def\G{{\bf G}}
\def\c #1{{\cal #1}}
\def\rep{representation}
\def\rb{representable}
\def\pa{$\forall$}
\def\pe{$\exists$}
\def\str{structure}
\def\:={\buildrel \rm def \over =}
\def\bo{\Box}
\def\di{\Diamond}
\def\dit{{\ang t}}
\def\did{{\ang d}}
\def\didt{{\ang{dt}}}
\def\bod{{[d]}}
\def\nhd{neighbourhood}
\def\tp{\mathop{\rm tp}}
\def\nice{dense-in-itself metric}
\def\nices{dense-in-themselves metric}
\def\R{\reals}
\def\ro{regular open}
\def\sem #1{[\![ #1 ]\!]}
\def\setG{\mathbb{G}}
\def\setI{\mathbb{I}}
\def\setB{\mathbb{B}}
\def\setS{\mathbb{S}}
\def\setT{\mathbb{T}}
\def\setU{\mathbb{U}}
\def\logicbomu{\axS4\mu}
\def\a{\alpha}
\def\app{\approx}
\def\ab#1{|#1|}
\def\At{\mathsf{At}}
\def\Di{\langle t\rangle}
\def\Dim{\Diamond}
\def\F{\mathcal{F}}
\def\g{\gamma}
\def\G{\Gamma}
\def\M{\mathcal{M}}
\def\ph{\varphi}
\def\sub{\subseteq}
\def\Var{\mathsf{Var}}
\def\Lbig{\c L^{\mu\dit\didt}_{\bo\bod\forall}}
\def\infrule #1#2{\frac{\hbox{\strut$#1$}}{\hbox{\strut$#2$}}}
\title{Spatial logic of modal mu-calculus and tangled closure operators}
\author{Robert Goldblatt\thanks{School of Mathematics and Statistics, Victoria University of Wellington, New Zealand.
{\tt sms.vuw.ac.nz/\~{}rob/}}
\hskip6pt and
 Ian Hodkinson\thanks{Department of Computing, Imperial College London, UK.
{\tt www.doc.ic.ac.uk/\~{}imh/}}}
\date{6 March 2016}
\begin{document}
\maketitle

\begin{abstract}
There has been renewed interest in recent years in McKinsey and Tarski's interpretation of modal logic in topological spaces and their proof that S4 is the logic of any  separable dense-in-itself metric space. Here we extend this work to the modal mu-calculus and to a logic of tangled closure operators that was developed by Fern\'andez-Duque after these two languages had been shown by Dawar and Otto to have  the same expressive power over finite transitive Kripke  models.  We  prove that this equivalence remains true over topological spaces. 

We establish the finite model property in Kripke semantics
for various tangled closure logics with and without the universal modality $\forall$.
We also extend the McKinsey--Tarski topological `dissection lemma'.
These results are used to construct a 
representation map (also called a d-p-morphism) from
any dense-in-itself metric space $X$ onto any finite connected locally connected serial 
transitive  Kripke frame.

This yields completeness theorems over $X$ for
a number of languages:
(i) the modal mu-calculus with the closure operator $\Diamond$;
(ii) $\Diamond$ and the tangled closure operators $\langle t \rangle$;
(iii) $\Diamond,\forall$; 
(iv) $\Diamond,\forall,\langle t \rangle$;
(v) the derivative operator $\langle d \rangle$;
(vi) $\langle d \rangle$ and the associated tangled closure operators $\langle dt \rangle$;
(vii) $\langle d \rangle,\forall$;
(viii) $\langle d \rangle,\forall,\langle dt \rangle$.
Soundness also holds, if: (a) for languages with $\forall$, $X$ is connected; and
(b) for languages with $\langle d \rangle$, $X$ validates the well known axiom $\mathrm{G}_1$.
For countable languages without $\forall$, we  prove strong completeness.
We also show that  in the presence of $\forall$, strong completeness fails 
if $X$ is  compact and locally connected.
\end{abstract}


\section{Introduction}

Modal logic can be given semantics over topological spaces.
In this setting, the  modality $\di$ can be interpreted in more than one way.
The first and most obvious way is as \emph{closure.}
Writing $\sem\varphi$ for
the set of points (in a topological model) at which a formula $\varphi$ is true,
$\sem{\di\varphi}$ is defined to be the \emph{closure}
of $\sem\varphi$,
so that $\di\varphi$ holds at a point $x$ if and only if
every open \nhd\  of $x$ contains a point $y$ satisfying $\varphi$.
Then, $\bo$ becomes the interior operator:
$\sem{\bo\varphi}$ is the interior of $\sem\varphi$.
Early studies of this semantics include
 \cite{Tang38,Tar38,McKi42,McKiT44,McKiT46}.
 
In a seminal result, McKinsey and Tarski \cite{McKiT44}
proved that the logic of any given separable\footnote{The separability assumption was removed in \cite{RS:mm}.}
\nice\ space
in this semantics is S4: it can be  axiomatised by the basic modal Hilbert system $\axK$
augmented by the two axioms
$\bo \varphi\to \varphi$ (T) and $\bo \varphi\to\bo\bo \varphi$ (4).

Motivated perhaps by 
the current wide interest in spatial logic,
a wish to present simpler proofs in `modern language',
growing awareness of the work of particular groups
such as Esakia's and Shehtman's,
or involvement in new settings such as
dynamic topology,
interest in McKinsey and Tarski's result has revived in recent years.
A number of new proofs of it
have appeared, some for specific spaces or embodying other variants
\cite{Mints:S4no1,GuramMai02:S4,Aiello-vB-Guram:space,Mints05aproof,Slavnov05,
Lando-Sarenac:11,Hodk:mckt_R}.
Very recently, strong
completeness (every countably infinite S4-consistent set of modal
formulas is satisfiable in every \nice\ space) was established by Kremer \cite{Kremer2010:stro}.

In this paper, we seek to extend McKinsey and Tarski's theorem  to 
more powerful languages.
We will extend the modal syntax  in two separate ways:
first, to the mu-calculus,
which adds least and greatest fixed points to the basic modal language,
and second, by adding
 an infinite sequence of 
new modalities $\di_n$ of arity $n$ $(n\geq1)$
introduced in the context of Kripke semantics by Dawar and Otto \cite{DO09}.
The semantics of $\di_n$ is given by the mu-calculus formula
\[
\di_n(\vec\varphi n)\equiv\nu q\bigwedge_{1\leq i\leq n}\di(\varphi_i\wedge q),
\]
for a new atom $q$ not occurring in $\vec\varphi n$.
The order and multiplicity of arguments to a $\di_n$ is immaterial, so
we will abbreviate $\di_n(\vec\gamma n)$ to $\dit\{\vec\gamma n\}$.
Fern\'andez-Duque used this to give the modalities
topological semantics, 
dubbed them \emph{tangled closure modalities}
(this is why we use the notation $\dit$),
and studied them in \cite{FD:SL11,FD:ijcai11,FD:apal12,FD:jsl12b}.

Dawar and Otto \cite{DO09} showed that, somewhat surprisingly, the
mu-calculus and the tangled modalities have exactly the same expressive power over
finite  Kripke models with  transitive frames. We will prove
that this remains true over topological spaces.
So the tangled closure modalities offer a viable alternative to the mu-calculus in both these settings.

We go on to determine the logic of an arbitrary \nice\ space $X$ 
in these languages.
We will show that in the mu-calculus, the logic of $X$
is axiomatised by a system called $\axS4\mu$ comprising 
Kozen's basic  system for the mu-calculus augmented by
the S4 axioms, and  the tangled logic
of $X$ is axiomatised by a system called $\axS4t$ 
similar to one in \cite{FD:ijcai11}.
We will establish strong completeness for countable sets of formulas.

We will also consider the extension of the tangled language
with the \emph{universal modality,} `\pa'.
(Earlier work on the universal modality in topological spaces
includes \cite{Sheh:everywhere99,LucBry11}.)
This language can express connectedness:
there is a formula $\axC$ valid in precisely the connected spaces.
Adding this and some standard machinery for \pa\ to the system $\axS4t$
gives a system called `$\axS4t.\axU\axC$'.
We will show that every $\axS4t.\axU\axC$-consistent formula
is satisfiable in every \nice\ space.
Thus, the logic of an arbitrary connected \nice\ space is $\axS4t.\axU\axC$.
We also show that strong completeness fails in general, even
for the modal language plus the universal modality.

\smallskip

A second and more powerful spatial interpretation of $\di$ is as the \emph{derivative operator.}
Following tradition,  when considering this interpretation we will generally
write the modal box and diamond as $\bod$ and $\did$.
In this interpretation, $\sem{\did\varphi}$ 
is defined to be the set of \emph{strict limit points}
of $\sem\varphi$:
so $\did\varphi$ holds at a point $x$ precisely when
every open \nhd\ of $x$ contains a point $y\neq x$ satisfying $\varphi$.
The original closure diamond is expressible by the derivative operator:
$\di\varphi$ is equivalent in any topological model to $\varphi\vee\did\varphi$,
and $\bo\varphi$ to $\varphi\wedge\bod\varphi$.
So in passing to $\did$, we have 
not reduced the power of the language.

Already in \cite[Appendix I]{McKiT44}, McKinsey and Tarski  
discussed the derivative operator and asked a number of questions about it.
It has since been studied by, among others, Esakia and his Tbilisi group 
(\cite{Esa04:apal,GEG05}, plus many other publications),
Shehtman \cite{Sheh:d90,Sheh:hab},  Lucero-Bryan \cite{LucBry11},
and Kudinov--Shehtman
\cite{KudShe14}, section 3 of which contains a survey of results.

In the derivative semantics,
determining the logic of a given \nice\ space is not a simple matter, for
the logic can vary with the space.
As McKinsey and Tarski observed, 
$\did\big((x\wedge\did\neg x)\vee(\neg x\wedge\did x)\big)\leftrightarrow\did x\wedge\did\neg x$
is valid in $\R^2$
but  not in $\R$. 
This formula is valid in the same topological spaces as the formula
$\axG_1$, where
for each integer $n\geq1$,
\[
\axG_n=\Big(\bod\bigvee_{0\leq i\leq n}\bo Q_i\Big)
\to\bigvee_{0\leq i\leq n}\bod\neg Q_i.
\]
Here,  $\Vec pn$ are pairwise distinct atoms, and 
for $i=0,\ldots,n$,
\[
Q_i=p_i \land  \bigwedge_{i\ne j\leq n}\neg p_j.
\]
In \cite{Sheh:d90}, Shehtman proved that
the logic of $\R^n$ for finite $n\geq 2$ is $\axK\axD4\axG_1$,
axiomatised by the basic system $\axK$
together with the axioms $\did\top$ (D),  $\bod p\to\bod\bod p$ (4), and $\axG_1$.
The logic of $\R$ was shown by Shehtman 
\cite{Sheh:hab}
 and Lucero-Bryan \cite{LucBry11} to be $\axK\axD4\axG_2$.
The logic of
every separable zero-dimensional \nice\  space
(such as $\rats$ and the Cantor space) is just $\axK\axD4$ \cite{Sheh:d90},
the smallest possible logic of a \nice\ space in the derivative semantics.
\cite{GBezLucBry12} proves that there are continuum-many logics 
of subspaces of the rationals in the language with $\bod$.

It is plain that $\axG_1\vdash \axG_2\vdash \axG_3\vdash\cdots$,
so the logics $\axK\axD4\axG_1\supseteq \axK\axD4\axG_2\supseteq\cdots$ form a decreasing chain,
and by \cite[corollary 3.11]{LucBry11}, its  intersection is $\axK\axD4$.
Shehtman  \cite[problem 1]{Sheh:d90} asked if $\axK\axD4\axG_1$ is the largest possible logic of a \nice\ space in the derivative semantics.

In this paper, we  answer  Shehtman's question affirmatively:
every $\axK\axD4\axG_1$-consistent formula of the language with $\did$
is satisfiable in every \nice\ space.
Thus, the logic of every \nice\ space that validates $\axG_1$
is exactly $\axK\axD4\axG_1$.
We also establish strong completeness for such spaces.

Adding the tangled closure operators, we 
prove similarly that the logic of every \nice\ space that validates $\axG_1$
is axiomatised by $\axK\axD4\axG_1t$  (including the tangle axioms).
We also prove strong completeness.

Further adding the universal modality, we 
show similarly
that $\axK\axD4\axG_1t.\axU\axC$ 
(and $\axK\axD4\axG_1.\axU\axC$
if the tangle closure operators are dropped)
axiomatises the logic of every connected \nice\ space that validates $\axG_1$.
Strong completeness  fails in general, 
as a consequence of the proof that it already fails for
the weaker language with $\bo$ and \pa.

The reader can find a summary of our results in table~\ref{table1} in section~\ref{sec:the end}.

\smallskip

Our proof works in a fairly familiar way, similar in spirit to 
McKinsey and Tarski's original argument in \cite{McKiT44} ---
indeed, we use some results from that paper.
There are three main steps.
\begin{enumerate}
\item We  establish the \emph{finite model property} for the various logics,
in Kripke semantics.  
This work may be of independent interest: earlier related results were proved in \cite{Sheh:d90,FD:ijcai11}.

\item We then prove a topological theorem 
that establishes  Tarski's `dissection lemma' \cite[satz 3.10]{Tar38}, \cite[theorem 3.5]{McKiT44} and a variant of it.

\item These topological results are used to construct a map
from an arbitrary \nice\ space onto any finite connected $\axK\axD4\axG_1$ Kripke frame, that  preserves the required formulas.

\end{enumerate}
Putting the three steps together proves completeness for all the 
languages, which is then
lifted by a separate argument
to strong completeness for languages without \pa.

It can be seen that our results concern
the logic of each individual space within a large class of spaces
(the \nices\  spaces), rather than the logic of a large class of spaces,
or of particular spaces such as $\R$.
This is as in \cite{McKiT44}.
We do not assume separability,
we consider languages that have not previously
been much studied in the topological setting,
and we obtain
some results on strong completeness, a matter that has only recently been 
investigated in this setting.

The paper is divided into two parts of roughly equal length.
Part 1 is devoted to proving the finite model property
in Kripke semantics for the logics of concern in the paper.
Part 2 covers topology and spatial completeness results, and can be read
independently by taking the results of part 1 on trust.
The short section~\ref{sec:basics 1} preceding part 1 contains
foundational material needed in both parts.

\section{Basic definitions}\label{sec:basics 1}

In this section, we lay out the main definitions, notation, and
some basic results.

\subsection{Notation for sets and binary relations}
Let $X,Y,Z$ be sets.
We let $\wp(X)$ denote the power set (set of all subsets) of $X$.
We write $X\setminus Y$ for $\{x\in X:x\notin Y\}$.
Note that $(X\cap Y)\setminus Z=X\cap(Y\setminus Z)$, so we may omit
the parentheses in such expressions.
For a partial function $f:X\to Y$, we let $\dom f$ denote the domain of $f$, and
$\rng f$ its range.

A \emph{binary relation} on a set $W$ is a subset of $W\times W$.
Let $R$ be a binary relation on $W$.
We write any of
$R(w_1,w_2)$, $Rw_1w_2$, and $w_1Rw_2$ to denote that $(w_1,w_2)\in R$.
We say that $R$ is \emph{reflexive} if
$R(w,w)$  for all $w\in W$, and \emph{transitive} if 
$R(w_1,w_2)$ and $R(w_2,w_3)$ imply $R(w_1,w_3)$.
We write $R^*$ for the \emph{reflexive transitive closure} of $R$:
the smallest reflexive transitive binary relation that contains $R$.
We also write 
\[
\begin{array}{cclcl}
R^{-1}&=&\{(w_2,w_1)\in W\times W:R(w_1,w_2)\},
\\
R^\circ&=&\{(w_1,w_2)\in W\times W: R(w_1,w_2)\wedge R(w_2,w_1)\}
&=&R\cap R^{-1},
\\
R^\bullet&=&\{(w_1,w_2)\in W\times W:R(w_1,w_2)\wedge\neg R(w_2,w_1)\}
&=&R\setminus R^{-1}.
\end{array}
\]

For $w\in W$, we let $R(w)$ denote
the set $\{w'\in W:R(w,w')\}$, 
sometimes called the set of \emph{$R$-successors} or \emph{$R$-alternatives} of $w$.
For $W'\subseteq W$, we write $R\restriction W'$ for the binary relation
$R\cap(W'\times W')$ on $W'$.

We write $\R$ for the set of real numbers,
On for the class of ordinals, and $\omega$ for the first infinite ordinal.

\subsection{Kripke frames}\label{ss:kripke frames}

A \emph{(Kripke) frame} is a pair $\c F=(W,R)$,
where $W$ is a non-empty set of `worlds' and $R$ is a binary relation on $W$.
We attribute properties to a frame by the usual extrapolation
from the frame's components.
So, we say that $\c F$ is \emph{finite} if $W$ is finite,
\emph{reflexive} if $R$ is reflexive, and \emph{transitive} if $R$ is transitive.
Two frames are said to be \emph{disjoint} if their respective sets of worlds are disjoint.
And so on.

A \emph{root of $\c F$} is an element $w\in W$
such that $W=R^*(w)$. Roots of a frame may not exist, nor be unique when they do.
We say that $\c F$ is \emph{rooted} if it has a root.
At the other end, an element $w\in W$ is said to be \emph{$R$-maximal} if
$R^\bullet(w)=\emptyset$. Such an element has no `proper' $R$-successors, of which
it is not itself an $R$-successor.

A \emph{subframe} of $\c F$
is a frame of the form $\c F'=(W',R\restriction W')$, for non-empty $W'\subseteq W$.
It is simply a sub\str\ of $\c F$ in the usual model-theoretic sense.
We call $\c F'$ the \emph{subframe of $\c F$ based on $W'$.}
We say that $\c F'$ is a \emph{generated} or \emph{inner subframe}  of $\c F$
if $R(w)\subseteq W'$ for every $w\in W'$ --- equivalently, $R\restriction W'=R\cap(W'\times W)$.
For $w\in W$, we write:
\begin{itemize}
\item  $\c F(w)$ for the subframe $(R(w),R\restriction R(w))$ of $\c F$
based on $R(w)$,

\item  $\c F^*(w)$
for the subframe $(R^*(w),R\restriction R^*(w))$
of $\c F$ generated by $w$.
\end{itemize}

For an integer $n\geq1$, we say that $\c F$ is \emph{$n$-connected}
if it is not the union of $n+1$ pairwise disjoint generated subframes
(recall that subframes are non-empty),
\emph{connected} if it is 1-connected,
and \emph{locally $n$-connected} if for each $w\in W$, the subframe $\c F(w)$
is $n$-connected.
Note that $\c F$ is $n$-connected iff the equivalence relation $(R\cup R^{-1})^*$ on $W$ has at most $n$ 
equivalence classes.
Every rooted frame is connected.
Connectedness will be discussed in more detail in section~\ref{sec:path conn}.

\subsection{Fixed points}\label{ss:lfp}

Let $X$ be a set and $f:\wp(X)\to\wp(X)$ be a map.
We say that
$f$ is \emph{monotonic} if $f(S)\subseteq f(S')$
whenever $S\subseteq S'\subseteq X$.
By a  well known theorem of Knaster and Tarski \cite{T-Knaster55},
actually formulated for complete lattices,
every monotonic $f:\wp(X)\to\wp(X)$ has \emph{least and greatest fixed points} ---
there is a unique $\subseteq$-minimal subset $L\subseteq X$
such that $f(L)=L$, and
a unique $\subseteq$-maximal $G\subseteq X$
such that $f(G)=G$.
We write $L=LFP(f)$ and $G=GFP(f)$.

There are a couple of useful ways to `compute' these fixed points.
First,
define by  recursion a subset $S_\alpha\subseteq X$
for each ordinal $\alpha$, by $S_0=\emptyset$,
$S_{\alpha+1}=f(S_\alpha)$, and
$S_\delta=\bigcup_{\alpha<\delta}S_\alpha$ for limit ordinals $\delta$.
The $S_\alpha$ form an increasing chain terminating in $LFP(f)$, so
\[
LFP(f)=\bigcup_{\alpha\in\rm On}S_\alpha.
\]
A similar expression can be given for $GFP(f)$.
Second, a subset $S\subseteq X$ is said to be a \emph{pre-fixed point} of $f$ if
 $f(S)\subseteq S$, and a \emph{post-fixed point}
if $f(S)\supseteq S$.
In~\cite{T-Knaster55} it is proved that
$LFP(f)$ is the intersection of all pre-fixed points of $f$, and dually for
$GFP(f)$:
\[
\begin{array}{rcl}
LFP(f)&=&\bigcap\{S\subseteq X:f(S)\subseteq S\},
\\
GFP(f)&=&\bigcup\{S\subseteq X:f(S)\supseteq S\}.
\end{array}
\]

For $f:\wp(X)\to\wp(X)$, define $f':\wp(X)\to\wp(X)$ by
$f'(S)=X\setminus f(X\setminus S)$.
It is an exercise to check that $f$ is monotonic iff $f'$ is,
and in that case, 
$GFP(f)=X\setminus LFP(f')$.

Least fixed points are used in the semantics of the mu-calculus, coming up next.

\subsection{Languages}

We assume some familiarity with modal languages and  the mu-calculus.
We fix a set $\Var$ of \emph{propositional variables,} or \emph{atoms.}
Sometimes we may make assumptions on $\Var$ --- for example, that it is finite.
We will be considering various logical languages.
The biggest of them is
denoted by 
$\Lbig$, 
which is a set of formulas defined as follows:
\begin{enumerate}
\item each $p\in\Var$ is a formula (of $\Lbig$),

\item $\top$ is a formula,

\item if $\varphi,\psi$ are formulas then so are
$\neg\varphi$, $(\varphi\wedge\psi)$,
$\bo\varphi$,  $\bod\varphi$, and $\forall\varphi$,

\item if $\Delta$ is a non-empty finite set of formulas then
$\dit\Delta$ and $\didt\Delta$ are formulas,

\item if  $q\in \Var$
and $\varphi$ is a formula that is \emph{positive in $q$} (that is, every
free
occurrence of $q$ as an atomic subformula of $\varphi$
is in the scope of an even number of negations in $\varphi$;
\emph{free} means `not in the scope of any $\mu q$ in $\psi$'), then
$\mu q\varphi$ is a formula, in which all occurrences of $q$ are \emph{bound}.
Bound atoms arise only in this way.
\end{enumerate}
For formulas $\varphi,\psi$, and $q\in\Var$,
the expression $\varphi(\psi/q)$ denotes the result of replacing every 
free occurrence of $q$ in $\varphi$ by $\psi$, where the result is well-formed
--- that is, all of its subformulas of the form $\mu p\theta$ are such
that $\theta$ is positive in $p$.
For example, if $\varphi=\mu p\, q$ then $\varphi(\neg p/q)=\mu p\,\neg p$ is not well-formed.

We use standard abbreviations:
$\bot$ denotes $\neg\top$,
$(\varphi\vee\psi)$
denotes $\neg(\neg\varphi\wedge\neg\psi)$,
$(\varphi\to\psi)$ denotes $\neg(\varphi\wedge\neg\psi)$,
$(\varphi\leftrightarrow\psi)$ denotes
$(\varphi\to\psi)\wedge(\psi\to\varphi)$,
$\di\varphi$ denotes $\neg\bo\neg\varphi$,
$\did\varphi$ denotes $\neg\bod\neg\varphi$,
$\exists\varphi$ denotes $\neg\forall\neg\varphi$,
and if $\varphi$ is positive in $q$ then
$\nu q\varphi$ denotes $\neg\mu q\neg\varphi(\neg q/q)$ (this is well-formed).
For a non-empty finite set $\Delta=\{\vec\delta n\}$ of formulas,
we let $\bigwedge\Delta$ denote $\delta_1\wedge\ldots\wedge\delta_n$
and $\bigvee\Delta$ denote $\delta_1\vee\ldots\vee\delta_n$
(the order and bracketing of the conjuncts and disjuncts will always be immaterial). 
We set $\bigwedge\emptyset=\top$ and $\bigvee\emptyset=\bot$.
Parentheses will be omitted where possible, by the usual methods.

The connectives $\dit,\didt$ are called \emph{tangle connectives,}
or (more fully) \emph{tangled closure operators.}

We will be using various \emph{sublanguages} of $\Lbig$,
and they will be denoted in the obvious way by omitting prohibited operators
from the notation.
So for example, $\c L^{\mu}_{\bo\forall}$
denotes the language consisting of all $\Lbig$-formulas
that do not involve $\bod,$ $\dit$, or $\didt$.

\subsection{Kripke semantics}\label{ss:Kripke sem}

An \emph{assignment} or \emph{valuation} into a frame $\c F=(W,R)$ is
a map $h:\Var\to\wp(W)$.
A \emph{Kripke model} is a triple $\c M=(W,R,h)$, where $(W,R)$ is a frame
and $h$ an assignment into it.
The \emph{frame of $\c M$} is $(W,R)$, and
we say that $\c M$ is finite, reflexive, transitive, etc., if its frame is.

For every Kripke model $\c M=(W,R,h)$ and every world $w\in W$, we define the notion 
$\c M,w\models\varphi$ of a formula $\varphi$ of
$\Lbig$ being \emph{true at $w$ in $\c M$.}
The definition is by induction on $\varphi$,
as follows:
\begin{enumerate}
\item $\c M,w\models p$ iff $w\in h(p)$, for $p\in\Var$.

\item $\c M,w\models\top$.

\item $\c M,w\models\neg\varphi$ iff $\c M,w\not\models\varphi$.

\item $\c M,w\models\varphi\wedge\psi$ iff $\c M,w\models\varphi$ and $\c M,w\models\psi$.

\item $\c M,w\models\bo\varphi$ iff $\c M,v\models\varphi$ for every
$v\in R(w)$.

\item The truth condition for $\bod\varphi$ 
is exactly the same as for $\bo\varphi$.

\item $\c M,w\models\forall\varphi$ iff $\c M,v\models\varphi$
for every $v\in W$.

\item\label{item: sem tangle kripke} $\c M,w\models\dit\Delta$ iff there
are worlds $w=w_0,w_1,\ldots\in W$ with $R(w_n,w_{n+1})$ for each $n<\omega$
and such that for each $\delta\in\Delta$ there are infinitely many $n<\omega$
with $\c M,w_n\models\delta$.

\item The truth condition for $\didt\Delta$ 
is exactly the same as for $\dit\Delta$.

\item The truth condition for $\mu q\varphi$ takes longer to explain.
For an assignment $h:\Var\to\wp(W)$  and $S\subseteq W$, define a new
assignment $h[S/q]:\Var\to\wp(W)$ by
\[
h[S/q](p)=
\begin{cases}
S,&\mbox{if }p=q,
\\
h(p),&\mbox{otherwise,}
\end{cases}
\]
for $p\in\Var$.
Inductively,
the set $\sem\varphi_h=\{w\in W:(W,R,h),w\models\varphi\}$ is well defined, for every assignment $h$ into $(W,R)$.
Define a map $f:\wp(W)\to\wp(W)$  by
\[
f(S)=\sem\varphi_{h[S/q]}\quad\mbox{for }S\subseteq W.
\] 
Since $\varphi$ is positive in $q$,
it can be shown that
$f$ is monotonic, so it has a least fixed point, $LFP(f)$ (see section~\ref{ss:lfp}).
We define $\c M,w\models\mu q\varphi$ iff $w\in LFP(f)$.
\end{enumerate}
In the notation of the last clause, it can be checked that
$\c M,w\models\nu q\varphi$ iff
$w\in GFP(f)$.

A word on the semantics of $\did$ and $\didt$.
Let us temporarily write $\varphi\equiv\psi$ to mean 
that $\c M,w\models\varphi\leftrightarrow\psi$ for every \emph{transitive} Kripke model
$\c M=(W,R,h)$ and every $w\in W$.
Then it can be checked that for every non-empty finite set $\Delta$ of formulas,
\begin{equation}\label{e:mu and tangle}
\begin{array}{rcl}
\dit\Delta&\equiv&\displaystyle\nu q\bigwedge_{\delta\in\Delta}\di(\delta\wedge q),
\\[16pt]
\didt\Delta&\equiv&\displaystyle\nu q\bigwedge_{\delta\in\Delta}\did(\delta\wedge q),
\end{array}
\end{equation}
if $q\in\Var$ is a `new' atom that does not occur in any formula in $\Delta$.
For more details, see lemma~\ref{lem:mu trans}.
In a sense, \eqref{e:mu and tangle} is the `official' definition of the semantics of
the tangle connectives, which boils down to clause~\ref{item: sem tangle kripke} above
in the case of transitive Kripke models.

\subsection{Kripke semantics in generated submodels}

Let $\c M=(W,R,h)$ be a Kripke model.
A \emph{generated submodel} of $\c M$
is a model of the form
$\c M'=(W',R',h')$,
where $(W',R')$ is a generated subframe of $(W,R)$
and  $h':\Var\to\wp(W')$ is given
by $h'(p)=h(p)\cap W'$ for $p\in\Var$.
The following is an easy extension
to $\c L^{\mu\did\didt}_{\bo\bod}$
of a well known result in modal logic:
\begin{lemma}\label{lem:gen submodels}
Let $\c M'=(W',R',h')$ be a generated submodel of $\c M=(W,R,h)$.
Then for each $\varphi\in\c L^{\mu\did\didt}_{\bo\bod}$ and $w\in W'$, we have
\[
\c M,w\models\varphi\iff\c M',w\models\varphi.
\]
\end{lemma}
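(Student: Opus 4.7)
The plan is to proceed by induction on the construction of $\varphi$. Since $h'(p)=h(p)\cap W'$ for every $p\in\Var$, the atomic and propositional cases are immediate. For the cases $\bo\varphi$ and $\bod\varphi$, the generated-subframe condition gives $R(w)\subseteq W'$ for every $w\in W'$, so the set of $R$-successors of $w$ in $\c F$ coincides with the set of $R'$-successors in $\c F'$, and the induction hypothesis applied to $\varphi$ at each such successor yields the required equivalence.

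For the tangle operators, suppose $\c M,w\models\dit\Delta$ is witnessed by a sequence $w=w_0,w_1,\ldots$ in $W$ with $R(w_n,w_{n+1})$ for all $n<\omega$. Since $W'$ is closed under $R$-successors and $w\in W'$, the whole sequence lies in $W'$, and $R(w_n,w_{n+1})$ holds iff $R'(w_n,w_{n+1})$ holds. By the induction hypothesis, $\c M,w_n\models\delta$ iff $\c M',w_n\models\delta$ for each $\delta\in\Delta$, so the same sequence witnesses $\c M',w\models\dit\Delta$. The converse direction and the case of $\didt\Delta$ are identical.

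The only nontrivial case is $\mu q\varphi$. I would handle it by strengthening the inductive statement: prove that for every $\psi\in\c L^{\mu\did\didt}_{\bo\bod}$ and every assignment $g:\Var\to\wp(W)$, the restricted assignment $g':\Var\to\wp(W')$ given by $g'(p)=g(p)\cap W'$ satisfies $\sem\psi_g\cap W'=\sem\psi_{g'}$. Granting this, let $f:\wp(W)\to\wp(W)$ and $f':\wp(W')\to\wp(W')$ be the monotonic maps $f(S)=\sem\varphi_{h[S/q]}$ and $f'(S')=\sem\varphi_{h'[S'/q]}$ associated with the least-fixed-point semantics. A direct check of the substitution shows that $h[S/q]$ restricts to $h'[(S\cap W')/q]$ for any $S\subseteq W$, so the strengthened induction hypothesis gives
\[
f(S)\cap W'=f'(S\cap W')\qquad\text{for all }S\subseteq W.
\]
A straightforward transfinite induction on $\alpha$ then shows that the ordinal approximants $S_\alpha$ of $LFP(f)$ and $S'_\alpha$ of $LFP(f')$ described in section~\ref{ss:lfp} satisfy $S_\alpha\cap W'=S'_\alpha$: the successor step uses the displayed identity, and the limit step uses that intersection with $W'$ commutes with unions. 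Taking the union over all ordinals gives $LFP(f)\cap W'=LFP(f')$, and since $w\in W'$ this yields $\c M,w\models\mu q\varphi$ iff $\c M',w\models\mu q\varphi$.

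The main subtlety is getting the inductive statement right: the naive form of the lemma does not suffice for the $\mu q$ case, because the fixed-point semantics lets $q$ range over arbitrary subsets of $W$, not merely those of the form $S\cap W'$, so one must carry the stronger ``restriction'' statement through the induction. This is also where one sees why $\forall$ must be excluded from the language: $\forall\psi$ quantifies over all of $W$, and no amount of strengthening of the induction can reconcile that with the restriction to $W'$ without extra hypotheses on how $W'$ sits in $W$.
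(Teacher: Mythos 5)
Your proof is correct, and it is exactly the standard induction that the paper leaves implicit (the lemma is stated there without proof as an ``easy extension'' of the classical generated-submodel result). In particular, your strengthening of the inductive statement to $\sem\psi_g\cap W'=\sem\psi_{g'}$ for arbitrary assignments $g$ is precisely what is needed to push the argument through the $\mu q\varphi$ case, and your observations about the tangle clause and the exclusion of $\forall$ are accurate.
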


There is no distinction between $\bo$ and $\bod$ or between $\dit$ and $\didt$ in Kripke semantics.
This is not so in topological semantics, to be studied in part 2.

\subsection{Hilbert systems}

These are familiar, and we will be informal.
A \emph{Hilbert system $H$} in a given
language $\c L\subseteq\Lbig$ is a set of \emph{axioms,} which are $\c L$-formulas, and \emph{inference rules,} which have the form
\begin{equation}\label{e:inf rule}
\infrule{\vec\varphi n}\psi,
\end{equation}
for $\c L$-formulas $\vec\varphi n,\psi$.
A \emph{derivation in $H$ (of length $l$)} is a sequence $\vec\varphi l$ 
of $\c L$-formulas
such that each $\varphi_i$ $(1\leq i\leq l$) is either an $H$-axiom or
is derived from earlier $\varphi_j$ by an $H$-rule --- that is,
there are $1\leq\vec jn<i$ such that
\[
\infrule{\varphi_{j_1},\ldots,\varphi_{j_n}}{\varphi_i}
\]
is an instance of a rule of $H$.

A \emph{theorem of $H$} is a formula that occurs in some derivation in $H$.
An \emph{$H$-logic} is a set of $\c L$-formulas that
contains all $H$-axioms and is closed under 
all $H$-rules. The set of theorems of $H$ is the smallest $H$-logic.
Sometimes we identify (notationally) $H$ with this set,
or present $H$ implicitly by defining an $H$-logic.

A formula $\varphi$ is \emph{consistent} with $H$ if $\neg\varphi$ is not a theorem of $H$. A set $\Gamma$ of formulas is \emph{consistent} with $H$ if
$\bigwedge\Gamma_0$ is consistent with $H$, 
for every finite $\Gamma_0\subseteq\Gamma$.

Some familiar Hilbert systems used later are:
\begin{description}
\item [\boldmath$\axK$:] 
the axioms comprise (i) all instances of
propositional tautologies (e.g., $\varphi\to(\psi\to\varphi)$, etc.)
and (ii) all formulas of the form
$\bo(\varphi\to \psi)\to(\bo \varphi\to\bo \psi)$
(the so-called `normality' schema).
The
inference rules are:
\begin{itemize}
\item modus ponens:
$\infrule{\varphi,\;\varphi\to\psi}\psi$

\item  $\bo$-generalisation:
$\infrule \varphi{\bo\varphi}$
\end{itemize}

\item[\boldmath$\axK4$:] this is $\axK$ plus 
all instances of the `4' schema: $\bo \varphi\to\bo\bo \varphi$.

\item[\boldmath$\axS4$:] this is $\axK$ plus 
all instances of the  S4 schemata: $\bo \varphi\to \varphi$ and $\bo \varphi\to\bo\bo \varphi$. 
\end{description}
The well known substitution rule
$\infrule\varphi{\varphi(\psi/q)}$ is not always sound in the mu-calculus
and is not needed in other systems, so we omit it.

As usual, we denote particular Hilbert systems by sequences of letters and 
numbers indicating the axioms present.
For example, $\axS4.\axU\axC$ denotes the extension of $\axS4$ by the axioms generated by two 
schemes $\axU$ and $\axC$
to be seen later. 
The letter $t$ will denote the schemata for the tangle operator
given in section~\ref{ss:tangle logics}.

\subsection{Satisfiability, validity, equivalence}\label{ss:validity}

Let $\c F=(W,R)$ be a Kripke frame.
A set $\Gamma$ of $\Lbig$-formulas is said to be 
\emph{satisfiable in $\c F$}
if there exist an  assignment $h$ into $\c F$ and a world $w\in W$
such that $(W,R,h),w\models\gamma$ for every $\gamma\in\Gamma$.

Let $\varphi$ be an $\Lbig$-formula.
We say that $\varphi$ is \emph{satisfiable in $\c F$}
if the set $\{\varphi\}$ is so satisfiable.
We say that $\varphi$ is \emph{valid in $\c F$}
if $\neg\varphi$ is not satisfiable in $\c F$.
We may also say in this case that $\c F$ \emph{validates} $\varphi$.

We also say that $\varphi$ is \emph{equivalent} to a formula $\psi$
in $\c F$
if $\varphi\leftrightarrow\psi$ is valid in $\c F$.

\subsection{Logics}

Let $\c K$ be a class of Kripke frames.
In the context of a given language $\c L\subseteq\Lbig$,
the \emph{($\c L$)-logic of $\c K$} is the set of all $\c L$-formulas 
that are valid in every member of $\c K$.
A Hilbert system $H$ for $\c L$ 
whose set of theorems is $T$, say, is said to be 
\begin{itemize}
\item \emph{sound over $\c K$}
if  $T$ is a subset of the logic of $\c K$
(all $H$-theorems are valid in $\c K$),

\item  \emph{weakly complete}, or simply \emph{complete, over $\c K$}
if $T$ contains the logic of $\c K$
(all $\c K$-valid formulas are $H$-theorems),

\item \emph{strongly complete over $\c K$} if
every countable $H$-consistent set $\Gamma$ of $\c L$-formulas
is satisfiable in some \str\ in $\c K$.
(The restriction to countable sets will be discussed    
at the beginning of subsection \ref{subsec:strongcomp}.)
\end{itemize}
The logic of a single frame $\c F$ is defined to be
the logic of the class $\{\c F\}$; similar definitions are used for the other terms here. 

We say that a Kripke frame $\c F$  is  an \emph{$H$-frame,} or
that $\c F$ \emph{validates $H$,}
if $H$ is sound over~$\c F$.
To establish this, it is enough to check that each axiom of $H$ is valid in $\c F$,
and that each rule of $H$ preserves $\c F$-validity
(in the notation in \eqref{e:inf rule} above, this means that if $\vec\varphi n$ are valid in $\c F$ then so is $\psi$). 
We assume familiarity with basic results about modal validity:
for example, that a frame is a $\axK4$-frame iff it is transitive,
and an $\axS4$-frame iff it is reflexive and transitive.

It can be checked that $H$ is weakly complete
over $\c K$ iff every \emph{finite} $H$-consistent
set of formulas is satisfiable in some \str\ in $\c K$.
Hence, every strongly complete Hilbert system is also weakly complete.

 A system $H$  is said to have the \emph{finite model property over $\c K$} if each $H$-consistent formula is satisfiable
 in some \emph{finite} member of $\c K$. Equivalently, this means that $H$ is weakly complete over the class of finite members of $\c K$ (i.e.\ any formula valid in all finite members of $\c K$ is an $H$-theorem).

\newpage
\part{}

In this part of the paper, we look briefly at Hilbert systems
for the mu-calculus, but mainly we establish the finite model property
for the logics of concern in the paper.

\section{Hilbert systems for mu-calculus}\label{ss:hs mu}
We now present a very brief diversion on 
a Hilbert system for the mu-calculus that is sound and complete
over the class of finite reflexive transitive Kripke frames. 
It will be used to translate $\mu$ to $\dit$ 
and to axiomatise the $\c L^\mu_\bo$-logic of \nices\ spaces.
In this section, all formulas are $\c L^\mu_\bo$-formulas,
 all Hilbert systems are for this language, and we assume that $\Var$ is infinite.

\begin{definition}\label{def:mu systems}
Consider  the two Hilbert systems:
\begin{description}
\item [\boldmath$\axK\mu$:] standard modal logic $\axK$ with 
the axioms comprising all instances of propositional tautologies
and of normality
($\bo(\varphi\to \psi)\to(\bo \varphi\to\bo \psi)$), and the
inference rules
modus ponens,   $\bo$-generalisation,
plus the following for each
formula $\varphi$ positive in $q$:
\begin{itemize}
\item fixed point axiom: $\varphi(\mu q\varphi/q)\to\mu q\varphi$,
provided that no free occurrence of
an atom in $\mu q\varphi$
gets bound in $\varphi(\mu q\varphi/q)$ --- consequently,
$\varphi(\mu q\varphi/q)$ is well formed
(the idea is roughly that $\mu q\varphi$ is
a pre-fixed point of $\varphi$)

\item fixed point rule:
$\infrule{\varphi(\psi/q)\to\psi}{\mu q\varphi\to\psi}$,
provided that no free occurrence of
an atom in $\psi$
gets bound in $\varphi(\psi/q)$ --- hence,
 $\varphi(\psi/q)$ is well formed
(the idea this time is roughly that $\mu q\varphi$ is the least pre-fixed point of $\varphi$).
\end{itemize}
We write $\axK\mu\vdash\varphi$ if $\varphi$ is a theorem of this system.
It is well known (see, e.g., \cite[\S6]{BS-mucalc07})
that the system is equivalent to
the original equational system of Kozen \cite{Koz:mu82}.

\item[\boldmath$\axS4\mu$:] this is $\axK\mu$ plus the S4 
schemata $\bo \varphi\to\varphi$, $\bo\varphi\to\bo\bo\varphi$.
We write $\axS4\mu\vdash\varphi$ if $\varphi$ is a theorem of this system.
\end{description}
\end{definition}

The following combines some famous and difficult work in the mu-calculus.

\begin{fact}[\cite{Koz:mu82,Wal95,DBLP:conf/concur/JaninW96}]\label{fact:major mu}
$\axK\mu$ is sound and complete
over the class of all finite Kripke frames.
\end{fact}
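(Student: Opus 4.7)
The plan is to split the claim into three pieces: soundness of $\axK\mu$ over all Kripke frames, weak completeness over all Kripke frames (Kozen's conjecture), and the finite model property. Soundness and completeness over all frames together with FMP give completeness over finite frames, which (together with soundness) is what is claimed.

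Soundness is routine. Propositional tautologies, normality, modus ponens and $\bo$-generalisation are handled as in plain $\axK$. For the fixed-point axiom $\varphi(\mu q\varphi/q)\to\mu q\varphi$, fix a Kripke model $\c M=(W,R,h)$ and define $f:\wp(W)\to\wp(W)$ by $f(S)=\sem\varphi_{h[S/q]}$; positivity of $q$ in $\varphi$ makes $f$ monotonic, so by Knaster--Tarski $LFP(f)$ exists and $f(LFP(f))=LFP(f)$, which unwinds to the validity of the axiom (the side condition on capture is exactly what makes the substitution lemma $\sem{\varphi(\mu q\varphi/q)}_h=\sem\varphi_{h[LFP(f)/q]}$ go through). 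For the fixed-point rule, assume $\varphi(\psi/q)\to\psi$ is valid. Then in every model, $\sem\psi_h$ is a pre-fixed point of the corresponding $f$, hence contains $LFP(f)=\sem{\mu q\varphi}_h$, yielding $\mu q\varphi\to\psi$.

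Completeness over arbitrary Kripke frames is the deep ingredient, and I would invoke Walukiewicz's theorem from \cite{Wal95}. The proof strategy is to associate to any $\axK\mu$-consistent formula $\varphi$ a tableau whose nodes are finite sets of formulas and whose rules decompose Booleans, unfold fixed points via the axiom, and split modalities. A subtle notion of \emph{trace} through infinite branches distinguishes good (i.e.\ $\nu$-dominated) from bad ($\mu$-dominated) threads; the tableau is then analysed by a parity game played on its nodes. Consistency of $\varphi$ is shown to guarantee a winning strategy for the `model-builder' player, from which a Kripke model of $\varphi$ is synthesised. The hard technical core is the refutation theorem, which converts a winning strategy for the opposing player into an $\axK\mu$-proof of $\neg\varphi$ using only finitary applications of the fixed-point rule; this is where Kozen's system is shown to be strong enough to capture the intrinsically infinitary Knaster--Tarski characterisation.

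For the finite model property, the standard route is automata-theoretic, as in \cite{DBLP:conf/concur/JaninW96} and the antecedent work it builds on. Every $\c L^\mu_\bo$-formula $\varphi$ is effectively translated into an alternating parity tree automaton $A_\varphi$ that accepts exactly the tree unfoldings of Kripke models of $\varphi$; alternation is then removed via Safra/Muller--Schupp-style simulation, and nonemptiness of $A_\varphi$ is decided by solving a parity game whose winning strategies, being positional, yield a \emph{regular} accepting tree. A regular tree unfolds from a finite Kripke model, so any satisfiable formula has a finite model. Combining this with weak completeness over all frames gives weak completeness over the finite ones. The main obstacle throughout is unambiguously the completeness step: both the tableau/consistency argument and the automata-theoretic translation demand substantial machinery, and it is precisely the tight interaction between $\mu$ and $\nu$ that resisted proof for over a decade after \cite{Koz:mu82}.
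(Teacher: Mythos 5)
Your outline is accurate, but note that the paper does not prove this statement at all: it is stated as a \textsc{Fact} imported wholesale from the literature, with the three citations covering exactly the ingredients you describe (Kozen's axiomatisation and soundness, Walukiewicz's completeness theorem via tableaux/games and the refutation argument, and the automata-theoretic finite model property). Your decomposition into soundness, completeness over all frames, and FMP --- and the way you recombine them to get completeness over finite frames --- matches what the cited works jointly establish, so there is nothing to compare beyond agreeing that the hard core (Walukiewicz's completeness proof) is correctly identified as the part no short argument can replace.
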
 

We are going to extend it to show that $\axS4\mu$ is sound and complete over the class of finite reflexive transitive frames (and, 
in Part 2, over every \nice\ space).
First, a form of the substitution rule can be established.

\begin{lemma}\label{lem:limited sub}
Suppose $\varphi,\psi$ are formulas such that 
for each atom $s$ occurring free in $\psi$,
there is no subformula of $\varphi$ of the form $\mu s\theta$.
If $\axS4\mu\vdash\varphi$, then $\axS4\mu\vdash\varphi(\psi/p)$ for any atom $p$.
\end{lemma}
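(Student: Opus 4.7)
The plan is to induct on the length of some $\axS4\mu$-derivation of $\varphi$, substituting $\psi$ for $p$ throughout. The hypothesis serves precisely to rule out the familiar capture pathology illustrated by $\varphi=\mu p\,q$ and $\psi=\neg p$ mentioned after the definition of the language. Before starting, I would apply $\alpha$-renaming to bound atoms in every formula of the derivation so that no bound atom anywhere in it equals $p$ or occurs free in $\psi$. By the hypothesis, this renaming leaves $\varphi$ unchanged up to $\alpha$-equivalence, hence leaves $\varphi(\psi/p)$ unchanged up to $\alpha$-equivalence; and $\alpha$-equivalent formulas are $\axS4\mu$-interderivable (a standard fact for the $\mu$-calculus), so it suffices to prove the conclusion for the renamed derivation. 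Under this arrangement, substitutions commute syntactically: for any subformula $\mu q\chi$ appearing in the derivation and any appropriate formula $\theta$, one has $\chi(\theta/q)(\psi/p)=\chi(\psi/p)(\theta(\psi/p)/q)$, because $q\neq p$ and $q$ is not free in $\psi$. Moreover, positivity in $q$ is preserved under $(\psi/p)$, because after the renaming $q$ does not occur in $\psi$ at all.

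Next I would case-split on how each $\varphi_i$ in the derivation arises. If $\varphi_i$ is a propositional tautology or an instance of normality, T, or 4, then $\varphi_i(\psi/p)$ is clearly another instance of the same schema. For the fixed point axiom $\chi(\mu q\chi/q)\to\mu q\chi$, the commutation identity rewrites $\varphi_i(\psi/p)$ as $\chi(\psi/p)(\mu q\chi(\psi/p)/q)\to\mu q\chi(\psi/p)$, again an instance of the fixed point axiom, well-formed because $\chi(\psi/p)$ is still positive in $q$. Modus ponens and $\bo$-generalisation commute with $(\psi/p)$ in the obvious way, so the induction hypothesis immediately yields the substituted conclusion.

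The slightly more delicate rule case is the fixed point rule. If $\varphi_i=\mu q\chi\to\theta$ was inferred from an earlier $\chi(\theta/q)\to\theta$, then applying $(\psi/p)$ to the premise gives $\chi(\psi/p)(\theta(\psi/p)/q)\to\theta(\psi/p)$ by commutation; the induction hypothesis supplies this premise as a theorem, and the fixed point rule (whose well-formedness side-condition is ensured by the renaming) then produces $\mu q\chi(\psi/p)\to\theta(\psi/p)$, which is exactly $\varphi_i(\psi/p)$.

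The only real obstacle is the bookkeeping around variable capture in the presence of $\mu$-binders; once the preliminary $\alpha$-renaming is in place, each axiom instance and each rule application commutes cleanly with the substitution, and the remainder of the induction is routine. The hypothesis of the lemma is exactly what is needed to perform that preliminary renaming without disturbing $\varphi$ itself.
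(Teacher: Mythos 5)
Your proof is correct in outline and follows the same overall skeleton as the paper's --- induction on the length of an $\axS4\mu$-derivation, with substitution commuting through the rules --- but it diverges at the two points that actually carry weight. For the fixed point axiom, the paper does \emph{not} argue syntactically: it observes that the substituted instance $\varphi^\dag$ is valid in all Kripke frames and invokes the completeness of $\axK\mu$ (fact~\ref{fact:major mu}) to conclude $\axK\mu\vdash\varphi^\dag$. You instead show, after a global $\alpha$-renaming of the derivation, that the substituted formula is literally another instance of the schema; this is a legitimate and more syntactic route, and your renaming also collapses the paper's case split $p=q$ versus $p\neq q$ in the fixed point rule (after renaming, no binder uses $p$). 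A further benefit you get for free: the paper's induction statement is conditional on the lemma's hypothesis holding for the formula in question, and it only verifies that hypothesis for the premise of the fixed point rule, not for the premises of modus ponens; renaming the entire derivation up front makes the inductive claim unconditional for every line, which is tidier.

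Two caveats you should make explicit. First, ``it suffices to prove the conclusion for the renamed derivation'' presupposes that the renamed sequence \emph{is} an $\axS4\mu$-derivation; this does not follow from interderivability of $\alpha$-equivalent endpoints, but requires choosing one consistent renaming (fresh atoms, applied uniformly) and checking that each axiom instance and rule application survives it --- a routine but nonempty piece of bookkeeping of exactly the capture-avoiding kind you are trying to dispatch. Second, in this paper's development the ``standard fact'' that $\alpha$-equivalent formulas are $\axS4\mu$-interderivable is itself most naturally obtained from fact~\ref{fact:major mu} (this is how the paper justifies $\overline\varphi\equiv\varphi$ in the proof of lemma~\ref{lem:*}), so unless you supply a syntactic proof of $\alpha$-interderivability you have not actually avoided the appeal to the completeness theorem --- you have only relocated it. Also, minor point: after renaming, $q$ may still occur \emph{bound} in $\psi$; your positivity argument survives because only free occurrences of $q$ matter, but the claim that ``$q$ does not occur in $\psi$ at all'' is slightly stronger than what the renaming guarantees.
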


\begin{proofsk}
Let $\varphi,\psi,p$ be as stipulated.
For a formula $\alpha$, write $\alpha^\dag=\alpha(\psi/p)$.
We show that $\axS4\mu\vdash\alpha\Rightarrow \axS4\mu\vdash\alpha^\dag$
(when the stipulation holds)
by induction on the length of a derivation of $\varphi$ in $\axS4\mu$.

Suppose that $\varphi$ is an instance
$\alpha(\mu q\alpha/q)\to\mu q\alpha$ of the fixed point axiom.
Then $\varphi^\dag$ is valid in all Kripke frames, so 
by fact~\ref{fact:major mu}, 
$\axK\mu\vdash\varphi^\dag$ and hence certainly $\axS4\mu\vdash\varphi^\dag$.

Suppose that $\varphi$ is derived by the fixed point rule,
so that $\varphi=\mu q\alpha\to\beta$ for some $\alpha,\beta,q$ meeting the condition of the rule, and $\alpha(\beta/q)\to\beta$ occurs earlier in the derivation.
If $s$ occurs free in $\psi$ then there is no $\mu s$ in 
$\mu q\alpha\to\beta$, so none in
$\alpha(\beta/q)\to\beta$ either.
So the inductive hypothesis applies,
to give $\axS4\mu\vdash(\alpha(\beta/q)\to\beta)^\dag$.
Let us evaluate this.
If $p=q$,  it is $\axS4\mu\vdash\alpha(\beta^\dag/q)\to\beta^\dag$.
By our stipulation,  the fixed point rule applies, giving
 $\axS4\mu\vdash\mu q\alpha\to\beta^\dag$.
But $(\mu q\alpha)^\dag=\mu q\alpha$. 
So $\axS4\mu\vdash\varphi^\dag$ as required.
If instead $p\neq q$,
then it is $\axS4\mu\vdash\alpha^\dag(\beta^\dag/q)\to\beta^\dag$.
Again, the rule applies, to give
$\axS4\mu\vdash\mu q\alpha^\dag\to\beta^\dag$. But this is exactly
$\axS4\mu\vdash\varphi^\dag$.

All other cases of the induction are easy and left to the reader.
\end{proofsk}

\begin{definition}\label{def:phi*}
For a formula $\varphi$, define a new formula $\varphi^*$
by induction:
\begin{itemize}
\item $p^*=p$ for $p\in\Var$;
\item 
$-^*$ \emph{commutes} with the boolean connectives and $\mu$.
That is, $\top^*=\top$,
$(\neg\varphi)^*=\neg\varphi^*$, 
$(\varphi\wedge\psi)^*=\varphi^*\wedge\psi^*$,
 and $(\mu q\varphi)^*=\mu q\varphi^*$.

\item  $(\bo\varphi)^*=\nu q(\varphi^*\wedge\bo q)$, where $q\in\Var$ is a `new'
atom not occurring in $\varphi^*$. 
\end{itemize}
The formula $\varphi^*$ is plainly well formed, for all $\varphi\in\c L^\mu_\bo$.
\end{definition}

\begin{lemma}\label{lem:semantic *}
Let $\varphi$ be any formula.
Then for  every Kripke model $(W,R,h)$ and $w\in W$, 
we have
$(W,R,h),w\models\varphi^*$ iff $(W,R^*,h),w\models\varphi$,
where (recall) $R^*$ is the reflexive transitive closure of $R$.
\end{lemma}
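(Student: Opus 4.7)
The plan is to prove this by structural induction on the formula $\varphi$, stated uniformly for all assignments so that the inductive hypothesis will be strong enough for the fixed-point case.

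For the atomic, boolean and $\forall$-free propositional cases, the result is immediate: $\varphi^*$ agrees syntactically with $\varphi$ on atoms and commutes with the boolean connectives, and the relation $R$ is used only to interpret modal operators, so the truth values at a world are unchanged. The $\mu q\varphi$ case is also straightforward. Writing $\llbracket\psi\rrbracket^R_h = \{w : (W,R,h),w \models \psi\}$, the IH applied to $\varphi$ gives $\llbracket\varphi^*\rrbracket^R_{h[S/q]} = \llbracket\varphi\rrbracket^{R^*}_{h[S/q]}$ for every $S \subseteq W$. Hence the two monotone operators on $\wp(W)$ that define $\mu q\varphi^*$ in $(W,R,h)$ and $\mu q\varphi$ in $(W,R^*,h)$ coincide, so have the same least fixed point, giving the claim for $\mu q\varphi^* = (\mu q\varphi)^*$.

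The interesting case is $\varphi = \Box\psi$, where $\varphi^* = \nu q(\psi^* \wedge \Box q)$ for some atom $q$ not occurring in $\psi^*$. Let $A = \llbracket\psi^*\rrbracket^R_h = \llbracket\psi\rrbracket^{R^*}_h$ (the equality is by the IH), and define $f : \wp(W) \to \wp(W)$ by $f(S) = A \cap \{v : R(v) \subseteq S\}$; the fact that $q$ is new makes $\llbracket\psi^*\rrbracket^R_{h[S/q]}$ independent of $S$ and equal to $A$, so $f(S)$ is exactly the denotation of $\psi^* \wedge \Box q$ under the assignment $h[S/q]$. By the Knaster--Tarski characterisation recalled in section~\ref{ss:lfp}, $(W,R,h),w \models \nu q(\psi^* \wedge \Box q)$ iff $w \in GFP(f)$, so it suffices to show $GFP(f) = B$, where $B = \{w \in W : R^*(w) \subseteq A\}$, since the latter is precisely $\llbracket \Box\psi \rrbracket^{R^*}_h$.

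For $B \subseteq GFP(f)$, I check that $B$ is a post-fixed point of $f$: for $w \in B$, reflexivity of $R^*$ gives $w \in R^*(w) \subseteq A$, and for any $v \in R(w)$ we have $R^*(v) \subseteq R^*(w) \subseteq A$, hence $v \in B$; so $B \subseteq A \cap \{v : R(v) \subseteq B\} = f(B)$. For the reverse inclusion, any post-fixed point $S \subseteq f(S)$ satisfies $S \subseteq A$ and $R(S) \subseteq S$; a routine induction on the length of $R$-paths then yields $R^*(S) \subseteq S \subseteq A$, so $S \subseteq B$; taking the union over all post-fixed points gives $GFP(f) \subseteq B$. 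The main obstacle is really just setting up this $\Box$-case bookkeeping cleanly, in particular keeping track that $q$ is fresh (so $f$ depends on $h$ only through $A$) and that the IH must be applied uniformly in the assignment so as to handle the subsequent $\mu$ case.
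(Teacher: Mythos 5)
Your proof is correct and follows essentially the same route as the paper's: induction on $\varphi$ with the inductive hypothesis held uniformly over assignments, the $\mu$-case handled by observing that the two defining operators coincide, and the $\Box$-case reduced to showing that $\nu q(\psi^*\wedge\Box q)$ denotes $\{w: R^*(w)\subseteq\sem{\psi^*}\}$. The only difference is that the paper dismisses this last step as "a well known exercise in the mu-calculus", whereas you carry it out explicitly via the post-fixed-point characterisation of $GFP$ — a correct and complete filling-in of that exercise.
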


\begin{proof}
The proof is by induction on $\varphi$.
The atomic and boolean cases are easy.
Assuming the result for $\varphi$, 
it is a well known exercise in the mu-calculus to check that
$(W,R,h),w\models(\bo\varphi)^*$ iff
$(W,R,h),u\models\varphi^*$ for every $u\in R^*(w)$.
Inductively, this is iff  
$(W,R^*,h),u\models\varphi$ for every $u\in R^*(w)$,
iff $(W,R^*,h),w\models\bo\varphi$ as required.

Finally assume that the result holds for $\varphi$, positive in $q$,
for every Kripke model.
For a formula $\psi$ and Kripke model $(W,R,h)$,
write $\sem\psi_{(W,R,h)}=\{w\in W:(W,R,h),w\models\psi\}$.
Then
$(W,R,h),w\models(\mu q\varphi)^*$
iff $(W,R,h),w\models\mu q\varphi^*$, iff $w$ is in the least fixed point of the map $f:\wp(W)\to\wp(W)$
given by $f(S)=\sem{\varphi^*}_{(W,R,h[S/q])}$.
But inductively, $f(S)=\sem{\varphi}_{(W,R^*,h[S/q])}$.
So this is iff $(W,R^*,h),w\models\mu q\varphi$ as required.
\end{proof}

\begin{lemma}\label{lem:*}
$\axS4\mu\vdash\varphi\leftrightarrow\varphi^*$ for every $\varphi$.
\end{lemma}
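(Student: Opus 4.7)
The plan is to proceed by induction on the structure of $\varphi$. The atomic case ($\varphi\in\Var$ or $\varphi=\top$) and the boolean cases are immediate since $^*$ commutes with $\top$, $\neg$, and $\wedge$. Two genuinely non-trivial cases remain, namely $\varphi=\mu q\psi$ and $\varphi=\bo\psi$.

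For $\varphi=\mu q\psi$ with $\psi$ positive in $q$, the inductive hypothesis is $\axS4\mu\vdash\psi\leftrightarrow\psi^*$. To obtain $\mu q\psi\to\mu q\psi^*$, apply lemma~\ref{lem:limited sub} to substitute $\mu q\psi^*$ for $q$ in the theorem $\psi\to\psi^*$, giving $\psi(\mu q\psi^*/q)\to\psi^*(\mu q\psi^*/q)$; the fixed point axiom then supplies $\psi^*(\mu q\psi^*/q)\to\mu q\psi^*$, and a further application of the fixed point rule yields the desired implication. The converse is symmetric. The side conditions on lemma~\ref{lem:limited sub} and on the fixed point rule are managed by the standard clean-naming convention.

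The $\bo$ case is where both S4 axioms T and 4 enter essentially, and is the step I expect to be the main obstacle. We must show $\axS4\mu\vdash\bo\psi\leftrightarrow\nu q(\psi^*\wedge\bo q)$ for $q$ not in $\psi^*$. For the direction $\nu q(\psi^*\wedge\bo q)\to\bo\psi$, the fact that $\nu$ is a fixed point supplies both $\nu q(\psi^*\wedge\bo q)\to\bo\nu q(\psi^*\wedge\bo q)$ and $\nu q(\psi^*\wedge\bo q)\to\psi^*\to\psi$ (the last step by IH); $\bo$-generalisation and normality then lift this to $\bo\nu q(\psi^*\wedge\bo q)\to\bo\psi$, and concatenation gives the goal. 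For the converse $\bo\psi\to\nu q(\psi^*\wedge\bo q)$, dualise and aim at $\mu q(\neg\psi^*\vee\di q)\to\neg\bo\psi$ instead; applying the fixed point rule with the formula $\neg\bo\psi$ reduces this to the premise $\neg\psi^*\vee\di\neg\bo\psi\to\neg\bo\psi$, which decomposes into $\bo\psi\to\psi^*$ (immediate from T and the IH) and $\bo\psi\to\bo\bo\psi$ (axiom 4).

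This matches lemma~\ref{lem:semantic *} semantically: $\varphi^*$ evaluates $\varphi$ against the reflexive transitive closure $R^*$ of $R$, which coincides with $R$ precisely when $R$ is already reflexive and transitive, so the translation must be vacuous over $\axS4$-frames. The main delicacy, beyond the essential use of T and 4 in the $\bo$ case, is the bookkeeping of side conditions on the fixed point rule and on lemma~\ref{lem:limited sub}, which can be handled by choosing the fresh atoms introduced by $^*$ carefully and renaming bound variables as needed.
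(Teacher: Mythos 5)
Your proof is correct and follows essentially the same route as the paper: the $\mu q\psi$ case is identical (lemma~\ref{lem:limited sub} plus the fixed point axiom and rule), and the $\bo\psi$ case reduces, after dualising, to exactly the same two facts $\bo\psi\to\psi^*$ (from T and the inductive hypothesis) and $\bo\psi\to\bo\bo\psi$ (axiom 4) via the fixed point rule. The only cosmetic difference is that for $\nu q(\psi^*\wedge\bo q)\to\bo\psi$ you unfold the $\nu$ directly, whereas the paper proves the dual implication $\di\neg\psi\to\mu q(\neg\psi^*\vee\di q)$ by showing the second approximant $\neg\psi\vee\di\neg\psi$ entails the least fixed point --- the same underlying instance of the fixed point axiom.
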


\begin{proof}
Again, the proof is by induction on $\varphi$. 
We write just `$\vdash$' for `$\axS4\mu\vdash$' in the proof.
We also write 
$\alpha\equiv\beta$ for $\vdash\alpha\leftrightarrow\beta$.
First, replace all bound atoms in $\varphi$ by fresh ones,
to give a formula $\overline\varphi$.
More formally, $\overline\psi$ is defined for each subformula $\psi$
of $\varphi$ by induction:
 $\overline{\mu q\psi}=\mu s(\overline\psi(s/q))$, where $s$ is a 
 new atom associated with $\psi$ and not occurring in $\varphi$, and
$\overline{\,\cdot\,}$ commutes with all other operators.
By fact~\ref{fact:major mu},  $\overline\varphi\equiv\varphi$
and $(\overline\varphi)^*\equiv\varphi^*$.
So, replacing $\varphi$ by $\overline\varphi$, we can
suppose without loss of generality that
for each atom $q$ that occurs free in $\varphi$, 
there is no subformula of $\varphi$ of the form $\mu q\theta$.
The $-^*$ operator preserves this condition, so it holds for $\varphi^*$ as well.

For atomic $\varphi$, the result is trivial since $\varphi^*=\varphi$, and
booleans are fine.

Assume inductively that $\varphi\equiv\varphi^*$ and consider
$\bo\varphi$.
We need to show that $\bo\varphi\equiv\nu q(\varphi^*\wedge\bo q)$,
for `new' $q$
--- that is, 
$\bo\varphi\equiv\neg\mu q\neg(\varphi^*\wedge\bo \neg q)$.
By a tautology, it is enough to show
 $\neg\bo\varphi\equiv\mu q\neg(\varphi^*\wedge\bo \neg q)$.
 By fact~\ref{fact:major mu},
$\neg\bo\varphi\equiv\di\neg\varphi$ and
$\mu q\neg(\varphi^*\wedge\bo \neg q)\equiv\mu q(\neg\varphi^*\vee\di q)$.
So, letting $\psi=\neg\varphi$,
it is enough to prove
\begin{equation}\label{e:mu to prove}
\di\psi\equiv\mu q\chi,\mbox{ where }\chi=\psi^*\vee\di q.
\end{equation}
Note that the inductive hypothesis gives $\psi\equiv\psi^*$,
and that $\chi(\theta/q)$ is well-formed for any well-formed $\theta$.
Let $\chi^0=\bot$, and $\chi^{n+1}=\chi(\chi^n/q)$ for $n<\omega$.
The following claim, needed only for $n=2$, is an instance of 
a more general result.

\claim  $\vdash \chi^n\to\mu q\chi$ for each $n<\omega$.

\pfclaim By induction on $n$.
For $n=0$, it is $\vdash\bot\to\mu q\chi$, a tautology.
Assume inductively that  $\vdash \chi^n\to\mu q\chi$.
We desire  $\vdash \psi^*\vee\di \chi^n\to\mu q\chi$.
By the fixed point axiom, it is enough to prove that
$\vdash \psi^*\vee\di \chi^n\to\chi(\mu q\chi/q)$
--- that is,
$\vdash \psi^*\vee\di \chi^n\to\psi^*\vee\di\mu q\chi$.
But the inductive hypothesis plus standard uses of
generalisation and normality yield
$\vdash \di\chi^n\to\di\mu q\chi$, and the result follows using
tautologies and modus ponens.
This proves the claim.

\smallskip

Towards \eqref{e:mu to prove}, we first show that $\vdash\di\psi\to\mu q\chi$.
Observe that inductively, $\chi^1=\psi^*\vee\di\bot\equiv\psi$ and
$\chi^2=\psi^*\vee\di\chi^1\equiv \psi\vee\di\psi$.
By the claim for $n=2$, and tautologies,
$\vdash \psi\vee\di\psi\to\mu q\chi$ and applying more tautologies yields
$\vdash\di\psi\to\mu q\chi$.

Now we show $\vdash\mu q\chi\to\di\psi$.
By the fixed point rule, it is enough to show $\vdash\chi(\di\psi/q)\to\di\psi$.
That is, $\vdash\psi^*\vee\di\di\psi\to\di\psi$.
But given the inductive hypothesis, this is just what the S4 axioms say.
This proves \eqref{e:mu to prove} and completes the case of $\bo\varphi$.

Finally assume the result for $\varphi$ positive in $q$, and consider the case 
$\mu q\varphi$.
All formulas below meet all necessary conditions because
of our initial assumption on $\varphi$.
By the inductive hypothesis and lemma~\ref{lem:limited sub} we get 
$\vdash\varphi(\mu q\varphi^*/q)\to\varphi^*(\mu q\varphi^*/q)$.
The fixed point axiom gives 
$\vdash\varphi^*(\mu q\varphi^*/q)\to\mu q\varphi^*$.
Putting the two together gives $\vdash\varphi(\mu q\varphi^*/q)\to\mu q\varphi^*$.
This says that $\mu q\varphi^*$ is a pre-fixed point of $\varphi$,
so the fixed point rule gives
$\vdash\mu q\varphi\to\mu q\varphi^*$. The converse, $\vdash\mu q\varphi^*\to\mu q\varphi$, is similar.
\end{proof}

\begin{theorem}\label{thm:S4mu sc}
The system $\axS4\mu$ is sound and complete
over the class of finite reflexive transitive Kripke frames
(finite S4 frames).

\end{theorem}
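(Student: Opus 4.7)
\begin{proofsk}
My plan is to handle soundness and completeness separately, with completeness being the more interesting direction that leverages the machinery already assembled.

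For soundness, I would simply check that each S4 schema $\bo\varphi\to\varphi$ and $\bo\varphi\to\bo\bo\varphi$ is valid in every reflexive transitive Kripke frame (this is standard and does not involve $\mu$), and observe that all remaining $\axK\mu$ axioms and rules are already sound over \emph{all} Kripke frames by fact~\ref{fact:major mu}, hence in particular over the finite reflexive transitive ones.

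For completeness, suppose $\varphi$ is valid in every finite reflexive transitive Kripke frame; I want to show $\axS4\mu\vdash\varphi$. By lemma~\ref{lem:*}, this reduces to showing $\axS4\mu\vdash\varphi^*$, and for that it is enough to show $\axK\mu\vdash\varphi^*$. By fact~\ref{fact:major mu}, the latter follows if $\varphi^*$ is valid in every finite Kripke frame (with no reflexivity or transitivity assumed). So let $(W,R,h)$ be an arbitrary finite Kripke model and $w\in W$. By lemma~\ref{lem:semantic *},
\[
(W,R,h),w\models\varphi^*\iff(W,R^*,h),w\models\varphi.
\]
Now $(W,R^*)$ is finite, reflexive, and transitive (being the reflexive transitive closure of a relation on a finite set), so by hypothesis $\varphi$ is valid there; in particular $(W,R^*,h),w\models\varphi$, whence $(W,R,h),w\models\varphi^*$. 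This gives the validity of $\varphi^*$ on all finite Kripke frames, closing the argument.

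The only real obstacle is already overcome: lemma~\ref{lem:*} (which uses fact~\ref{fact:major mu} and the delicate fixed-point manipulations for the $\bo$-case) is what allows the translation $\varphi\mapsto\varphi^*$ to be both semantically faithful (lemma~\ref{lem:semantic *}) and provably equivalent in $\axS4\mu$. Given those two lemmas, the theorem reduces to a one-line reflection trick: pass from $(W,R)$ to $(W,R^*)$ to invoke completeness of $\axK\mu$ over all finite frames.
\end{proofsk}
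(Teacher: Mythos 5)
Your proposal is correct and follows essentially the same route as the paper: both arguments hinge on lemma~\ref{lem:*} (provable equivalence of $\varphi$ and $\varphi^*$), lemma~\ref{lem:semantic *} (semantic faithfulness of $-^*$ via the passage from $R$ to $R^*$), and fact~\ref{fact:major mu}. The only difference is that you run the argument in the ``valid $\Rightarrow$ provable'' direction while the paper phrases it contrapositively as ``consistent $\Rightarrow$ satisfiable''; these are interchangeable.
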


\begin{proof}
Soundness is easily checked.
Conversely, assume that $\varphi$ is consistent with $\axS4\mu$.
By lemma~\ref{lem:*}, $\varphi^*$ is consistent with $\axS4\mu$ and hence with
$\axK\mu$ as well.
By fact~\ref{fact:major mu}, there is a finite Kripke model $\c M=(W,R,h)$ in which $\varphi^*$ is satisfied at $w$, say.
We do not know that $(W,R)$ is reflexive or transitive.
However, by lemma~\ref{lem:semantic *} we have $(W,R^*,h),w\models\varphi$ 
as well, and $R^*$ is reflexive and transitive.
\end{proof}

\section{Finite model property}\label{sec:fmp}


The main work of our paper starts here.
In this section, we establish a number of finite model property results
for sublanguages of $\c L^{\dit\didt}_{\bo\bod\forall}$, by modifying a filtration approach 
pioneered in the context of $\c L_\bod$ by Shehtman \cite{Sheh:d90}
and used later by Lucero-Bryan for $\c L_{\bod\forall}$ \cite{LucBry11}.
The finite model property for the systems  $\axK\axD4\axG_n$ (and others)
was proved by Zakharyaschev \cite{Zakh93}, using canonical formulas.
 The finite model property for an S4-like tangle system
 was proved by Fern\'andez-Duque in \cite{FD:ijcai11}, by a different method,
 and the scheme {\bf Fix} and a variant of {\bf Ind} 
 in section~\ref{ss:tangle logics} below
appear in \cite[\S3]{FD:ijcai11}.

\subsection{Clusters in Transitive Frames}    
We work within models on \emph{K4 frames} $(W,R)$, i.e.\ $R$ is a  transitive binary relation on $W$. 
If $xRy$, we may say that $y$ comes \emph{$R$-after} $x$, or is \emph{$R$-later than} $x$, or is an \emph{$R$-successor} of $x$. If $xR^\bullet y$, i.e.\ $xRy$ but not $yRx$, then $y$ is \emph{strictly} after/later, or is a \emph{proper} $R$-successor. A point $x$ is \emph{reflexive} if $xRx$, and \emph{irreflexive} otherwise. $R$ is (ir)reflexive on a set $X\sub W$ if every member of $X$ is (ir)reflexive.

An \emph{$R$-cluster} is a subset $C$ of $W$ that is an equivalence class under the equivalence relation
$$
\{(x,y):x=y\text{ or } xRyRx\}.
$$
A cluster is \emph{degenerate} if it is a singleton $\{x\}$ with $x$ irreflexive. Note that a cluster $C$ can only contain an irreflexive point if it is a singleton. For,  if $C$ has more than one element, then for each $x\in C$ there is some $y\in C$ with $x\ne y$, so $xRyRx$ and thus $xRx$ by transitivity. On a non-degenerate cluster $R$ is universal. For $C$ to be non-degenerate it suffices that there exist $x,y\in C$ with $xRy$, regardless of whether $x=y$ or not.

Write $C_x$ for the $R$-cluster containing $x$. Thus $C_x=\{x\}\cup\{y:xRyRx\}$. The relation $R$ lifts to a well-defined \emph{partial} ordering of clusters   by putting $C_xRC_y$ iff $xRy$.  A cluster $C$ is \emph{$R$-maximal} when there is no cluster that  comes strictly $R$-after it, i.e.\ when $CRC'$ implies $C=C'$. A point $x\in W$ is \emph{$R$-maximal,}
or just \emph{maximal} if $R$ is understood, if $C_x$ is a maximal cluster, or equivalently if $xRy$ implies $yRx$.

An \emph{$R$-chain} is a sequence $C_1,C_2,\dots$ of pairwise distinct clusters with $C_1RC_2R\cdots$. In a finite frame, such a chain is of finite length. Hence we can define a notion of \emph{rank} in a finite frame by declaring the rank of a cluster $C$ to be the number of clusters in the longest chain of clusters starting with $C$. So the rank is always $\geq 1$, and a rank-1 cluster is maximal. The rank of a point $x$ is defined to be the rank of $C_x$. The key property of this notion is that if $xR^\bullet y$, equivalently if  $C_y$ comes strictly $R$-after $C_x$, then $y$ has smaller rank than $x$. 

An \emph{endless $R$-path} is a sequence $\{x_n:n<\omega\}$ such that $x_nRx_{n+1}$ for all $n$. Such a path \emph{starts at/from} $x_0$. The terms of the sequence need not be distinct: for instance, any reflexive point $x$ gives rise to the endless $R$-path  $xRxRxR\dots$.  In a finite frame, an endless path must eventually enter some \emph{non-degenerate} cluster $C$ and stay there, i.e.\ there is some $n$ such that $x_m\in C$ for all $m\geq n$.

Recall that $R(x)=\{y\in W:xRy\}$ is the set of $R$-successors of $x$, and that $(W',R')$ is an \emph{inner} subframe of 
$(W,R)$ if  $(W',R')$  is a subframe of  $(W,R)$ that is \emph{$R$-closed}. This means that $R'$ is the restriction of $R$ to $W'\sub W$, and  $x\in W' $ implies $R(x)\sub W'$. In this situation every $R'$-cluster is an $R$-cluster, and every $R$-cluster that intersects $W'$ is a subset of $W'$ and is an $R'$-cluster.

  \subsection{Syntax and Semantics}
  We will work initially in the language $\c L_\bo^\dit$.
Recall that we assume a set $\Var$ of propositional variables, which may be finite or infinite. Formulas are constructed from these variables by the standard Boolean connectives, the unary modality $\Box$ (with dual $\Dim$)
and the \emph{tangle} connective $\Di$ which assigns a formula  $\Di\G$ to each finite set $\G$ of formulas.
  
  Later we will want to add additional connectives, such as the universal modality $\forall$ and its dual $\exists$.
  
  We use the standard notion from section~\ref{ss:Kripke sem} of a Kripke model $\M=(W,R,h)$  on a (transitive) frame as given by a valuation function $h:\Var\to\wp W$, giving rise to a truth/satisfaction relation $\M,x\models\ph$ with $\M,x\models p$ iff $x\in h(p)$ for all $p\in\Var$ and $x\in W$.
The modality $\Dim$ is modelled by $R$ in the usual Kripkean way:
\begin{equation}\label{krip}
 \text{$\M,x\models\Dim\ph$ iff there is a $y$ with $xRy$ and $y\models \ph$.}
 \end{equation} 
 The condition for $\M,x\models\Di\G$ is that
 \begin{quote}
there exists an endless $R$-path $\{x_n:n<\omega\}$ with $x=x_0$
along which each member $\g$ of $\G$ is true infinitely often, i.e.\ $\{n<\omega:\M,x_n\models\g\}$ is infinite.
\end{quote}
 A set $\G$ of formulas is  \emph{satisfied by the cluster $C$} if  each member of $\G$ is true in $\M$ at some point of $C$.  So $\G$ fails to be satisfied by $C$ if some member of $\G$ is false at every point of $C$.  In a \emph{finite} model,
 since an endless path must eventually enter some non-degenerate cluster  and stay there,
  we get that 
 \begin{equation}\label{semants}
 \text{$x\models\Di\G$ iff there is a $y$ with $xRy$ \textbf{and} {\boldmath{$yRy$}} and $\G$ is satisfied by $C_y$ }
 \end{equation}
To put this another way, $x\models\Di\G$ iff $\G$ is satisfied by some \emph{non-degenerate} cluster following $C_x$.
 
Write $\Di\ph$ for the formula $\Di\{\ph\}$. Then $\Di\ph$ is true at $x$ iff there is an endless path starting at $x$ along which $\ph$ is true infinitely often. For finite models we have
\begin{equation*}\label{}
 \text{$x\models\Di\ph$ iff there is a $y$ with $xRy$ and $yRy$ and $y\models \ph$,}
 \end{equation*}
 i.e.\ the meaning of $\Di\ph$ is that there is a \emph{reflexive} alternative at which $\ph$ is true.
Thus for finite \emph{reflexive} models (i.e.\ S4 models) this reduces to the standard Kripkean interpretation  \eqref{krip} of $\Dim$. More strongly, it is evident that $\Di\ph\leftrightarrow\Dim\ph$ is valid in all S4 frames (and $\Di\ph\to\Dim\ph$ is valid in all K4 frames).

Write $\Dim^*\ph$ for the formula $\ph\lor\Dim\ph$, and  $\Box^*\ph$ for $\ph\land\Box\ph$. In any transitive frame, define  $R^*=R\cup\{(x,x):x\in W\}$. Then $R^*$ is the reflexive-transitive closure of $R$, and in any model on the frame we have
 \begin{equation*}\label{}
 \text{$\M,x\models\Box^*\ph$ iff for all $y$,  if $xR^*y$ then $\M,y\models \ph$.}
 \end{equation*} 
and
 \begin{equation*}\label{}
 \text{$\M,x\models\Dim^*\ph$ iff for some $y$,  $xR^*y$ and $\M,y\models \ph$.}
 \end{equation*} 
Note that if $C_x=C_y$, then $xR^*y$. For each $x$ let $R^*(x)=\{y\in W:xR^*y\}$. Then $R^*(x)=\{x\}\cup R(x)$.

\subsection{Tangle Systems and Logics}\label{ss:tangle logics}

A \emph{tangle system}  is any Hilbert system whose axioms
 include all tautologies and all instances of the schemes
\begin{description}
\item[K:]
$\Box(\ph\to\psi)\to(\Box\ph\to\Box\psi)$
\item[4:]
 $\Dim\Dim\ph\to\Dim\ph$
\item[Fix:]
$\Di\G\to  \Dim(\g\land\Di\G)$, \quad all $\g\in\G$.
\item[Ind:]
$\Box^*(\ph\to \bigwedge_{\g\in\G}\Dim(\g\land\ph))\to(\ph\to\Di\G)$.
\end{description}
and whose rules include  modus ponens and $\Box$-generalisation. The smallest tangle system will be denoted K4$t$.

A \emph{tangle logic} (or just \emph{logic} in this section) is a set $L$ of formulas 
that is a K4$t$-logic.
Any logic includes the following:

\begin{description}
\item[\ ]
\quad$\Di\ph\to\Dim\ph$
\item[4$_*$: ] 
$\Dim\Dim^*\ph\to\Dim\ph$
\item[4$_t$: ] 
$\Dim\Di\G\to\Di\G$
\end{description}
$4_t$ will be explicitly needed in our finite model property proof, in relation to a condition  called (r4).
Here is a derivation of $4_t$, in which the justification ``Bool'' means by principles of Boolean logic, ``Reg'' is the rule \emph{from $\ph\to\psi$ infer $\Dim\ph\to\Dim\psi$}, and ``Nec'' is the rule \emph{from $\ph$ infer $\Box^*\ph$.}

For each $\g\in\G$ we derive

\bigskip
\begin{tabular}{ll}
1. $\Di\G\to  \Dim(\g\land\Di\G)$   & Fix
\\
2. $ \Dim(\g\land\Di\G)\to \Dim\Di\G$  &K-theorem (Bool + Reg)
\\
3. $\Di\G\to \Dim\Di\G$  &1, 2 Bool
\\
4. $\g\land\Di\G\to\g\land \Dim\Di\G$  &3, Bool
\\
5. $\Dim(\g\land\Di\G)\to\Dim(\g\land \Dim\Di\G)$  &4, Reg
\\
6. $\Di\G\to \Dim(\g\land \Dim\Di\G)$  &1, 5 Bool
\\
7.  $\Dim\Di\G\to \Dim\Dim(\g\land \Dim\Di\G)$  &6, Reg
\\
8. $\Dim\Di\G\to\Dim(\g\land \Dim\Di\G)$ &7, \textbf{Axiom 4}, Bool
\end{tabular}

\bigskip\noindent
Since this holds for every $\g\in\G$ we can continue with

\bigskip
\begin{tabular}{ll}
\, 9. $\Dim\Di\G\to \bigwedge_{\g\in\G}\Dim(\g\land \Dim\Di\G)$ &8 for all $\g\in\G$, Bool
\\
10. $\Box^*(\Dim\Di\G\to\bigwedge_{\g\in\G}\Dim(\g\land \Dim\Di\G))$ &9, Nec
\\
11. $\Box^*(\Dim\Di\G\to\bigwedge_{\g\in\G}\Dim(\g\land \Dim\Di\G))\to (\Dim\Di\G\to\Di\G)$ & Ind with $\ph=\Dim\Di\G$
\\
12. $\Dim\Di\G\to\Di\G$ &10, 11 Bool
\end{tabular}

\subsection{Canonical Frame}\label{ss:can frame}
For a tangle logic $L$, the canonical frame is 
 $\F_L=(W_L,R_L)$, with $W_L$ the set of maximally $L$-consistent sets of formulas, and $xR_Ly$ iff $\{\Dim\ph:\ph\in y\}\sub x$ iff $\{\ph:\Box\ph\in x\}\sub y$. $R_L$ is transitive, by the K4 axiom 4.

Suppose $\F=(W,R)$ is an inner subframe of $\F_L$, i.e.\  $W$ is an $R_L$-closed subset of $W_L$,  and $R$ is the restriction of $R_L$ to $W$.

By standard canonical frame theory,  we have that for all formulas $\ph$ and all $x\in W$:
\begin{eqnarray}
\Dim\ph\in x &&\text{iff\quad for some } y\in W, \ xRy \text{ and }\ph\in y.  \label{Dican}
\\
\Dim^*\ph\in x &&\text{iff\quad for some } y\in W, \ xR^*y \text{ and }\ph\in y.  \label{Distar}
\\
\label{Boxcan}
\Box\ph\in x &&\text{iff\quad  for all } y\in W, \ xRy \text{ implies }\ph\in y.
\\
\label{Boxstar}
\Box^*\ph\in x &&\text{iff\quad for all } y\in W, \ xR^*y \text{ implies }\ph\in y.
\end{eqnarray}

We will say that a sequence $\{x_n:n<\omega\}$ in $\F$ \emph{fulfils} the formula $\Di\G$ if 
each member of $\G$ belongs to $x_n$ for infinitely many $n$. The role of the axiom Fix is to provide such sequences:

\begin{lemma}\label{one}
In $\F$, if $\Di\G\in x$ then there is an endless $R$-path starting from $x$ that fulfils $\Di\G$. Moreover, $\Di\G$ belongs to every member of this path. 
\end{lemma}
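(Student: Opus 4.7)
The plan is to build the desired path $x_0 R x_1 R x_2 R \cdots$ inductively, cycling through the members of $\G$ and using the Fix axiom at each stage to extend the path while keeping $\Di\G$ in the current world. The second ``moreover'' clause will come for free from the construction.

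First I would list $\G = \{\gamma_0,\ldots,\gamma_{k-1}\}$ in some arbitrary order, set $x_0 = x$, and observe that $\Di\G \in x_0$ by hypothesis. Inductively, suppose $x_n \in W$ has been defined with $\Di\G \in x_n$. Let $\gamma = \gamma_{n \bmod k}$. By the Fix axiom and the fact that $x_n$ is maximally $L$-consistent, $\Dim(\gamma \wedge \Di\G) \in x_n$. Applying \eqref{Dican} in the inner subframe $\F$ (which is legitimate because $W$ is $R_L$-closed, so every $R_L$-successor of $x_n$ lies in $W$ and is thus $R$-related to $x_n$), we obtain some $x_{n+1} \in W$ with $x_n R x_{n+1}$ and $\gamma \wedge \Di\G \in x_{n+1}$. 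In particular $\gamma_{n \bmod k} \in x_{n+1}$ and $\Di\G \in x_{n+1}$, so the induction continues.

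This produces an endless $R$-path $\{x_n : n < \omega\}$ starting at $x$, and by construction $\Di\G \in x_n$ for every $n$, which handles the ``moreover'' clause. For fulfilment, fix any $\gamma_i \in \G$: whenever $n \equiv i - 1 \pmod k$, the construction guarantees $\gamma_i \in x_{n+1}$, so $\gamma_i$ belongs to $x_m$ for infinitely many $m$. Hence the path fulfils $\Di\G$.

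There is no real obstacle here; the only subtlety worth highlighting is that we must be careful that the successor supplied at each stage actually lies in $W$, not merely in $W_L$, which is exactly what being an \emph{inner} subframe ensures. The Fix axiom is precisely designed to keep $\Di\G$ propagating along the chosen path, so no extra compactness or König-style argument is needed.
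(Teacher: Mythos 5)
Your proof is correct and follows essentially the same route as the paper's: cycle through the members of $\G$, use the Fix axiom to keep $\Di\G$ in each world, and use \eqref{Dican} to extract the next $R$-successor inside $W$. The observation about inner subframes is a nice explicit touch that the paper leaves implicit in the statement of \eqref{Dican}.
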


\begin{proof}
Let $\G=\{\g_1,\dots,\g_k\}$.
Put $x_0=x$. From $\Di\G\in x_0$ by axiom Fix 
we get $\Dim(\g_1\land\Di\G)\in x_0$, so by \eqref{Dican} there exists $x_1\in W$ with $x_0Rx_1$ and $\g_1,\Di\G\in x_1$. Since $\Di\G\in x_1$, by Fix again there exists $x_2\in W$ with $x_1Rx_2$ and $\g_2,\Di\G\in x_2$. Continuing in this way ad infinitum cycling through the list $\g_1,\dots,\g_k$ we generate a sequence fulfilling $\Di\G$, with  $\g_i\in x_n$ whenever $n\equiv i\mod k$, and  $\Di\G\in x_n$ for all $n<\omega$.
\end{proof}

The canonical model $\M_L$ on $\F_L$ has $\M_L,x\models \ph$ iff $\ph\in x$, \emph{provided that $\ph$ is $\Di$-free}.
But this `Truth Lemma' can fail for formulas containing the tangle connective, even though all instances of the tangle axioms belong to every member of $W_L$. For this reason we will work directly with the structure of $\F_L$ and the relation
 $\ph\in x$, rather than with truth in $\M_L$.
 
 For an example of failure of the Truth Lemma, consider the set
 $$
 \Sigma=\{p_0,q,\Box(p_{2n}\to\Dim(p_{2n+1}\land\neg q)),\Box(p_{2n+1}\to\Dim(p_{2n+2}\land q)):n<\omega\},
 $$
 where $q$ and the $p_n$'s are distinct variables.
Each finite subset of 
$\Sigma\cup\{\neg\Di\{q,\neg q\}\}$
is satisfiable in a transitive frame, and so is $L_{\axK4t}$-consistent where $L_{\axK4t}$ is the smallest logic. Explanation:
if $\G$ is a finite subset,
$\M$ a model with transitive frame, and $\M,x\models\G$, then
$\{\ph:\M,y\models\ph$ for all worlds $y$ of $\M\}$ is a logic that excludes $\neg\bigwedge\G$, so $\neg\bigwedge\G\notin L_{\axK4t}$.

Since the proof theory is finitary, it follows that $\Sigma\cup\{\neg\Di\{q,\neg q\}\}$ is $L_{\axK4t}$-consistent, so is included in some member $x$ of  $W_{L_{\axK4t}}$. Using the fact that $\Sigma\sub x$, together with \eqref{Dican} and \eqref{Boxcan}, we can construct an endless $R_{L_{\axK4t}}$-path starting from $x$ that fulfills $\{q,\neg q\}$, hence satisfies each of $q$ and $\neg q$ infinitely often in 
$\M_{L_{\axK4t}}$. Thus $\M_{L_{\axK4t}},x\models\Di\{q,\neg q\}$. But $\Di\{q,\neg q\}\notin x$, since  $\neg\Di\{q,\neg q\}\in x$ and $x$ is $L_{\axK4t}$-consistent.

\subsection{Definable Reductions}
Fix a finite set $\Phi$ of formulas closed under subformulas.
Let $\Phi^t$ be the set of all formulas in $\Phi$ of the form $\Di\G$, and
$\Phi^\Dim$ be the set of all formulas in $\Phi$ of the form $\Dim\ph$.

Let $\F=(W,R)$ be an inner subframe of $\F_L$. Then by a \emph{definable reduction of $\F$ via $\Phi$} we mean a pair $(\M_\Phi,f)$,  where $\M_\Phi=(W_\Phi,R_\Phi,h_\Phi)$ is a model on a finite transitive frame, and $f:W\to W_\Phi$
is a surjective function, such that the following hold for all $x,y\in W$:

\begin{enumerate}[(r1):]
\item 
$p\in x$ iff $f(x)\in h_\Phi(p)$, for all $p\in\Var\cap\Phi$.
\item
$f(x)=f(y)$ implies $x\cap\Phi= y\cap\Phi$.
\item
$xRy$ implies $f(x)R_\Phi f(y)$.
\item
$f(x)R_\Phi f(y)$ implies  $y\cap \Phi^t\sub x\cap\Phi^t$ and 
$\{\Dim\ph\in\Phi:\Dim^*\ph\in y\}\sub x$.
\item 
For each subset $C$ of $W_\Phi$ there is a formula $\ph$ that defines $f^{-1}(C)$ in $W$, i.e.\ $\ph\in y$ iff $f(y)\in C$.
\end{enumerate}
We will make crucial use of the following  consequence of this definition.

\begin{lemma} \label{import}
If $f(x)$ and $f(y)$ belong to the same $R_\Phi$-cluster, then $x\cap\Phi^t=y\cap\Phi^t$ and $x\cap\Phi^\Dim=y\cap\Phi^\Dim$.
\end{lemma}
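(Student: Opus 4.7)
My plan is to unpack what it means for $f(x)$ and $f(y)$ to lie in the same $R_\Phi$-cluster, and then apply conditions (r2) and (r4) of the definable reduction in a symmetric fashion. By definition of cluster, either $f(x)=f(y)$, or $f(x)R_\Phi f(y)$ and $f(y)R_\Phi f(x)$. The first case is immediate from (r2), which gives $x\cap\Phi=y\cap\Phi$ and hence the required equalities for the subsets $\Phi^t$ and $\Phi^\Dim$. So the real work is in the second case, where I plan to treat $\Phi^t$ and $\Phi^\Dim$ separately.

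For the equality $x\cap\Phi^t=y\cap\Phi^t$, the first half of (r4) applied to $f(x)R_\Phi f(y)$ yields $y\cap\Phi^t\subseteq x\cap\Phi^t$, and applied to $f(y)R_\Phi f(x)$ gives the reverse inclusion. So this part is a direct double application of (r4) and should be essentially one line.

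For the equality $x\cap\Phi^\Dim=y\cap\Phi^\Dim$, I will use the second half of (r4), which says $\{\Dim\varphi\in\Phi:\Dim^*\varphi\in y\}\subseteq x$ when $f(x)R_\Phi f(y)$. The trick here is the elementary observation that $\Dim\varphi\vdash\Dim^*\varphi$ in any tangle logic, since $\Dim^*\varphi$ abbreviates $\varphi\lor\Dim\varphi$. Thus if $\Dim\varphi\in x\cap\Phi^\Dim$, then $\Dim^*\varphi\in x$, so applying the second clause of (r4) to $f(y)R_\Phi f(x)$ gives $\Dim\varphi\in y$. The symmetric argument shows the reverse inclusion, completing the proof.

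The only mildly delicate point — and it is not really an obstacle, just something to state carefully — is that one must apply (r4) to the ``other'' direction of the cluster edge (using $f(y)R_\Phi f(x)$ to move information from $x$ to $y$, and $f(x)R_\Phi f(y)$ to move it from $y$ to $x$), and to remember to pass through $\Dim^*$ on the side where (r4) demands it. Beyond that, the argument is purely formal manipulation of conditions (r2) and (r4); none of the axiom schemes Fix, Ind, or $4_t$, nor any canonical-frame facts from (\ref{Dican})--(\ref{Boxstar}), are needed here.
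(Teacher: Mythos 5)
Your proof is correct and follows essentially the same route as the paper's: split on whether $f(x)=f(y)$ (using (r2)) or $f(x)R_\Phi f(y)R_\Phi f(x)$, apply the first clause of (r4) in both directions for $\Phi^t$, and pass through $\Dim^*\ph=\ph\lor\Dim\ph$ to invoke the second clause of (r4) for $\Phi^\Dim$. No differences worth noting.
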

\begin{proof}
If $f(x)=f(y)$, then $x\cap\Phi=y\cap\Phi$ by (r2) and so $x\cap\Phi^t=y\cap\Phi^t$ and $x\cap\Phi^\Dim=y\cap\Phi^\Dim$. But if  $f(x)\ne f(y)$, then $ f(x)R_\Phi f(y)R_\Phi f(x)$, and so 
 $y\cap \Phi^t\sub x\cap\Phi^t\sub y\cap\Phi^t$ by (r4).  Also if
 $\Dim\ph\in y\cap \Phi$ then $\Dim^*\ph=\ph\lor\Dim\ph\in y$, and so $\Dim\ph\in x$ by (r4), and likewise $\Dim\ph\in x\cap \Phi$ implies
 $\Dim\ph\in y$.
\end{proof}
Note that the second conclusion of (r4) is a concise way of expressing that both 
$$
\{\Dim\ph\in\Phi:\ph\in y\}\sub x\quad\text{and}\quad 
\{\Dim\ph\in\Phi:\Dim\ph\in y\}\sub x.
$$

Given a definable reduction  $(\M_\Phi,f)$ of $\F$,  we will  replace $R_\Phi$ by a weaker relation $R_t$, producing a new model 
$\M_t=(W_\Phi,R_t,h_\Phi)$, the \emph{untangling} of $\M_\Phi$, with the property that satisfaction  in  $\M_t$ of any formula $\ph\in\Phi$ corresponds exactly via $f$ to membership of $\ph$ in points of $\F$. In other words, $\ph\in x$ iff $\M_t,f(x)\models\ph$, a result we refer to as the \emph{Reduction Lemma}. The definition of $R_t$ will cause
each $R_\Phi$-cluster to be decomposed into a partially ordered set of smaller $R_t$-clusters.

In what follows we will write  $\ab{x}$ for $f(x)$. Then as $f$ is surjective, each member of $W_\Phi$ is equal to $\ab{x}$ for some $x\in W$. In later applications  the set $W_\Phi$ will  be a set of equivalence classes $\ab{x}$ of points $x\in W$, under a suitable equivalence relation, and $f$ will be the natural map $x\mapsto\ab{x}$.

Our first step  makes the key use of the axiom Ind:

\begin{lemma} \label{useind}
Let $\Di\G\in\Phi$. Suppose that $\Di\G\notin x$, where $x\in W$, and let $\ab{x}\in C\sub W_\Phi$. Then there is a formula $\g\in\G$ and some $y\in W$ such that $xR^*y$, $\ab{y}\in C$ and
\begin{equation} \label{final}
\text{if $yRz$ and $\ab{z}\in C$, then $\g\notin z$.}
\end{equation}
\end{lemma}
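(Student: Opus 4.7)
The plan is to use axiom Ind, in its contrapositive form, applied to the defining formula of $C$ given by (r5).

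First, I would invoke (r5) to obtain a formula $\ph_C$ that defines $f^{-1}(C)$ in $W$, so that for all $w\in W$, $\ph_C\in w$ iff $\ab{w}\in C$. Since $\ab{x}\in C$, we have $\ph_C\in x$, and by maximal consistency together with the hypothesis $\Di\G\notin x$, also $\neg\Di\G\in x$, hence $\ph_C\land\neg\Di\G\in x$.

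Next, I would read axiom Ind
$$\Box^*(\ph\to \bigwedge_{\g\in\G}\Dim(\g\land\ph))\to(\ph\to\Di\G)$$
contrapositively: taking the instance $\ph:=\ph_C$, it yields
$$(\ph_C\land\neg\Di\G)\to\Dim^*\bigl(\ph_C\land\bigvee_{\g\in\G}\neg\Dim(\g\land\ph_C)\bigr),$$
which is a theorem of any tangle logic and so belongs to $x$. Since the antecedent is in $x$, so is the consequent. Now applying \eqref{Distar} in the canonical frame, there exists $y\in W$ with $xR^*y$ such that $\ph_C\in y$ and $\bigvee_{\g\in\G}\neg\Dim(\g\land\ph_C)\in y$. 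The first conjunct gives $\ab{y}\in C$, and by maximal consistency of $y$, the disjunction picks out some particular $\g\in\G$ with $\neg\Dim(\g\land\ph_C)\in y$.

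Finally I would verify \eqref{final} for this choice of $\g$ and $y$. Suppose $yRz$ and $\ab{z}\in C$. Then $\ph_C\in z$. If we had $\g\in z$, then $\g\land\ph_C\in z$, and by \eqref{Dican} this would force $\Dim(\g\land\ph_C)\in y$, contradicting the choice of $y$. Hence $\g\notin z$, as required. The main subtlety is simply the choice of $\ph$ in the Ind instance: using $\ph_C$ rather than $\ph_C\land\neg\Di\G$ keeps the $\Dim$-conjuncts clean and is exactly what matches with the condition \eqref{final} on $z$; there is no real obstacle beyond this bookkeeping.
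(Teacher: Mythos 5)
Your proposal is correct and follows essentially the same route as the paper: both apply the instance of Ind with $\ph$ taken to be the (r5)-defining formula of $C$, conclude that $\Box^*(\ph\to\bigwedge_{\g\in\G}\Dim(\g\land\ph))\notin x$, extract $y$ with $xR^*y$ via \eqref{Boxstar} (your $\Dim^*$/\eqref{Distar} phrasing is just the dual of this), and then verify \eqref{final} by the same \eqref{Dican} argument. The only difference is presentational.
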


\begin{proof}
By (r5) there is a formula $\ph$ that defines $\{y\in W:\ab{y}\in C\}$, i.e.\ $\ph\in y$ iff $\ab{y}\in C$.
Then $\ph\in x$ and $\Di\G\notin x$, so by the axiom Ind,  
$
\Box^*(\ph\to \bigwedge_{\g\in\G}\Dim(\g\land\ph))\notin x
$.
Hence by \eqref{Boxstar} there is a $y$ with $xR^*y$ and $(\ph\to \bigwedge_{\g\in\G}\Dim(\g\land\ph))\notin y$.
Then
$\ph\in y$, so $\ab{y}\in C$, and  for some $\g\in\G$ we have $\Dim(\g\land\ph)\notin y$. Hence by \eqref{Dican}, if $yRz$ and  $\ab{z}\in C$, then $\g\land\ph\notin z$ and $\ph\in z$, so $\g\notin z$, which gives \eqref{final}.
\end{proof}

\begin{lemma} \label{extend}
Let formulas $\Di\G_1,\dots,\Di\G_k$ belong to $\Phi$ but not to $x$.  Suppose that  $\ab{x}\in C\sub W_\Phi$. Then there are formulas $\g_1\in\G_1,\dots,\g_k\in\G_k$ and some $y\in W$ such that $xR^*y$, $\ab{y}\in C$ and
\begin{equation} \label{final2}
\text{if $yRz$ and $\ab{z}\in C$, then $\{\g_1,\dots,\g_k\}\cap z=\emptyset$.}
\end{equation}
\end{lemma}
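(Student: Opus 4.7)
The plan is to prove this by induction on $k$, with Lemma \ref{useind} serving as the base case $k=1$ (the case $k=0$ is trivial, just take $y=x$). For the inductive step, I would assume the result for $k-1$ and apply the inductive hypothesis to $\Di\G_1,\dots,\Di\G_{k-1}$ starting from $x$, using the same subset $C$. This yields formulas $\g_1\in\G_1,\dots,\g_{k-1}\in\G_{k-1}$ and a point $y'\in W$ with $xR^*y'$, $\ab{y'}\in C$, and such that whenever $y'Rz$ and $\ab{z}\in C$ then $\{\g_1,\dots,\g_{k-1}\}\cap z=\emptyset$.

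The main step is then to apply Lemma \ref{useind} a second time, this time at $y'$ with the formula $\Di\G_k$. To do so, I must first verify that $\Di\G_k\notin y'$. If $y'=x$ this holds by hypothesis. Otherwise $xRy'$, and if $\Di\G_k\in y'$ were to hold, then by \eqref{Dican} we would have $\Dim\Di\G_k\in x$, and the derived theorem $4_t$ (namely $\Dim\Di\G\to\Di\G$, established in section~\ref{ss:tangle logics}) would force $\Di\G_k\in x$, contradicting the hypothesis. With $\Di\G_k\notin y'$ in hand, Lemma \ref{useind} produces $\g_k\in\G_k$ and $y\in W$ with $y'R^*y$, $\ab{y}\in C$, and the property that $yRz$ and $\ab{z}\in C$ imply $\g_k\notin z$. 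Transitivity of $R^*$ gives $xR^*y$.

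It remains to check that the witnesses $\g_1,\dots,\g_{k-1}$ from the inductive hypothesis still work at $y$, i.e.\ that any $z$ with $yRz$ and $\ab{z}\in C$ satisfies $\g_i\notin z$ for all $i<k$. If $y=y'$ this is immediate. Otherwise $y'Ry$, so $yRz$ forces $y'Rz$ by transitivity of $R$, and the inductive hypothesis applied at $y'$ gives $\g_i\notin z$ for $i<k$. Combining with the property just obtained for $\g_k$ yields \eqref{final2}.

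The only nonroutine point is the propagation of the non-membership of $\Di\G_k$ from $x$ to $y'$ along $R^*$, which is where the derived theorem $4_t$ (and, indirectly, the axiom scheme \textbf{Ind} used implicitly through the inductive call to Lemma \ref{useind}) is essential; everything else is bookkeeping about $R^*$ and the properties listed in (r1)--(r5) of the definable reduction.
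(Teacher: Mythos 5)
Your proposal is correct and is essentially the paper's own argument: the paper iterates Lemma \ref{useind} $k$ times, using scheme $4_t$ (via $\Dim^*$ and \eqref{Distar}) to propagate $\Di\G_{i+1}\notin x$ along $R^*$ to the current point, and transitivity to preserve the earlier witnesses — exactly the two points you identify. Recasting the iteration as a formal induction on $k$ is only a presentational difference.
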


\begin{proof}
If $k=0$, take $y=x$; we are done.
Now assume $k>0$.
By Lemma \ref{useind}, there exists $\g_1\in\G_1$ and $y_1\in W$ such that $xR^*y_1$, $\ab{y_1}\in C$ and
\begin{equation} \label{finaly1}
\text{if $y_1Rz$ and $\ab{z}\in C$, then $\g_1\notin z$.}
\end{equation}
Now $\Di\G_2\notin x$, so $\Dim\Di\G_2\notin x$ by scheme $4_t$.
Hence $\Dim^*\Di\G_2=\Di\G_2\lor\Dim\Di\G_2\notin x$. As
$xR^*y_1$, this implies  $\Di\G_2\notin y_1$ by \eqref{Distar}.
So by Lemma \ref{useind} again, with $y_1$ in place of $x$, there exists $\g_2\in\G_2$ and $y_2\in W$ such that $y_1R^*y_2$, 
$\ab{y_2}\in C$ and
\begin{equation} \label{finaly2}
\text{if $y_2Rz$ and $\ab{z}\in C$, then $\g_2\notin z$.}
\end{equation}
Now by transitivity of $R^*$ we have $xR^*y_2$. Also if $y_2Rz$ and $\ab{z}\in C$, then from  $y_1R^*y_2Rz$ we get $y_1Rz$, and so $\g_1\notin z$ by \eqref{finaly1}. Together with \eqref{finaly2} this shows that  $\{\g_1,\g_2\}\cap z=\emptyset$.

If $k=2$ this proves \eqref{final2} with $y=y_2$. Otherwise we repeat, applying Lemma \ref{useind} again with $y_2$ in place of $x$ and so on, eventually obtaining the desired $y$ as $y_k$.
\end{proof}

Define a formula $\ph\in\Phi$ to be \emph{realised} at a member $\ab{z}$ of $W_\Phi$ iff $\ph\in z$. 
Note that this definition does not depend on how the member is  named, for if $\ab{z}=\ab{z'}$, then $z\cap\Phi=z'\cap\Phi$ by (r2), and so $\ph\in z$ iff $\ph\in z'$.

\begin{lemma}\label{notreal}
Let   $C$ be any $R_\Phi$-cluster. Then there is some $y\in W$ with $\ab{y}\in C$, such that for any formula 
$\Di\G\in \Phi^t-y$ there is a formula in $\G$ that is not realised at any $\ab{z}\in C$ such that $yRz$.
\end{lemma}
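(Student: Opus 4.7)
The plan is to apply Lemma~\ref{extend} once, with the list of \emph{all} tangle formulas in $\Phi$ that fail throughout the cluster $C$. The only real preparatory observation needed is that, by Lemma~\ref{import}, the set $u\cap\Phi^t$ is the same for every $u\in W$ with $\ab{u}\in C$; in particular, $\Phi^t - u$ is the same for any such $u$. This is what allows us to ``pre-commit'' to a list of tangle formulas before choosing $y$.

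First, pick any $x_0\in W$ with $\ab{x_0}\in C$ (possible since $f$ is surjective). Enumerate
$$
\Phi^t - x_0=\{\Di\G_1,\ldots,\Di\G_k\},
$$
which is a finite list since $\Phi$ is finite. Apply Lemma~\ref{extend} with $x=x_0$, these $\Di\G_1,\ldots,\Di\G_k$, and the given set $C$. This yields formulas $\g_1\in\G_1,\ldots,\g_k\in\G_k$ and a point $y\in W$ with $x_0R^*y$, $\ab{y}\in C$, and
$$
\text{if $yRz$ and $\ab{z}\in C$, then }\{\g_1,\ldots,\g_k\}\cap z=\emptyset. \qquad(\star)
$$
If $k=0$, take $y=x_0$; the conclusion is vacuous.

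Now I verify that this $y$ meets the requirement. Let $\Di\G\in\Phi^t - y$ be arbitrary. Since $\ab{y}\in C$ and $\ab{x_0}\in C$, Lemma~\ref{import} gives $y\cap\Phi^t=x_0\cap\Phi^t$, so $\Phi^t - y = \Phi^t - x_0$. Hence $\Di\G=\Di\G_i$ for some $i\leq k$. I claim the witness is $\g_i\in\G$: by $(\star)$, for every $z\in W$ with $yRz$ and $\ab{z}\in C$, we have $\g_i\notin z$, i.e., $\g_i$ is not realised at $\ab{z}$. Since ``realised at $\ab{z}$'' is well-defined on $W_\Phi$ thanks to (r2), this is exactly the required conclusion.

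There is no substantial obstacle here; the lemma is essentially a packaging of Lemma~\ref{extend}. The only conceptual step is recognising that cluster-invariance of $\Phi^t$-membership (Lemma~\ref{import}) lets us fix the list of ``missing'' tangle formulas in advance based on an arbitrary representative $x_0$ of $C$, and then transfer the conclusion from $x_0$ to the $y$ produced by Lemma~\ref{extend} (which also has $\ab{y}\in C$).
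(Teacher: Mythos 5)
Your proof is correct and follows essentially the same route as the paper: fix a representative $x_0$ of $C$, enumerate $\Phi^t-x_0$, apply Lemma~\ref{extend} once, and use Lemma~\ref{import} to transfer $\Phi^t-x_0=\Phi^t-y$ across the cluster. No gaps; the checks you make (surjectivity of $f$, well-definedness of realisation via (r2), the $k=0$ case) are exactly the right ones.
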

\begin{proof}
Take any $\ab{x}\in C$, and put
 $\Phi^t-x=\{\Di\G_1,\dots,\Di\G_k\}$. By Lemma \ref{extend} there is some $y$ with $xR^*y$ and $\ab{y}\in C$, and  formulas $\g_i\in\G_i$ for $1\leq i\leq k$ such that if $yRz$ and $\ab{z}\in C$, then $\g_i\notin z$, hence $\g_i$ is not realised at $\ab{z}$.
 
Now $\ab{x}$ and $\ab{y}$ belong to the same $R_\Phi$-cluster $C$, so $y\cap \Phi^t= x\cap\Phi^t$ by Lemma \ref{import}. Hence
 $\Phi^t-y=\Phi^t-x$.
 So if $\Di\G\in \Phi^t-y$, then $\G=\G_i$ for some $i$, and then $\g_i$ is a member of $\G$ not realised at any $\ab{z}\in C$ such that $yRz$.
\end{proof}

Now for each $R_\Phi$-cluster $C$, choose and fix a
 point $y$ as given by Lemma \ref{notreal}. Call $y$  the \emph{critical point for} $C$, and put
$$
C^\circ=\{\ab{z}\in C: yRz \}.
$$ 
Lemma \ref{notreal} states that if $\Di\G\in \Phi^t-y$, then there is a formula in $\G$ that is not realised at any point of 
$C^\circ$. 

We call $C^\circ$ the \emph{nucleus} of the cluster $C$. If $yRy$ then $\ab{y}\in C^\circ$, but in general $\ab{y}$ need not belong to $C^\circ$. Indeed the nucleus could be empty. For instance, it must be empty when $C$ is a degenerate cluster. To show this, suppose that $C^\circ\ne\emptyset$. Then there is some $\ab{z}\in C$ with $yRz$, hence $\ab{y}R_\Phi\ab{z}$ by (r3), so as $\ab{y}\in C$ this shows that $C$ is \emph{non}-degenerate. Consequently, if the nucleus is non-empty then the relation $R_\Phi$ is universal on it.

  We introduce the subrelation $R_t$ of $R_\Phi$ to refine the structure of $C$ by decomposing it into the nucleus
  $C^\circ$ as an $R_t$-cluster together with a singleton \emph{degenerate} $R_t$-cluster $\{w\}$ for each $w\in C-C^\circ$. These degenerate clusters all have $C^\circ$ as an $R_t$-successor but are incomparable  with each other. So the structure replacing $C$ looks like
$$
\xymatrix{
*{\bullet} \ar[drr]^<{}  &*{\bullet} \ar[dr]^<{\textstyle\{w\}}  & {\qquad\cdots\cdots\cdots}   &*{\ \bullet^{}} \ar[dl]   \\
& &*{\xy ;<1pc,0pc>:\POS(0,0) +(0,-1.5)*+{C^\circ}*\cir<20pt>{} \endxy}  &{\hspace{-2.3cm}}
}
$$
with the  black dots being the degenerate clusters determined by the  points of $C-C^\circ$. 
Doing this to each cluster of $(W_\Phi,R_\Phi)$ produces a new transitive frame $\F_t=(W_\Phi,R_t)$ with $R_t\sub R_\Phi$.  

$R_t$ can be more formally defined on $W_\Phi$ simply  by specifying, for each $w,v\in W_\Phi$, that $wR_tv$ iff  $wR_\Phi v$ and either
\begin{itemize}
\item 
$w$ and $v$ belong to different $R_\Phi$-clusters; \enspace or
\item
$w$ and $v$ belong to the same $R_\Phi$-cluster $C$, and $v\in C^\circ$.
\end{itemize}
This ensures that each member of $C$ is $R_t$-related to every member of the nucleus of $C$. The restriction of $R_t$ to $C$ is equal to $C\times C^\circ$, so we could also define $R_t$ as the union of  the relations $C\times C^\circ$ for all $R_\Phi$-clusters $C$, plus all inter-cluster instances of $R_\Phi$.

If the nucleus is empty, then so is the relation $R_t$ on $C$, and $C$ decomposes into a set of pairwise incomparable degenerate clusters. If $C=C^\circ$, then $R_t$ is universal  on $C$, identical to the restriction of $R_\Phi$ to $C$.

\begin{lemma}[Reduction lemma]
Every formula in $\Phi$  is true in $\M_t=(W_\Phi,R_t,h_\Phi)$ precisely at the points at which it is realised, i.e.\ for all $\ph\in\Phi$ and all $x\in W$,
 \begin{equation}\label{filtlem}
\text{$\M_t,\ab{x}\models \ph$\enspace iff\enspace $\ph\in x$.}
 \end{equation}
\end{lemma}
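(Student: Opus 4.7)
The proof will be an induction on the complexity of $\ph\in\Phi$. Atomic formulas are handled directly by (r1), and the Boolean cases follow from the inductive hypothesis, using (r2) to note that membership of any $\ph\in\Phi$ in $x$ is an invariant of $|x|$. Subformula-closure of $\Phi$ ensures the inductive hypothesis is applicable throughout. The two substantive cases are $\ph=\Dim\psi$ and $\ph=\Di\G$.

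For $\ph = \Dim\psi$, suppose first that $\M_t,|x|\models\Dim\psi$, witnessed by some $v=|y|$ with $|x|R_t|y|$ and (by induction) $\psi\in y$. Since $R_t\sub R_\Phi$, we have $|x|R_\Phi|y|$, and since $\psi\in y$ implies $\Dim^*\psi\in y$, the second clause of (r4) yields $\Dim\psi\in x$. Conversely, if $\Dim\psi\in x$, then \eqref{Dican} supplies $y$ with $xRy$ and $\psi\in y$. If $|x|$ and $|y|$ lie in different $R_\Phi$-clusters, then $|x|R_t|y|$ by the construction of $R_t$, and induction finishes the case. Otherwise they share a cluster $C$, which must be non-degenerate; Lemma \ref{import} then yields $\Dim\psi\in y_C$, so by \eqref{Dican} there is some $z$ with $y_C Rz$ and $\psi\in z$. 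Either $|z|\in C$, whence $|z|\in C^\circ$ and $|x|R_t|z|$, or $|z|$ lies in a strictly later cluster, whence again $|x|R_t|z|$; in both cases induction completes the proof.

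For $\ph = \Di\G$ I invoke the finite-model characterisation \eqref{semants}. In the left-to-right direction, any witnessing non-degenerate $R_t$-cluster must, by the construction of $R_t$, coincide with the nucleus $C^\circ$ of some non-degenerate $R_\Phi$-cluster $C$. Satisfaction of $\G$ on $C^\circ$ in $\M_t$ transfers by induction to realisation of each $\g\in\G$ at some point of $C^\circ$, so the contrapositive of Lemma \ref{notreal} delivers $\Di\G\in y_C$. Since the witness $v$ lies in $C$ and $C$ is non-degenerate (hence $R_\Phi$-universal on $C$), we obtain $|x|R_\Phi|y_C|$, and (r4) then gives $\Di\G\in x$.

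The converse for tangle is the main obstacle, because we must produce a non-degenerate $R_t$-cluster witnessing $\Di\G$ in $\M_t$, and nuclei are in general strictly smaller than their $R_\Phi$-clusters. My plan is an ascent argument through the finite cluster poset of $W_\Phi$. Given $\Di\G\in x$, Lemma \ref{one} supplies an endless $R$-path $x=x_0,x_1,\ldots$ fulfilling $\Di\G$ with $\Di\G\in x_n$ for every $n$; its projection eventually stabilises in some $R_\Phi$-cluster $C_0$, and Lemma \ref{import} propagates $\Di\G$ to $y_{C_0}$. Applying Lemma \ref{one} again at $y_{C_0}$, I consider the eventual cluster $C_1$ of the new projected path. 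If $C_1=C_0$ then every post-zero term of the new path is $R$-after $y_{C_0}$ (by transitivity of $R$), so its image lies in $C_0^\circ$, and fulfilment of $\Di\G$ delivers each $\g\in\G$ at some point of $C_0^\circ$. Otherwise $C_1$ is strictly later than $C_0$, Lemma \ref{import} again propagates $\Di\G$ to $y_{C_1}$, and I iterate. Because $W_\Phi$ is finite, this strictly ascending sequence terminates at some cluster $D$ where the construction succeeds. Then $\G$ is realised throughout $D^\circ$; since $D$ is at least as late as the cluster of $|x|$ and is non-degenerate, one checks $|x|R_t d$ for every $d\in D^\circ$ directly from the definition of $R_t$, so \eqref{semants} yields $\M_t,|x|\models\Di\G$, completing the induction.
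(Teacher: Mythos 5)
Your proof is correct, but in the tangle case it takes a genuinely different route from the paper. The paper nests a secondary strong induction on the \emph{rank} of $\ab{x}$ inside the $\Di\G$ case: for truth-to-membership it splits on whether the witnessing point has strictly lower rank (invoking the rank hypothesis) or the same rank (landing in the nucleus of the cluster of $\ab{x}$ and invoking Lemma \ref{notreal}); for membership-to-truth it splits on whether the path supplied by Lemma \ref{one} stays in that cluster or escapes to one of lower rank. You dispense with the rank induction entirely. For truth-to-membership you note that every non-degenerate $R_t$-cluster is a nucleus $D^\circ$, apply the contrapositive of Lemma \ref{notreal} at the critical point of \emph{that} cluster $D$ (not necessarily the cluster of $\ab{x}$), and pull $\Di\G$ back along $R_\Phi$ via (r4) — no case split needed. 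For membership-to-truth you replace the rank recursion by an explicit finite ascent through the cluster poset, iterating Lemmas \ref{one} and \ref{import} until the projected path stabilises in its own cluster $D$; antisymmetry of the lifted order then confines the whole projected path to $D$, so the post-initial terms land in $D^\circ$ and fulfilment realises each $\g\in\G$ there. Both directions check out (in particular, the verification that $\ab{x}R_td$ for $d\in D^\circ$ does need the two subcases $D$ equal to, versus strictly later than, the cluster of $\ab{x}$ — in the former, non-emptiness of $D^\circ$ makes $D$ non-degenerate so $R_\Phi$ is universal on it and the nucleus clause of $R_t$ applies). The paper's device is more compact; your version keeps the induction purely structural on formulas and makes the provenance of the witnessing cluster explicit, at the cost of a slightly longer construction. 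Your $\Dim\psi$ case is essentially the paper's, with a harmless extra initial case split on whether the witness leaves the cluster.
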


\begin{proof}
This is by  induction on the  formation of formulas. For the base case of  a variable $p\in\Phi$, we have
$\M_t,\ab{x}\models p$ iff $\ab{x}\in h_\Phi(p)$, which holds iff $p\in x$ by (r1). The inductive cases of the Boolean connectives are standard.

 Next, take the case of a formula $\Dim\ph\in\Phi$,  under the induction hypothesis that \eqref{filtlem} holds for all $x\in W$.
 Suppose first that $\M_t,\ab{x}\models \Dim\ph$. Then there is some $y\in W$ with $\ab{x}R_t\ab{y}$ and $\M_t,\ab{y}\models \ph$, hence $\ph\in y$ by the induction hypothesis on $\ph$. Then $\Dim^*\ph\in y$. But $R_t\sub R_\Phi$, so  $\ab{x}R_\Phi\ab{y}$, implying that $\Dim\ph\in x$, as required, by  (r4).  Conversely, suppose that $\Dim\ph\in x$. Let $C$ be the $R_\Phi$-cluster of $\ab{x}$, and $y$ the critical point for $C$. Then $\Dim\ph\in y$ by Lemma \ref{import}, so there is some $z$ with $yRz$ and $\ph\in z$, hence $\M_t,\ab{z}\models \ph$ by induction hypothesis. Now if $\ab{z}\in C$, then $\ab{z}$ belongs to the nucleus of $C$ and hence $\ab{x}R_t\ab{z}$. But if $\ab{z}\notin C$, then as $\ab{y}R_\Phi\ab{z}$ by (r3), and hence $\ab{x}R_\Phi\ab{z}$, the $R_\Phi$-cluster of $\ab{z}$ is strictly $R_\Phi$-later than $C$, and again $\ab{x}R_t\ab{z}$. So in any case we have 
 $\ab{x}R_t\ab{z}$ and $\M_t,\ab{z}\models \ph$, giving $\M_t,\ab{x}\models \Dim\ph$. That completes this inductive case of $\Dim\ph$.
 
 Finally we have the most intricate case of a   formula $\Di\G\in\Phi$, under the induction hypothesis that  \eqref{filtlem} holds for every member of $\G$ for all $x\in W$. Then we have to show that for all $z\in W$, 
\begin{equation}\label{filt}
\text{
$\M_t,\ab{z}\models \Di\G$\enspace iff\enspace $\Di\G\in z$.}
\end{equation}
The proof  proceeds by strong induction on the \emph{rank} of $\ab{z}$.
Take $x\in W$ and suppose that \eqref{filt} holds for every $z$ for which the rank of $\ab{z}$ is  \emph{less than} the rank of $\ab{x}$. We show that
$\M_t,\ab{x}\models \Di\G$ iff $\Di\G\in x$. Let  $C$ be the $R_\Phi$-cluster of $\ab{x}$, and $y$  the critical point for $C$.

Assume first that $\Di\G\in x$. Then $\Di\G\in y$ by Lemma \ref{import}. By Lemma \ref{one}, there is an endless $R$-path $\{y_n:n<\omega\}$ starting from $y=y_0$ that fulfills $\Di\G$ and has $\Di\G$ belonging to each point. Then by (r3) the sequence 
$\{\ab{y_n}:n<\omega\}$ is an endless $R_\Phi$-path in $W_\Phi$ starting at $\ab{y}\in C$.

Suppose that $\ab{y_n}\in C$ for all $n$. Then for all $n>0$, since $yRy_n$ we get $\ab{y_n}\in C^\circ$. So there is the endless $R_t$-path $\pi=\ab{x}R_t\ab{y_1}R_t\ab{y_2}R_t\cdots$  starting at $\ab{x}$.
As $\{y_n:n<\omega\}$ fulfills $\Di\G$, for each $\g\in\G$ there are infinitely many $n$ for which $\g\in y_n$  and so $\M_t,\ab{y_n}\models\g$ by the induction hypothesis on members of $\G$. Thus each member of $\G$ is true  infinitely often along $\pi$, implying that $\M_t,\ab{x}\models \Di\G$.

If however there is an $n>0$ with $\ab{y_n}\notin C$, then the $R_\Phi$-cluster of $\ab{y_n}$ is strictly $R_\Phi$-later than $C$, so $\ab{x}R_t\ab{y_n}$ and $\ab{y_n}$ has smaller rank than $\ab{x}$. Since $\Di\G\in y_n$, the induction hypothesis \eqref{filt} on rank then implies that $\M_t,\ab{y_n}\models \Di\G$. So there is an endless $R_t$-path $\pi$ from $\ab{y_n}$ along which each member of $\G$ is true infinitely often. Since $\ab{x}R_t\ab{y_n}$, we can append $\ab{x}$ to the front of $\pi$ to obtain such an $R_t$-path starting from $\ab{x}$, showing that $\M_t,\ab{x}\models \Di\G$ (this last part is an argument for soundness of  $4_t$). So in both cases we get
$\M_t,\ab{x}\models \Di\G$. That proves the forward implication of \eqref{filtlem} for $\Di\G$.

For the converse implication, suppose $\M_t,\ab{x}\models \Di\G$. Since $W_\Phi$ is finite, it follows by \eqref{semants}  that there exists  a $z\in W$ with $\ab{x}R_t\ab{z}$ and $\ab{z}R_t\ab{z}$ and the $R_t$-cluster of $\ab{z}$ satisfies $\G$.
By the induction hypothesis \eqref{filtlem} on members of $\G$,
every formula in $\G$  is  realised at some point of this cluster.
Suppose first there is such a $z$ for which the rank of $\ab{z}$ is less than that of $\ab{x}$. Then  as the $R_t$-cluster of $\ab{z}$ is non-degenerate and satisfies $\G$, we have $\M_t,\ab{z}\models \Di\G$. Induction hypothesis \eqref{filt} then implies that $\Di\G\in z$. But  $\ab{x}R_\Phi\ab{z}$, as $\ab{x}R_t\ab{z}$, so by (r4) we get the required conclusion that  $\Di\G\in x$.

If however there is no such $z$ with $\ab{z}$ of lower rank  than $\ab{x}$, then the $\ab{z}$ that does exist must have the same rank as $\ab{x}$, so it belongs  to $C$. Hence as  $\ab{x}R_t\ab{z}$, the definition of $R_t$ implies that $\ab{z}\in C^\circ$.      
Thus  the $R_t$-cluster of $\ab{z}$ is $C^\circ$. Therefore every formula in $\G$  is  realised at some point of $C^\circ$, i.e.\ at some $\ab{z'}\in C$ with $yRz'$. But Lemma \ref{notreal} states that if $\Di\G\notin y$, then some member of $\G$ is not realised in $C^\circ$. Therefore we must have  $\Di\G\in y$. Then  $\Di\G\in x$ as required, by Lemma \ref{import}.
That finishes the inductive proof that $\M_t$ satisfies the Reduction Lemma. 
\end{proof}

\subsection{Adding Seriality} 

Suppose the logic $L$ contains the D-axiom $\Dim\top$. Then $R_L$  is \emph{serial}: $\forall x\exists y(xR_Ly)$.  Hence the relation $R$ of the inner subframe $\F$ is serial.
From this we can show that $R_t$ is serial. The key point is that any maximal $R_\Phi$-cluster $C$ must have a \emph{non-empty} nucleus. For, if $y$ is the critical point for $C$, then there is a $z$ with $yRz$, as $R$ is serial. But then $\ab{y}R_\Phi\ab{z}$ by (r3) and so $\ab{z}\in C$ as $C$ is maximal. Hence  $\ab{z}\in C^\circ$, making the nucleus non-empty. Now every member of $C$ is $R_t$-related to any member of $C^\circ$ so altogether this implies that $R_t$ is serial on the rank 1 cluster $C$. But any point of rank $>1$ will be $R_t$-related to points  of lower rank, and indeed to points in the nucleus of some rank 1 cluster. Since $R_t$ is reflexive on a nucleus,  this shows that $R_t$ satisfies the stronger condition that  $\forall w\exists v(wR_t vR_t v)$ --- ``every world sees a reflexive world''.

\subsection{Adding Reflexivity} 

Suppose that $L$ contains the scheme
\begin{description}
\item[T:]
$\ph\to\Dim\ph$.
\end{description}
Then it  contains
\begin{description}
\item[T$_t$: ] 
$\bigwedge\G\to\Di\G$.
\end{description}
To see this, let $\ph=\bigwedge\G$. Then
$
\ph\to \bigwedge_{\g\in\G}(\g\land\ph)
$
is a tautology, hence derivable. From that we derive
\begin{equation} \label{antind}
\Box^*(\ph\to{\textstyle \bigwedge}_{\g\in\G}\Dim(\g\land\ph))
\end{equation}
using  the instances $(\g\land\ph)\to\Dim(\g\land\ph)$ of axiom T and K-principles. But \eqref{antind} is an antecedent of axiom Ind, so we apply it  to derive  $\ph\to\Di\G$, which is T$_t$ in this case.

Axiom T ensures that the canonical frame relation $R_L$ is reflexive, and hence so is $R_\Phi$ by (r3). Thus no $R_\Phi$-cluster is degenerate.
We modify the definition of $R_t$ to make it  reflexive as well. The change occurs in the case of an $R_\Phi$-cluster $C$ having $C\ne C^\circ$. Then instead of making the singletons $\{w\}$ for $w\in C-C^\circ$ be degenerate, we make them all into \emph{non}-$R_t$-degenerate clusters by requiring that $wR_tw$. Formally this is done by adding to the definition of $wR_tv$ the third possibility that
\begin{itemize}
\item 
$w$ and $v$ belong to the same $R_\Phi$-cluster $C$, and $w=v\in C-C^\circ$.
\end{itemize}
Equivalently, the restriction of $R_t$ to $C$ is equal to $(C\times C^\circ) \cup\{(w,w):w\in C-C^\circ\}$.

The proof of the Reduction Lemma  for the resulting reflexive and transitive model $\M_t$ now requires an adjustment in one place, in its last paragraph, where $\ab{x}R_t\ab{z}\in C$. In the original proof above, this implied that the $R_t$-cluster of $\ab{z}$ is $C^\circ$. But now we have the new possibility  that $\ab{x}=\ab{z}\in C-C^\circ$. Then the $R_t$-cluster of $\ab{z}$ is $\{\ab{z}\}$, so every formula of $\F$ is realised at $\ab{z}$, implying $\bigwedge\G\in z$.
The  scheme T$_t$ now
 ensures that $\Di\G\in z$, so  by Lemma \ref{import}  we still get the required result that $\Di\G\in x$, and the Reduction Lemma still holds for this modified reflexive version of $\M_t$.

 \subsection{Finite model property over K4, KD4 and S4}\label{sec:fmp S4}
 
 Given a logic $L$ and a finite set $\Phi$ of formulas closed under subformulas, 
we can construct a definable reduction of any  inner subframe  $\F=(W,R)$ of $\F_L$    by  filtration through 
$\Phi$. An equivalence relation $\sim$ on $W$ is given by putting $x\sim y$ iff $x\cap\Phi=y\cap\Phi$. 
Then with $\ab{x}=\{y\in W:x\sim y\}$ we put  $W_\Phi=\{\ab{x}:x\in W\}.$
 
Letting $R_\lambda=\{(\ab{x},\ab{y}):xRy\}$ (the least filtration of $R$  through $\Phi$), we define
 $R_\Phi\sub W_\Phi\times W_\Phi$ to be the \emph{transitive closure} of $R_\lambda$.
Thus $wR_\Phi v$ iff there exist $w_1,\dots, w_n\in W_\Phi$, for some $n>1$, such that 
$w=w_1R_\lambda\cdots R_\lambda w_n=v$. The definition of $\M_\Phi$ is completed by putting
$h_\Phi(p)=\{\ab{x}: p\in x\}$ for $p\in\Phi$, and $h_\Phi(p)=\emptyset$ (or anything) otherwise. We call $\M_\Phi$ the
\emph{standard transitive filtration through $\Phi$}.

The surjective function 
$f:W\to W_\Phi$ is given by $f(x)=\ab{x}$.
 The conditions (r1) and (r2) for a definable reduction are then immediate, and the definability condition (r5) is standard. For (r3) observe that $xRy$ implies $\ab{x}R_\lambda\ab{y}$ and hence $\ab{x}R_\Phi\ab{y}$.
 
 (r4) takes more work, but is also standard for the case of $\Dim$, and similar for $\Di$. To prove it,
 let  $\ab{x}R_\Phi\ab{y}$. Then by definition of $R_\Phi$ as the transitive closure of $R_\lambda$, there are finitely many elements $x_1,y_1,\dots,x_n,y_n$ of $W$ (for some $n\geq 1$)
such that
$$
x\sim x_1R y_1\sim x_2R y_2\sim \cdots \sim x_nR y_n\sim y.
$$
Then $\Di\G\in y\cap\Phi^t$ implies $\Di\G\in y_n$ as $y_n\sim y$, hence  $\Dim\Di\G\in x_n$ as $x_nR y$, which implies $\Di\G\in x_n$ by the scheme $4_t$. 
If $n=1$ we then get $\Di\G\in x$  because $x\sim x_1$.
But if $n>1$, we
 repeat this argument back along the above chain of relations, leading to $\Di\G\in x_{n-1}$, \dots ,$\Di\G\in x_1$, and then $\Di\G\in x$ as required to conclude that $y\cap \Phi^t\sub x\cap\Phi^t$. 

To show that $\{\Dim\ph\in\Phi:\Dim^*\ph\in y\}\sub x$, note that if $\Dim^*\ph\in y$, then either $\ph\in y$ or $\Dim\ph\in y$.
If $\ph\in y$, then  $\ph\in y_n$ as $y_n\sim y$ and $\ph\in\Phi$, hence $\Dim\ph\in x_n$ as $x_nRy_n$. But if $\Dim\ph\in y$ then $\Dim\ph\in y_n$, hence $\Dim\Dim\ph\in x_n$, and so again $\Dim\ph\in x_n$, this time by scheme 4.
Repeating this  back along the chain leads to $\Dim\ph\in x$ as required.
 
 Thus $(\M_\Phi,f)$ as defined is a definable reduction of $\F$.
 
 \medskip
 From this we can obtain a proof that the the smallest tangle system $\axK4t$ has the finite model property  over transitive frames. If
  $L_{\axK4t}$ is its set of theorems, put $\F=\F_{L_{\axK4t}}$. If $\ph$ is a
  $\axK4t$-consistent formula then $\ph\in x$ for some point $x$ of $\F$. Let $\Phi$ be the set of  subformulas of $\ph$, and  $\M_t$  the model derived from the model $\M_\Phi$ just defined. Then $\M_t,\ab{x}\models\ph$ by the Reduction Lemma. But the finite frame $\F_t=(W_\Phi,R_t)$ is transitive, so $\axK4t$ has the finite model property over  transitive frames, i.e.\ K4 frames.
 
 If we replace K4$t$ here by the smallest tangle system KD$4t$ containing $\Dim\top$, then the frame $\F_t$ of the last paragraph is serial, so $\{\psi:\F_t\models\psi\}$ is then a logic that contains $\Dim\top$, hence includes $L_{\axK\axD4t}$. Thus KD$4t$ has the finite model property over serial transitive  (i.e.\ KD4) frames.
 
 Similarly, since $\M_t$ is reflexive when $L$ contains the scheme T, we get that the smallest tangle system S4$t$ containing T has the finite model property over reflexive transitive (i.e.\ S4) frames.

 \subsection{Universal Modality}
 
 Extend the syntax to include the universal modality $\forall$ with semantics $\M,x\models\forall\ph$ iff for all $y$, 
 $\M,y\models\ph$.  Let K4$t$.U be the smallest tangle system that 
  includes the S5 axioms and rules for $\forall$, and the scheme
 \begin{description}
\item[U:]
 $\forall\ph\to\Box\ph$,
\end{description}
equivalently $\Dim\ph\to\exists\ph$, where $\exists=\neg\forall\neg$ is the dual modality to $\forall$.
 
 Let $L$ be any K4$t$.U-logic.
 Define a relation $S_L$ on $W_L$ by: $xS_Ly$ iff $\{\ph:\forall\ph\in x\}\sub y$ iff
 $\{\exists\ph:\ph\in y\}\sub x$. Then $S_L$ is an equivalence relation with $R_L\sub S_L$. Also
 $$
 \forall\ph\in x \text{ iff\enspace  for all } y\in W_L, \ xS_Ly \text{ implies }\ph\in y.
 $$
 For any fixed $x\in W_L$, let $W^x$ be the equivalence class $S_L(x)=\{y\in W_L:xS_Ly\}$. Then for $z\in W^x$,
 
\begin{equation}  \label{allsem}
 \forall\ph\in z \text{ iff\enspace  for all } y\in W^x, \ \ph\in y.
\end{equation} 
Let $R^x$ be the restriction of $R_L$ to $W^x$. Since $R_L\sub S_L$ it follows that $\F^x=(W^x,R^x)$ is an inner subframe of 
$(W_L,R_L)$.
If $\M_\Phi$ is a definable reduction of $\F^x$, and $\M_t$ its untangling, then using \eqref{allsem} it can be shown that if 
a formula $\ph\in\Phi$ satisfies the Reduction Lemma
 \begin{equation*}
\text{
$\M_t,\ab{z}\models \ph$\enspace iff\enspace $\ph\in z$}
\end{equation*}
for all $z$ in $\M_t$, then so does $\forall\ph$. So the Reduction Lemma holds for all members of $\Phi$.

Now the standard transitive filtration can be applied to $\F^x$ to produce a definable reduction of it.
Consequently,  if $\ph$ is an
  $L$-consistent formula,  $x$ is a point of $W_L$ with  $\ph\in x$, and $\Phi$ is the set of all subformulas of $\ph$, then 
  $\M_t,\ab{x}\models \ph$ where $\M_t$ is the untangling of the standard transitive filtration of $\F^x$ through $\Phi$.
That establishes the finite model property for K4$t$.U over transitive frames.

This construction preserves seriality and reflexiveness in passing from $R_L$ to $R^x$ and then $R_t$.
The outcome  is that the finite model property continues to hold for the tangle systems  
  KD4$t$.U and S4$t$.U over the KD4 and S4 frames, respectively.

\subsection{Path Connectedness}\label{sec:path conn}

A \emph{connecting path between $w$ and $v$} in a frame $(W,R)$ is a finite sequence $w=w_0,\dots, w_n=v$, for some $n\geq 0$, such that for all $i<n$, either $w_iRw_{i+1}$ or $w_{i+1}Rw_i$. We say that such  a path has \emph{length $n$}. 
The points $w$ and $v$ of $W$ are \emph{path connected}  if there exists a connecting path between them of some finite length. Note that any point $w$ is  connected to itself by a path of length 0 (put $n=0$ and $w=w_0$). 
The relation ``$w$ and $v$ are path connected'' is an equivalence relation whose equivalence classes are the \emph{path components} of the frame. The frame is \emph{path connected} if it has a single path component, i.e.\   any two points have a connecting path between them.
This is iff the frame
is connected in the sense of section~\ref{ss:kripke frames}.

Later we will make use of the fact that a path component $P$ is $R$-closed. For if $x\in P$  and $xR y$, then $x$ and $y$ are  path connected, so $y\in P$. It follows that any $R$-cluster $C$ that intersects $P$ must be included in $P$, for if $x\in P\cap C$ and $y\in C$, then $xR^*y$ and so $y\in P$, showing that $C\sub P$.

We now wish to show that in passing from  the frame $\F_\Phi=(W_\Phi,R_\Phi)$ to its untangling $\F_t$, there is no loss of path connectivity. The two frames have the same path connectedness relation and so have the same path components.
The idea is that the relations that
are broken by the untangling only occur between elements of the same $R_\Phi$-cluster, so it suffices to show that such elements are still path connected in $\F_t$. \emph{For this we need to make the assumption that $\Phi$ contains the formula $\Dim\top$. }This is harmless as we can always add it and its subformula $\top$, preserving finiteness of $\Phi$.

\begin{lemma} \label{repair}
Let $\Dim\top\in\Phi$. If $w,w'$ are points in $W_\Phi$ with $wR_\Phi w'$ or $w'R_\Phi w$, but neither $wR_tw'$ or $w'R_tw$, then there exist a $v$ with $wR_t v$ and $w'R_t v$.
\end{lemma}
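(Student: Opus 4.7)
The strategy is to locate a common $R_t$-successor $v$ by first pinning down where $w,w'$ must sit, then producing $v$ using the assumption $\Dim\top\in\Phi$.

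First I would observe that $R_t$ and $R_\Phi$ agree on pairs lying in \emph{distinct} $R_\Phi$-clusters (only intra-cluster links are refined by the untangling). So the hypothesis that neither $wR_tw'$ nor $w'R_tw$ holds forces $w,w'$ to lie in a common $R_\Phi$-cluster $C$. Moreover, because the restriction of $R_t$ to $C$ is $C\times C^\circ$ (together with the diagonal on $C\setminus C^\circ$ in the reflexive variant), the failure of both $wR_tw'$ and $w'R_tw$ forces $w,w'\in C\setminus C^\circ$, and in the reflexive case additionally $w\neq w'$. Since $wR_\Phi w'$ with $w,w'\in C$, the cluster $C$ is non-degenerate as an $R_\Phi$-cluster.

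The key step is to show that the critical point $y$ of $C$ has an $R$-successor in $W$. By non-degeneracy, $|y|R_\Phi|y|$; since in the standard transitive filtration $R_\Phi$ is the transitive closure of $R_\lambda$, this unpacks to a chain $|y|=v_1R_\lambda\cdots R_\lambda v_n=|y|$ with $n\geq 2$, and each intermediate $v_i$ again lies in $C$. The first link $|y|R_\lambda v_2$ produces preimages $x,z\in W$ with $|x|=|y|$, $|z|=v_2$ and $xRz$, so $\Dim\top\in x$. Here the assumption $\Dim\top\in\Phi$ is essential: combined with $x\sim y$ and condition (r2), it transfers membership, giving $\Dim\top\in y$. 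Then \eqref{Dican} yields some $z\in W$ with $yRz$.

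To finish I take $v=|z|$, noting $|y|R_\Phi v$ by (r3). If $v\in C$ then by the definition of the nucleus $v\in C^\circ$, and since $C\times C^\circ\subseteq R_t$ on $C$ we get $wR_tv$ and $w'R_tv$. Otherwise $v$ lies in a cluster strictly $R_\Phi$-later than $C$, so $wR_\Phi v$ and $w'R_\Phi v$ are inter-cluster relations that the untangling preserves, and again $wR_tv$ and $w'R_tv$. The main obstacle in this plan is securing $\Dim\top\in y$: without the assumption $\Dim\top\in\Phi$, condition (r2) cannot carry $\Dim\top$ from the ``active'' preimage $x$ back to the critical point $y$, and in the K4 setting $y$ might otherwise have no $R$-successor at all, so no natural candidate $v$ would exist. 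This is exactly why the lemma builds $\Dim\top\in\Phi$ into its hypothesis.
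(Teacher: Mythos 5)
Your overall route is sound and ends exactly where the paper's proof does, but the middle step --- establishing $\Dim\top\in y$ for the critical point $y$ of $C$ --- leans on structure that the lemma's hypotheses do not provide. You unpack $|y|R_\Phi|y|$ into an $R_\lambda$-chain using the fact that ``in the standard transitive filtration $R_\Phi$ is the transitive closure of $R_\lambda$''. But Lemma \ref{repair} is stated for the untangling of an \emph{arbitrary} definable reduction $(\M_\Phi,f)$, where $R_\Phi$ is just some transitive relation subject to (r1)--(r5); there is no $R_\lambda$ to unpack, and the lemma is later invoked (via Lemma \ref{pthconpres}) for the finer $\app$-filtration of section \ref{sec:fmp Gn}, not only the standard one. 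The detour is also unnecessary: since $wR_\Phi w'$ (or $w'R_\Phi w$) and $\Dim^*\top=\top\lor\Dim\top$ belongs to every maximally consistent set, property (r4) immediately gives $\Dim\top\in u$ where $w=|u|$, and Lemma \ref{import} transfers this to the critical point $y$ of $C$ (equivalently, apply (r4) to $|y|R_\Phi|y|$, which holds because $C$ is non-degenerate). From there your argument --- \eqref{Dican} gives $yRz$, and $v=|z|$ lands either in $C^\circ$ or in a strictly $R_\Phi$-later cluster, yielding $wR_tv$ and $w'R_tv$ in either case --- is correct and matches the paper's.

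A minor structural difference worth noting: the paper first disposes of the case where $C$ is not $R_\Phi$-maximal (any point of a strictly later cluster works, with no appeal to $\Dim\top$ at all), and only runs the $\Dim\top$ argument when $C$ is maximal, where maximality forces $|z|\in C$ and hence $|z|\in C^\circ$. Your version treats both cases uniformly through the successor of the critical point, which is fine but makes the hypothesis $\Dim\top\in\Phi$ do work even where it is not needed.
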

\begin{proof}
If $wR_\Phi w'$, then since not  $wR_tw'$ we must have $w$ and $w'$ in the same cluster. The same follows if $w'R_\Phi w$, since not $w'R_tw$.

Thus there is an $R_\Phi$-cluster $C$ with $w,w'\in C$, so both $wR_\Phi w'$ and $w'R_\Phi w$. If $C$ is not $R_\Phi$-maximal, then there is an  $R_\Phi$-cluster $C'$ with $CR_\Phi C'$ and  $C\ne C'$. Taking any $v\in C'$ we then get $wR_tv$ and $w'R_tv$.

The alternative is that $C$ is  $R_\Phi$-maximal. Then we show that the nucleus $C^\circ$ is non-empty. 
Let $w=\ab{u}$ and $w'=\ab{t}$.
Since  $\ab{u}R_\Phi\ab{t}$ and $\top\in t$, and $\Dim\top\in\Phi$, property (r4) implies that 
$\Dim\top\in u$. Now if $y$ is the critical point for $C$, then $\Dim\top\in y$ by Lemma \ref{import}. Hence there is a $z$ with $yRz$. So $\ab{y}R_\Phi\ab{z}$ by (r3).  Maximality of $C$ then ensures that  $\ab{z}\in C$, so this implies that $\ab{z}\in C^\circ$.  Then by definition of $R_t$, since $w,w'\in C$ we have $wR_t\ab{z}$ and  $w'R_t\ab{z}$. 
\end{proof}

\begin{lemma} \label{pathcon}
If $\Dim\top\in\Phi$, then two members of $W_\Phi$ are path connected in $\F_\Phi$ if, and only if, they are path connected in $\F_t$. Hence the two frames have the same path components.
\end{lemma}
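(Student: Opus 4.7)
\begin{proofsk}
The plan is to prove the two directions of the equivalence separately, exploiting the inclusion $R_t\sub R_\Phi$ for one direction and Lemma~\ref{repair} for the other.

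For the easy direction, suppose $w$ and $w'$ are path connected in $\F_t$ via a sequence $w=w_0,\dots,w_n=w'$. Since $R_t\sub R_\Phi$, every step $w_iR_tw_{i+1}$ or $w_{i+1}R_tw_i$ is also a step in $R_\Phi$ in the same direction, so the very same sequence witnesses path connectedness in $\F_\Phi$.

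For the nontrivial direction, suppose $w$ and $w'$ are path connected in $\F_\Phi$ via a sequence $w=w_0,\dots,w_n=w'$, where for each $i<n$ either $w_iR_\Phi w_{i+1}$ or $w_{i+1}R_\Phi w_i$. I would show by induction on $n$ (or more simply by treating each edge in turn and concatenating the resulting paths) that $w$ and $w'$ are path connected in $\F_t$. For each $i<n$, one of two cases arises. Either the $\F_\Phi$-step is already an $\F_t$-step in one direction (i.e.\ $w_iR_tw_{i+1}$ or $w_{i+1}R_tw_i$), in which case it contributes directly to a connecting path in $\F_t$. Otherwise, neither direction holds in $R_t$, and we are exactly in the hypothesis of Lemma~\ref{repair} (which requires $\Dim\top\in\Phi$): there is some $v\in W_\Phi$ with $w_iR_tv$ and $w_{i+1}R_tv$, so the two-step detour $w_i,v,w_{i+1}$ is a connecting path in $\F_t$ of length $2$ between $w_i$ and $w_{i+1}$. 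Concatenating these local repairs yields a single connecting path from $w$ to $w'$ in $\F_t$.

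This equivalence means that the path-connectedness relations on $W_\Phi$ induced by $\F_\Phi$ and by $\F_t$ coincide, and hence their equivalence classes, the path components, are identical. The main (only) subtlety is simply invoking Lemma~\ref{repair} correctly, and noting that the hypothesis $\Dim\top\in\Phi$ is exactly what allows that lemma to apply.
\end{proofsk}
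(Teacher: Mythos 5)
Your proposal is correct and follows essentially the same route as the paper: the easy direction via the inclusion $R_t\sub R_\Phi$, and the converse by repairing each ``defective'' edge of the $\F_\Phi$-path with the common $R_t$-successor supplied by Lemma~\ref{repair} and concatenating. No gaps.
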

\begin{proof}
Since $R_t\sub R_\Phi$, a connecting path in $\F_t$ is a connecting path in $\F_\Phi$, so points that are path connected in $\F_t$ are path connected in $\F_\Phi$. 

Conversely, let $\pi=w_0,\dots,w_n$ be a connecting path in $\F_\Phi$. If, for all $i<n$, either $w_iR_tw_{i+1}$ or $w_{i+1}R_tw_{i}$, then $\pi$ is a connecting path in  $\F_t$. If not, then for each $i$ for which this fails, by Lemma \ref{repair} there exists some $v_i$ with $w_iR_t v_i$ and $w_{i+1}R_t v_i$. Insert $v_i$ between $w_i$ and $w_{i+1}$ in the path. Doing this for
all ``defective'' $i<n$, creates a new sequence that is now a connecting path in $\F_t$ between the same endpoints.
\end{proof}

Now let K4$t$.UC be the smallest extension of system  K4$t$.U in the language with $\forall$ that includes the scheme

\begin{description}
\item[C:]
$\forall(\Box^*\ph\lor\Box^*\neg\ph)\to(\forall\ph\lor\forall\neg\ph)$,
\end{description}
or equivalently
$\exists\ph\land\exists\neg\ph\to\exists(\Dim^*\ph\land\Dim^*\neg\ph)$.

Let $L$ be any K4$t$.UC-logic.
Let $\F^x$ be a point-generated subframe of $(W_L,R_L)$ as above, and $\M_\Phi$ its standard transitive filtration through 
$\Phi$. Then the frame $\F_\Phi=(W_\Phi,R_\Phi)$ of $\M_\Phi$ is path connected, as shown by 
Shehtman \cite{Sheh:everywhere99} as follows. If $P$ is the path component of $\ab{x}$ in $\M_\Phi$, take a formula $\ph$ that defines $f^{-1}(P)$ in $W^x$, i.e.\ $\ph\in y$ iff $\ab{y}\in P$, for all 
$y\in W^x$. Suppose, for the sake of contradiction, that $P\ne W_\Phi$. Then there is some $z\in W^x$ with $\ab{z}\notin P$, hence $\neg\ph\in z$. Since $\ph\in x$, this gives  $\exists\ph\land\exists\neg\ph\in x$. By the scheme C it follows that for some $y\in W^x$, 
$\Dim^*\ph\land\Dim^*\neg\ph\in y$. Hence there are $z,w\in W^x$ with $yR^*z$, $\ph\in z$, $yR^* w$ and $\neg\ph\in w$.

From this we get $\ab{y}R_\Phi{}^*\ab{z}$ and $\ab{y}R_\Phi{}^*\ab{w}$ so the sequence $\ab{z},\ab{y},\ab{w}$ is a connecting path between  $\ab{z}$ and  $\ab{w}$ in $\F_\Phi$. But $\ab{z}\in P$ as $\ph\in z$, so this implies $\ab{w}\in P$. Hence $\ph\in w$, contradicting the fact that $\neg\ph\in w$. The contradiction forces us to conclude that $P= W_\Phi$, and hence that $\F_\Phi$ is path connected.

From Lemma \ref{pathcon} it now follows that the untangling $\F_t$ of $\F_\Phi$ is also path connected when $L$  includes scheme C and $\Dim\top\in\Phi$. Hence the finite model property holds for K4$t$.UC over path-connected transitive frames.

The arguments for the preservation of seriality and reflexiveness by $\F_t$ continue to hold here. This gives us proofs of the
finite model property  for the systems, KD4$t$.UC and S4$t$.UC over path-connected KD4 and S4 frames, respectively.

Note that for the $\c L_{\bo\forall}$-fragments of these logics (i.e.\ their restrictions to the language without $\dit$), our analysis reconstructs the finite model property proof of \cite{Sheh:everywhere99} by using $\M_\Phi$ instead of $\M_t$. For, restricting to this language, if $\M_\Phi$ is a standard transitive filtration of an inner subframe of $\F_L$, then any $\dit$-free formula is true in $\M_\Phi$ precisely at the points at which it is realised (for $\c L_{\bo}$ this is a classical result first formulated and proved in \cite{sege:deci68}). Thus a finite satisfying model for a consistent 
$\c L_{\bo\forall}$-formula can be obtained as a model of this form $\M_\Phi$. Since seriality and reflexivity are preserved in passing from $R_L$ to $R_\Phi$, and $\F_\Phi$ is path connected in the presence of  axiom C, it follows that the finite model property holds for each of the systems K4.UC, KD4.UC and S4.UC in the language 
$\c L_{\bo\forall}$.

\subsection{The Schemes G$_n$}

Fix $n\geq 1$ and take $n+1$ variables $p_0,\dots,p_{n}$. For each $i\leq n$, define the formula
\begin{equation} \label{Qi}
Q_i=p_i \land  \bigwedge_{i\ne j\leq n}\neg p_j.
\end{equation}
G$_n$ is the scheme consisting of all uniform substitution instances of the formula
\begin{equation} \label{Gn}
\bigwedge_{i\leq n}\Dim Q_i\to\Dim(\bigwedge_{i\leq n}\Dim^* \neg Q_i ).
\end{equation}
This is equivalent in any  logic to
\begin{equation*}
\Box(\bigvee_{i\leq n}\Box^*  Q_i )\to \bigvee_{i\leq n}\Box\neg Q_i,
\end{equation*}
the form in which the G$_n$'s were introduced in \cite{Sheh:d90}. When $n=1$, \eqref{Gn} is
\begin{equation}  \label{G1}
\Dim(p_0\land\neg p_1)\land \Dim(p_1\land\neg p_0) \to \Dim(\Dim^*\neg(p_0\land\neg p_1)\land \Dim^*\neg(p_1\land\neg p_0)).
\end{equation}
As an axiom, \eqref{G1} is equivalent  to
\begin{equation}  \label{G1B}
\Dim p\land \Dim\neg p \to \Dim(\Dim^* p\land \Dim^*\neg p),
\end{equation}
or in dual form $\Box(\Box^* p\lor \Box^* \neg p)\to \Box p\lor \Box \neg p$, which is the form in which G$_1$ was first defined in \cite{Sheh:d90}. To derive \eqref{G1B} from \eqref{G1}, substitute $p$ for $p_0$ and $\neg p$ for $p_1$ in \eqref{G1}. Conversely, substituting $p_0\land\neg p_1$ for $p$ 
in  \eqref{G1B} leads to a derivation of  \eqref{G1}.

For the semantics of G$_n$, we use the set $R(x)=\{y\in W:xRy\}$ of $R$-successors of $x$ in a frame $(W,R)$. We can view $R(x)$ as a frame in its own right, under the restriction of $R$ to $R(x)$, and consider whether it is path connected, or how many path components it has etc.  $(W,R)$ is called \emph{locally $n$-connected}  if, for all $x\in W$, the frame 
$\F(x)=(R(x),R{\restriction} R(x))$ has at most $n$ path components.
This is equivalent to the definition in section~\ref{ss:kripke frames}.
Note that path components in $\F(x)$ are defined by connecting paths in $(W,R)$ that lie entirely within $R(x)$. 

\begin{fact}\label{fact:Gn = loc nconn}
 A K4 frame validates G$_n$ iff it is locally $n$-connected. 
\end{fact}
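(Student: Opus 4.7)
The plan is to prove both implications directly, working with the convenient dual form of $\axG_n$ in the forward direction and constructing an explicit refuting valuation in the reverse direction. Throughout, the key structural fact we exploit, already noted in the text, is that each path component of $\F(x)$ is $R$-closed in $(W,R)$: if $y\in R(x)$ and $yRz$, then by transitivity $xRz$, so $z\in R(x)$, and the single $R$-step witnesses that $y,z$ lie in the same path component of $\F(x)$.

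For the $(\Leftarrow)$ direction I would work with the equivalent $\Box$-form $\Box(\bigvee_{i\leq n}\Box^*Q_i)\to\bigvee_{i\leq n}\Box\neg Q_i$. Assume local $n$-connectedness, fix a valuation and a point $x$ satisfying the antecedent; if $R(x)=\emptyset$ then $\Box\neg Q_0$ holds vacuously at $x$, so assume $R(x)\neq\emptyset$. For each $y\in R(x)$ pick some $i(y)\leq n$ with $y\models\Box^*Q_{i(y)}$. The formulas $Q_0,\dots,Q_n$ are pairwise inconsistent (each $Q_i$ forces $p_i$ and $\neg p_j$ for $j\ne i$), so propagation of $\Box^* Q_{i(y)}$ along $R$-steps inside $R(x)$ forces $i$ to be constant on each path component of $\F(x)$. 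Local $n$-connectedness then bounds the range of $i$ by $n$, so some $j\leq n$ is missed; for that $j$, no $y\in R(x)$ satisfies $Q_j$, hence $x\models\Box\neg Q_j$.

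For the $(\Rightarrow)$ direction I would argue by contrapositive. Suppose $\F(x)$ has at least $n+1$ path components $P_0,\dots,P_m$ with $m\geq n$. Define $S_i=P_i$ for $i<n$ and let $S_n$ be the union of the remaining path components (so $S_0,\dots,S_n$ is a partition of $R(x)$ into non-empty, $R$-closed sets). Set $h(p_i)=S_i$ for $i\leq n$ and $h(p)=\emptyset$ otherwise. Since the $S_i$ are pairwise disjoint, any $y\in S_i$ satisfies $Q_i$; since each $S_i$ is $R$-closed, $y\in S_i$ also gives $y\models\Box Q_i$, hence $y\models\Box^* Q_i$, equivalently $y\not\models\Dim^*\neg Q_i$. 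As the $S_i$ exhaust $R(x)$, the consequent $\Dim(\bigwedge_{i\leq n}\Dim^*\neg Q_i)$ fails at $x$. On the other hand, each $S_i$ is non-empty, furnishing a witness $y\in R(x)$ with $y\models Q_i$, so the antecedent $\bigwedge_{i\leq n}\Dim Q_i$ holds at $x$. Thus this instance of \eqref{Gn} is refuted in the frame.

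The only real design decision, and the sole obstacle worth naming, is what to do when $\F(x)$ has strictly more than $n+1$ path components: the naive assignment of one component per $p_i$ leaves a residue on which all $\neg Q_i$ hold and would spuriously verify the consequent. Collapsing all excess components into a single $S_n$ removes this residue while preserving both $R$-closedness of each $S_i$ and non-emptiness, which is all the argument needs.
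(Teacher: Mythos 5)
Your proof is correct. Note that the paper itself does not prove this fact at all: it simply cites \cite[Theorem 3.7]{LucBry11}, so there is no in-paper argument to compare against. What you have written is a complete, self-contained frame-correspondence proof that fills that gap. Both halves check out: in the $(\Leftarrow)$ direction, the observation that $\Box^*Q_{i(y)}$ forces the index $i$ to be constant along any single $R$-step inside $R(x)$ (in either direction, by pairwise inconsistency of the $Q_i$) is exactly what makes $i$ constant on path components, and the pigeonhole on $n+1$ indices versus at most $n$ components does the rest; in the $(\Rightarrow)$ direction, the partition of $R(x)$ into $n+1$ non-empty $R$-closed blocks, obtained by lumping all surplus path components into $S_n$, correctly refutes the displayed instance of $\axG_n$, and you rightly flag that failing to absorb the surplus components would leave points satisfying every $\neg Q_i$ and hence spuriously verify the consequent. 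Two minor points worth making explicit if you write this up: (i) it suffices to refute or verify the base formula \eqref{Gn} under arbitrary valuations, since frame validity of all uniform substitution instances reduces to this in the usual way; and (ii) the two dual forms of $\axG_n$ you switch between are literal contrapositives of one another, so proving one direction with each form is harmless.
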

For a proof of this see \cite[Theorem 3.7]{LucBry11}.

\subsection{Weak Models}\label{ss:weak models}

We now assume that the set $\Var$ of variables is \emph{finite}. The adjective ``weak'' is sometimes applied to languages with finitely many variables, as well as to models for weak languages and to canonical frames built from them. Weak models may enjoy special properties. For instance,  a proof is given in \cite[Lemma 8]{Sheh:d90} that in a weak
\emph{distinguished}\footnote{A model is distinguished if for any two of its distinct points there is a formula that is true
in the model at one of the points and not the other.} model on a transitive frame, there are only finitely many maximal clusters. 
This was used to show that a weak canonical model for the $\c L_\bo$-system  K4DG$_1$ is locally 1-connected, and from this to obtain the finite model property for that system. The corresponding versions of these results for  K4DG$_n$ with $n\geq 2$ are worked out in \cite{LucBry11}.

We wish to lift these results to the language $\c L_\bo^\dit$ with tangle.  One issue is that the property of  a canonical model being distinguished depends on it satisfying the Truth Lemma: $\M_L,x\models\ph$ iff $\ph\in x$. As we have seen, this fails for tangle logics. Therefore we must continue to work directly with the relation of membership of formulas in points of $W_L$, rather than with their truth in $\M_L$. We will see that it is still possible to recover Shehtman's analysis of maximal clusters in $\F_L$, with the aid of both tangle axioms.

Another issue is that we want to work over K4G$_n$ without assuming the seriality axiom. This requires further adjustments, and care with the distinction between $R$ and $R^*$.

Let $L$ be any tangle logic in our weak language. Put
$\At=\Var\cup\{\Dim\top\}$. For each $s\sub\At$ define the formula
$$
\chi(s)=\bigwedge_{\ph\in s}\ph\land\bigwedge_{\ph\in \At\setminus s}\neg\ph.
$$
For each point $x$ of $W_L$ define $\tau(x)=x\cap\At$. Think of $\At$ as a set of ``atoms'' and $\tau(x)$ as the ``atomic type'' of 
$x$. It is evident that for any $x\in W_L$ and $s\subseteq\At$
we have
\begin{equation} \label{chit}
\chi(s)\in x \text{ iff } s=\tau(x).
\end{equation}
Writing $\chi(x)$ for the formula $\chi(\tau(x))$, we see from \eqref{chit} that $\chi(x)\in x$, and in general $\chi(y)\in x$ iff $\tau(y)=\tau(x)$.

Now fix an inner subframe $\F=(W,R)$ of $\F_L$.  If $C$ is an $R$-cluster in $\F$,
let 
$$
\delta C=\{\tau(x):x\in C\}
$$
be the set of atomic types of members of $C$. We are going to show that maximal clusters in $\F$ are determined by their atomic types. They key to this is:  

\begin{lemma} \label{indisting}
Let $C$ and $C'$ be maximal clusters in $\F$ with $\delta C=\delta C'$. Then for all formulas $\ph$, if $x\in C$ and $x'\in C'$ have $\tau(x)=\tau(x')$, then $\ph\in x$ iff $\ph\in x'$. Thus, $x=x'$.
\end{lemma}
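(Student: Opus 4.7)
\proof
The plan is to proceed by induction on $\ph$, with the atomic and Boolean cases routine and the substance concentrated in $\Box\psi$ and $\Di\G$. My first move would be to note that the choice $\At = \Var \cup \{\Dim\top\}$ forces $C$ and $C'$ to be simultaneously degenerate or non-degenerate. Indeed, a degenerate maximal cluster $\{z\}$ has no $R$-successors in $\F$ (any successor would by maximality lie in the same cluster, contradicting irreflexivity of $z$), so $\Dim\top \notin z$; whereas every point of a non-degenerate cluster is reflexive and hence contains $\Dim\top$. Since $\delta C = \delta C'$ records the $\Dim\top$-coordinate, the degeneracy status of the two clusters must match. In the degenerate case $x$ and $x'$ have no $R$-successors, so by \eqref{Boxcan} every $\Box\psi$ belongs to both vacuously; and $\Di\G$ belongs to neither, since by Fix and a K-principle $\Di\G \to \Dim\gamma$ for any $\gamma \in \G$, which would require a successor.

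Assuming now both clusters are non-degenerate, $R$ is universal on each, and maximality gives $R(x) = C$ and $R(x') = C'$. For $\Box\psi$, $\Box\psi \in x$ amounts by \eqref{Boxcan} to $\psi$ lying in every point of $C$; for any $y' \in C'$ I would use $\delta C = \delta C'$ to choose $y \in C$ with $\tau(y) = \tau(y')$, apply the inductive hypothesis to get $\psi \in y'$, and so conclude $\Box\psi \in x'$. For $\Di\G$, suppose $\Di\G \in x$. By Lemma \ref{one} there is an endless $R$-path from $x$ fulfilling $\Di\G$, and by maximality of $C$ this path is confined to $C$. So for each $\gamma \in \G$ some $y_\gamma \in C$ realises $\gamma$; I pick $y'_\gamma \in C'$ with $\tau(y'_\gamma) = \tau(y_\gamma)$, and the inductive hypothesis yields $\gamma \in y'_\gamma$.

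To close the $\Di\G$ case I would invoke the Ind axiom with the propositional surrogate $\ph := \bigvee_{s \in \delta C'}\chi(s)$, which by \eqref{chit} belongs to a point $z$ precisely when $\tau(z) \in \delta C'$. Then $\ph \in x'$; and for any $z$ with $x' R^* z$, maximality of $C'$ forces $z \in C'$, so $\ph \in z$. Since $R$ is universal on $C'$, one has $z R y'_\gamma$ for each $\gamma \in \G$, giving $\gamma \land \ph \in y'_\gamma$ and hence $\Dim(\gamma \land \ph) \in z$ by \eqref{Dican}. Consequently
\[
\Box^*\Bigl(\ph \to \bigwedge_{\gamma \in \G}\Dim(\gamma \land \ph)\Bigr) \in x',
\]
and Ind together with modus ponens on $\ph$ delivers $\Di\G \in x'$. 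The reverse direction is symmetric, and the closing claim $x = x'$ is then immediate since maximally consistent sets are determined by their members. I expect the $\Di\G$ step to be the principal obstacle: the Truth Lemma fails for tangle formulas in $\M_L$, so semantic reasoning in the canonical model is unavailable, and the decisive move is the \emph{definable} approximation $\bigvee_{s \in \delta C'}\chi(s)$ of $C'$, made available by the finiteness of $\At$ (and hence of $\delta C'$), on which the induction axiom Ind can act.
\endproof
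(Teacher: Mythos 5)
Your proof is correct and follows essentially the same route as the paper's: induction on $\ph$, with $\Dim\top\in\At$ used to transfer non-degeneracy between $C$ and $C'$, Fix/Lemma~\ref{one} used to extract realising points for each $\g\in\G$, and Ind used to recover $\Di\G\in x'$. The only real divergence is your instantiation of Ind with the definable surrogate $\bigvee_{s\in\delta C'}\chi(s)$, which works but is more than needed: since maximality of $C'$ already confines $R^*(x')$ to $C'$, the paper simply takes $\ph=\top$ in Ind and derives $\Box^*(\bigwedge_{\g\in\G}\Dim\g)\to\Di\G$, so the definability of $C'$ is not the decisive ingredient here.
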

\begin{proof}
Suppose $C$ and $C'$ are maximal with $\delta C=\delta C'$. The key property of maximality that is used is that if $x\in C$ and $xRy$, then $y\in C$, and likewise for $C'$.

The proof  proceeds by induction on the formation of $\ph$. The base case,
when $\ph\in\Var$, is immediate from the fact that  then $\ph\in x$ iff $\ph\in \tau(x)$. The induction cases for the Boolean connectives are straightforward from properties of maximally consistent sets.

Now take the case of a formula $\Dim\ph$ under the induction hypothesis that the result holds for $\ph$, i.e.\ $\ph\in x$ iff $\ph\in x'$ for any $x\in C$ and $x'\in C'$ such that $\tau(x)=\tau(x')$. Take such $x$ and $x'$, and assume $\Dim\ph\in x$. Then $\ph\in y$ for some $y$ such that $xRy$. Then $y\in C$ as $C$ is maximal. Hence $\tau(y)\in \delta C=\delta C'$, so 
$\tau(y)=\tau(y')$ for some $y'\in C'$. Therefore $\ph\in y'$ by the induction hypothesis on $\ph$. But $\Dim\top\in x$ (as $xRy$), so $\Dim\top\in \tau(x)=\tau(x')$. This gives $\Dim\top\in x'$ which ensures that $x'Rz$ for some $z$, with $z\in C'$ as $C'$ is maximal, hence $C'$ is a non-degenerate cluster.\footnote{That is the reason for including $\Dim\top$ in $\At$.}  It follows that  $x'Ry'$, so $\Dim\ph\in x'$ as required. Likewise $\Dim\ph\in x'$ implies $\Dim\ph\in x$, and  the Lemma holds for $\Dim\ph$.

Finally we have the case of a formula $\Di\G$ under the induction hypothesis that the result holds for every $\g\in\G$.  
Suppose  $x\in C$ and $\tau(x)=\tau(x')$ for some $x'\in C'$. Let $\Di\G\in x$. Then by axiom Fix, for each $\g\in\G$ we have 
$ \Dim(\g\land\Di\G)\in x$, implying that $\Dim\g\in x$. Then applying to $\Dim\g$ the analysis of $\Dim\ph$ in the previous paragraph, we conclude that $C'$ is non-degenerate and there is  some $y_\g\in C'$ with  $\g\in y_\g$. Now if $x'R^*z$, then $z\in C'$ so for each $\g\in\G$ we have $zR y_\g$, implying that $\Dim\g\in z$.
This proves that 
$\Box^*(\bigwedge_{\g\in\G}\Dim\g)\in x'$.
But putting $\ph=\top$ in axiom Ind shows that the formula
$$
\Box^*(\top\to \bigwedge_{\g\in\G}\Dim(\g\land\top))\to(\top\to\Di\G)
$$
is an $L$-theorem, From this we can derive that  $\Box^*(\bigwedge_{\g\in\G}\Dim\g)\to\Di\G$ is an $L$-theorem, and hence belongs to $x'$. Therefore $\Di\G\in x'$ as required. Likewise $\Di\G\in x'$ implies $\Di\G\in x$, and so the Lemma holds for $\Di\G$.
\end{proof}

\begin{corollary} \label{equalC}
If $C$ and $C'$ are maximal clusters in $\F$ with $\delta C=\delta C'$, then $C=C'$.
\end{corollary}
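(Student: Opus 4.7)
The corollary is an immediate consequence of Lemma \ref{indisting}, essentially just repackaging its last clause ``thus, $x = x'$''. My plan is the following short argument.

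Suppose $C$ and $C'$ are maximal clusters in $\F$ with $\delta C = \delta C'$. To show $C \subseteq C'$, pick any $x \in C$. Then $\tau(x) \in \delta C = \delta C'$, so there exists $x' \in C'$ with $\tau(x') = \tau(x)$. By Lemma \ref{indisting} applied to this pair, $\varphi \in x$ iff $\varphi \in x'$ for every formula $\varphi$. Since $x$ and $x'$ are maximally $L$-consistent sets and they contain the same formulas, they are equal, so $x = x' \in C'$. This gives $C \subseteq C'$, and swapping the roles of $C$ and $C'$ yields $C' \subseteq C$ by the symmetric argument.

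There is no obstacle here — the whole work has already been done in Lemma \ref{indisting}, whose hypotheses match the corollary's exactly, and whose conclusion already asserts $x = x'$ once the formula-membership equivalence is established. The corollary just globalises that pointwise statement to an equality of clusters via a straightforward double-inclusion argument.
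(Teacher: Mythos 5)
Your proof is correct and is essentially identical to the paper's own argument: both pick $x\in C$, use $\delta C=\delta C'$ to find $x'\in C'$ of the same atomic type, invoke Lemma \ref{indisting} to conclude $x=x'\in C'$, and finish by symmetry. No issues.
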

\begin{proof}
If $x\in C$, then $\tau(x)\in\delta C=\delta C'$, so there exists $x'\in C'$ with $\tau(x)=\tau(x')$. Lemma \ref{indisting} then implies that 
$x=x'\in C'$, showing $C\sub C'$.  Likewise $C'\sub C$.
\end{proof}

\begin{corollary} \label{finmaxcl}
The set $M$ of all maximal clusters of $\F$ is finite.
\end{corollary}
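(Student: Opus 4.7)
The plan is very short, since almost all the work has been done in Lemma \ref{indisting} and Corollary \ref{equalC}. The map $C \mapsto \delta C$ sends each maximal cluster of $\F$ to a subset of $\wp(\At)$. By Corollary \ref{equalC}, this map is injective on $M$, so to finish it suffices to show that the codomain is finite.

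First, I observe that since we are working in the weak language, $\Var$ is finite by assumption, and therefore $\At = \Var \cup \{\Dim\top\}$ is finite. Consequently $\wp(\At)$ is finite, and so $\wp(\wp(\At))$ is finite. Each $\delta C$ is an element of $\wp(\wp(\At))$ (it is a set of atomic types $\tau(x) \subseteq \At$), hence there are only finitely many possible values of $\delta C$. Composing with the injection from Corollary \ref{equalC} gives $|M| \leq |\wp(\wp(\At))| = 2^{2^{|\At|}} < \omega$.

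There is no real obstacle here; the content of the result lies entirely in the preceding lemma and corollary, which use the tangle axioms \textbf{Fix} and \textbf{Ind} (together with maximality of the clusters) to pin down the formulas true at a point from its atomic type $\tau(x)$. Once that indistinguishability is in hand, finiteness of $M$ is an immediate counting argument from finiteness of $\Var$.
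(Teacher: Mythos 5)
Your proof is correct and is essentially identical to the paper's: both use the injectivity of $C\mapsto\delta C$ on maximal clusters (Corollary \ref{equalC}) together with the finiteness of $\wp\wp\At$, yielding the bound $2^{2^{|\At|}}=2^{2^{n+1}}$ where $n=|\Var|$.
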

\begin{proof}
The map $C\mapsto\delta C$ is an injection of $M$ into the double power set $\wp\wp\At$ of the finite set $\At$. This gives an upper bound of $2^{2^{n+1}}$ on the number of maximal clusters, where $n$ is the size of $\Var$.
\end{proof}

Given subsets $X,Y$ of $W$ with $X\sub Y$, we say that $X$ is \emph{definable within $Y$ in $\F$} if there is a formula $\ph$ such that for all $y\in Y$, $y\in X$ iff $\ph\in y$.
We now work towards showing that within each inner subframe $R(x)$ in $\F$, each path component is definable. For each cluster $C$, define the formula
$$
\a(C)=  \bigwedge_{s\in \delta C}\Dim^*\chi(s)\land\bigwedge_{s\in \wp\At\setminus \delta C}\neg\Dim^*\chi(s).
$$
The next result shows that a maximal cluster is definable within the set of all maximal elements of $\F$.

\begin{lemma} \label{maxdef}
If  $C$ is a maximal cluster and $x$ is any maximal element of $\F$, then $x\in C$ iff $\a(C)\in x$.
\end{lemma}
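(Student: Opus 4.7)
My plan is to prove both implications by exploiting the fact that when $x$ is maximal, every $R^*$-successor of $x$ stays inside the cluster $C_x$, which is itself maximal. The formula $\a(C)$ is designed precisely to read off the set $\delta C$ of atomic types appearing in $C$, so the proof reduces to showing that, for maximal $x$, the $\Dim^*\chi(s)$-conjuncts of $\a(C)$ characterise the types realised in $C_x$. Combined with Corollary~\ref{equalC}, which says a maximal cluster is determined by its set of atomic types, this will give the result.

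For the forward direction, suppose $x\in C$. For each $s\in\delta C$, pick $y\in C$ with $\tau(y)=s$; then $xR^*y$ (since $x,y$ lie in a common cluster), and $\chi(s)=\chi(y)\in y$ by \eqref{chit}, so $\Dim^*\chi(s)\in x$ by \eqref{Distar}. For each $s\in\wp\At\setminus\delta C$, I must rule out $\Dim^*\chi(s)\in x$. If it did hold, \eqref{Distar} would supply $y$ with $xR^*y$ and $\chi(s)\in y$, i.e.\ $\tau(y)=s$. But maximality of $C=C_x$ forces any $R^*$-successor of $x$ to remain in $C$ (this is where maximality is used), so $s=\tau(y)\in\delta C$, a contradiction. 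Hence $\neg\Dim^*\chi(s)\in x$, and both big conjuncts of $\a(C)$ belong to $x$.

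For the converse, assume $\a(C)\in x$ with $x$ maximal, and let $C'=C_x$, which is also a maximal cluster. The plan is to show $\delta C=\delta C'$ and then invoke Corollary~\ref{equalC}. To get $\delta C'\subseteq\delta C$, take any $y\in C'$ and note $xR^*y$ and $\chi(\tau(y))\in y$, so $\Dim^*\chi(\tau(y))\in x$; the second conjunct of $\a(C)$ then forbids $\tau(y)\notin\delta C$, giving $\tau(y)\in\delta C$. For $\delta C\subseteq\delta C'$, take $s\in\delta C$; the first conjunct of $\a(C)$ yields $\Dim^*\chi(s)\in x$, so by \eqref{Distar} there is $y$ with $xR^*y$ and $\chi(s)\in y$, forcing $\tau(y)=s$, and, again by maximality of $C'$, $y\in C'$, whence $s\in\delta C'$. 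Thus $\delta C=\delta C'$, and Corollary~\ref{equalC} gives $C=C'$, so $x\in C$.

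The only mildly delicate point is the repeated appeal to maximality to confine $R^*$-successors of $x$ inside the cluster $C_x$; everything else is bookkeeping with \eqref{chit}, \eqref{Distar}, and the shape of $\a(C)$. No tangle axioms are needed here beyond what was already absorbed into Corollary~\ref{equalC}.
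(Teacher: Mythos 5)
Your proof is correct and follows essentially the same route as the paper's: the forward direction uses maximality to confine $R^*$-successors of $x$ to $C$, and the converse establishes $\delta C=\delta C'$ for $C'=C_x$ and invokes Corollary~\ref{equalC}. The only differences are cosmetic (order of the two inclusions, contrapositive versus direct phrasing).
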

\begin{proof}
Let $x\in C$. If $s\in\delta C$, then $s=\tau(y)$ for some $y$ such that $y\in C$, hence $xR^*y$, and $\chi(s)=\chi(y)\in y$, showing that $\Dim^*\chi(s)\in x$. The converse of this also holds: if $\Dim^*\chi(s)\in x$, then for some $y$, $xR^* y$ and $\chi(s)\in y$. Hence $y\in C$ by maximality of $C$, and $s=\tau(y)$ by \eqref{chit}, so $s\in\delta C$.
Contrapositively then, if $s\notin \delta C$, then $\Dim^*\chi(s)\notin x$, so $\neg\Dim^*\chi(s)\in x$. 
Altogether this shows that all conjuncts of $\a(C)$ are in $x$, so $\a(C)\in x$.

In the opposite direction, suppose $\a(C)\in x$. Let $C'$ be the cluster of $x$. Then we want $C=C'$ to conclude that $x\in C$. Since $x$ is maximal, i.e.\ $C'$ is maximal, it is enough by Corollary \ref{equalC} to show that $\delta C=\delta C'$.

Now if $s\in\delta C$, then $s=\tau(y)$ for some $y\in C$. But $\Dim^*\chi(s)$ is a conjunct of $\a(C)\in x$, so $\Dim^*\chi(s)\in x$.
Hence there exists $z$ with $xR^*z$ and $\chi(s)\in z$. Then $z\in C'$ by maximality of $C'$, and  by \eqref{chit} $s=\chi(z)\in \delta C'$.

Conversely, if $s\in\delta C'$, with  $s=\tau(y)$ for some $y\in C'$, then $xR^* y$ as $x\in C'$, and so 
$\Dim^*\chi(s)\in x$ as $\chi(s)=\chi(y)\in y$. Hence $\neg\Dim^*\chi(s)\notin x$.
But then we must have $s\in\delta C$, for otherwise $\neg\Dim^*\chi(s)$ would be a conjunction of $\a(C)$ and so would belong to $x$.
\end{proof}

It is shown in \cite{Sheh:d90} that any transitive canonical frame (weak or not) has the \emph{Zorn property}:
\begin{center}
$\forall x\,\exists y (xR^*y$ and  $y$ is $R$-maximal).
\end{center}
Note the use of $R^*$: the statement is that either $x$ is $R$-maximal, or it has an $R$-maximal successor. 
The essence of the proof is that the relation
$\{(x,y): xR^\bullet y \text{ or }x= y\}$
is a partial ordering for which every chain has an upper bound, so by Zorn's Lemma $R(x)$ has a maximal element provided that it is non-empty.

The Zorn property is preserved under inner substructures, so it holds for our frame $\F$. One interesting consequence is:

\begin{lemma} \label{finpthcmp}
For each $x\in W$, the frame $\F(x)=(R(x),R{\restriction} R(x))$ has  finitely many path components,
as does $\F$ itself.
\end{lemma}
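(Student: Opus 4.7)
My plan is to use the Zorn property together with Corollary \ref{finmaxcl} (finiteness of the set of maximal clusters of $\F$). The key observation is that each path component must contain an $R$-maximal element, and each maximal cluster lies inside a single path component, so the number of path components is bounded by the number of maximal clusters.

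\textbf{First step.} I would verify that $\F(x)=(R(x),R{\restriction}R(x))$ is an inner subframe of $\F$: if $y\in R(x)$ and $yRz$, then $xRz$ by transitivity, so $z\in R(x)$. This means $R$-clusters and $R$-maximality on elements of $R(x)$ coincide with their $R{\restriction}R(x)$-counterparts, and so the maximal clusters of $\F(x)$ are precisely those maximal clusters $C$ of $\F$ that happen to satisfy $C\sub R(x)$. By Corollary \ref{finmaxcl}, there are only finitely many such clusters (and, trivially, only finitely many maximal clusters of $\F$ itself).

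\textbf{Second step.} I would recall from the discussion preceding the lemma that the Zorn property passes to inner subframes, so both $\F$ and $\F(x)$ satisfy it. I would also recall from section~\ref{sec:path conn} that any path component $P$ of a frame is $R$-closed. Consequently, for any $y\in P$, a witness $y'$ with $yR^*y'$ and $y'$ $R$-maximal (given by Zorn) must still lie in $P$: either $y=y'\in P$, or $yRy'$ puts $y'$ in $P$ by $R$-closure. So every path component of $\F$ (respectively, of $\F(x)$) contains at least one element of some maximal cluster.

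\textbf{Third step.} A maximal cluster $C$ is itself path connected: any two points of $C$ are either equal or mutually $R$-related, giving a connecting path of length at most~$1$. Hence $C$ lies entirely within one path component. This sets up an assignment from path components to maximal clusters (each component picks one of the maximal clusters it contains), and this assignment is injective because the component is determined by any maximal cluster inside it. Thus the number of path components of $\F$ is at most the finite number of maximal clusters of $\F$, and similarly the number of path components of $\F(x)$ is bounded by the finite number of maximal clusters of $\F$ contained in $R(x)$.

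There is no real obstacle here; all ingredients are already in hand. The one place to be slightly careful is the edge case $R(x)=\emptyset$, where $\F(x)$ is undefined as a frame but the conclusion holds vacuously, and the verification that $R{\restriction}R(x)$-maximality coincides with $R$-maximality for elements of $R(x)$ (which uses transitivity to get the inner-subframe property). Once these are in place the proof is just a counting argument using Zorn and Corollary~\ref{finmaxcl}.
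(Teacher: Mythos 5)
Your proposal is correct and follows essentially the same route as the paper: use the Zorn property to show every path component contains a maximal cluster, note that distinct components cannot share a maximal cluster, and conclude via Corollary \ref{finmaxcl}. The paper likewise observes that the argument applies to both $\F$ and $\F(x)$ because clusters and maximality in $\F(x)$ agree with those in $\F$, so your extra bookkeeping in the first step just makes explicit what the paper states in one sentence.
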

\begin{proof}
 The following argument works for both $\F$ and $\F(x)$, noting that the $R{\restriction} R(x)$-cluster of an element of $\F(x)$ is the same as its $R$-cluster in $\F$, and that all maximal clusters of $\F(x)$ are maximal in $\F$.

Let $P$ be a path component and $y\in P$. By the Zorn property there is an $R$-maximal $z$ with $yR^* z$. Then  $z\in P$ as $P$ is $R^*$-closed. So  the $R$-cluster of $z$ is a subset of $P$. Since this cluster is maximal, that proves that every path component contains a maximal cluster.

Now distinct path components are disjoint and so cannot contain the same maximal cluster. Since there are finitely many maximal clusters (Corollary \ref{finmaxcl}), there can only be finitely many path components.
\end{proof}

\begin{lemma}  \label{defCRx}
Let $C$ be a maximal cluster in $\F$. Then for all $x\in W$:
\begin{enumerate}[\rm(1)]
\item 
$C\sub R(x)$ iff\/ $\Dim\Box^*\a(C)\in x$.
\item
$C\sub R^*(x)$ iff\/ $\Dim^*\Box^*\a(C)\in x$.
\end{enumerate}
\end{lemma}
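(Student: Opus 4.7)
My plan is to combine Lemma \ref{maxdef} with the Zorn property, exploiting the fact that maximality of $C$ traps every $R^*$-successor of a point in $C$ inside $C$ itself. The key preliminary observation, which drives both forward directions, is that every $y\in C$ satisfies $\Box^*\a(C)\in y$. Indeed, if $yR^*z$ then maximality of $C$ forces $z\in C$, hence $z$ is $R$-maximal, and Lemma \ref{maxdef} delivers $\a(C)\in z$; then \eqref{Boxstar} gives $\Box^*\a(C)\in y$.

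With this in hand, the forward direction of (1) reduces to picking any $y\in C\sub R(x)$ and combining $xRy$ with $\Box^*\a(C)\in y$ via \eqref{Dican} to conclude $\Dim\Box^*\a(C)\in x$. The forward direction of (2) is entirely parallel, using $xR^*y$ (guaranteed by $C\sub R^*(x)$) together with \eqref{Distar} in place of \eqref{Dican}.

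For the converse of (1), I start from $\Dim\Box^*\a(C)\in x$, extract $y$ with $xRy$ and $\Box^*\a(C)\in y$ via \eqref{Dican}, then invoke the Zorn property on $\F$ to pick an $R$-maximal $w$ with $yR^*w$. Now \eqref{Boxstar} yields $\a(C)\in w$, and Lemma \ref{maxdef} therefore places $w\in C$. The chain $xRy\,R^*w$ collapses to $xRw$ by transitivity of $R$ (i.e.\ $R\circ R^*=R$ when $R$ is transitive). Finally, for any $z\in C$ either $z=w$ (giving $xRz$ directly) or $wRz$ follows from the cluster structure of $C$, and transitivity supplies $xRz$; thus $C\sub R(x)$. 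The converse of (2) is entirely analogous, starting from \eqref{Distar} and using transitivity of $R^*$ in the chain $xR^*y\,R^*w$ to conclude $xR^*w$.

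The argument is uniform and presents no serious obstacle; the mildly fiddly steps are the two transitivity compositions in the converse directions, and the cluster-structure reduction $C\sub\{w\}\cup R(w)$, both of which are immediate from transitivity of $R$ and the definition of an $R$-cluster. The essential conceptual ingredient is the observation in the first paragraph, which is what makes $\Box^*\a(C)$ exactly the right formula to witness the inclusions $C\sub R(x)$ and $C\sub R^*(x)$.
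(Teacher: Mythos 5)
Your proof is correct and follows essentially the same route as the paper's: the forward directions use Lemma \ref{maxdef} to place $\Box^*\a(C)$ at every point of the maximal cluster, and the converses use the Zorn property plus Lemma \ref{maxdef} to land a maximal successor inside $C$, finishing by transitivity. The only cosmetic difference is that you spell out the final cluster-to-cluster step directly where the paper appeals to $R^*$-closedness of $R(x)$; the content is identical.
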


\begin{proof}
For (1), first let $C\sub R(x)$. Take any $y\in C$. Then if $yR^*z$ we have $z\in C$ as $C$ is maximal, therefore $\a(C)\in z$ by Lemma \ref{maxdef}. Thus $\Box^*\a(C)\in y$. But $y\in R(x)$, so then $\Dim\Box^*\a(C)\in x$.

Conversely, if $\Dim\Box^*\a(C)\in x$ then for some $y$, $xRy$ and $\Box^*\a(C)\in y$. By the Zorn property, take a maximal $z$ with $yR^*z$. Then $\a(C)\in z$, so $z\in C$ by Lemma \ref{maxdef}. From $xRyR^*z$ we get $xRz$, so $z\in R(x)\cap C$.
Since $R(x)$ is $R^*$-closed, this is enough to force $C\sub R(x)$.

The proof of (2) is similar to (1), replacing $R$ by $R^*$ where required.
\end{proof}

For a given $x\in W$, let $P$ be a path component of the frame $\c F(x)=(R(x),R{\restriction} R(x))$. Let $M(P)$ be the set of all  maximal $R$-clusters $C$ that have  $C\sub P$. Then $M(P)\sub M$, where $M$ is the set of all maximal clusters of $\F$, so $M(P)$ is finite by Corollary \ref{finmaxcl}. Define the formula
$$
\a(P)=\bigvee\{\Dim^*\Box^*\a(C):C\in M(P)\}.
$$
Then $\a(P)$ defines $P$ within $R(x)$:

\begin{lemma} \label{defP}
For all $y\in R(x)$,  $y\in P$ iff\/ $\a(P)\in y$.
\end{lemma}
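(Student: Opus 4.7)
The plan is to prove both directions by appealing to Lemma \ref{defCRx}(2), which tells us that for any maximal cluster $C$ and any $w\in W$, we have $C\sub R^*(w)$ iff $\Dim^*\Box^*\a(C)\in w$. The formula $\a(P)$ is a disjunction of such $\Dim^*\Box^*\a(C)$ over $C\in M(P)$, so membership of $\a(P)$ in $y$ corresponds exactly to the existence of some $C\in M(P)$ with $C\sub R^*(y)$.

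For the forward direction, let $y\in P$. By the Zorn property applied in $\F$, pick a maximal element $z$ with $yR^*z$. Since $P$ is $R^*$-closed (as path components are $R$-closed, and $R^*$-closure follows), we have $z\in P$. Let $C$ be the $R$-cluster of $z$; this is a maximal cluster because $z$ is maximal, and $C\sub P$ as every $R$-cluster intersecting a path component is contained in it, so $C\in M(P)$. To apply Lemma \ref{defCRx}(2) to $y$, I check $C\sub R^*(y)$: any $w\in C$ satisfies $yR^*zR^*w$ (using that $R$ is universal on a non-degenerate cluster, or $w=z$ in the degenerate case), so $yR^*w$. Hence $\Dim^*\Box^*\a(C)\in y$, and thus $\a(P)\in y$.

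For the converse, suppose $y\in R(x)$ and $\a(P)\in y$. Pick a disjunct $\Dim^*\Box^*\a(C)\in y$ with $C\in M(P)$. By Lemma \ref{defCRx}(2) applied with $x$ replaced by $y$, we get $C\sub R^*(y)$, so there exists $z\in C$ with $yR^*z$. Then $z\in P$, since $C\sub P$. If $y=z$, we are done. Otherwise $yRz$, and since both $y,z\in R(x)$, the two-term sequence $y,z$ is a connecting path in $\c F(x)=(R(x),R{\restriction}R(x))$. Thus $y$ is path-connected to $z$ in $\c F(x)$, placing $y$ in the same path component $P$.

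No real obstacle is expected; the only delicate point is keeping straight that Lemma \ref{defCRx}(2) is a uniform statement over all worlds, so it may be instantiated at $y\in R(x)\sub W$, and that the connecting path used in the converse direction must lie inside $R(x)$, which is immediate here because the path has length at most one and both endpoints are in $R(x)$ by hypothesis.
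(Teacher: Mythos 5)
Your proof is correct and takes essentially the same route as the paper: Zorn's property to find a maximal cluster $C\in M(P)$ above $y$ in the forward direction, and Lemma \ref{defCRx}(2) plus path-connectedness of $y$ to a point of $C\sub P$ in the converse. The only cosmetic difference is that in the forward direction you invoke the ``iff'' of Lemma \ref{defCRx}(2) after checking $C\sub R^*(y)$, whereas the paper reaches $\Dim^*\Box^*\a(C)\in y$ directly via Lemma \ref{maxdef}; these amount to the same computation.
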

\begin{proof}
Let $y\in R(x)$.  If $y\in P$, take an $R$-maximal $z$ with $yR^*z$, by the Zorn property. Then $z\in R(x)$, and $z$ is path connected to $y\in P$, so $z\in P$. The  cluster $C_z$ of $z$ is then included in $P$ (if $w\in C_z$ then $zR^*w$ so $w\in P$), and $C_z$ is maximal, so $C_z\in M(P)$. The maximality of $C_z$ together with Lemma \ref{maxdef} then ensure that $\Box^*\a(C_z)\in z$. Hence  $\Dim^*\Box^*\a(C_z)\in y$. But $\Dim^*\Box^*\a(C_z)$ is a disjunct of $\a(P)$, so $\a(P)\in y$.

Conversely, if $\a(P)\in y$, then  $\Dim^*\Box^*\a(C)\in y$ for some $C\in M(P)$. By Lemma \ref{defCRx}(2), $C\sub R^*(y)$. Taking any $z\in C$, since also $C\sub P$ we have $yR^*z\in P$, hence $y\in P$.
\end{proof}

\begin{theorem}\label{canlnc}
Suppose that $L$ includes the scheme $\axG_n$.
Then every inner subframe $\F$ of $\F_L$ is locally $n$-connected.
\end{theorem}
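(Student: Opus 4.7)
The plan is to argue by contradiction: suppose that for some $x \in W$ the subframe $\F(x)$ has at least $n+1$ path components. By Lemma \ref{finpthcmp} there are only finitely many, say $P_0, \ldots, P_m$ with $m \geq n$, and by Lemma \ref{defP} each $P_i$ is defined within $R(x)$ by a formula $\a(P_i)$. The key idea is to substitute into $\axG_n$ so that each $Q_i$ isolates one of the first $n$ components, with $Q_n$ absorbing the overflow components $P_n, \ldots, P_m$.

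Concretely, I would substitute $p_i := \a(P_i)$ for each $i < n$ and $p_n := \bigvee_{n \leq i \leq m} \a(P_i)$. Since the $P_i$ are pairwise disjoint and partition $R(x)$, at each $z \in R(x)$ exactly one of $p_0, \ldots, p_n$ is realised --- namely $p_k$ if $z \in P_k$ with $k < n$, and $p_n$ if $z \in P_k$ with $k \geq n$ --- so exactly one of the corresponding $Q_i$'s is realised at $z$. Picking representatives $z_i \in P_i$ for $i < n$ and $z_n \in P_n$, one obtains $Q_i \in z_i$ and $xRz_i$, whence \eqref{Dican} yields $\Dim Q_i \in x$ for each $i \leq n$. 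The antecedent of the substituted instance of $\axG_n$ therefore lies in $x$, so its consequent does too, producing some $y \in W$ with $xRy$ and $\bigwedge_{i \leq n} \Dim^* \neg Q_i \in y$.

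Since $R$ is transitive, $R(x)$ is $R$-closed, so $y \in R(x)$, and $y$ lies in some path component $P_k$ of $\F(x)$. Set $i^* := k$ if $k < n$, else $i^* := n$; our construction guarantees that $Q_{i^*}$ is realised at every point of $P_k$. Now $\Dim^* \neg Q_{i^*} \in y$ means either $\neg Q_{i^*} \in y$ (contradicting $Q_{i^*} \in y$) or $\neg Q_{i^*} \in z$ for some $z$ with $yRz$; in the latter case $z \in R(x)$ by $R$-closure, and $z \in P_k$ because path components are $R$-closed within $\F(x)$, so $Q_{i^*} \in z$, again a contradiction. The main delicate point is the treatment of the overflow when there are strictly more than $n+1$ path components: bundling them all into $p_n$ keeps the $Q_i$'s mutually exclusive and exhaustive on $R(x)$, which is exactly what makes the substituted $\axG_n$-instance strong enough to force the contradiction.
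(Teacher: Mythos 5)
Your proposal is correct and follows essentially the same route as the paper: define the path components via $\a(P_i)$ (Lemma \ref{defP}), substitute into $\axG_n$ so that the resulting $Q_i$'s partition $R(x)$, and use the consequent together with $R^*$-closure of path components to reach a contradiction. The only difference is cosmetic: you define the overflow formula as $\bigvee_{n\leq i\leq m}\a(P_i)$ (which requires Lemma \ref{finpthcmp} to ensure finiteness), whereas the paper uses $\neg\bigvee_{i<n}\a(P_i)$ and thereby avoids needing that lemma here.
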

\begin{proof} 
Let $x\in W$. We have to show that $R(x)$ has at most $n$ path components. If it has fewer than $n$ there is nothing to do, so suppose  $R(x)$ has at least  $n$ path components $P_0,\dots,P_{n-1}$. Put 
$P_n=  R(x)\setminus (P_0\cup\cdots\cup P_{n-1})$. We will prove that $P_n=\emptyset$, confirming that there can be no more components.

For each $i< n$, let $\ph_i$ be the formula $\a(P_i)$ that defines $P_i$ within $R(x)$ according to Lemma \ref{defP}.  Let 
 $\ph_n$ be $\neg\bigvee\{\a(P_i):0\leq i< n\}$, so $\ph_n$ defines $P_n$ within $R(x)$. Now for all $i\leq n$ let $\psi_i$ be the formula obtained by uniform substitution of 
$\ph_0,\dots,\ph_n$ for $p_0, \dots,p_n$ in the formula $Q_i$ of \eqref{Qi}. Observe that since the $n+1$ sets 
$P_0,\dots,P_n$ form a partition of $R(x)$, each $y\in R(x)$ contains $\psi_i$ for exactly one $i\leq n$, and indeed $\psi_i$ defines the same subset of $R(x)$ as  $\ph_i$.

Now suppose, for the sake of contradiction, that $P_n\ne\emptyset$.\footnote{In that case $P_n$ is the union of finitely many path components, by Lemma \ref{finpthcmp}, but we do not need that fact.}
Then for each $i\leq n$ we can choose an element $y_i\in P_i$. Then $xRy_i$ and $\psi_i\in y_i$.
It follows that
$\bigwedge_{i\leq n}\Dim \psi_i\in x$. Since all instances of G$_n$ are in $x$, we then get 
$\Dim(\bigwedge_{i\leq n}\Dim^* \neg \psi_i )\in x$. So there is some $y\in R(x)$ such that for each $i\leq n$ there exists a $z_i\in R^*(y)$ such that   $\neg\psi_i\in z_i$, hence $\psi_i\notin z_i$. Now let $P$ be the path component of $y$. If $P=P_i$ for some    $i<n$, then as $y\in P_i$ and  $yR^*z_i$, we get $z_i\in P_i$,  and so $\psi_i\in z_i$ -- which is false. Hence it must be 
that $P$ is disjoint from $P_i$ for all $i<n$, and so is a subset of $P_n$.
But then as $yR^*z_n$ we get $z_n\in P\sub P_n$, and so $\psi_n\in z_n$.
That is also false, and shows that the assumption that $P_n\ne\emptyset$ is false.
\end{proof}

\subsection{Completeness and finite model property for K4G$_n$}\label{sec:fmp Gn}

For the language $\c L_\bo$ without $\Di$, Theorem \ref{canlnc} provides a completeness  theorem for any system extending  K4G$_n$ by showing that any consistent formula $\ph$ is satisfiable in a locally $n$-connected weak canonical model (take a finite $\Var$ that includes all variables of $\ph$ and enough variables to have G$_n$ as a formula in the weak language). But  the ``satisfiable'' part of this depends on the Truth Lemma, which is unavailable in the presence of $\Di$. We will need to apply filtration/reduction to establish completeness itself,  as well as  the finite model property.

Let $L$ be a weak tangle logic that includes G$_n$;  $\F=(W,R)$ an inner subframe of $\F_L$; and $\Phi$ a finite set of formulas that is closed under subformulas.

Recall that $M$ is the  set of all maximal clusters of $\F$, shown to be finite in Corollary \ref{finmaxcl}. 
For each $x\in W$, define  $$M(x)=\{C\in M: C\sub R(x)\}.$$ Then $M(x)$ is finite, being a subset of $M$.
 
Define an equivalence relation $\app$ on $W$ by putting 
\begin{center}
$x\app y$ iff $x\cap\Phi=y\cap\Phi$ and $M(x)=M(y)$. 
\end{center}
We then repeat the earlier standard transitive filtration construction, but using the finer relation $\app$ in place of $\sim$.
Thus we put  $\ab{x}=\{y\in W:x\app y\}$ and  $W_\Phi=\{\ab{x}:x\in W\}$.  The set $W_\Phi$ is finite, because the map $\ab{x}\mapsto (x\cap\Phi,M(x))$ is a well-defined injection of $W_\Phi$ into the finite set $\wp\Phi\times\wp M$.
The surjective function  $f:W\to W_\Phi$ is given by $f(x)=\ab{x}$.

Let $\M_\Phi=(W_\Phi,R_\Phi,h_\Phi)$, where $R_\Phi\sub W_\Phi\times W_\Phi$ is the transitive closure of 
$R_\lambda=\{(\ab{x},\ab{y}):xRy\}$,  $h_\Phi(p)=\{\ab{x}: p\in x\}$ for $p\in\Phi$, and $h_\Phi(p)=\emptyset$  otherwise.

We now verify that the pair $(\M_\Phi,f)$ as just defined satisfies the axioms (r1)--(r5) of a definable reduction of $\F$ via $\Phi$.

\begin{enumerate}[(r1):]
\item 
\emph{$p\in x$ iff $\ab{x}\in h_\Phi(p)$, for all $p\in\Var\cap\Phi$.}

 By definition of $h_\Phi$.

\item
\emph{$\ab{x}=\ab{y}$ implies $x\cap\Phi= y\cap\Phi$.} 

If $\ab{x}=\ab{y}$ then $x\app y$, so $x\cap\Phi= y\cap\Phi$ by definition of $\app$.

\item
\emph{$xRy$ implies $\ab{x}R_\Phi \ab{y}$.}

$xRy$ implies $\ab{x}R_\lambda \ab{y}$ and $R_\lambda\sub R_\Phi$.
\item
\emph{$\ab{x}R_\Phi \ab{y}$ implies  $y\cap \Phi^t\sub x\cap\Phi^t$ and 
$\{\Dim\ph\in\Phi:\Dim^*\ph\in y\}\sub x$.}

The proof is the same as the proof given earlier of (r4) for the standard transitive filtration, but using $\app$ in place of $\sim$ and the fact that $x\app y$ implies $x\cap\Phi= y\cap\Phi$.

\item 
\emph{For each subset $C$ of $W_\Phi$ there is a formula $\ph$ that defines $f^{-1}(C)$ in $W$, i.e.\ $\ph\in y$ iff
 $\ab{y}\in C$.}
 
 To see this, for each $x\in W$ let $\g_x$ be the conjunction of $(x\cap\Phi)\cup\{\neg\psi:\psi\in\Phi\setminus x\}$. Then for any $y\in W$,
 $$
 \g_x\in y \quad\text{iff}\quad x\cap\Phi =  y\cap\Phi.
 $$
 Next, let $\mu_x$ be the conjunction of the finite set of formulas
 $$
 \{\Dim\Box^*\a(C): C\in M(x)\}\cup \{\neg\Dim\Box^*\a(C): C\in M\setminus M(x)\}.
 $$
 Lemma \ref{defCRx} showed that each $C\in M$ has $C\in M(x)$ iff $\Dim\Box^*\a(C)\in x$. From this it follows readily that
for any $y\in W$,
 $$
 \mu_x\in y \quad\text{iff}\quad M(x)=M(y).
 $$
 So putting $\ph_x=\g_x\land\mu_x$, we get that in general
 $$
 \ph_x\in y \quad\text{iff}\quad x\app y \quad\text{iff}\quad \ab{y}\in\{\ab{x}\}.
 $$
 Now if $C=\emptyset$, then $\bot$ defines $f^{-1}(C)$ in $W$. Otherwise if $C=\{\ab{x_1},\dots,\ab{x_n}\}$, then the disjunction
 $
 \ph_{x_1}\lor\cdots\lor\ph_{x_n}
 $
 defines $f^{-1}(C)$ in $W$.
 \end{enumerate}
Consequently, the reduction $\M_t$ of $\M_\Phi$ satisfies the Reduction Lemma. We will show that G$_n$ is valid in the frame of $\M_t$.  But first we show that it is valid in the frame of $\M_\Phi$. Both cases involve some preliminary analysis,  involving linking points of $R_\Phi(\ab{y})$  and $R_t(\ab{y})$ back to points of $R(y)$. This requires further work with maximal elements and clusters.

\begin{lemma} \label{presM}
For all $x,y\in W$, $\ab{x}R_\Phi^*\ab{y}$ implies $M(y)\sub M(x)$.
\end{lemma}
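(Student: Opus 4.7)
The plan is to unpack $\ab{x}R_\Phi^*\ab{y}$ by reducing it, via the definition of $R_\Phi$ as the transitive closure of $R_\lambda$, to a finite chain in $W$ alternating between $\app$-steps and single $R$-steps, then to verify that $M(\cdot)$ behaves monotonically along each kind of step.

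First I would dispose of the reflexive case: if $\ab{x} = \ab{y}$, then $x \app y$, and by the very definition of $\app$ we have $M(x) = M(y)$, so $M(y) \sub M(x)$ trivially. Otherwise $\ab{x}R_\Phi\ab{y}$, and exactly as in the verification of (r4) for the standard transitive filtration (but with $\app$ replacing $\sim$), there exist $n \geq 1$ and elements $x_1,y_1,\dots,x_n,y_n \in W$ with
\[
x \app x_1 R y_1 \app x_2 R y_2 \app \cdots \app x_n R y_n \app y.
\]

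The key local claim is: \emph{if $uRv$ in $W$ then $M(v) \sub M(u)$.} This is immediate from transitivity of $R$: for any $z \in R(v)$ we have $uRvRz$, hence $uRz$, so $R(v) \sub R(u)$; therefore any maximal cluster $C \sub R(v)$ also satisfies $C \sub R(u)$, i.e.\ $C \in M(u)$. The other local fact is even easier: \emph{if $u \app v$ then $M(u) = M(v)$,} directly from the definition of $\app$. Chaining these along the displayed sequence yields
\[
M(y) = M(y_n) \sub M(x_n) = M(y_{n-1}) \sub \cdots \sub M(y_1) \sub M(x_1) = M(x),
\]
which is the desired inclusion.

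There is no serious obstacle here; the lemma is essentially a bookkeeping statement, relying only on (i) the unpacking of $R_\Phi$ as the transitive closure of $R_\lambda$, (ii) transitivity of $R$ (which holds since $\F$ is an inner subframe of the $\axK4$-canonical frame $\F_L$), and (iii) the fact that the refined filtration relation $\app$ was defined precisely so as to preserve $M(\cdot)$. The only mild point of care is that the inclusion $M(v) \sub M(u)$ from an $R$-step is generally strict (reflecting the intuition that moving $R$-forward can only lose access to maximal clusters, not gain them), so one must not expect equality except along $\app$-steps.
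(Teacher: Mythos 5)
Your proof is correct and follows essentially the same route as the paper's: decompose $\ab{x}R_\Phi^*\ab{y}$ into a finite chain of $\app$-steps and $R$-steps, note that $\app$-steps preserve $M(\cdot)$ by definition and $R$-steps shrink it by transitivity of $R$, and chain the inclusions. The paper phrases the chain slightly more uniformly (a single sequence $z_0,\dots,z_k$ with each consecutive pair related by $\app$ or $R$, covering the reflexive case automatically), but the substance is identical.
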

\begin{proof}
If $\ab{x}R_\Phi^*\ab{y}$ there is a finite sequence $x=z_0,\dots,z_k=y$ for some $k\geq 1$ such that for all $i<k$, either $z_i\app z_{i+1}$ or  $z_iR z_{i+1}$.
But $z_i\app z_{i+1}$ implies $M(z_i)=M(z_{i+1})$, and  $z_iR z_{i+1}$ implies $M(z_{i+1})\sub M(z_i)$ by transitivity of $R$. This yields $M(z_{k})\sub M(z_0)$ by induction on $k$.
\end{proof}

\begin{lemma} \label{max}
Suppose $\At\sub\Phi$ and
$a\in W$ is $R$-maximal. Then for all $x\in W$,  $xRa$  iff\/ $\ab{x}R_\Phi\ab{a}$.
\end{lemma}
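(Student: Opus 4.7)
The forward direction $xRa \Rightarrow \ab{x}R_\Phi\ab{a}$ is immediate from property (r3) verified above. For the converse, suppose $\ab{x}R_\Phi\ab{a}$. Unwinding the definition of $R_\Phi$ as the transitive closure of $R_\lambda$, I obtain a finite chain of representatives in $W$,
\[
x \app x_0 \, R \, y_0 \app x_1 \, R \, y_1 \app \cdots \app x_n \, R \, y_n \app a.
\]

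The engine of the proof will be a \emph{transfer principle}: for any $z, z' \in W$ with $z \app z'$, one has $zRa$ iff $z'Ra$. To see this, observe that since $a$ is $R$-maximal, $C_a$ is a maximal $R$-cluster, and $zRa$ iff $C_a \sub R(z)$ iff $C_a \in M(z)$: if $a$ is reflexive, transitivity together with $aRa$ and non-degeneracy of $C_a$ give $zRa \Leftrightarrow C_a \sub R(z)$, while if $a$ is irreflexive then $C_a = \{a\}$ and the equivalence is trivial. The transfer principle then follows from $z \app z' \Rightarrow M(z) = M(z')$.

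The proof thus reduces to establishing $x_n R a$ from $y_n \app a$. If $a$ is reflexive, then $C_a \in M(a) = M(y_n)$ forces $y_n R a$, and $x_n R y_n R a$ yields $x_n R a$ by transitivity. If $a$ is irreflexive maximal, then any $R$-successor of $a$ would lie in $C_a$ and force $aRa$ by transitivity, so $R(a) = \emptyset$ and hence $\Dim\top \notin a$ by \eqref{Dican}; since $\Dim\top \in \At \sub \Phi$ and $y_n \cap \Phi = a \cap \Phi$, also $\Dim\top \notin y_n$, making $y_n$ irreflexive maximal too. Then the degenerate maximal clusters $\{y_n\}$ and $\{a\}$ satisfy $\delta\{y_n\} = \{\tau(y_n)\} = \{\tau(a)\} = \delta\{a\}$ (using $\At \sub \Phi$), so Corollary~\ref{equalC} forces $y_n = a$, whence $x_n R a$.

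A downward induction along the chain now propagates $Ra$: given $x_i R a$ and $y_{i-1} \app x_i$, the transfer principle yields $y_{i-1} R a$, and then $x_{i-1} R y_{i-1} R a$ together with transitivity gives $x_{i-1} R a$. One final application of transfer with $x \app x_0$ delivers $xRa$. The main obstacle is the irreflexive-maximal case; it is precisely here that the hypothesis $\At \sub \Phi$ (including $\Dim\top$) is essential, via the identification $y_n = a$ through Corollary~\ref{equalC}.
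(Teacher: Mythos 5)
Your proof is correct and takes essentially the same route as the paper's: the same case split on whether $a$ is reflexive (non-degenerate cluster) or irreflexive (degenerate cluster), the same use of $\Dim\top\in\At\sub\Phi$ together with Lemma~\ref{indisting}/Corollary~\ref{equalC} to identify $y_n$ with $a$ in the degenerate case, and the same propagation of $C_a\in M(\cdot)$ along the chain. The only difference is presentational: your ``transfer principle'' plus the downward induction re-derives inline what the paper packages as Lemma~\ref{presM}, which it invokes after unwinding only the last link of the chain.
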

\begin{proof}
$xRa$ implies $\ab{x}R_\Phi\ab{a}$ by (r3). For the converse, suppose $\ab{x}R_\Phi\ab{a}$
and let  $K$ be the maximal $R$-cluster of $a$. 

If $K$ is non-degenerate then $K\sub R(a)$, so $K\in M(a)$. Then from $\ab{x}R_\Phi\ab{a}$ we get $K\in M(x)$ by  Lemma \ref{presM}, implying $xRa$ as required. 

But if $K$ is degenerate, then $K=\{a\}$ and $R(a)=M(a)=\emptyset$. Also $\Dim\top\notin a$. Since $\ab{x}R_\Phi\ab{a}$, by definition of $R_\Phi$ there are $z,w\in W$ with $\ab{x}R_\Phi^*\ab{z}$ and $zRw\app a$. As $\At\sub\Phi$, from $w\app a$ we get $w\cap\At=a\cap\At$, i.e.\ $\tau(w)=\tau(a)$. In particular $\Dim\top\notin w$, hence $w$ is also $R$-maximal. Therefore $a$ and $w$ are maximal elements with the same atomic type, so $w=a$ by Lemma \ref{indisting}. Thus $zRa$ and so $K\in M(z)$.
Since $\ab{x}R_\Phi^*\ab{z}$ this implies $K\in M(x)$ by Lemma \ref{presM}, giving the required $xRa$ again.
\end{proof}

\begin{lemma} \label{R*max}
For any $y\in W$, let $A$ be the set of all $R$-maximal points in $R(y)$.
Then each  point $v\in R_\Phi(\ab{y})$ has $vR_\Phi^*\ab{a}$ for some $a\in A$. 
\end{lemma}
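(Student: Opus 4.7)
\begin{proofsk}
The plan is to combine the Zorn property of $\F$ with Lemma \ref{max}. Throughout, as is standard in this filtration setup (and as will be required to apply Lemma~\ref{max}), we assume $\At \sub \Phi$; this is harmless since $\At$ is finite and $\Phi$ can always be enlarged to include it while preserving closure under subformulas.

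Fix $v \in R_\Phi(\ab{y})$. First, write $v = \ab{z}$ for some $z \in W$. The Zorn property of the inner subframe $\F$ gives an $R$-maximal point $a \in W$ with $zR^*a$. Since $R^* = R \cup \{(w,w) : w \in W\}$, either $z = a$ or $zRa$; in the first case $\ab{z} = \ab{a}$, and in the second case (r3) gives $\ab{z} R_\lambda \ab{a}$, hence $\ab{z} R_\Phi \ab{a}$. Either way, $v = \ab{z}\, R_\Phi^*\, \ab{a}$.

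It remains to show $a \in A$, i.e.\ that $a$ is $R$-maximal (already granted) and $yRa$. Combining $\ab{y} R_\Phi v$ with $v R_\Phi^* \ab{a}$ and using transitivity of $R_\Phi$, we obtain $\ab{y} R_\Phi \ab{a}$ (in the boundary case $v = \ab{a}$ this is immediate). Since $a$ is $R$-maximal and $\At \sub \Phi$, Lemma \ref{max} applies to yield $yRa$. Therefore $a \in R(y)$, so $a \in A$, completing the proof.

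The only subtle point is the appeal to Lemma \ref{max}, which is where the hypothesis $\At \sub \Phi$ enters; apart from that, the argument is a direct concatenation of Zorn-property chain extension with the transitivity of $R_\Phi$ and condition (r3) of the definable reduction.
\end{proofsk}
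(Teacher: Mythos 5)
Your proof is correct, and it takes a mildly but genuinely different route from the paper's. The paper splits into two cases after obtaining the $R$-maximal $a$ with $zR^*a$: when $z=a$ it invokes Lemma~\ref{max} to get $z\in R(y)$, but when $z\neq a$ it argues via maximal clusters, showing that the cluster $C$ of $a$ lies in $M(z)$ and then using Lemma~\ref{presM} (preservation of maximal clusters along $R_\Phi^*$) to conclude $C\in M(y)$ and hence $a\in R(y)$. You instead treat both cases uniformly: push the relation through to $\ab{y}R_\Phi\ab{a}$ by transitivity and apply Lemma~\ref{max} once to read off $yRa$. This is cleaner and shorter; the only cost is that your argument leans on the hypothesis $\At\sub\Phi$ of Lemma~\ref{max} in every case, whereas the paper needs it only in the degenerate case $z=a$. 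Since the paper's own proof already uses Lemma~\ref{max} (and the lemma is only ever applied later under the hypothesis $\At\sub\Phi$, e.g.\ in Theorem~\ref{FPhilnc}), your explicit flagging of this assumption is appropriate rather than a weakening. One small point worth noting: in the case $z\neq a$ you could equally have cited the paper's Lemma~\ref{presM} to avoid Lemma~\ref{max} there, but nothing is lost by your choice.
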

\begin{proof}
Let $v=\ab{z}\in R_\Phi(\ab{y})$. By the Zorn property there exists an $a$ with $zR^*a$ and $a$ is $R$-maximal.
If $z=a$, then $z$ is $R$-maximal, so as $\ab{y}R_\Phi\ab{z}$ we have $z\in R(y)$ by Lemma \ref{max}. Hence $z\in A$, so in this case we get $\ab{z}R_\Phi^*\ab{a}$ with $a\in A$ by taking $a=z$.

If however $z\ne a$, then $zRa$, hence $\ab{z}R_\Phi\ab{a}$ by (r3).  Also, if $C$ is the $R$-cluster of $a$, then $C\sub R(z)$ and $C$ is maximal, hence $C\in M(z)$. But $\ab{y}R_\Phi\ab{z}$, so Lemma \ref{presM} then implies $C\in M(y)$, therefore  $a\in R(y)$. So in this case  we have $\ab{z}R_\Phi\ab{a}$ with $a\in A$. 
\end{proof}

\begin{theorem} \label{FPhilnc}
If\/ $\At\sub\Phi$, the frame $\F_\Phi=(W_\Phi,R_\Phi)$ is locally $n$-connected.
\end{theorem}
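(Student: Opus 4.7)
The plan is to transfer the local $n$-connectedness of $\F$ (given by Theorem~\ref{canlnc}) up to $\F_\Phi$ via the filtration map $f$. Fix $y\in W$; we must show that $R_\Phi(\ab{y})$ has at most $n$ path components. By Theorem~\ref{canlnc}, the subframe $\F(y) = (R(y), R\restriction R(y))$ already has at most $n$ path components in $\F$; call them $P_1,\ldots,P_k$ with $k\leq n$. The idea is to show that every point of $R_\Phi(\ab{y})$ can be routed by a path in $R_\Phi(\ab{y})$ to some $\ab{a}$ with $a$ an $R$-maximal element of $R(y)$, and that two such images $\ab{a},\ab{a'}$ coming from the same $P_i$ are connected in $R_\Phi(\ab{y})$. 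This bounds the number of components of $R_\Phi(\ab{y})$ by $k\leq n$.

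First I would dispatch the trivial case: if $R(y)=\emptyset$ then Lemma~\ref{R*max} forces $R_\Phi(\ab{y})=\emptyset$, and there is nothing to do. Otherwise, by the Zorn property in $\F$, the set $A$ of $R$-maximal points of $R(y)$ is nonempty, and every member of $R(y)$ is $R^*$-below some element of $A$. Partition $A$ as $A_i = A\cap P_i$ for $i\leq k$. Now apply Lemma~\ref{R*max}: each $v\in R_\Phi(\ab{y})$ satisfies $v R_\Phi^* \ab{a}$ for some $a\in A$. Either $v=\ab{a}$, or else $v R_\Phi \ab{a}$; in the latter case $\ab{a}\in R_\Phi(\ab{y})$ by (r3) (since $y R a$), so this single $R_\Phi$-step is itself a connecting path inside $R_\Phi(\ab{y})$ joining $v$ to $\ab{a}$.

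Next, for $a,a'\in A_i$ (so they lie in the same path component $P_i$ of $R(y)$), take a connecting path $a=a_0,a_1,\ldots,a_m=a'$ in $R(y)$, where each consecutive pair is related by $R$ or $R^{-1}$. Since each $a_j\in R(y)$, property (r3) gives $\ab{y} R_\Phi \ab{a_j}$, so $\ab{a_j}\in R_\Phi(\ab{y})$; and (r3) also lifts each $R$-step to an $R_\Phi$-step between consecutive $\ab{a_j}$. Hence $\ab{a_0},\ldots,\ab{a_m}$ is a connecting path in $R_\Phi(\ab{y})$ witnessing that $\ab{a}$ and $\ab{a'}$ share a path component there.

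Combining the two observations, every $v\in R_\Phi(\ab{y})$ is path-connected in $R_\Phi(\ab{y})$ to some $\ab{a}$ with $a\in A_i$ for some (unique) $i\leq k$, and all the $\ab{a}$ with $a\in A_i$ lie in a common path component. Therefore $R_\Phi(\ab{y})$ has at most $k\leq n$ path components, as required. I do not foresee a significant obstacle: the only delicate point is ensuring the routing path $v\leadsto\ab{a}$ stays inside $R_\Phi(\ab{y})$, which follows because $R_\Phi^*$ here amounts to a direct $R_\Phi$-edge whose other endpoint is already known to lie in $R_\Phi(\ab{y})$ via (r3). The heavy lifting (the Zorn property, finiteness of maximal clusters, and Lemma~\ref{R*max} itself) has been done earlier; this theorem is essentially their payoff.
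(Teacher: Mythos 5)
Your proof is correct and follows essentially the same route as the paper's: both rest on Theorem~\ref{canlnc}, Lemma~\ref{R*max}, and lifting connecting paths from $R(y)$ to $R_\Phi(\ab{y})$ via (r3). The only difference is presentational — the paper phrases the count as a pigeonhole argument on $n+1$ arbitrary points of $R_\Phi(\ab{y})$, while you organise the same ingredients into explicit ``hub'' classes indexed by the path components of $R(y)$.
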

\begin{proof}
For any point $\ab{y}\in W_\Phi$, we have to show that $R_\Phi(\ab{y})$ has at most $n$ path components. But if it had more than $n$, then by picking  points from different components we would get a sequence of more than $n$ points no two of which were path connected. We show that this is impossible, by taking an arbitrary sequence $v_0,\dots,v_n$ of $n+1$ points in $R_\Phi(\ab{y})$, and proving that there must exist distinct $i$ and $j$ such that $v_i$ and $v_j$ are path connected  in $R_\Phi(\ab{y})$.

For each $i\leq n$, by Lemma \ref{R*max} there is an $R$-maximal $a_i\in R(y)$ with $v_iR_\Phi^*\ab{a_i}$. This gives us a sequence $a_0,\dots,a_n$   of  members of $R(y)$. But $R(y)$ has at most $n$ path components, by Theorem \ref{canlnc}. Hence there exist $i\ne j\leq n$ such that there is a connecting  $R$-path 
$a_i=w_0,\dots,w_n=a_{j}$ between $a_i$ and $a_{j}$ that lies in $R(y)$. So for all $i<n$ we have $yRw_i$ and either $w_iRw_{i+1}$ or $w_{i+1}Rw_i$, hence $\ab{y}R_\Phi \ab{w_i}$ and either $\ab{w_i}R_\Phi\ab{w_{i+1}}$ or $\ab{w_{i+1}}R_\Phi\ab{w_i}$.

This shows that $\ab{a_i}$ and $\ab{a_j} $ are path connected in $R_\Phi(\ab{y})$ by the sequence
$\ab{w_0},\dots,\ab{w_n}$. Since $v_iR_\Phi^*\ab{a_i}$ and $v_jR_\Phi^*\ab{a_j}$, it follows that 
$v_i$ and $v_j $ are path connected in $R_\Phi(\ab{y})$, as required.
\end{proof}

From this result we can infer that in the language $\c L_\bo$, for all $n\geq 1$ the finite model property holds  for K4G$_n$ and  KD4G$_n$  over locally $n$-connected K4 and KD4  frames, respectively.
For the proof, 
we take a consistent $\c L_\bo$-formula $\ph$ and let $\Phi$  be the closure under $\c L_\bo$-subformulas of $\At\cup\{\ph\}$. Then $\Phi$ is finite and $\ph$ is satisfiable in the model $\M_\Phi$  (see the remarks about $\M_\Phi$ at the end of section \ref{sec:path conn}). But the frame $\F_\Phi$ of $\M_\Phi$ is locally $n$-connected by the theorem just proved, so validates G$_n$.
Together with the preservation of seriality by $\F_\Phi$, this implies the finite model property results for K4G$_n$ and  KD4G$_n$.

Extending to the language $\c L_{\bo\forall}$, and using that $\F_\Phi$ is path connected in the presence of  axiom C, these 
  finite model property results hold  correspondingly for the four systems K4G$_n$.U,  K4G$_n$.UC,  KD4G$_n$.U,  and KD4G$_n$.UC.

We turn now to the corresponding results for the versions of these systems that include the tangle connective.
\begin{lemma}  \label{critpresRy}
If $y\in W$ is the critical point for some $R_\Phi$-cluster, then  $z\in R(y)$ implies  $\ab{z}\in R_t(\ab{y})$.
\end{lemma}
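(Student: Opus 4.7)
The plan is to unwind the definitions of the critical point, of the nucleus $C^\circ$, and of the relation $R_t$, and observe that the lemma is essentially immediate. First I would use (r3): from $yRz$ we get $\ab{y}R_\Phi\ab{z}$, so $\ab{z}$ automatically lies in $R_\Phi(\ab{y})$. The content of the lemma therefore reduces to showing that the passage from $R_\Phi$ to the untangling $R_t$ did not delete the edge $(\ab{y},\ab{z})$. Let $C$ be the $R_\Phi$-cluster for which $y$ is the critical point, so $\ab{y}\in C$.

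I would then split into two cases according to whether $\ab{z}$ belongs to $C$. If $\ab{z}\notin C$, then $\ab{y}$ and $\ab{z}$ lie in distinct $R_\Phi$-clusters, so by the first clause of the definition of $R_t$ we immediately obtain $\ab{y}R_t\ab{z}$. If instead $\ab{z}\in C$, then since $y$ is the critical point for $C$, the definition of the nucleus gives
\[
C^\circ=\{\ab{w}\in C:yRw\},
\]
so $\ab{z}\in C^\circ$ follows directly from $yRz$. The second clause of the definition of $R_t$ (both points in the same cluster $C$, with the target in the nucleus) then yields $\ab{y}R_t\ab{z}$.

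These two cases are exhaustive, so the lemma follows. I do not anticipate any serious obstacle: no subtleties about degeneracy or reflexivity intervene, because if $C$ were degenerate then $\ab{z}=\ab{y}$ would force $\ab{y}R_\Phi\ab{y}$ via $yRz$ and (r3), contradicting degeneracy, so Case 1 applies; and in the reflexive version used under axiom T the additional third clause only makes $R_t$ larger, so the argument is unchanged.
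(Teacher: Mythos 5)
Your proof is correct and follows essentially the same route as the paper's: apply (r3) to get $\ab{y}R_\Phi\ab{z}$, then split on whether $\ab{z}$ lies in the cluster $C$ for which $y$ is critical, using the definition of the nucleus $C^\circ$ in the second case. The extra remarks on degeneracy and the reflexive variant are accurate but not needed.
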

\begin{proof}
Let $y$ be critical for cluster $C$. If $z\in R(y)$, then $\ab{y}R_\Phi\ab{z}$ (r3), so if $\ab{z}\notin C$ then immediately $\ab{y}R_t\ab{z}$. But if $\ab{z}\in C$, then $\ab{z}\in C^\circ$ and again $\ab{y}R_t\ab{z}$.
\end{proof}

\begin{lemma} \label{pthconpres}
Suppose $\Dim\top\in\Phi$.
Let $y\in W$ be a critical point, and $z,z'\in R(y)$. If $z$ and $z'$ are path connected in $R(y)$, then $\ab{z}$ and $\ab{z'} $ are path connected in $R_t(\ab{y})$.
\end{lemma}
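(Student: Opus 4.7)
My plan is to lift the given connecting $R$-path between $z$ and $z'$ inside $R(y)$ to a connecting $R_t$-path between $\ab{z}$ and $\ab{z'}$ inside $R_t(\ab{y})$, repairing any edge that fails to survive the passage from $R_\Phi$ to $R_t$. This is essentially the same repair-and-verify strategy used in Lemma~\ref{pathcon}, but with the extra requirement that every node of the new path sits inside the neighbourhood $R_t(\ab{y})$.

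First I would fix a connecting path $z = w_0, w_1, \ldots, w_n = z'$ with each $w_i \in R(y)$ and, for each $i < n$, either $w_i R w_{i+1}$ or $w_{i+1} R w_i$. By (r3) the images $\ab{w_0}, \ldots, \ab{w_n}$ form a corresponding connecting $R_\Phi$-sequence, and by Lemma~\ref{critpresRy} (which is where the criticality of $y$ enters) each $\ab{w_i}$ lies in $R_t(\ab{y})$. For each consecutive pair $\ab{w_i}, \ab{w_{i+1}}$, if an $R_t$-edge between them already exists in either direction I keep it; otherwise I invoke Lemma~\ref{repair} (available because $\Dim\top \in \Phi$) to produce a point $v_i$ with $\ab{w_i} R_t v_i$ and $\ab{w_{i+1}} R_t v_i$, and splice $v_i$ into the sequence between $\ab{w_i}$ and $\ab{w_{i+1}}$. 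The resulting augmented sequence from $\ab{z}$ to $\ab{z'}$ is then an $R_t$-connecting sequence.

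The remaining obstacle, and the reason the criticality of $y$ is needed, is to check that every term of the augmented sequence lies in $R_t(\ab{y})$ and not merely in $W_\Phi$. The original $\ab{w_i}$'s do, by Lemma~\ref{critpresRy}. For an inserted repair point $v_i$, I would combine $\ab{y} R_t \ab{w_i}$ (again from Lemma~\ref{critpresRy}) with $\ab{w_i} R_t v_i$ and appeal to the transitivity of $R_t$, asserted as part of the construction of $\F_t$. This yields $\ab{y} R_t v_i$, hence $v_i \in R_t(\ab{y})$, and the augmented sequence is then a connecting path in $R_t(\ab{y})$ between $\ab{z}$ and $\ab{z'}$, completing the plan.
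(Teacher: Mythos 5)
Your proposal is correct and follows essentially the same route as the paper's proof: lift the path via (r3), place its nodes in $R_t(\ab{y})$ using Lemma~\ref{critpresRy}, repair defective edges with Lemma~\ref{repair}, and place the inserted points in $R_t(\ab{y})$ by transitivity of $R_t$ from $\ab{y}R_t\ab{w_i}R_t v_i$. No gaps.
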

\begin{proof}
Let $z=z_0,\dots, z_n=z'$ be a connecting path between $z$ and $z'$ within $R(y)$.
The criticality of $y$ ensures, by Lemma \ref{critpresRy}, that  $\ab{z_0},\dots, \ab{z_n}$ are all in $R_t(\ab{y})$.
We apply Lemma \ref{repair} to convert this sequence into a connecting $R_t$-path within $R_t(\ab{y})$.

For each $i<n$ we have $z_iRz_{i+1}$ or $z_{i+1}Rz_{i}$, hence $\ab{z_i}R_\Phi\ab{z_{i+1}}$ or $\ab{z_{i+1}}R_\Phi\ab{z_{i}}$ by (r3). So if there is such an $i$ that is ``defective'' in the sense that neither $\ab{z_i}R_t\ab{z_{i+1}}$ nor $\ab{z_{i+1}}R_t\ab{z_{i}}$, then by Lemma \ref{repair}, which applies since $\Dim\top\in\Phi$, there exists a $v_i$ with  $\ab{z_i}R_t v_i$ and  $\ab{z_{i+1}}R_t v_i$. Then $v_i\in R_t(\ab{y})$ by transitivity of $R_t$, as $\ab{z_i}\in R_t(\ab{y})$.
We insert $v_i$ between $\ab{z_i}$ and $\ab{z_{i+1}}$ in the sequence.
Doing this for all defective $i<n$
turns the sequence into a connecting $R_t$-path in $R_t(\ab{y})$ with unchanged  endpoints $\ab{z}$ and $\ab{z'} $.
\end{proof}

\begin{lemma} \label{RtPhi}
Suppose $\Dim\top\in\Phi$ and
$a\in W$ is $R$-maximal. Then for all $x\in W$, $\ab{x}R_t\ab{a}$ iff\/ $\ab{x}R_\Phi\ab{a}$.
\end{lemma}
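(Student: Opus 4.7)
The plan is to prove the non-trivial direction by a case split on whether $\ab{x}$ and $\ab{a}$ lie in the same $R_\Phi$-cluster, and to invoke Lemma~\ref{max} at the critical point of that cluster. The forward implication $\ab{x}R_t\ab{a} \Rightarrow \ab{x}R_\Phi\ab{a}$ is immediate since $R_t \sub R_\Phi$ by construction; so I would dismiss it in one sentence and focus on the converse.

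For the converse, assume $\ab{x}R_\Phi\ab{a}$ with $a$ $R$-maximal. If $\ab{x}$ and $\ab{a}$ lie in distinct $R_\Phi$-clusters, then by the very definition of $R_t$ (the inter-cluster instances of $R_\Phi$ are inherited by $R_t$) we get $\ab{x}R_t\ab{a}$ immediately. So the interesting case is when $\ab{x}$ and $\ab{a}$ share an $R_\Phi$-cluster $C$. Let $y$ be the critical point fixed for $C$. Since $\ab{y},\ab{a}\in C$, we have $\ab{y}R_\Phi \ab{a}$, and Lemma~\ref{max} (available in the ambient setting of this subsection, where $\At\sub\Phi$) then yields $yRa$. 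By definition of the nucleus, this means $\ab{a}\in C^\circ$, and since the restriction of $R_t$ to $C$ is $C\times C^\circ$, every point of $C$ (including $\ab{x}$) is $R_t$-related to $\ab{a}$, giving $\ab{x}R_t\ab{a}$ as required.

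The main obstacle is justifying the use of Lemma~\ref{max}, whose statement hypothesises $\At\sub\Phi$ whereas Lemma~\ref{RtPhi} only explicitly mentions $\Dim\top\in\Phi$; in applications of Lemma~\ref{RtPhi} within this subsection the standing hypothesis $\At\sub\Phi$ is in force, so this is really a bookkeeping issue rather than a genuine gap. Once that is noted, the proof is short and structural, hinging entirely on the definition of the untangling: within a cluster $C$, the only way an $R_\Phi$-edge into $\ab{a}$ can fail to survive as an $R_t$-edge is if $\ab{a}\notin C^\circ$, and maximality of $a$ together with Lemma~\ref{max} forces $\ab{a}$ into the nucleus.
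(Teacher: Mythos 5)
Your proof has the same skeleton as the paper's: dispose of the forward direction via $R_t\sub R_\Phi$, split on whether $\ab{a}$ lies in the $R_\Phi$-cluster $C$ of $\ab{x}$, and in the same-cluster case reduce everything to showing $yRa$ for the critical point $y$ of $C$, whence $\ab{a}\in C^\circ$ and $\ab{x}R_t\ab{a}$. The one substantive divergence is how $yRa$ is obtained. You outsource it to Lemma~\ref{max}, which carries the hypothesis $\At\sub\Phi$; the lemma as stated assumes only $\Dim\top\in\Phi$, so strictly speaking your argument proves a weaker statement than the one in the text (adequate for its sole application, as you observe, but not the statement itself). The paper instead stays within the stated hypothesis: from $\Dim\top\in\Phi$ and $\ab{x}R_\Phi\ab{a}$, (r4) gives $\Dim\top\in x$; Lemma~\ref{import} (applicable since $\ab{x}$ and $\ab{a}$ share the cluster $C$) transfers this to $\Dim\top\in a$; hence $R(a)\ne\emptyset$, and by maximality of $a$ its cluster $K$ is non-degenerate with $M(a)=\{K\}$; then $\ab{y}R_\Phi\ab{a}$ and Lemma~\ref{presM} give $K\in M(y)$, i.e.\ $yRa$. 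This is essentially the non-degenerate branch of the proof of Lemma~\ref{max}, re-run by hand after first certifying non-degeneracy of $K$ --- which is exactly the step where Lemma~\ref{max} would otherwise need the full $\At\sub\Phi$ (its degenerate case uses $\tau(w)=\tau(a)$ and Lemma~\ref{indisting}). So your ``bookkeeping'' diagnosis is defensible but slightly understates the point: the extra hypothesis is avoidable, and the paper's statement is deliberately the sharper one. One small further remark: your inference ``$\ab{y},\ab{a}\in C$, hence $\ab{y}R_\Phi\ab{a}$'' silently uses that $C$ is non-degenerate; this does hold here (from $\ab{x}R_\Phi\ab{a}$ with both points in $C$), but it should be said.
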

\begin{proof}
$\ab{x}R_t\ab{a}$ implies $\ab{x}R_\Phi\ab{a}$ by definition of $R_t$. 
For the converse, suppose $\ab{x}R_\Phi\ab{a}$, let $C$ be the $R_\Phi$-cluster of $\ab{x}$,
and let  $K$ be the maximal $R$-cluster of $a$. 

If $\ab{a}\notin C$, then since $\ab{x}R_\Phi\ab{a}$ it is immediate that $\ab{x}R_t\ab{a}$ as required. We are left with the case  $\ab{a}\in C$. Since $\Dim\top\in\Phi$ and $\ab{x}R_\Phi\ab{a}$ we get  $\Dim\top\in x$ by (r4).  As $\ab{x}$ and $\ab{a}$ both belong to $C$, Lemma \ref{import} then gives
 $\Dim\top\in a$. So $R(a)\ne\emptyset$, implying that $R(a)=K$ and $M(a)=\{K\}$.
Moreover, since $\ab{x}R_\Phi\ab{a}$ we see that  $C$ is non-degenerate, so if $y$ is the critical point for $C$ then 
$\ab{y}R_\Phi\ab{a}$, hence $M(a)\sub M(y)$ by Lemma \ref{presM}. Thus $K\in M(y)$, making $yRa$, hence $\ab{a}\in C^\circ$ and so again $\ab{x}R_t\ab{a}$ as required. 
\end{proof}

\begin{theorem}
If\/ $\At\sub\Phi$, the frame $\F_t=(W_\Phi,R_t)$ is locally $n$-connected.
\end{theorem}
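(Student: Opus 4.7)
The plan is to follow the strategy of Theorem~\ref{FPhilnc}, making two adjustments to handle $R_t$: first, reduce to the case where $y$ is a critical point, and second, use Lemma~\ref{pthconpres} in place of (r3) to lift $R$-paths to $R_t$-paths inside $R_t(\ab{y})$. Note that $\Dim\top\in\Phi$ throughout, since $\At\sub\Phi$.

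Fix $\ab{y}\in W_\Phi$, and let $C$ be its $R_\Phi$-cluster, with critical point $y'$, so $\ab{y'}\in C$. We first claim $R_t(\ab{y})=R_t(\ab{y'})$ as subsets of $W_\Phi$, so the induced subframes of $\F_t$ are identical and have the same path components. If $C$ is degenerate then $\ab{y'}=\ab{y}$ and this is immediate; if $C$ is non-degenerate, then by transitivity of $R_\Phi$ both $\ab{y}$ and $\ab{y'}$ reach exactly the same points outside $C$ (where the $R_t$- and $R_\Phi$-successor relations coincide), and inside $C$ their $R_t$-successors are in each case exactly the nucleus $C^\circ$ by the definition of $R_t$. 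Thus we may assume from now on that $y$ itself is critical.

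Now proceed as in Theorem~\ref{FPhilnc}. Pick any $n{+}1$ points $v_0,\dots,v_n\in R_t(\ab{y})\sub R_\Phi(\ab{y})$. By Lemma~\ref{R*max}, for each $i$ choose an $R$-maximal $a_i\in R(y)$ with $v_iR_\Phi^*\ab{a_i}$; transitivity of $R_\Phi$ together with Lemma~\ref{RtPhi} (applicable because $a_i$ is $R$-maximal and $\Dim\top\in\Phi$) yields $v_i=\ab{a_i}$ or $v_iR_t\ab{a_i}$, and the same lemma applied to the single step $yRa_i$ gives $\ab{y}R_t\ab{a_i}$, placing $\ab{a_i}$ in $R_t(\ab{y})$ and making $v_i$ path connected to $\ab{a_i}$ there. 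The sequence $a_0,\dots,a_n$ lies in $R(y)$, which by Theorem~\ref{canlnc} has at most $n$ path components, so some $a_i,a_j$ with $i\ne j$ are path connected in $R(y)$. Since $y$ is critical, Lemma~\ref{pthconpres} then supplies a connecting $R_t$-path between $\ab{a_i}$ and $\ab{a_j}$ entirely within $R_t(\ab{y})$; splicing this with the short paths from $v_i$ to $\ab{a_i}$ and from $\ab{a_j}$ to $v_j$ connects $v_i$ to $v_j$ in $R_t(\ab{y})$. Hence $R_t(\ab{y})$ has at most $n$ path components.

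The main obstacle is the reduction to a critical $y$: without it, intermediate vertices of a connecting $R$-path inside $R(y)$ need not map under $f$ into $R_t(\ab{y})$ (only into $R_\Phi(\ab{y})$), so the Lemma~\ref{repair} repairs used in the proof of Lemma~\ref{pthconpres} could introduce auxiliary points outside $R_t(\ab{y})$, breaking the attempt to connect $\ab{a_i}$ and $\ab{a_j}$ inside the relevant successor set.
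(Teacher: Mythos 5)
Your proof is correct and follows essentially the same route as the paper's: the same reduction to the critical point $y$ of the cluster (the paper observes directly that $R_t(u)$ depends only on the $R_\Phi$-cluster of $u$), the same use of Lemma~\ref{R*max} and Lemma~\ref{RtPhi} to pull the $n+1$ points back to $R$-maximal elements of $R(y)$, and the same appeal to Theorem~\ref{canlnc} and Lemma~\ref{pthconpres} to connect two of them inside $R_t(\ab{y})$. Your closing remark about why criticality of $y$ is needed accurately identifies the point at which the naive adaptation of Theorem~\ref{FPhilnc} would break.
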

\begin{proof}
This refines the proof of Theorem \ref{FPhilnc}. 
If $u\in W_\Phi$, we have to show that $R_t(u)$ has at most $n$ path components. 
Now if $C$ is the $R_\Phi$-cluster  of $u$, then $R_t(u)$ is the union of the nucleus $C^\circ$ and all the $R_\Phi$-clusters coming strictly $R_\Phi$-after $C$. Hence
$R_t(u)=R_t(w)$ for all $w\in C$. In particular, $R_t(u)=R_t(\ab{y})$ where
 $y$ is the critical point of $C$. So we show that  $R_t(\ab{y})$ has at most $n$ path components. We take  an arbitrary sequence $v_0,\dots,v_n$ of $n+1$ points in $R_t(\ab{y})$, and prove that there must exist distinct $i$ and $j$ such that $v_i$ and $v_j$ are path connected  in $R_t(\ab{y})$.

Let $A$ be the set of all $R$-maximal points in $R(y)$.
For each $i\leq n$ we have $v_i\in R_\Phi(\ab{y})$ and so
by Lemma \ref{R*max} there is an $a_i\in A\subseteq R(y)$ such that $v_iR_\Phi^*\ab{a_i}$. Hence 
$v_iR_t^*\ab{a_i}$ by Lemma \ref{RtPhi}.
 This gives us a sequence $a_0,\dots,a_n$   of  members of $R(y)$. But $R(y)$ has at most $n$ path components, by Theorem \ref{canlnc}. Hence there exist $i\ne j\leq n$ such that $a_i$ and $a_j$ are path connected in $R(y)$. Therefore by Lemma \ref{pthconpres},
$\ab{a_i}$ and $\ab{a_j} $ are path connected in $R_t(\ab{y})$. Since $v_iR_t^*\ab{a_i}$ and $v_jR_t^*\ab{a_j}$,
and $v_i,v_j\in R_t(|y|)$,
 it follows that 
$v_i$ and $v_j $ are path connected in $R_t(\ab{y})$. That shows that $R_t(\ab{y})$ does not have more than $n$ path components.
\end{proof}

This result combines with the analysis as in other cases to give the  finite model property for the tangle systems K4G$_nt$,    K4G$_nt$.U,  K4G$_nt$.UC, KD4G$_nt$, KD4G$_nt$.U,  and KD4G$_nt$.UC
for all $n\geq 1$.

\newpage\part{}
In the second part of the paper, we prove topological completeness theorems for
the logics discussed in part~1.
The results in part~1 will of course be used, but much of part~2 can be read
independently --- indeed, nearly all of it, if the reader takes the results of part~1 on trust.

\section{Further basic definitions}

In this section,  the main definitions, notation, and
 basic results needed in Part 2 are developed.

\subsection{Topological spaces}\label{ss:top spaces}

We will assume some familiarity with topology,
but we take some time to reprise the
main concepts and notation.
A \emph{topological space} is a pair $(X,\tau)$,
where $X$ is a set and $\tau\subseteq\wp(X)$ satisfies:
\begin{enumerate}

\item if $\c S\subseteq\tau$ then $\bigcup\c S\in\tau$,

\item if $\c S\subseteq\tau$ is finite then $\bigcap\c S\in\tau$,
on the understanding that $\bigcap\emptyset=X$.

\end{enumerate}
So $\tau$ is a set of subsets of $X$ closed under unions and finite intersections.
By taking $\c S=\emptyset$,
it follows that $\emptyset,X\in\tau$.
The elements of $\tau$ are called \emph{open subsets} of $X$, or just \emph{open sets.}
An \emph{open \nhd} of a point $x\in X$ is an open set containing $x$.
A subset $C\subseteq X$ is called \emph{closed} if $X\setminus C$ is open.
The set of closed subsets of $X$ is closed under intersections and finite unions.
If $O$ is open and $C$ closed then  $O\setminus C$ is open and
$C\setminus O$ is closed.

We use the  signs $\int$, $\cl$, $\did$ to denote the \emph{interior,}
\emph{closure,} and \emph{derivative} operators, respectively.
So for $S\subseteq X$, 
\begin{itemize}
\item $\int S=\bigcup\{O\in\tau:O\subseteq S\}$ --- the largest
open set contained in $S$,

\item   $\cl S=\bigcap\{C\subseteq X:C$ closed, $S\subseteq C\}$
--- the smallest closed set containing $S$;
we have $\cl S=\{x\in X:S\cap O\neq\emptyset
\mbox{ for every open \nhd\ } O\mbox{ of } x\}$,

\item $\did S=\{x\in X:S\cap O\setminus\{x\}\neq\emptyset
\mbox{ for every open \nhd\ } O\mbox{ of } x\}$.

\end{itemize}
Then $\int S\subseteq S\subseteq\cl S\supseteq\did S$.
For all subsets $A,B$ of $X$, we have 
\[
\begin{array}{rcl}
\cl(A\cup B)&=&\cl A\cup\cl B,
\\
\did(A\cup B)&=&\did A\cup\did B, 
\\
\int(A\cap B)&=&\int A\cap\int B.
\end{array}
\]
That is, \emph{closure and $\did$ are additive and interior is multiplicative.}

We follow standard practice and identify (notationally) the space $(X,\tau)$
with $X$.
The reader should note that we do allow empty topological spaces, where $X=\emptyset$.
This is particularly useful when dealing with subspaces.

A \emph{subspace} of $X$ is a topological space of the form
$(Y,\{O\cap Y:O\in\tau\})$, for (possibly empty) $Y\subseteq X$.
It is a subset of $X$, made into a topological space by endowing it with what is called
the \emph{subspace topology.}
It is said to be an \emph{open subspace} if $Y$ is an open subset of $X$.
As with $X$, we identify (notationally) the subspace with its underlying set, $Y$.
We write $\int_Y,\cl_Y$ for the operations of interior and closure in the subspace $Y$.
It can be checked that for every $S\subseteq Y$ we have
$\cl_YS=Y\cap\cl S$, and if $Y$ is an open subspace then $\int_YS=\int S$.

We will be considering various properties that a topological space $X$ may have.
We leave most of them for later, but we mention now
that $X$ is said to be  
\emph{dense in itself} if
no singleton subset is open, 
\emph{connected} if it is not the union of two disjoint non-empty open sets,
and \emph{separable} if it has a countable subset $D$ with $X=\cl D$.
$X$ is \emph{T1} if  every singleton subset $\{x\}$ is closed, and \emph{T$_D$} if the 
derivative $\did\{x\}$ of every singleton is closed, which is equivalent to requiring $\did\did\{x\}\sub\did\{x\}$. The T$_D$ property, introduced in \cite{aull:sepa62}, is strictly weaker than T1.

\subsection{Metric spaces}\label{ss:metric spaces}

A \emph{metric space} is a pair $(X,d)$,
where $X$ is a 
set
and $d:X\times X\to\R$ is a `distance function'
(having nothing to do with the modal operator $\did$) satisfying, for all $x,y,z\in X$,
\begin{enumerate}
\item $d(x,y)\geq0$,

\item $d(x,y)=0$ iff $x=y$,

\item $d(x,y)=d(y,x)$,

\item $d(x,z)\leq d(x,y)+d(y,z)$ (the `triangle inequality').
\end{enumerate}
We assume some experience of working with this definition,
in particular with the triangle inequality.
Examples of metric spaces abound and include the real numbers
$\R$ with the standard distance function $d(x,y)=|x-y|$,
$\R^n$ with Pythagorean distance, etc.
As usual, we  often identify (notationally) $(X,d)$ with $X$.

Let $(X,d)$ be a metric space, and $x\in X$.
For non-empty $S\subseteq X$, define 
\[
d(x,S)=\inf\{d(x,y):y\in S\}.
\]
We leave $d(x,\emptyset)$ undefined.
For a real number $\varepsilon>0$,
we let $N_\varepsilon(x)$ denote the so-called `open ball' 
$\{y\in X:d(x,y)<\varepsilon\}$.  
A metric space $(X,d)$ gives rise to a topological space $(X,\tau_d)$ 
in which
a subset $O\subseteq X$ is declared to be open (i.e., in $\tau_d$)
iff for every $x\in O$, there is some $\varepsilon>0$ such that
$N_\varepsilon(x)\subseteq O$.
In other words, the open sets are the unions of open balls.
We frequently regard a metric space $(X,d)$ equally as a topological space
$(X,\tau_d)$.
So, we will say that a metric space has a given topological property
(such as being dense in itself) if the associated topological space has the property.
As an example,  every metric space is T$_D$, since it has the stronger Hausdorff (or T2) property.

A \emph{subspace} of a metric space $(X,d)$ is a pair
of the form $(Y,d\restriction Y\times Y)$, where $Y\subseteq X$.
It is plainly a metric space, and the topological space
$(Y,\tau_{d\restriction Y\times Y})$
is a subspace of $(X,\tau_d$).

\subsection{Topological semantics}
Given a topological space $X$, 
an \emph{assignment} into $X$ is simply
a map $h:\Var\to\wp(X)$.
A \emph{topological model} is a pair $(X,h)$, where
$X$ is a topological space and $h$ an assignment into~$X$.
We will also be  considering topological models where $\Var$ is replaced by some other set of atoms. Details will be given later.

As with Kripke models, we attribute a topological property
to a topological model if the underlying topological space has the property.

For every topological model $(X,h)$ and every point $x\in X$, we define 
$(X,h),x\models\varphi$, for a
$\Lbig$-formula $\varphi$, by induction on $\varphi$:
\begin{enumerate}
\item $(X,h),x\models p$ iff $x\in h(p)$, for $p\in\Var$.

\item $(X,h),x\models\top$.

\item $(X,h),x\models\neg\varphi$ iff $(X,h),x\not\models\varphi$.

\item $(X,h),x\models\varphi\wedge\psi$ iff $(X,h),x\models\varphi$ and $(X,h),x\models\psi$.

\item $(X,h),x\models\bo\varphi$ iff 
there is an open \nhd\ $O$ of $x$ with $(X,h),y\models\varphi$ for every
$y\in O$.

\item $(X,h),x\models\bod\varphi$ iff 
there is an open \nhd\ $O$ of $x$ with $(X,h),y\models\varphi$ for every
$y\in O\setminus\{x\}$.  We do not require $\varphi$ to hold at $x$ itself.

\item $(X,h),x\models\forall\varphi$ iff 
$(X,h),y\models\varphi$ for every $y\in X$.

\item\label{topsem clause 8} For a non-empty 
finite set $\Delta$ of formulas for which we have 
inductively defined semantics,
write $\sem\delta=\{x\in X:(X,h),x\models\delta\}$,
for each $\delta\in\Delta$.
Then define:
\begin{itemize}
\item $(X,h),x\models\dit\Delta$ iff 
there is some $S\subseteq X$ such that
$x\in S\subseteq\bigcap_{\delta\in\Delta}\cl(\sem\delta\cap S)$,

\item  $(X,h),x\models\didt\Delta$ iff 
there is some $S\subseteq X$ such that
$x\in S\subseteq\bigcap_{\delta\in\Delta}\did(\sem\delta\cap S)$.

\end{itemize}

\item Suppose 
inductively that $\sem\varphi_h=\{x\in X:(X,h),x\models\varphi\}$ is well defined, for every assignment $h$ into $X$.
Define a map $f:\wp(X)\to\wp(X)$  by
\[
f(S)=\sem\varphi_{h[S/q]}\quad\mbox{for }S\subseteq X,
\]
where $h[S/q]$ is defined as in  Kripke semantics (section~\ref{ss:Kripke sem}).
Again,
$f$ is monotonic, and we define $(X,h),x\models\mu q\varphi$ iff $x\in LFP(f)$.
\end{enumerate}
The definition makes sense but has no content if $X$ is empty: there are
no points $x\in X$ to evaluate at.
Writing $\sem\varphi_h=\{x\in X:(X,h),x\models\varphi\}$,
we have $\sem{\bo\varphi}_h=\int(\sem\varphi_h)$,
$\sem{\di\varphi}_h=\cl(\sem\varphi_h)$, and
$\sem{\did\varphi}_h=\did(\sem\varphi_h)$ for each $\varphi,h$.
Again, 
$\sem{\nu q\varphi}=GFP(f)$, where $\varphi,f$ are as in the last clause.

\begin{remark}\label{rmk:sem of tangle}\rm
Again we briefly discuss the semantics of $\dit$ and $\didt$
(see clause~\ref{topsem clause 8} above).
 With $\varphi\equiv\psi$ redefined to mean that 
$(X,h),x\models\varphi\leftrightarrow\psi$ for every topological model $(X,h)$ and $x\in X$,
the equivalences in \eqref{e:mu and tangle} above continue to hold,
and indeed they motivate clause~\ref{topsem clause 8}.
However, there is a perhaps more intuitive 
meaning for $\dit$ and $\didt$ in terms of \emph{games,} which are used extensively in the mu-calculus.
Let players \pa, \pe\ play a game of length $\omega$ on $X$.
Initially, the position is $x$.
In each round, if the current position is $y\in X$,
player \pa\ chooses an open \nhd\ $O$ of $y$ and a formula $\delta\in\Delta$.
Player \pe\ must select a point $z\in O$ at which $\delta$ is true
(and with $z\neq y$ in the case of $\didt$).
If she cannot, player \pa\ wins.
That is the end of the round, and the next round commences from position $z$.
Player \pe\ wins if she survives every round.
It can be checked that $(X,h),x\models\dit\Delta$
(respectively, $(X,h),x\models\didt\Delta$) iff \pe\ has a \ws\ in this game
(respectively, the game where she must additionally choose $z\neq y$).
\end{remark}

\subsection{Topological semantics in open subspaces}

Let $X$ be a topological space and $Y$ a subspace of $X$.
Each assignment $h:\Var\to\wp(X)$ into $X$ induces
an assignment $h_Y$ into $Y$, via
$h_Y(p)=Y\cap h(p)$, for each $p\in\Var$.
Thus,  we can evaluate formulas at points in $Y$
in both $(X,h)$ and $(Y,h_Y)$.
Because the semantics of the connectives $\bo,\bod,\dit,\didt$ 
depend on only arbitrarily small open \nhd s of the evaluation point,
it is easily seen that
if $Y$ is an \emph{open} subspace of $X$,
 we get the same result for every formula not involving $\forall$.
That is, the following analogue of lemma~\ref{lem:gen submodels} holds:

\begin{lemma}\label{lem:open subspaces}
Whenever $Y$ is an open subspace of $X$, we have
$(X,h),y\models\varphi$ iff
$(Y,h_Y),y\models\varphi$,
for every $y\in Y$ and $\varphi\in\c L^{\mu\dit\didt}_{\bo\bod}$.
\end{lemma}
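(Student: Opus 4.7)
The plan is to prove the lemma by straightforward induction on the construction of $\varphi\in\c L^{\mu\dit\didt}_{\bo\bod}$, exploiting at each step the fact that $Y$ is open in $X$, so that open neighbourhoods of points $y\in Y$ in the subspace $Y$ are exactly sets of the form $O\cap Y$ with $O$ open in $X$, and moreover such $O\cap Y$ are themselves open in $X$. This makes the interior, closure, and derivative operators on subsets of $Y$ agree between $X$ and $Y$: one checks that $\int_Y S=\int S$, $\cl_Y S=Y\cap\cl S$, and $\did_Y S=Y\cap\did S$ for $S\subseteq Y$. The atomic and Boolean cases of the induction are immediate from the definition of $h_Y$. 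The $\bo$ and $\bod$ cases reduce at once to these identities (using the inductive hypothesis to rewrite $\sem{\varphi}_{h_Y}$ as $Y\cap\sem{\varphi}_h$).

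For the $\dit\Delta$ and $\didt\Delta$ cases, I would use the existential-witness semantics given in the definition. If a set $S\subseteq X$ witnesses $(X,h),y\models\dit\Delta$, then $S'=S\cap Y$ witnesses $(Y,h_Y),y\models\dit\Delta$: for each $\delta\in\Delta$ and each $z\in S'$, every open neighbourhood $O\subseteq Y$ of $z$ in $Y$ is also an open neighbourhood of $z$ in $X$, so it meets $\sem{\delta}_h\cap S$ in some point $w$, which lies in $Y$ because $w\in O\subseteq Y$, hence $w\in\sem{\delta}_h\cap S\cap Y=\sem{\delta}_{h_Y}\cap S'$ by the inductive hypothesis. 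Conversely, any witness $S'\subseteq Y$ for truth in $Y$ serves also as a witness in $X$, via the identity $\cl_Y T=Y\cap \cl T\subseteq\cl T$ for $T\subseteq Y$. The $\didt$ case is entirely analogous using $\did_Y T=Y\cap\did T$.

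The case of $\mu q\varphi$ is the main point requiring a little care. Writing $f_X,f_Y$ for the monotonic maps on $\wp(X),\wp(Y)$ respectively defined as in the semantic clause for $\mu$, I would use the transfinite approximation $f_X^0=\emptyset$, $f_X^{\alpha+1}=f_X(f_X^\alpha)$, $f_X^\delta=\bigcup_{\alpha<\delta}f_X^\alpha$ for limit $\delta$ (and similarly for $f_Y$) from section~\ref{ss:lfp}, and prove by induction on $\alpha$ that $f_X^\alpha\cap Y=f_Y^\alpha$. At a successor step this uses the outer inductive hypothesis on $\varphi$ applied to the assignment $h[f_X^\alpha/q]$, together with the observation that $(h[f_X^\alpha/q])_Y=h_Y[f_X^\alpha\cap Y/q]=h_Y[f_Y^\alpha/q]$ by the inner inductive hypothesis on $\alpha$; the limit step is trivial. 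Passing to the union over all ordinals yields $LFP(f_X)\cap Y=LFP(f_Y)$, which gives the required equivalence for $y\in Y$.

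No step is really hard; the only point that might trip one up is making sure in the $\dit$/$\didt$ direction going from $X$ to $Y$ that the witness set $S$ is cut down to $S\cap Y$ before applying closure, and verifying that openness of $Y$ lets every nearby witness point in $X$ be found inside $Y$. All other clauses reduce mechanically, so nothing more than this bookkeeping is needed.
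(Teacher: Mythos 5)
Your proof is correct and follows exactly the route the paper indicates: the paper states this lemma without proof, remarking only that the connectives depend on arbitrarily small open neighbourhoods, and your induction (including the cut-down of the tangle witness set to $S\cap Y$ and the transfinite approximant argument $f_X^\alpha\cap Y=f_Y^\alpha$ for the $\mu$ case) is a correct fleshing-out of that observation. The only point worth making explicit is that the induction hypothesis must be taken over all assignments $h$ simultaneously, since the $\mu$ case applies it to the modified assignment $h[f_X^\alpha/q]$ — which you do implicitly and correctly.
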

(This holds vacuously if $Y$ is empty.)

\subsection{Satisfiability, validity, equivalence}\label{ss:validity 2}

Let $X$ be a topological space.
A set $\Gamma$ of $\Lbig$-formulas is said to be 
\emph{satisfiable in $X$}
if there exist an  assignment $h$ into $X$ and a point $x\in X$
such that $(X,h),x\models\gamma$ for every $\gamma\in\Gamma$.

Let $\varphi$ be an $\Lbig$-formula.
We say that $\varphi$ is \emph{satisfiable in $X$}
if the set $\{\varphi\}$ is so satisfiable.
We say that $\varphi$ is \emph{valid in $X$,}
or that \emph{$X$ \emph{validates} $\varphi$,}
if $\neg\varphi$ is not satisfiable in $X$.
We also say that $\varphi$ is \emph{equivalent} to a formula $\psi$
in $X$
if $\varphi\leftrightarrow\psi$ is valid in $X$.

In any space $X$, the `4' schema: $\bo \varphi\to\bo\bo \varphi$ is valid under 
the interpretation $\sem{\bo\varphi}=\int\sem\varphi$. But the schema 
$\bod \varphi\to\bod\bod \varphi$, or equivalently
$\did\did \varphi\to \did \varphi$, is valid under 
the interpretation $\sem{\did\varphi}=\did\sem\varphi$ if, and only if, $X$ is a T$_D$ space. This is because in any space the derivatives of all subsets are closed iff the derivatives of all singletons are closed (see \cite[Theorem 5.1]{aull:sepa62}).

\subsection{Logics}

Let $\c K$ be a class of topological spaces.
In the context of a given language $\c L\subseteq\Lbig$,
the \emph{($\c L$)-logic of $\c K$} is the set of all $\c L$-formulas 
that are valid in every member of $\c K$.
Exactly as for Kripke semantics, a Hilbert system $H$ for $\c L$ 
with set of theorems  $T$ is said to be 
\begin{itemize}
\item \emph{sound over $\c K$}
if  $T$ is a subset of the logic of $\c K$
(all $H$-theorems are valid in $\c K$),

\item  \emph{weakly complete}, or simply \emph{complete, over $\c K$}
if $T$ contains the logic of $\c K$
(all $\c K$-valid formulas are $H$-theorems),

\item \emph{strongly complete over $\c K$} if
every countable $H$-consistent set $\Gamma$ of $\c L$-formulas
is satisfiable in some \str\ in $\c K$.
\end{itemize}
For example the $\c L_\bod$-system K4 is sound and complete over the class of all T$_D$-spaces, a result due to Esakia (see \cite{Esa04:apal}).

The logic of a single space $X$ is defined to be
the logic of the class $\{X\}$; similar definitions are used for the other terms here. 

We say that a topological space $X$ \emph{validates $H$}
if $H$ is sound over~$X$.
To establish this, it is enough to check that each axiom of $H$ is valid in $X$,
and that each rule of $H$ preserves $X$-validity.

It can be checked that $H$ is weakly complete
over $\c K$ iff every \emph{finite} $H$-consistent
set of formulas is satisfiable in some space in $\c K$.
Hence, every strongly complete Hilbert system is also weakly complete.
The main aim of this part of the paper is to provide Hilbert systems
that are (where possible) sound and strongly complete over various topological spaces,
with respect to various sublanguages of 
$\Lbig$.

\section{Translations}\label{sec:translations}

The language $\Lbig$
has some redundancy.
We can  express $\bo$ with $\bod$, and $\dit$ with $\didt$
(but not vice versa).
We can also express $\dit,\didt$ with $\mu$ --- and often vice versa,
using results of Dawar and Otto \cite{DO09}.

Later, we will need translations that work in both 
 topological spaces and (possibly restricted) Kripke models.
In this  section, we will explore translations --- but only to the extent needed for later work.
We will again assume that $\Var$ is infinite.

\subsection{Translating $\did$ and $\didt$ to $\mu$}

This is the simplest case.  We have already seen the idea, in
the equivalence of $\dit$- and $\dit$-formulas to $\nu$-formulas given 
in~\eqref{e:mu and tangle} in section~\ref{ss:Kripke sem}.
\begin{definition}
For each $\Lbig$-formula
$\varphi$, we define a $\c L^{\mu}_{\bo\bod\forall}$-formula
$\varphi^\mu$ as follows:
\begin{enumerate}
\item $p^\mu=p$ for $p\in\Var$.

\item $-^\mu$ commutes with the boolean connectives, 
 $\bo$, $\bod$, $\forall$, and $\mu$ (cf.~definition~\ref{def:phi*}).

\item $(\dit\Delta)^\mu=\nu q\bigwedge_{\delta\in\Delta}\di(\delta^\mu\wedge q)$, 
where $q\in\Var$ does not occur in any $\delta^\mu$ ($\delta\in\Delta$).

\item $(\didt\Delta)^\mu=\nu q\bigwedge_{\delta\in\Delta}\did(\delta^\mu\wedge q)$,
where $q\in\Var$ does not occur in any $\delta^\mu$ ($\delta\in\Delta$).
\end{enumerate}
\end{definition}
These formulas can be checked to be well formed.
The translation simply replaces 
$\dit$ by an expression using $\mu$ and $\bo$,
and similarly for $\didt$.
So if $\varphi\in\c L_{\bo}^{\dit}$
then $\varphi^\mu\in\c L_{\bo}^{\mu}$,
if $\varphi\in\c L_{\bod}^{\didt}$
then $\varphi^\mu\in\c L_{\bod}^{\mu}$, etc.

This translation is faithful in all relevant semantics:

\begin{lemma}\label{lem:mu trans}
Let $\varphi$ be any $\Lbig$-formula.
Then
$\varphi$ is equivalent to $\varphi^\mu$ in 
every transitive Kripke frame and  in every topological space.
(See sections~\ref{ss:validity} and~\ref{ss:validity 2} for the definition of equivalence.)
\end{lemma}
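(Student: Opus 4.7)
\begin{proofsk}[Proposal]
The plan is to proceed by induction on $\varphi$. I would first check that for each assignment $h$ into $X$ (topological) or $\c F$ (Kripke transitive), and each world/point $x$, we have $\sem\varphi_h = \sem{\varphi^\mu}_h$. Because $-^\mu$ commutes with atoms, booleans, $\bo$, $\bod$, $\forall$, and $\mu$, all of these cases reduce immediately to the inductive hypothesis: for the $\mu q\psi$ case one notes that the semantic monotone maps $S \mapsto \sem\psi_{h[S/q]}$ and $S \mapsto \sem{\psi^\mu}_{h[S/q]}$ coincide by IH, hence share a least fixed point. So the entire problem boils down to the two tangle clauses.

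For the topological $\dit\Delta$ case, I would unfold the $\nu$-definition: writing $f(S) = \sem{\bigwedge_{\delta\in\Delta}\di(\delta^\mu\wedge q)}_{h[S/q]}$, additivity of closure gives $f(S) = \bigcap_{\delta\in\Delta}\cl(\sem{\delta^\mu}_h \cap S)$, which by IH equals $\bigcap_{\delta\in\Delta}\cl(\sem\delta_h \cap S)$. So $\sem{(\dit\Delta)^\mu}_h = GFP(f) = \bigcup\{S : S\sub f(S)\}$, and this is literally the topological definition of $\sem{\dit\Delta}_h$ from clause~\ref{topsem clause 8} of the topological semantics. The $\didt$ case is identical with $\cl$ replaced by $\did$.

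The Kripke case is the only one requiring real work, so this is the main obstacle: one must bridge the infinite-path semantics of $\dit$ (clause \ref{item: sem tangle kripke} of the Kripke semantics) with the greatest fixed point of $f(S) = \{x \in W : \text{for every } \delta\in\Delta,\ R(x) \cap S \cap \sem\delta_h \ne \emptyset\}$. Here transitivity of $R$ is essential. For the forward direction, given an endless path $x = x_0 R x_1 R \cdots$ along which each $\delta\in\Delta$ holds infinitely often, let $S = \{x_n : n < \omega\}$; for any $x_n \in S$ and any $\delta\in\Delta$, pick $m > n$ with $x_m \models \delta$, and by transitivity $R(x_n, x_m)$, so $x_m \in R(x_n) \cap S \cap \sem\delta_h$. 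Hence $S$ is a post-fixed point of $f$, so $x \in S \sub GFP(f)$. For the converse, let $S = GFP(f)$; starting from $x \in S$ and cycling through an enumeration $\delta_1,\ldots,\delta_k$ of $\Delta$, at each stage with current point $x_n \in S$ we pick an $R$-successor $x_{n+1} \in S \cap \sem{\delta_{(n \bmod k)+1}}_h$ using that $S \sub f(S)$. This yields the required endless path witnessing $x \models \dit\Delta$. The Kripke case for $\didt\Delta$ is treated identically, since in Kripke semantics $\bod = \bo$ and $\didt = \dit$.

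Putting the two tangle cases together with the trivial commutation cases completes the induction, yielding the equivalence of $\varphi$ and $\varphi^\mu$ simultaneously over all transitive Kripke frames and all topological spaces.
\end{proofsk}
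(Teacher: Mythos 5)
Your proposal is correct and follows essentially the same route as the paper: the paper also reduces everything to the tangle case, characterises $\nu q\bigwedge_{\delta\in\Delta}\di(\delta\wedge q)$ via post-fixed points, builds the endless path by cycling through $\Delta$, and uses transitivity of $R$ for the converse direction (the paper only writes out the Kripke case, leaving the topological unfolding — which you carry out correctly — as immediate from clause~\ref{topsem clause 8}). The only quibble is that your appeal to ``additivity of closure'' in the topological case is unnecessary; the identity $f(S)=\bigcap_{\delta\in\Delta}\cl(\sem{\delta}_h\cap S)$ follows directly from the semantics of $\wedge$ and $\di$ together with the fact that $q$ does not occur in the $\delta^\mu$.
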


\begin{proof}
An easy induction on $\varphi$.
We consider only the case $\dit\Delta$ (for finite $\Delta\neq\emptyset$), 
and only in Kripke semantics
(the case $\didt\Delta$ is of course identical).
Assume the lemma for each $\delta\in\Delta$.
Take any transitive Kripke model
$\c M=(W,R,h)$ and any $w\in W$.
Inductively, 
$\c M,w\models(\dit\Delta)^\mu$
iff $\c M,w\models\nu q\bigwedge_{\delta\in\Delta}\di(\delta\wedge q)$.
By the post-fixed point characterisation of greatest fixed points,
this holds iff $(*)$ there is $S\subseteq W$
with $w\in S$ and such that for every $s\in S$ and $\delta\in\Delta$,
 there is $t\in S$ with $sRt$ and $\c M,t\models\delta$.
 
Assuming $(*)$, it is easy to choose a sequence $w=s_0Rs_1Rs_2\ldots$ in $S$
by induction so that $\{n<\omega:\c M,s_n\models\delta\}$ is infinite for every $\delta\in\Delta$.
It follows that 
$\c M,w\models\dit\Delta$.
Conversely, if $\c M,w\models\dit\Delta$ then
there are worlds $w=w_0Rw_1Rw_2\ldots$ in $W$
with $\{n<\omega:\c M,w_n\models\delta\}$ infinite for every $\delta\in\Delta$.
Let $S=\{w_n:n<\omega\}$.
Then $w\in S$, and 
for each $w_n\in S$ and $\delta\in\Delta$, there is 
$m>n$ with  $\c M,w_m\models\delta$.
Then $w_m\in S$, and by transitivity of $R$ we have $w_nRw_m$.
So $(*)$ holds.
\end{proof}

\subsection{Translating $\bo$ to $\bod$ and $\dit$  to $\didt$}
Just replacing $\bo$ by $\bod$ and $\dit$ by $\didt$ in a formula $\varphi\in\Lbig$
yields an $\c L^{\mu\didt}_{\bod\forall}$-formula equivalent to $\varphi$
in all Kripke frames.  But the two are not equivalent in topological spaces, so we seek a better translation that works in both semantics.

\begin{definition}
For each $\Lbig$-formula
$\varphi$, we define a $\c L^{\mu\didt}_{\bod\forall}$-formula
$\varphi^d$ as follows:
\begin{enumerate}
\item $p^d=p$ for $p\in\Var$.

\item $-^d$ commutes with the boolean connectives, $\bod$,  $\didt$,
$\forall$, and $\mu$.

\item $(\bo\varphi)^d=\varphi^d\wedge\bod\varphi^d$.

\item $(\dit\Delta)^d=(\bigwedge\Delta^d)\vee
\did(\bigwedge\Delta^d)\vee(\didt\Delta^d)$, where $\Delta^d=\{\delta^d:\delta\in\Delta\}$.
\end{enumerate}
\end{definition}
Again, $\varphi^d$ is always well formed.
The translation $-^d$ is pretty good:
\begin{lemma}\label{lem:-d in refl frames}
Each $\Lbig$-formula $\varphi$
is equivalent to $\varphi^d$ in every reflexive Kripke frame.
\end{lemma}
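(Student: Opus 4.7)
The plan is to prove the equivalence by structural induction on $\varphi$, showing that for every reflexive Kripke frame $\c F=(W,R)$, every assignment $h$ into $\c F$, and every $w\in W$, we have $(W,R,h),w\models\varphi\iff(W,R,h),w\models\varphi^d$. The base case ($\varphi$ atomic or $\top$) and the boolean cases are immediate because $-^d$ commutes with these constructs. The cases $\bod\varphi$, $\didt\Delta$, $\forall\varphi$ are also easy: the translation commutes, so the inductive hypothesis on the immediate subformula(s) transfers to the full formula, since the truth condition of each of these connectives depends only on the truth sets of its arguments. The $\mu$ case is equally routine: by the inductive hypothesis, the monotone operator $S\mapsto\sem{\varphi}_{h[S/q]}$ coincides with $S\mapsto\sem{\varphi^d}_{h[S/q]}$, so they share the same least fixed point.

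The two substantive cases are $\bo\varphi$ and $\dit\Delta$, and both rely on reflexivity of $R$ together with the fact (noted at the end of subsection on Kripke semantics in generated submodels) that in Kripke semantics, $\bo$ and $\bod$ are interchangeable, as are $\dit$ and $\didt$. For $\bo\varphi$: in any Kripke model one has $\bo\varphi\equiv\bod\varphi$, and in a reflexive frame $\bo\varphi\equiv\varphi\wedge\bo\varphi\equiv\varphi\wedge\bod\varphi$. Applying the inductive hypothesis to $\varphi$ and using modal congruence for $\bod$ then yields $\bo\varphi\equiv\varphi^d\wedge\bod\varphi^d=(\bo\varphi)^d$.

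For $\dit\Delta$: again $\dit\Delta\equiv\didt\Delta$ in Kripke semantics, and by the inductive hypothesis (applied to each $\delta\in\Delta$) together with the fact that $\didt$ depends only on the truth sets of its arguments, $\didt\Delta\equiv\didt\Delta^d$. So it suffices to show $\didt\Delta^d\equiv\bigwedge\Delta^d\vee\did(\bigwedge\Delta^d)\vee\didt\Delta^d$ in $\c F$. One direction is trivial. For the other, suppose the disjunction holds at $w$; in the third case we are done, and in the first two cases we use reflexivity to build a witnessing endless $R$-path. If $w\models\bigwedge\Delta^d$, the constant sequence $w,w,w,\ldots$ works, since $wRw$. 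If $w\models\did(\bigwedge\Delta^d)$, pick $v$ with $wRv$ and $v\models\bigwedge\Delta^d$; then $w,v,v,v,\ldots$ is an $R$-path (using $vRv$) along which every $\delta^d$ holds cofinitely, witnessing $\didt\Delta^d$ at $w$. The main (albeit modest) obstacle is simply the bookkeeping around the $\dit\Delta$ clause — keeping clear the two translations in play (from $\bo,\dit$ to $\bod,\didt$ at the top level, and the inductive translation of $\Delta$ into $\Delta^d$) — but once the Kripke-level identification of $\bo/\bod$ and $\dit/\didt$ is invoked, reflexivity does all the real work.
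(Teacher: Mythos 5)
Your proof is correct and takes essentially the same route as the paper's (which is only a two-sentence sketch of the same induction): reflexivity gives $\bo\varphi\to\varphi$ for the $\bo$ case, and the constant-tail paths $w,w,\ldots$ and $w,v,v,\ldots$ are exactly the observation that $\bigwedge\Delta$ and $\did\bigwedge\Delta$ each imply $\dit\Delta$ in reflexive frames. Your write-up just fills in the details the paper leaves to the reader.
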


\begin{proof}
An easy induction on $\varphi$. To show, e.g., that
$\bo\varphi$ implies $(\bo\varphi)^d$, we need reflexivity.
We also note that $\bigwedge\Delta$ and $\did\bigwedge\Delta$
both imply $\dit\Delta$ in reflexive Kripke models.
\end{proof}

\begin{lemma}\label{lem:trans equivalent top}
Each $\Lbig$-formula $\varphi$
is equivalent to $\varphi^d$ in a topological space $X$ if, and only if, $X$ is T$_D$.
\end{lemma}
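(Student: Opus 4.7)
The plan is to prove the equivalence in both directions. For the direction $(\Leftarrow)$, assume $X$ is T$_D$ and proceed by induction on $\varphi$. The base and boolean cases are immediate, as are the cases for $\bod$, $\didt$, $\forall$, and $\mu$, because $-^d$ commutes with these connectives and the inductive hypothesis yields $\sem{\delta^d}=\sem\delta$ for subformulas. The case $\varphi=\bo\psi$ reduces to showing $\sem{\bo\psi}=\sem\psi\cap\sem{\bod\psi}$, and this holds in every topological space: the identity $X\setminus\int S=\cl(X\setminus S)=(X\setminus S)\cup\did(X\setminus S)$ gives $\int S=S\cap(X\setminus\did(X\setminus S))$, which is $\sem\psi\cap\sem{\bod\psi}$ when $S=\sem\psi$. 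So T$_D$ is not needed for this clause.

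The crux is the case $\varphi=\dit\Delta$. Using the inductive hypotheses to identify each $\sem{\delta^d}$ with $\sem\delta$, it suffices to establish
\[
\sem{\dit\Delta}\;=\;\sem{\bigwedge\Delta}\cup\did\sem{\bigwedge\Delta}\cup\sem{\didt\Delta}.
\]
The inclusion $\supseteq$ is easy in every space: the witnesses $S=\{x\}$, $S=\{x\}\cup\sem{\bigwedge\Delta}$, and the containment $\did T\subseteq\cl T$ handle the three disjuncts. For $\subseteq$, take $x\in\sem{\dit\Delta}$ with $x\notin\cl\sem{\bigwedge\Delta}$; choose an open $O_x\ni x$ disjoint from $\sem{\bigwedge\Delta}$, and set $H=\sem{\dit\Delta}\cap O_x$, so $x\in H$. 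I claim $H$ witnesses $\didt\Delta$ at $x$. For each $y\in H$ and each $\delta\in\Delta$, the fact $y\in\cl(\sem\delta\cap\sem{\dit\Delta})$ combined with openness of $O_x$ gives $y\in\cl(\sem\delta\cap H)$ (restrict neighborhoods of $y$ to $O_x$). If $y\notin\sem\delta$ then $y\in\did(\sem\delta\cap H)$ at once. If $y\in\sem\delta$, I argue by contradiction: if $y\notin\did(\sem\delta\cap H)$, pick open $U\subseteq O_x$ with $y\in U$ and $U\cap\sem\delta\cap H=\{y\}$. Since $y\in O_x\setminus\sem{\bigwedge\Delta}$, some $\delta'\in\Delta$ has $y\notin\sem{\delta'}$, so the previous case gives $y\in\did(\sem{\delta'}\cap H)\subseteq\did H$. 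For any $z\in U\cap H\setminus\{y\}$, the facts $z\in\cl(\sem\delta\cap H)$ and $U\cap\sem\delta\cap H=\{y\}$ force every open neighborhood of $z$ (intersected with $U$) to contain $y$, so $z\in\did\{y\}$. Here T$_D$ enters: since $\did\{y\}$ is closed, $U\setminus\did\{y\}$ is an open neighborhood of $y$ (note that $y\notin\did\{y\}$ always), and $(U\setminus\did\{y\})\cap H\subseteq\{y\}$, contradicting $y\in\did H$.

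For the direction $(\Rightarrow)$ I argue by contrapositive: if $X$ is not T$_D$, I exhibit a formula failing the equivalence. Pick $T\subseteq X$ with $\did\did T\not\subseteq\did T$ and $y\in\did\did T\setminus\did T$. A short argument (if $y\notin T$ then some open $U\ni y$ would satisfy $U\cap T=\emptyset$, but any $z\in U\cap\did T$ forces a $T$-point other than $z$ inside $U$, contradiction) shows $y\in T$, so some open $O\ni y$ satisfies $O\cap T=\{y\}$. Moreover, each $z\in O\cap\did T$ lies in $\did\{y\}$: any open neighborhood of $z$ contained in $O$ must meet $T\setminus\{z\}=\{y\}$. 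Set $\sem p=\did T$ and $\sem q=T\setminus\did T$. The witness $S=\{y\}\cup(O\cap\did T)$ shows $y\models\dit\{p,q\}$, using that $y\in\did\did T\subseteq\cl\did T$, that $\sem q\cap S=\{y\}\subseteq\cl\{y\}$, and that $z\in\cl\{y\}$ for each $z\in O\cap\did T$ because $z\in\did\{y\}$. But $\sem{p\wedge q}=\emptyset$ kills the first two disjuncts of $(\dit\{p,q\})^d$, and the isolation of $y$ in $T$ combined with $y\notin\did T$ rules out any set containing $y$ witnessing $\didt\{p,q\}$ at $y$ (in any candidate $S^*$, the neighborhood $O$ of $y$ would need a point of $(T\setminus\did T)\cap S^*$ other than $y$, but $O\cap(T\setminus\did T)\subseteq\{y\}$). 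Hence $\dit\{p,q\}\not\equiv(\dit\{p,q\})^d$ in $X$.

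The main obstacle is the $\dit\Delta$ subcase of the $(\Leftarrow)$ direction in which $y\in\sem\delta\cap H$ might appear isolated within $\sem\delta\cap H$: the T$_D$ hypothesis is essential there for ruling out this pathology via the closedness of $\did\{y\}$, which allows the open neighborhood $U\setminus\did\{y\}$ of $y$ to be constructed and the contradiction $y\notin\did H$ derived.
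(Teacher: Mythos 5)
Your proof is correct and follows essentially the same route as the paper's: an induction in which the tangle case is the crux, the same use of T$_D$ (closedness of $\did\{y\}$ to puncture a neighbourhood of $y$), and essentially the same counterexample for the converse (your $\sem p=\did T$, $\sem q=T\setminus\did T$ specialises to the paper's $\dit\{p,\did p\}$ with $h(p)=\{x\}$ when $T$ is a singleton). The only difference is cosmetic: in the $\dit\Delta$ case you take the maximal witness set $\sem{\dit\Delta}\cap O_x$ and argue by contradiction, whereas the paper shrinks neighbourhoods directly around a given witness set $S$; both versions turn on the identical T$_D$ step.
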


\begin{proof}
Let $X$ be a T$_D$ topological space.
We prove by induction on $\varphi$ that 
each $\Lbig$-formula $\varphi$ is equivalent
to $\varphi^d$ in $X$.
We consider only two cases: $\bo\varphi$ and $\dit\Delta$.
Inductively assume the result for $\varphi$ and  each formula in the finite
set $\Delta$ of formulas,
let $h$ be an assignment into $X$, and let $x\in X$.
In the proof, we write `$x\models{}$' as short for `$(X,h),x\models$',
and for a formula $\varphi$, we write
$\sem\varphi=\{y\in X:y\models\varphi\}$.

We prove that $x\models\bo\varphi\leftrightarrow(\bo\varphi)^d$.
We have $x\models\bo\varphi$ iff
for some open \nhd\ $O$ of $x$, we have
$(X,h),y\models \varphi$ for every $y\in O$.
This is plainly iff $x\models\varphi\wedge\bod\varphi$.
Inductively, this is iff
$x\models\varphi^d\wedge\bod\varphi^d$ --- i.e., iff
$x\models(\bo\varphi)^d$.

Now we prove that $x\models\dit\Delta\leftrightarrow(\dit\Delta)^d$.
Recall that 
\[(\dit\Delta)^d=(\bigwedge\Delta^d)\vee
\did(\bigwedge\Delta^d)\vee(\didt\Delta^d).
\]
First we prove that $x\models(\dit\Delta)^d\to\dit\Delta$.
Suppose that $x\models(\dit\Delta)^d$.
To show that $x\models\dit\Delta$, we need to find $S\subseteq X$
with $x\in S\subseteq\bigcap_{\delta\in\Delta}\cl(\sem\delta\cap S)$.
If $x\models\bigwedge\Delta^d$, take  $S=\{x\}$.
If $x\models\did\bigwedge\Delta^d$,
take $S=\{x\}\cup\sem{\bigwedge\Delta^d}$.
And if $x\models\didt\Delta^d$,
there is $S\subseteq X$ with $x\in S\subseteq\bigcap_{\delta\in\Delta}\did(\sem\delta\cap S)$; then $x\in S\subseteq\bigcap_{\delta\in\Delta}\cl(\sem\delta\cap S)$ 
as required.

It remains to prove that $x\models\dit\Delta\to(\dit\Delta)^d$.
So suppose that $x\models\dit\Delta$.
If $x\models(\bigwedge\Delta^d)\vee\did(\bigwedge\Delta^d)$, we are done.

So suppose not.
Thus,  there is an open \nhd\
$U$ of $x$ with $y\models\neg\bigwedge\Delta^d$ for every $y\in U$.
So for every $y\in U$, there is $\delta_y\in\Delta$
with $y\models\neg\delta_y^d$.

We prove that $x\models\didt\Delta^d$.

Since $x\models\dit\Delta$,
there is $S\subseteq X$ with 
$x\in S\subseteq\bigcap_{\delta\in\Delta}\cl(\sem\delta\cap S)$.

\claim Put $S'=U\cap S$. Then
$x\in S'\subseteq\bigcap_{\delta\in\Delta}\did(\sem{\delta^d}\cap S')$.

\pfclaim 
Plainly, $x\in S'$.
For the other half, 
let $y\in S'$ and $\delta\in\Delta$ be arbitrary;
we show that $y\in\did(\sem{\delta^d}\cap S')$.
So let $O$ be any open \nhd\ of $y$. 
As $X$ is T$_D$, $\did\{y\}$ is closed, so since it does not contain $y$,
 $O\cap U\setminus \did\{y\}$ is an open \nhd\ of $y$ too.
As $y\in S'\subseteq S\subseteq\cl(\sem{\delta_y}\cap S)$,
there is some $z\in  O\cap U\cap S\setminus \did\{y\}$ with
$z\models\delta_y$.
But $y\models\neg\delta_y^d$, so inductively,
$y\models\neg\delta_y$.
It follows that $z\neq y$.

Now we have $z\notin\{y\}\cup \did\{y\}=\cl\{y\}$, so
$O\cap U\setminus\cl\{y\}$ is an open \nhd\ of $z$.
Since $z\in S\subseteq\cl(\sem\delta\cap S)$,
there is some $t\in O\cap U\cap S\setminus\cl\{y\}=O\cap S'\setminus\cl\{y\}$ with
 $t\models\delta$. Then $t\ne y$.
Since $O$  was  arbitrary, this shows that $y\in\did(\sem\delta\cap S')$.
Since inductively, 
$\sem\delta=\sem{\delta^d}$, this proves the claim.

By definition of the semantics, the claim immediately yields $x\models\didt\Delta^d$ as required.
This completes the induction and the proof that each $\varphi$
is equivalent to $\varphi^d$.
(The reader may like to construct an alternative proof using the games described in remark~\ref{rmk:sem of tangle}.)

Conversely, to show that the T$_D$ hypothesis is necessary, we first prove
\begin{quote}
\begin{lemma}  \label{dxclosed}
In any space $X$, for any $x\in X$, $\cl\did\{x\}\setminus \did\{x\}\sub\{x\}$.
Hence
$\did\{x\}$ is closed iff $x\notin \cl\did\{x\}$.
\end{lemma}

\begin{proof}
For the first part, since  $\did\{x\}\sub \cl\{x\}$ and the latter is closed,  
$\cl\did\{x\}\sub \cl\{x\}=\did\{x\}\cup \{x\}$. This implies $\cl\did\{x\}\setminus \did\{x\}\sub\{x\}$.

For the second part, $\did\{x\}$ is closed iff $\cl\did\{x\}\setminus \did\{x\}=\emptyset$. 
By the first part, this holds iff $x\notin\cl\did\{x\}\setminus \did\{x\}$.
But $x\notin\did\{x\}$, so $x\notin\cl\did\{x\}\setminus \did\{x\}$ iff 
$x\notin \cl\did\{x\}$.
\end{proof}
\end{quote}
Now suppose the space $X$  is not T$_D$. Then there is some point $x$ of $X$ with $\did\{x\}$ not closed. By  Lemma \ref{dxclosed},
$x\in \cl\did\{x\}$. Hence $\cl\{x\}\sub\cl\did\{x\}$.
Let $p\in\Var$ 
and  $h:\Var\to\wp X$ satisfy
$h(p)=\{x\}$ for some (arbitrary) $x$.
Then $(X,h),x\models\dit\{p,\did p\}$, but $(X,h),x\not\models(\dit\{p,\did p\})^d$,
i.e.\ $(X,h),x\not\models (p\wedge \did p)\vee\did(p\wedge\did p)\vee\didt\{p,\did p\}$, giving a case of $\ph$ not being equivalent to $\ph^d$.
That $x\not\models (p\wedge \did p)\vee\did(p\wedge\did p)$ follows because
$\sem{p\wedge \did p}=\{x\}\cap\did\{x\}=\emptyset$. That $x\not\models\didt\{p,\did p\}$ follows as no punctured \nhd\ $O\setminus\{x\}$ contains a point of $\sem{p}=\{x\}$.
To see that $x\models\dit\{p,\did p\}$, let $S=\cl\{x\}$.
Then $S$ is included in both 
$\cl(\sem{p}\cap S)=\cl\{x\}=S$ and
$\cl(\sem{\did p}\cap S)=\cl(\did\{x\})$ (because $\cl\{x\}\sub\cl\did\{x\}$ as noted above).
Since $x\in S$, it follows that $x\models\dit\{p,\did p\}$ .
\end{proof}

\subsection{Translating $\mu$ to $\dit$}\label{ss:DO09}

We use this translation only to prove strong completeness
for $\c L_\bo^\mu$ in theorem~\ref{thm:str compl boxes}(\ref{stro compl boxes part2}).
Fortunately, most of the hard work involved has already been done by others.
We will need only the fact below, but its proof was a  major enterprise.

\begin{fact}[Dawar--Otto, \hbox{\cite[theorem 4.57(5)]{DO09}}]\label{fact:DO}
For each formula $\varphi$ of $\c L^\mu_{\bo}$,
there is a  formula $\varphi^t$ of $\c L^{\dit}_{\bo}$ that is
equivalent to $\varphi$ in every finite transitive Kripke frame.
\end{fact}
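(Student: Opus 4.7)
The plan is to prove this by combining bisimulation invariance of the mu-calculus with a definability theorem for bisimulation types in finite transitive Kripke models.

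First I would invoke the (easy direction of) the theorem of Janin and Walukiewicz: every $\c L^\mu_\bo$ formula is invariant under bisimulation between Kripke models. Next I would prove the following key lemma. Fix a finite set $P \subseteq \Var$. Up to bisimulation, there are only finitely many pointed finite transitive Kripke models whose assignments involve only atoms from $P$, and each bisimulation type $T$ is defined by a single formula $\chi_T \in \c L^\dit_\bo$ using only atoms from $P$. Granted this lemma, the proof of the fact is immediate: let $P$ be the (finite) set of atoms occurring in $\varphi$, and set
\[
\varphi^t \;=\; \bigvee\{\chi_T : T \text{ is a bisimulation type satisfying }\varphi\},
\]
a finite disjunction because there are only finitely many types over $P$. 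Any finite transitive Kripke model can be restricted to its $P$-content without affecting the truth of $\varphi$ or of any $\chi_T$, so bisimulation invariance of $\varphi$ yields $\varphi \leftrightarrow \varphi^t$ in every finite transitive Kripke frame.

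The real work is proving the type-definability lemma, by induction on the $R$-rank of the cluster containing the distinguished point. For the base case of a maximal cluster $C$, the bisimulation type at a point of $C$ is determined by which $P$-colourings are realised in $C$: for each target $\Sigma \subseteq \wp(P)$, there is a formula of the form
\[
\bigwedge_{s \in \Sigma} \dit\!\Big\{\bigwedge_{p \in s} p \;\wedge\; \bigwedge_{p \in P \setminus s} \neg p\Big\} \;\wedge\; \bigwedge_{s \notin \Sigma} \neg\,\dit\!\Big\{\cdots\Big\}
\]
asserting that the realised colourings in the current cluster are exactly $\Sigma$; this exploits that in a finite transitive model, $\dit\{\gamma\}$ holds iff some reflexive successor satisfies $\gamma$, and in a maximal cluster such reflexive successors are members of the cluster itself. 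Degenerate maximal clusters are handled separately using $\bo\bot$. For the inductive step at lower rank, a type must record (i) the atomic type of the point, (ii) the set of types of strictly lower rank reachable as proper $R$-successors, captured by $\bo$ and $\di$ applied to the previously defined $\chi_{T'}$s, and (iii) the intra-cluster behaviour, captured by $\dit$ applied to conjunctions of atomic formulas together with the external-successor descriptors from (ii).

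The main obstacle will be the bookkeeping in the inductive step: a non-degenerate cluster may contain points with different external successor patterns, so one has to enrich the notion of ``type'' to a pair consisting of an atomic type together with a set of lower-rank successor types, and then verify that applying $\dit$ to formulas describing these enriched profiles captures exactly which compound ``atomic $+$ exit'' behaviours are cycled through inside the cluster. Once the type is set up to contain this extra information, the tangle connective is by design expressive enough to pin down the full cluster-invariant, and the induction goes through.
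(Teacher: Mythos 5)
The paper does not prove this statement at all: it is imported verbatim from Dawar--Otto \cite{DO09} as a black box, with the explicit remark that ``its proof was a major enterprise.'' So the only question is whether your sketch is a viable proof, and it is not: your key lemma is false. It is not true that, up to bisimulation, there are only finitely many pointed finite transitive Kripke models over a finite atom set $P$. Take $P=\emptyset$ and let $C_n$ be the irreflexive transitive chain $w_0Rw_1R\cdots Rw_n$ (with $R=\{(i,j):i<j\}$). The root of $C_n$ satisfies $\di^n\top\wedge\neg\di^{n+1}\top$, a plain modal formula, so the pointed models $(C_n,w_0)$ are pairwise non-bisimilar. Hence there are infinitely many bisimulation types, your disjunction $\bigvee\{\chi_T:\dots\}$ is infinite, and the construction of $\varphi^t$ collapses. (The same problem already defeats the base-case/inductive-step organisation: the ``rank'' of a cluster is unbounded across the class of all finite transitive models, so an induction on rank cannot terminate in a single finite formula.)

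The half of your argument that is essentially sound --- bisimulation invariance of $\c L^\mu_\bo$, plus characteristic formulas for each \emph{individual} finite pointed transitive model built by induction on cluster rank using $\dit$ to pin down which atomic/exit profiles recur inside a non-degenerate cluster --- is the routine half. The actual content of Dawar--Otto's theorem is precisely what your sketch omits: one must show that over finite transitive frames a mu-calculus formula $\varphi$ cannot distinguish models beyond some \emph{finite-index} equivalence (a bounded game equivalence with parameters controlling both the depth in the cluster order and the behaviour inside clusters), so that the disjunction over equivalence classes becomes finite. Establishing that collapse from full bisimulation invariance to bounded-parameter invariance is the hard upgrading argument, and without it the proof does not go through.
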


To lift this to topological spaces, we will use 
the proof theory from section~\ref{ss:hs mu}.

\begin{corollary}\label{cor:t transl top equiv}
Each $\c L_\bo^\mu$-formula $\varphi$ is equivalent to $\varphi^t$ in
every topological space.
\end{corollary}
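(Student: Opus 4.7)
The plan is to route the equivalence through the Hilbert system $\axS4\mu$, using fact~\ref{fact:DO} on the Kripke side and soundness of $\axS4\mu$ over topological spaces on the topological side. So the intermediate object of interest is the formula $(\varphi^t)^\mu \in \c L^\mu_\bo$, obtained by first translating $\varphi$ into $\c L^\dit_\bo$ via Dawar--Otto and then back into $\c L^\mu_\bo$ via the $-^\mu$ translation.

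First I would observe that lemma~\ref{lem:mu trans} gives $\varphi^t \leftrightarrow (\varphi^t)^\mu$ in every transitive Kripke frame and in every topological space. Combined with fact~\ref{fact:DO}, the $\c L^\mu_\bo$-formula $\varphi \leftrightarrow (\varphi^t)^\mu$ is valid in every finite transitive Kripke frame, and therefore in every finite reflexive transitive (finite S4) Kripke frame. Theorem~\ref{thm:S4mu sc} then delivers
\[
\axS4\mu \vdash \varphi \leftrightarrow (\varphi^t)^\mu.
\]

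Next I would verify that $\axS4\mu$ is sound over every topological space under the interior interpretation of $\bo$. The S4 axioms and normality reduce to the standard facts $\int S \subseteq S$, $\int S = \int\int S$, and $\int(S \cap T) = \int S \cap \int T$; modus ponens and $\bo$-generalisation are immediate; and the fixed point axiom and the fixed point rule are exactly the defining properties of $LFP(f)$ as a (pre-)fixed point of the monotonic operator $f(S)=\sem\varphi_{h[S/q]}$ used in the topological clause for $\mu$. Applying soundness to the derivation above, $\varphi \leftrightarrow (\varphi^t)^\mu$ is valid in every topological space. One more application of lemma~\ref{lem:mu trans}, this time on the topological side, turns $(\varphi^t)^\mu$ back into $\varphi^t$ and yields $\varphi \leftrightarrow \varphi^t$ in every topological space.

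The only step that requires any actual work is the soundness check for $\axS4\mu$ over topological spaces, and even there the genuinely subtle clauses --- the fixed point axiom and rule --- are immediate from Knaster--Tarski, which the semantics of $\mu$ was set up to exploit. The deep content of the corollary, namely the existence of a Dawar--Otto style translation, is imported as a black box from fact~\ref{fact:DO}; everything else is proof-theoretic plumbing between the finite-frame and topological worlds via $\axS4\mu$.
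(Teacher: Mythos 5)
Your proposal is correct and follows essentially the same route as the paper's own proof: combine fact~\ref{fact:DO} with lemma~\ref{lem:mu trans} to get validity of $\varphi\leftrightarrow(\varphi^t)^\mu$ over finite transitive (hence finite S4) frames, invoke theorem~\ref{thm:S4mu sc} to obtain $\axS4\mu\vdash\varphi\leftrightarrow(\varphi^t)^\mu$, use soundness of $\axS4\mu$ over topological spaces, and then apply lemma~\ref{lem:mu trans} once more on the topological side. The only difference is that you spell out the soundness check in slightly more detail than the paper, which simply asserts it as easy.
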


\begin{proof}
By fact~\ref{fact:DO} and lemma~\ref{lem:mu trans}, 
$\varphi\leftrightarrow(\varphi^t)^\mu$ is
an $\c L_\bo^\mu$-formula
valid in every finite transitive Kripke frame.
By theorem~\ref{thm:S4mu sc},
$\axS4\mu\vdash\varphi\leftrightarrow(\varphi^t)^\mu$.

Now it is easy to check that 
$\axS4\mu$ is sound over every topological space.
(The S4 axioms are sound by definition of the topological
semantics of $\bo$, and the fixed point  axiom and rule are sound 
by the semantics of $\mu$.)
Hence, $\varphi\leftrightarrow(\varphi^t)^\mu$ is valid in every 
topological space.
But by lemma~\ref{lem:mu trans}, $(\varphi^t)^\mu$ is equivalent
to $\varphi^t$ in every topological space.
We conclude that 
$\varphi$ is equivalent to $\varphi^t$ in every 
topological space, as required.
\end{proof}
By the corollary and lemma~\ref{lem:mu trans},
$\c L_\bo^\mu$ and $\c L_\bo^\dit$ uniformly have the same expressive power
in every topological space.

Since $\bo,\bod$ and $\dit,\didt$ are indistinguishable in Kripke semantics,
a similar analysis would give a translation
from $\c L_\bod^\mu$ to $\c L_\bod^\didt$ valid
in every topological space.
(For this purpose,
the T axiom $\bo\varphi\to\varphi$ would be dropped in section~\ref{ss:hs mu},
and the translation in definition~\ref{def:phi*} 
adapted to represent transitive closure.)
The translation would show that $\c L_\bod^\mu$ and $\c L_\bod^\didt$ 
are equally expressive over all T$_D$ topological spaces.
We could use it to lift weak completeness
for $\c L_\bod^\mu$ to strong completeness.
Unfortunately,  we do not have a weak completeness result for $\c L_\bod^\mu$ to lift.

\section{More topology}

The  finite model property theorems of Part~1 will be instrumental in
our completeness theorems for (some) topological spaces.
Not surprisingly, we will also need some simple and standard topological definitions and results, 
together with some more substantial ones.
The first one is very simple.

\begin{lemma}\label{lem:inf}
Let $X$ be a dense-in-itself T$_D$ topological space.
Then every non-empty open subset of $X$ is infinite.
\end{lemma}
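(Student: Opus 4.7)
The plan is to argue by contradiction, taking a non-empty open subset $U \subseteq X$ of least finite cardinality and manufacturing a strictly smaller non-empty open set inside it. (Equivalently, one could induct on $|U|$.)

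First I would pick any $x \in U$. Because $X$ is T$_D$, the set $\did\{x\}$ is closed, so $V := U \setminus \did\{x\}$ is open; and it contains $x$ since $x \notin \did\{x\}$. So $V$ is a non-empty finite open set with $|V| \leq |U|$, hence by minimality $|V| = |U|$. Now the dense-in-itself hypothesis says $\{x\}$ is not open, so $V$ must contain some point $y \neq x$. Since $y \in V$, we have $y \notin \did\{x\}$, which produces an open neighbourhood $W$ of $y$ with $x \notin W$.

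Finally, consider $V \cap W$: it is open, contains $y$, is contained in $U$, and omits $x$. Hence $V \cap W$ is a non-empty open set of strictly smaller cardinality than $U$, contradicting the minimality of $|U|$. The two hypotheses each play their role exactly once: T$_D$ is used to shrink $U$ to an open neighbourhood of $x$ disjoint from $\did\{x\}$, and dense-in-itself is used to produce the witness $y$ inside this neighbourhood.

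There is no real obstacle here; the only subtle point is that under the weaker T$_D$ assumption (rather than T1) we cannot simply remove the single point $\{x\}$ and keep openness, which is why the first step passes instead to $V = U \setminus \did\{x\}$ and the second step must separate $y$ from $x$ by an explicit neighbourhood $W$ coming from $y \notin \did\{x\}$.
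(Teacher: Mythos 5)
Your proof is correct and follows essentially the same route as the paper's: both use T$_D$ to pass to the open set $U\setminus\did\{x\}$, dense-in-itself to find $y\neq x$ there, and the fact that $y\notin\did\{x\}$ to separate $y$ from $x$ by an open neighbourhood, yielding a proper non-empty open subset. The only difference is packaging — you phrase it as a minimal finite counterexample, whereas the paper iterates the "proper open subset" step infinitely often — and this is immaterial.
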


\begin{proof}
It suffices to show that every non-empty open subset $O$ has a non-empty open \emph{proper} subset $O'$, since infinitely many iterations of that fact will produce an infinite sequence of distinct points in $O$. Take any $x\in O$. Then $x$ belongs to $ O\setminus\did\{x\}$, which is open as $\did\{x\}$ is closed in the T$_D$-space. Since $X$ is dense-in-itself, there must then be some $y\ne x$ with $y\in O\setminus\did\{x\}$. As $y\ne x$ and $y\notin \did\{x\}$,  $y$ has an open \nhd\ $U$ with $x\notin U$. Put $O'=O\cap U$ to get that $O'$ is open, non-empty as it contains $y$, and a proper subset of $O$ as it does not contain $x$.
\end{proof}

\subsection{The $\did$ operator on sets}

Let $X$ be a topological space.
For a set $S\subseteq X$,
recall that 
$\did S=
\{x\in X:S\cap O\setminus\{x\}\neq\emptyset$ for every open \nhd\ $O$ of $x\}$, the set of strict limit points of $S$.
The $\did$ operator has the following basic properties.

\begin{lemma}\label{lem:did cl}
Let $S,T\subseteq X$.
\begin{enumerate}
 \item $\cl S=S\cup\did S$.

\item\label{didcl additive}  $\did$ is additive: $\did(S\cup T)=\did S\cup\did T$.

\item\label{didcl part5} If $X$ is
dense in itself,
then (i) $\int S\subseteq\did S$,
and (ii) if $S$ is open then $\did S=\cl S$.
\end{enumerate}
\end{lemma}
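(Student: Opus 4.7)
The lemma has three parts, each of which I would prove separately using only the definitions of $\cl$, $\did$, $\int$ and the dense-in-itself property; no heavy machinery is required.

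For part~1, the containment $S\cup\did S\subseteq\cl S$ is immediate: $S\subseteq\cl S$ always, and every $x\in\did S$ has every open \nhd\ meeting $S\setminus\{x\}$, hence meeting $S$. For the reverse, take $x\in\cl S$. If $x\in S$ we are done; otherwise every open \nhd\ $O$ of $x$ meets $S$, and since $x\notin S$ any witness lies in $S\setminus\{x\}$, so $x\in\did S$.

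For part~2, $\supseteq$ follows from monotonicity of $\did$ (which is immediate from its definition). For $\subseteq$, I would argue contrapositively: if $x\notin\did S\cup\did T$, pick open \nhd s $O_1,O_2$ of $x$ witnessing this for $S$ and $T$; then $O_1\cap O_2$ is an open \nhd\ of $x$ disjoint from $(S\cup T)\setminus\{x\}$, so $x\notin\did(S\cup T)$.

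For part~3(i), assume $X$ is dense in itself and let $x\in\int S$. For any open \nhd\ $O$ of $x$, the set $O\cap\int S$ is an open \nhd\ of $x$ contained in $S$; since no singleton is open in $X$, this \nhd\ strictly contains $\{x\}$, so produces a point of $S\cap O\setminus\{x\}$. Hence $x\in\did S$. For part~3(ii), assume $S$ is open. One containment $\did S\subseteq\cl S$ is part~1 (or immediate). Conversely, by~(i) we have $S=\int S\subseteq\did S$, so combining with part~1 gives $\cl S=S\cup\did S=\did S$.

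All three parts are short and essentially calculational; the only mildly non-trivial step is the use of ``dense in itself'' in part~3(i), which rules out $\{x\}$ being open and lets us extract a point distinct from $x$. I do not anticipate any real obstacle.
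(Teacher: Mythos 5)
Your proof is correct and is exactly the routine verification from the definitions that the paper intends when it dismisses this lemma with the single word ``Easy.'' All three parts check out, including the one point of substance --- using dense-in-itself in part~3(i) to find a point of $O\cap\int S$ other than $x$ --- so there is nothing to add.
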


\begin{proof}
Easy.
\end{proof}

\subsection{Regular open sets}

Let $X$ be a topological space.
A \ro\ subset of $X$ is one 
equal to the interior of its closure.
We will mainly be interested in \ro\ subsets of  open subspaces of $X$,
so we give definitions directly for such situations.

\begin{definition}\label{def:ro subset}
Let $U$ be an open subset of $X$.
A subset $S$ of $X$ is said to be a \emph{\ro\ subset of $U$}
if  $S=\int(U\cap\cl S)$.
\end{definition}
As `$\int$' is multiplicative and $U$ is open, it is equivalent to say that $S=U\cap\int\cl S$, and we
sometimes prefer this formulation.
In such a case, $S\subseteq U$ and $S$ is open.
So $S=\int_U\cl_US$: $S$ is a \ro\ subset of the subspace $U$ of $X$.
It is worth noting that if $S\subseteq U$ is arbitrary then $\int_U\cl_U S$ is 
a \ro\ subset of $U$.

It is known
(see, e.g., \cite[chapter 10]{GiHa:ba})
that for every open subset $U$ of $X$,
the set $RO(U)$ of \ro\ subsets of $U$
is closed under the
operations $+,\cdot,-,0,1$ defined by
\begin{itemize}
\item $S+S'=U\cap\int\cl(S\cup S')$
\item $S\cdot S'=S\cap S'$
\item $-S=U\setminus\cl S$
\item $0=\emptyset$ and $1=U$,
\end{itemize}
and $(RO(U),+,\cdot,-,0,1)$ is a (complete) \ba.
We will also use the notation $RO(U)$ to denote this \ba.
The standard boolean ordering
$\leq$ on $RO(U)$ coincides with
set inclusion, because for $S,T\in RO(U)$ we have
 $S\leq T$ iff $S\cdot T=S$,
 iff $S\cap T=S$, iff $S\subseteq T$.
We will need the following general lemma.

\begin{lemma}\label{lem:ro1}
Let $V\subseteq U$ be  open subsets of $X$,
and $S,S'$ be \ro\ subsets of $U$.
\begin{enumerate}
\item\label{lem:ro:part1} If $T=U\setminus\cl S$, 
then $T$ is also a \ro\ subset of $U$, with
$S=U\setminus\cl T$ and $U\setminus S\subseteq\cl T$.

\item\label{lem:ro:part2} If $U\cap\cl S\cap\cl S'=\emptyset$,
then $S+S'=S\cup S'$.

\item\label{lem:ro:part3} If $S\subseteq V$,
then $S$ is a \ro\ subset of $V$.

\item\label{lem:ro:part4} Every \ro\ subset of $S$ is a \ro\ subset of $U$.

\end{enumerate}
\end{lemma}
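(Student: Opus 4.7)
The plan is to treat each of the four parts as a short manipulation, leveraging the remark just above the lemma that $RO(U)$ forms a boolean algebra, plus basic facts about interior, closure and open subspaces. The most substantive step is part~(\ref{lem:ro:part2}), which requires a small topological argument; the other three parts are essentially algebraic.

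For part~(\ref{lem:ro:part1}), I would observe that $T = U \setminus \cl S$ is precisely the boolean complement $-S$ in $RO(U)$, which is well-defined and again in $RO(U)$. Since $-(-S) = S$ in a boolean algebra, this yields $S = U \setminus \cl T$ at once. To verify the complement claim directly, $T$ is open and contained in $U$, so certainly $T \subseteq U \cap \int\cl T$; conversely, for the reverse inclusion I would use that $S = U \cap \int\cl S$ to see that any point of $U$ having an open neighbourhood avoiding $S$ already avoids $\cl S$. Finally $U \setminus S = U \setminus (U \setminus \cl T) = U \cap \cl T \subseteq \cl T$ gives the last assertion.

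For parts~(\ref{lem:ro:part3}) and~(\ref{lem:ro:part4}) the argument is pure set manipulation. For~(\ref{lem:ro:part3}): if $S \subseteq V \subseteq U$ and $S = U \cap \int\cl S$, then $V \cap \int\cl S = V \cap U \cap \int\cl S = V \cap S = S$, so $S$ is \ro\ in $V$. For~(\ref{lem:ro:part4}), suppose $T$ is \ro\ in $S$, so $T = S \cap \int\cl T$. From $T \subseteq S$ I get $\cl T \subseteq \cl S$ and hence $\int\cl T \subseteq \int\cl S$, so $U \cap \int\cl T \subseteq U \cap \int\cl S = S$, which reduces $U \cap \int\cl T$ to $S \cap \int\cl T = T$, as required.

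The main obstacle is part~(\ref{lem:ro:part2}). The inclusion $S \cup S' \subseteq S + S'$ always holds, since $S \cup S'$ is an open subset of $U$ contained in $\cl(S \cup S')$. For the reverse, let $A = U \cap \cl S$ and $A' = U \cap \cl S'$, which are closed in the subspace $U$ and, by hypothesis, disjoint. Since $U$ is open in $X$, one has $S = \int A$ and $S' = \int A'$, and a short rewrite gives $S + S' = \int_U(A \cup A')$. The key topological step is then the claim $\int_U(A \cup A') = \int_U A \cup \int_U A'$: given $x \in A$ and a $U$-open neighbourhood $V$ of $x$ with $V \subseteq A \cup A'$, the set $V \setminus A'$ is open in $U$ (because $A'$ is closed in $U$), contains $x$ (by disjointness of $A, A'$), and lies in $A$, witnessing $x \in \int_U A$. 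Combining these gives $S + S' = S \cup S'$.
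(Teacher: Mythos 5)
Your proof is correct. Parts (\ref{lem:ro:part1}), (\ref{lem:ro:part3}) and (\ref{lem:ro:part4}) coincide with the paper's argument (the paper dispatches (\ref{lem:ro:part1}) and (\ref{lem:ro:part2})'s easy inclusion by appeal to the boolean algebra structure of $RO(U)$, and proves (\ref{lem:ro:part3}) and (\ref{lem:ro:part4}) by exactly the one-line computations you give). The only genuine divergence is in the hard direction of part (\ref{lem:ro:part2}). The paper's key step is the inclusion $\int\cl(S\cup S')\subseteq \int\cl S\cup\int\cl S'\cup(\cl S\cap\cl S')$, which it justifies in a footnote by observing that the corresponding modal formula $\bo\di(p\vee q)\to\bo\di p\vee\bo\di q\vee(\di p\wedge\di q)$ is S4-valid and hence holds in $X$; intersecting with $U$ then kills the third term by hypothesis. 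You instead pass to the subspace $U$, rewrite $S+S'$ as $\int_U(A\cup A')$ for the disjoint relatively closed sets $A=U\cap\cl S$ and $A'=U\cap\cl S'$, and prove directly that interior distributes over a union of two disjoint closed sets via the ``subtract the closed set not containing $x$'' trick. The two arguments encode the same topological fact, but yours is self-contained and purely point-set, whereas the paper's footnote route leans on soundness of S4 over $X$; on the other hand the paper's three-way inclusion is stated for arbitrary $S,S'$ and only uses the disjointness hypothesis at the very end, which makes the role of that hypothesis slightly more transparent. Either way the lemma is established.
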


\begin{proof}
\begin{enumerate}
\item The first two points follow from \ba\ considerations, and
can easily be shown directly.
The third point, $U\setminus S\subseteq\cl T$, follows from
$U\setminus\cl T=S$.

\item
Since $S,S'\leq S+S'$ and $\leq$ coincides with $\subseteq$,
we obtain $S,S'\subseteq S+S'$ and so $S\cup S'\subseteq S+S'$.
Conversely,
it is easy to check\footnote{Indeed, $\bo\di(p\vee q)\to\bo\di p\vee\bo\di q\vee(\di p\wedge\di q)$
is valid in $\axS4$ frames, so provable in $\axS4$. 
Since $\axS4$ is sound over $X$, the formula is valid in $X$.}
that
\[
\int\cl(S\cup S')\subseteq \int\cl S\cup\int\cl S'\cup(\cl S\cap\cl S').
\]
Since $U\cap\cl S\cap\cl S'=\emptyset$,
\[
S+S'=U\cap\int\cl(S\cup S')\subseteq (U\cap\int\cl S)\cup(U\cap\int\cl S')=S\cup S',
\]
as required.

\item 
$V\cap\int\cl S=(V\cap U)\cap \int\cl S=V\cap (U\cap \int\cl S)=V\cap S=S$.

\item Let $T$ be a \ro\ subset of $S$. Clearly, $\int\cl T\subseteq\int\cl S$.
So $U\cap\int\cl T=U\cap(\int\cl S\cap\int\cl T)=(U\cap\int\cl S)\cap\int\cl T=S\cap\int\cl T=T$.
\end{enumerate}\unskip
\end{proof}

\subsection{Normal spaces}

\begin{definition}\label{def:normal}
A topological space $X$ is said to be
\emph{Hausdorff} (or T2) if  for every two distinct points $x_0,x_1\in X$,
there are disjoint open sets $O_0,O_1$ with $x_0\in O_0$ and $x_1\in O_1$,
and  \emph{normal} (or T4) if
it is Hausdorff 
and for every two disjoint closed subsets $C_0,C_1$ of $X$,
there are disjoint open sets $O_0,O_1$ with $C_0\subseteq O_0$ and $C_1\subseteq O_1$.
\end{definition}
Equivalently, $X$ is normal iff
it is Hausdorff
and if $C\subseteq O\subseteq X$, $C$ closed, and $O$ open, then there is open $P$
with $C\subseteq P\subseteq\cl P\subseteq O$.

\begin{lemma}\label{lem:norm++}
Let $C_0,C_1$
be disjoint closed subsets of a normal topological space $X$. Then there are 
\ro\ subsets
$O_0,O_1$ of $X$ with disjoint closures,  such that
$C_0\subseteq O_0$ and $C_1\subseteq O_1$.
\end{lemma}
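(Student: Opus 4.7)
The plan is to use normality twice to extract nested separations, then take regular open hulls.

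First I would apply normality to the disjoint closed pair $C_0, C_1$ to obtain disjoint open sets $U_0, U_1$ with $C_i \subseteq U_i$. At this stage the $U_i$ need not be regular open, and although $U_0 \cap U_1 = \emptyset$ we cannot conclude $\cl U_0 \cap \cl U_1 = \emptyset$. So I would shrink. Using the equivalent formulation of normality (closed $\subseteq$ open implies existence of an open set whose closure is trapped between), I would obtain open sets $P_0, P_1$ with
\[
C_i \;\subseteq\; P_i \;\subseteq\; \cl P_i \;\subseteq\; U_i \qquad (i = 0,1).
\]
Since $U_0 \cap U_1 = \emptyset$, we immediately get the desired disjointness $\cl P_0 \cap \cl P_1 = \emptyset$.

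The remaining step is to upgrade the $P_i$ to regular open sets without losing either the inclusion $C_i \subseteq O_i$ or the disjointness of closures. The natural candidate is $O_i := \int \cl P_i$. This is regular open: since $P_i$ is open we have $P_i \subseteq \int \cl P_i = O_i$, and $\cl O_i \subseteq \cl \cl P_i = \cl P_i$, so $\int \cl O_i \subseteq \int \cl P_i = O_i$, while the reverse inclusion holds because $O_i$ is open. Also $C_i \subseteq P_i \subseteq O_i$, and $\cl O_i \subseteq \cl P_i$, whence $\cl O_0 \cap \cl O_1 \subseteq \cl P_0 \cap \cl P_1 = \emptyset$.

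There is no real obstacle here; the only point requiring care is the verification that $O_i = \int \cl P_i$ is genuinely regular open in $X$ (which is the regular-open subset of $X$ notion from Definition~\ref{def:ro subset} with $U = X$), and that passing from $P_i$ to $O_i$ preserves both properties simultaneously. The double application of normality — once for disjoint open neighbourhoods and once to shrink inside them with closure control — is what guarantees that enlarging to regular open hulls does not destroy the disjointness of closures.
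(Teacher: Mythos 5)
Your proof is correct and uses essentially the same ingredients as the paper's: repeated applications of normality to obtain open separators with controlled closures, followed by passage to the regular-open hulls $\int\cl P_i$, using the key fact that for open $P$ one has $\cl(\int\cl P)\subseteq\cl P$ so the disjointness of closures survives the upgrade. The only difference is cosmetic — you arrange the two uses of normality symmetrically (separate first, then shrink both sides), whereas the paper proceeds sequentially (regularise one side, then separate its closure from the other closed set) — but the argument is the same.
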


\begin{proof}
By normality, there are disjoint open sets 
$O^-_0\supseteq C_0$ and $U\supseteq C_1$.
Then $O_0^-\subseteq X\setminus U$, a closed set.
So $O_0=\int\cl O_0^-$ is a \ro\ subset of $X$ disjoint from $U$.
We have $C_0\subseteq O_0^-\subseteq O_0\subseteq\cl O_0\subseteq X\setminus U$, so $\cl O_0$ and $C_1$ are disjoint closed sets.
By normality again, there are disjoint open sets $V\supseteq \cl O_0$ and 
$O_1^-\supseteq C_1$.
Let $O_1=\int\cl O_1^-$, a \ro\ subset of $X$ disjoint from $V$.
Then $C_1\subseteq O_1^-\subseteq O_1\subseteq\cl O_1\subseteq X\setminus V$, so $\cl O_0\cap\cl O_1=\emptyset$.
Now $O_0,O_1$ are as required.
\end{proof}

The following is well known (see, e.g., \cite[III, 6.1]{RS:mm}), but is so important for us that we include a quick proof.

\begin{lemma}\label{lem:ms}
Every metric space is normal.
\end{lemma}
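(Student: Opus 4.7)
The plan is to check the two requirements of normality in Definition 10.4 separately: first the Hausdorff condition, then the separation of disjoint closed sets by disjoint open sets.

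Hausdorffness is immediate from the metric axioms. Given distinct points $x_0, x_1 \in X$, set $\varepsilon = d(x_0,x_1)/2$; this is strictly positive by axioms (1) and (2). The open balls $N_\varepsilon(x_0)$ and $N_\varepsilon(x_1)$ are open neighbourhoods of the respective points, and they are disjoint: if $y$ lay in both, the triangle inequality would give $d(x_0,x_1) \leq d(x_0,y) + d(y,x_1) < 2\varepsilon = d(x_0,x_1)$, a contradiction.

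For the main separation property, let $C_0, C_1$ be disjoint closed subsets of $X$. If either is empty, take the disjoint open sets $\emptyset$ and $X$, so assume both are non-empty. The key point is that for each $x \in C_0$, since $C_1$ is closed and $x \notin C_1$, there is some $\varepsilon_x > 0$ with $N_{\varepsilon_x}(x) \cap C_1 = \emptyset$; symmetrically, for each $y \in C_1$, there is some $\delta_y > 0$ with $N_{\delta_y}(y) \cap C_0 = \emptyset$. I would then define the `halved-radius' unions
\[
O_0 = \bigcup_{x \in C_0} N_{\varepsilon_x/2}(x), \qquad O_1 = \bigcup_{y \in C_1} N_{\delta_y/2}(y),
\]
which are open by construction, contain $C_0$ and $C_1$ respectively (since $x \in N_{\varepsilon_x/2}(x)$ and similarly for $y$), and remain to be shown disjoint.

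The only real content is that last disjointness, and it is a standard triangle-inequality argument: suppose for contradiction that some $z$ lies in $O_0 \cap O_1$. Then $z \in N_{\varepsilon_x/2}(x)$ for some $x \in C_0$ and $z \in N_{\delta_y/2}(y)$ for some $y \in C_1$, so $d(x,y) \leq d(x,z) + d(z,y) < \varepsilon_x/2 + \delta_y/2$. Setting $r = \max(\varepsilon_x, \delta_y)$, this yields $d(x,y) < r$. If $\varepsilon_x \geq \delta_y$ then $r = \varepsilon_x$ and we obtain $y \in N_{\varepsilon_x}(x) \cap C_1$, contradicting the choice of $\varepsilon_x$; if instead $\delta_y > \varepsilon_x$ then $r = \delta_y$ and we get $x \in N_{\delta_y}(y) \cap C_0$, contradicting the choice of $\delta_y$. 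So $O_0 \cap O_1 = \emptyset$, as required. No step here is substantial --- the whole proof is mechanical once one spots the halved-radius trick, which is the only `obstacle' worth mentioning.
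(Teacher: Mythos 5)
Your proof is correct. The Hausdorff part is fine, and the disjointness argument for the halved-radius unions is the standard and valid one: from $d(x,y) < \varepsilon_x/2 + \delta_y/2 \leq \max(\varepsilon_x,\delta_y)$ you land one of the two points inside a ball that was chosen to miss the other closed set, which is a contradiction either way.

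You do take a genuinely different route from the paper. The paper works with the distance-to-set function $d(x,S) = \inf\{d(x,s) : s \in S\}$ and defines a single open set $O = \{x \in X : d(x,C) < d(x,D)/2\}$ containing $C$; it then shows the slightly stronger fact that $\cl(O) \cap D = \emptyset$, from which normality follows by taking $X \setminus \cl(O)$ as the second open set. Your construction builds both open sets symmetrically as unions of balls of half the "escape radius" at each point. The two approaches are comparable in length; the paper's buys the closure-disjointness statement directly, which is the form it actually exploits afterwards (e.g.\ in lemma~\ref{lem:norm++}, where regular open sets with \emph{disjoint closures} are needed, obtained via the equivalent formulation $C \subseteq P \subseteq \cl P \subseteq O$ of normality). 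Your version proves exactly the stated definition of normal (disjoint open sets), which suffices for the lemma as stated, and arguably requires spotting one fewer definition ($d(x,S)$) at the cost of the case split on $\max(\varepsilon_x,\delta_y)$.
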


\begin{proof}
Let $X$ be a metric space.
It is easy to check that $X$ is Hausdorff, and we leave this to the reader.
Let $C,D$ be disjoint closed subsets of $X$.
By symmetry, it is enough to
show that there is open $O\supseteq C$ with $\cl(O)\cap D=\emptyset$.
If $C=\emptyset$, take $O=\emptyset$.  If $D=\emptyset$ take $O=X$.
So we can suppose $C,D\neq\emptyset$, and thus define 
\[
O=\{x\in X:d(x,C)<d(x,D)/2\}
\]
(recall from section~\ref{ss:metric spaces} that $d(x,S)=\inf\{d(x,s):s\in S\}$
for non-empty $S\subseteq X$).
Then $C\subseteq O$, because if $x\in C$ then $d(x,C)=0$,
while $x\notin D$, so $d(x,D)>0$ as $D$ is closed.
It is easily seen that $O$ is  open
and $\cl(O)\subseteq \{x\in X:d(x,C)\leq d(x,D)/2\}$,
so it is enough to show that this latter set is disjoint from $D$.
If $x$ is in both, 
then $d(x,C)\leq d(x,D)/2=0$ so $x\in C$ as $C$ is closed.  
This contradicts the assumption that $C\cap D=\emptyset$.
\end{proof}

\subsection{Tarski's theorem and relatives}

\def\setG{\mathbb{G}}
\def\setB{\mathbb{B}}

The primary topological results needed later 
(for representing finite Kripke frames in proposition~\ref{prop:di rep})
are 
provided by the next theorem.
A recent related result is 
\cite[proposition 11]{KudShe14}.

\begin{theorem}\label{thm:Tarski}\label{thm:Tarski++}
Let $X$ be a \nice\ space.
\begin{enumerate}

\item\label{Tarski thm 1} Let $\setT,\setU$ be open subsets of $X$, with $\emptyset\neq\setT\subseteq\setU$.
Let $k<\omega$.
Then there are pairwise disjoint non-empty subsets $\Vec\setI k\subseteq\setT$
satisfying 
\[
\did\setI_i=\cl(\setT)\setminus\setU\quad\mbox{for each }i\leq k.
\]

\item \label{Tarski thm 2}Let $\setG$ be a non-empty open subset of $X$,
and let $r,s<\omega$.  Then $\setG$ can be partitioned into non-empty open
subsets $\vec \setG r$ and other non-empty sets $\Vec \setB s$ 
such that, letting
\[
D=\cl(\setG)\setminus \bigcup_{1\leq l\leq r}\setG_l,
\]
we have $\cl(\setG_i)\setminus \setG_i=D$ for each $i=1,\ldots,r$,
and $\did\setB_j=D$ for each $j=0,\ldots,s$.
\end{enumerate}

\end{theorem}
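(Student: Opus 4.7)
The plan is to prove both parts as extensions of the classical McKinsey--Tarski dissection lemma \cite[theorem~3.5]{McKiT44}, using normality of metric spaces (lemma~\ref{lem:ms}), the fattening lemma~\ref{lem:norm++}, and the fact that non-empty open subsets of a dense-in-itself T$_D$ space are infinite (lemma~\ref{lem:inf}).

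For Part~1, set $F=\cl(\setT)\setminus\setU$. If $F=\emptyset$, take $\setI_0,\ldots,\setI_k$ to be any $k+1$ disjoint singletons in $\setT$ (each has empty derivative in the Hausdorff space $X$, and $\setT$ is infinite by lemma~\ref{lem:inf}). Otherwise, first partition $\setT$ into $k+1$ colour-classes $\setT^0,\ldots,\setT^k$, each meeting every non-empty open subset of $\setT$; such a partition exists by a straightforward Zorn/well-ordering argument, using that open subsets of $\setT$ are infinite. Then, for each $x\in F$ and $n\geq 1$, pick any $y^i_{x,n}\in\setT^i\cap N_{1/n}(x)$ (non-empty because $x\in\cl\setT$ and $\setT^i$ meets every non-empty open subset of $\setT$), and set $\setI_i=\{y^i_{x,n}:x\in F,\,n\geq 1\}\subseteq\setT^i$. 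The $\setI_i$ are pairwise disjoint and non-empty, and $F\subseteq\did\setI_i$ is immediate. The reverse inclusion $\did\setI_i\subseteq F$ is where the metric structure is essential: for any $z\in\setU$, with $\delta=d(z,X\setminus\setU)>0$, the triangle inequality gives $d(y^i_{x,n},X\setminus\setU)\leq d(y^i_{x,n},x)<1/n$ (as $x\in F\subseteq X\setminus\setU$), so any $y^i_{x,n}\in N_{\delta/2}(z)$ forces $1/n>\delta/2$, bounding $n$ to finitely many values; for each such $n$, $d(y^i_{x,n},z)\geq d(x,z)-1/n\geq\delta-1/n$, bounded away from $0$, so a small enough ball around $z$ misses $\setI_i$ entirely.

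For Part~2, the construction has two stages. \emph{Stage~1}: by a coordinated recursion built on the classical Tarski dissection (using lemmas~\ref{lem:ms} and~\ref{lem:norm++}), produce pairwise disjoint non-empty open $\setG_1,\ldots,\setG_r\subseteq\setG$ all sharing the common boundary $\cl\setG_i\setminus\setG_i=D$, while leaving a residue $V=\setG\setminus\bigcup_i\setG_i$ dense in $D$. \emph{Stage~2}: partition $V$ into non-empty $\setB_0,\ldots,\setB_s$ with $\did\setB_j=D$ by a variant of Part~1's construction: for each $x\in D$ and $n\geq 1$, Stage~1's density ensures $V\cap N_{1/n}(x)$ is non-empty (and so infinite), and one picks $s+1$ distinct witness points assigned to the $\setB_j$; remaining points of $V$ are distributed arbitrarily among the $\setB_j$, as the inclusion $\did\setB_j\subseteq D$ follows automatically from $\setB_j\subseteq V\subseteq X\setminus\bigcup_i\setG_i$ (a closed set whose intersection with $\cl\setG$ is precisely $D$).

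The main obstacle is Stage~1 of Part~2: ensuring all $r$ open pieces share the \emph{same} boundary $D$ while simultaneously reserving enough residue to be dense in $D$. The classical Tarski dissection gives only $r=2$ pieces with common closure and no reserved residue. For arbitrary $r$ together with a dense residue, an interleaved transfinite recursion is required, at each scale and each neighbourhood planting $r$ small open pieces (one per $\setG_i$) while keeping gaps between them that will furnish the residue; the fattening lemma~\ref{lem:norm++} supplies the required disjoint-closure operations at every step.
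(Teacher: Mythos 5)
There are genuine gaps in both parts. In Part~1, the verification that $\did\setI_i\subseteq\cl(\setT)\setminus\setU$ does not go through. For $z\in\setU$ with $\delta=d(z,X\setminus\setU)>0$, your bound $d(y^i_{x,n},z)\geq\delta-1/n$ is negative whenever $n\leq 1/\delta$, so the finitely many relevant values of $n$ include small ones for which the bound says nothing; it is not ``bounded away from $0$''. This is a defect of the construction, not just of the verification: the witnesses $y^i_{x,n}$ for small $n$ are constrained only to lie within $1/n$ of $F=\cl(\setT)\setminus\setU$, and nothing prevents the family $\{y^i_{x,1}:x\in F\}$ from accumulating at a point of $\setU$ at distance less than $1$ from $F$. (Take $X=\R^2$ and $\setT=\setU$ the open upper half-plane, so $F$ is the horizontal axis; the witnesses $y^i_{(a,0),1}$ may be chosen to converge to $(0,\frac12)\in\setU$ as $a\to0$, putting $(0,\frac12)$ into $\did\setI_i\setminus F$.) What is missing is a separation condition on each layer of witnesses: the paper's proof adds, at stage $n$, a \emph{maximal $\varepsilon_n$-separated} set $Z_n$ (hence $\did Z_n=\emptyset$) inside a shrinking open set $O_n$ consisting of the points of $\setT$ within $\varepsilon_n$ of $F$, and then shows $\did\setI_i=\bigcap_n\cl O_n=F$. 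Your scheme could be repaired along these lines, but as written it is incorrect. (Also, the partition of $\setT$ into $k+1$ dense colour classes is a resolvability statement that needs more than ``a straightforward Zorn argument'', though for metric spaces it does follow from iterated maximal $\varepsilon$-nets.)

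In Part~2, Stage~1 --- producing $r$ pairwise disjoint non-empty open sets all with boundary exactly $D$ while reserving a residue whose derivative is $D$ --- is the substance of the theorem, and you describe what must be done rather than doing it: you yourself flag it as ``the main obstacle'' and say a recursion ``is required'' without carrying it out or verifying either inclusion of $\cl(\setG_i)\setminus\setG_i=D$. The paper performs this with the same game as Part~1, player \pa\ maintaining normality-fattened open sets $G^n_j\supseteq I^n_j$ with pairwise disjoint closures and playing $O_n=\setG\setminus\bigcup_j\cl G^n_j$; establishing $D=\bigcap_n\cl O_n$ and the boundary identities takes real work there. Stage~2 also has loose ends: $V\cap N_{1/n}(x)$ being non-empty does not make it infinite ($V$ is not open); a single point of $V$ may be selected as a witness for different $\setB_j$ at different pairs $(x,n)$, so the partition is not obviously well defined; and when $x\in V\subseteq D$ the witness must be required to differ from $x$. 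Your observation that $\did\setB_j\subseteq D$ is automatic from $\setB_j\subseteq V$ is correct, but the converse inclusion depends on witness bookkeeping that has not been supplied.
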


Part~\ref{Tarski thm 2} above is essentially known.
Paraphrasing slightly, Tarski \cite[satz 3.10]{Tar38}
proved the following.
Let $X$ be a dense-in-itself normal topological space 
 with a countable
basis of open sets (see below).
Then for every $r<\omega$,
every non-empty open subset
$\setG$ of $X$ can be partitioned into non-empty
open sets $\vec\setG r$ and a non-empty set $\setB_0$
such that
$\cl(\setG)\setminus\setG\subseteq\cl\setB_0
\subseteq\cl\setG_1\cap\ldots\cap\cl\setG_r$.
Here and below, the empty intersection (when $r=0$)
is taken to be $X$.
This statement is equivalent to the statement in part~\ref{Tarski thm 2} of  theorem~\ref{thm:Tarski++} above in the case $s=0$ and with $\did \setB_j$ replaced by $\cl\setB_j$.

A topological space $(X,\tau)$ has a countable
basis of open sets iff 
there is countable $\tau_0\subseteq\tau$
such that $\tau$ is the smallest topology on $X$ containing $\tau_0$.
Given this and normality,
Urysohn's theorem \cite{Urysohn25:met} yields that $\tau=\tau_d$ for some metric $d$ on $X$.
Any metric space is normal, and has a countable basis of open sets iff it is 
separable (see section~\ref{ss:top spaces}).
So   Tarski's stipulation on $X$ boils down to stipulating that $X$ is a separable \nice\ space.

Removing  the restriction to $s=0$ but with the same hypotheses on $X$,
McKinsey and Tarski \cite[theorem 3.5]{McKiT44} proved that
for every $r,s<\omega$, every non-empty open set
$\setG$ can be partitioned into non-empty open sets
$\vec\setG r$ and non-empty sets $\Vec\setB s$
with 
$\cl(\setG)\setminus\setG\subseteq\cl\setB_0=\cdots=\cl\setB_s\subseteq\cl\setG_1\cap\ldots\cap\cl\setG_r$.
This statement is equivalent to the statement
of theorem~\ref{thm:Tarski++}(\ref{Tarski thm 2}) above,
with $\did \setB_j$ replaced by $\cl\setB_j$.
It was used in \cite{McKiT44} to prove 
(in our terminology) that the $\c L_\bo$-logic of $X$ is S4.

Removing the assumption of separability,
Rasiowa and Sikorski \cite[III, 7.1]{RS:mm}
proved  theorem~\ref{thm:Tarski++}(\ref{Tarski thm 2})
as formulated above, but with $\did \setB_j$ replaced by $\cl\setB_j$.
Our use of $\did\setB_j$ is only a formal strengthening of 
\cite[III, 7.1]{RS:mm},
since the same effect can be achieved by first obtaining disjoint
sets $\setB_j^i$ with $\cl\setB_j^i=D$ for $j=0,\ldots,s$ and $i=0,1$,
and then defining $\setB_j=\setB_j^0\cup\setB_j^1$ for each $j$. 
As $\setB_j^0\cap\setB_j^1=\emptyset$, using lemma~\ref{lem:did cl} we have
\[
D\subseteq (D\setminus\setB_j^0)\cup(D \setminus\setB_j^1)
=(\cl\setB_j^0\setminus\setB_j^0)\cup(\cl\setB_j^1\setminus\setB_j^1)
\subseteq\underbrace{\did\setB_j^0\cup\did\setB_j^1}_{\did\setB_j}
\subseteq
\cl\setB_j^0\cup\cl\setB_j^1=D,
\]
so $\did\setB_j=D$ as required.
Given this, the reader may ask why we give a proof of part~2 at all.
The answer is that we wish to make clear the affinity between the two parts of the theorem,
as well as make our paper more self contained
and explicit as to the topological arguments needed in our completeness proof.

\begin{proof}
We will get to the theorem shortly, but first, fix $k<\omega$.
We define a game, $\c G_k$,
 to build pairwise disjoint subsets $\Vec\setI k$ of $X$.
The game has two players,
\pa\ (male) and 
\pe\ (female), and $\omega$ rounds, numbered $0,1,2,\ldots.$
At the start of round $n$ (for each $n<\omega$), pairwise disjoint sets 
$\Vec{I^n}k\subseteq X$ are in play,
satisfying
\begin{equation}\label{e:did=0}
\did I_i^n=\emptyset\quad\mbox{ for each }i\leq k.
\end{equation}
Observe that each $I_i^n$ is closed,
because by lemma~\ref{lem:did cl}, $\cl I^n_i=I^n_i\cup\did I^n_i=I^n_i$.
Also, 
\begin{equation}\label{e:UIjn empty int}
\int\Big(\bigcup_{j\leq k}I^n_j\Big)=\emptyset.
\end{equation}
For if $U\subseteq\bigcup_{j\leq k}I^n_j$ is open, then
by lemma~\ref{lem:did cl} and \eqref{e:did=0},
\[
U\subseteq\cl U=\did U\subseteq\did\bigcup_{j\leq k}I_j^n
=\bigcup_{j\leq k}\did I_j^n=\emptyset.
\]
The game starts off with all of the sets empty: 
$I_0^0=\cdots=I_{k}^0=\emptyset$.
Round $n$ is played as follows.
Player \pa\ moves first, by playing a triple 
$
(\varepsilon_n,i_n,O_n),
$
of his choice,
where 
$\varepsilon_n>0$ is a real number, $i_n\leq k$,
and $O_n$ is a non-empty open subset of $X$.
Let
\begin{equation}\label{e:Pn}
P_n=O_n\setminus\bigcup_{j\leq k}I_j^n.
\end{equation}
Then $P_n\neq\emptyset$:
for otherwise, $\emptyset\neq O_n\subseteq\bigcup_{j\leq k}I_j^n$,
contradicting \eqref{e:UIjn empty int}.
Player \pe\ responds to \pa's move by using Zorn's lemma to choose a maximal subset 
$Z_n\subseteq P_n$
such that $d(x,y)\geq\varepsilon_n$ for each distinct $x,y\in Z_n$.
Observe that 
\begin{enumerate}
\renewcommand{\theenumi}{Z\arabic{enumi}}
\renewcommand{\labelenumi}{Z\arabic{enumi}.}

\item \label{cond Z1} $\did Z_n=\emptyset$ (because for all $x\in X$, 
the set $N_{\varepsilon_n/2}(x)\cap Z_n$ 
has  at most one element).
Just as with $I^n_i$ above, it follows that $Z_n$ is closed. 

\item\label{cond Z2}  $Z_n$ is non-empty (because 
$P_n$ is non-empty and any singleton subset of $P_n$ satisfies the $\varepsilon_n$-condition).
 
\item\label{cond Z3} $d(x,Z_n)<\varepsilon_n$ for every 
$x\in P_n$ (else $x$ can be added to $Z_n$, contradicting its maximality).
Recall again that $d(x,Z_n)=\inf\{d(x,z):z\in Z_n\}$, which is defined
because $Z_n$ is non-empty.
\end{enumerate}
Player \pe\ then extends $I^n_{i_n}$ by $Z_n$, leaving the other
sets $I^n_i$ unchanged.  Formally, she defines
\[
\begin{array}{rcll}
I_{i_n}^{n+1}&=&I_{i_n}^n\cup Z_n,
\\[4pt]
I_i^{n+1}&=&I_i^n&\mbox{for each }i\leq k\mbox{ with }i\neq i_n.
\end{array}
\] 
This completes the round, and the sets
$\Vec{I^{n+1}}k$ are passed to the start of round $n+1$.
Note that \eqref{e:did=0} 
holds for these sets, since
$\did I_{i_n}^{n+1}=\did I_{i_n}^n\cup\did Z_n=\emptyset$
by lemma~\ref{lem:did cl}, \eqref{e:did=0} for $I_{i_n}^n$, and \ref{cond Z1} above.
Also, by~\eqref{e:Pn}, $Z_n$ is disjoint from each $I^n_i$, so the 
$I^{n+1}_i$ $(i\leq k)$ are pairwise disjoint.

At the end of the game, we define
$\setI_i=\bigcup_{n<\omega}I_i^n$ for each $i\leq k$.
Plainly,  $\Vec\setI k$ are pairwise disjoint.

We say that \pa\ \emph{plays well} in $\c G_k$
if his choices of $\varepsilon_n$
tend to zero, the set $\{n<\omega:i_n=i\}$ is infinite for each $i\leq k$,
and his choices of $O_n$ form a descending chain: $O_0\supseteq O_1\supseteq\cdots$.

It is clear by condition \ref{cond Z2} above that if \pa\ plays well then 
$\Vec\setI k$ are all non-empty.

\claim In any play (match?) of the game in which \pa\ plays well,
for each $i\leq k$ we have
\[
\did\setI_i=\bigcap_{n<\omega}\cl O_n.
\]

\pfclaim 
Let $n<\omega$.
Define $I_i^{>n}=\setI_i\setminus I_i^n$.
This is the set of points that \pe\ added to $\setI_i$ in or after round $n$.
By the game rules and because \pa\ played well,
$I_i^{>n}\subseteq\bigcup_{n\leq m<\omega}Z_m
\subseteq\bigcup_{n\leq m<\omega}O_m=O_n$.
Obviously, $\setI_i=I_i^n\cup I_i^{>n}$.
So by lemma~\ref{lem:did cl} and \eqref{e:did=0},
\[
\did\setI_i=\did(I_i^n\cup I_i^{>n})=\did I_i^n\cup\did I_i^{>n}
=\did I_i^{>n}\subseteq\did O_n\subseteq\cl O_n.
\]
This holds for all $n$,
so $\did\setI_i\subseteq\bigcap_{n<\omega}\cl O_n$.

Conversely, let $x\in\bigcap_{n<\omega}\cl O_n$.
Let a real number $\varepsilon>0$ be given.
Since \pa\ plays well, we can pick a round, say $n$, such that
\pa\ chose $\varepsilon_n\leq\varepsilon$ and
$i_n=i$, and such that if $x\in\setI_i$ then already $x\in I_i^n$.
Since $x\in\cl O_n$,
the set $N_\varepsilon(x)\cap O_n$ is non-empty, 
and plainly it is  open.
As before,  \eqref{e:UIjn empty int} implies that
$N_\varepsilon(x)\cap O_n\setminus\bigcup_{j\leq k}I^n_j$
is non-empty as well.
Fix a point $y$ in this set.
Then $y\in P_n$ and $d(x,y)<\varepsilon$.

In round $n$, player \pe\ picks $Z_n\subseteq P_n$
satisfying conditions \ref{cond Z1}--\ref{cond Z3} above.
Observe that $x\notin Z_n$,
because otherwise, $x\in Z_n\subseteq\setI_i$ (since $i_n=i$),
so by assumption on $n$ we have
$x\in I_i^n$, so by~\eqref{e:Pn},
$x\notin P_n\supseteq Z_n$, a contradiction.
Since $y\in P_n,$ by \ref{cond Z3} we have $d(y,Z_n)<\varepsilon_n$.
Since $d(x,y)<\varepsilon$,
we have $d(x,Z_n)<\varepsilon+\varepsilon_n\leq 2\varepsilon$.
So there is $z\in Z_n\subseteq\setI_i$ with $z\neq x$
(since $x\notin Z_n$) and $d(x,z)<2\varepsilon$.
This holds for all $\varepsilon>0$,
and it follows that $x\in\did\setI_i$, proving the claim.

\medskip

Now we prove part 1 of the theorem.
Suppose first that $\cl(\setT)\setminus\setU=\emptyset$.
Noting that $\setT$ is infinite (by lemma~\ref{lem:inf}),
we can take $\Vec\setI k$ to be disjoint singleton
subsets of $\setT$. Plainly, 
all requirements are met.

So suppose that $\cl(\setT)\setminus\setU\neq\emptyset$.
Let \pa\ and \pe\ play the game $\c G_k$.
We suppose that \pa\ plays well, and also so that for each $n<\omega$,
\[
O_n=\setT\cap\bigcup_{x\in \cl(\setT)\setminus\setU}N_{\varepsilon_n}(x).
\]
Note that $O_n$ is open, and 
non-empty because $\cl(\setT)\setminus\setU\neq\emptyset$, so \pa\ can legally play it.
Then $\Vec\setI k$  are pairwise disjoint,
and non-empty since \pa\ plays well.
We have $Z_n\subseteq O_n\subseteq\setT$ for each $n$,
so  $\Vec\setI k$
are subsets of $\setT$.
By the claim, $\did\setI_i=\bigcap_{n<\omega}\cl O_n$ for each $i\leq k$,
so it suffices to show that $\bigcap_{n<\omega}\cl O_n=\cl(\setT)\setminus\setU$.

Certainly, each 
$x\in\cl(\setT)\setminus\setU$ lies in $\cl O_n$ for each $n$,
because for every $\varepsilon>0$, 
\[
O_n\cap N_\varepsilon(x)
\supseteq 
\Big(\setT\cap\bigcup_{y\in \cl(\setT)\setminus\setU}N_{\varepsilon_n}(y)\Big)
\cap N_{\min(\varepsilon,\varepsilon_n)}(x)
=\setT\cap N_{\min(\varepsilon,\varepsilon_n)}(x)\neq\emptyset.
\]
So $\cl(\setT)\setminus\setU\subseteq\bigcap_{n<\omega}\cl O_n$.
Conversely, first note that
$O_0\subseteq\setT$,
so $\bigcap_{n<\omega}\cl O_n\subseteq\cl O_0\subseteq\cl\setT$.
It remains to show that $\setU\cap\bigcap_{n<\omega}\cl O_n=\emptyset$.
Suppose for contradiction that there is some $x\in\setU\cap\bigcap_{n<\omega}\cl O_n$. As $\setU$ is open, we can choose $\delta>0$ with $N_\delta(x)\subseteq\setU$.
As \pa\ played well, we can pick $n<\omega$ such that $\varepsilon_n\leq\delta$.
Then $x\in\cl O_n$,
so $d(x,O_n)=0$.
By definition of $O_n$, for each $y\in O_n$
we have $d(y,\cl(\setT)\setminus\setU)<\varepsilon_n$.
So $d(x,\cl(\setT)\setminus\setU)<\varepsilon_n$ as well.
As $\varepsilon_n\leq\delta$ and  $N_\delta(x)\subseteq\setU$,
this is a contradiction.
We conclude that indeed $\setU\cap\bigcap_{n<\omega}\cl O_n=\emptyset$,
so $\bigcap_{n<\omega}\cl O_n\subseteq\cl(\setT)\setminus\setU$, as required.
We have proved part~1 of the theorem.

\medskip

To prove part 2, let \pa\ and \pe\ play $\c G_{s+r}$.
As we will see, \pa\ will play so that $\Vec\setI{s+r}\subseteq\setG$.
In the end, $\vec\setB s$ will be $\vec\setI s$,
$\vec\setG r$ will be `fattened' versions
of $\setI_{s+1},\ldots,\setI_{s+r}$, and $\setB_0$ will be the rest
of $\setG$ (we will have $\setB_0\supseteq\setI_0$).
For the fattening, at the start of round $n$ (for each $n<\omega$), 
for each $j=s+1,\ldots,s+r$, \pa\ defines an auxiliary open set
$G^n_j$ such that
\begin{eqnarray}
I^n_{j}\subseteq G^n_{j}
\label{e:G sets 1}
\\
G^0_j\subseteq G^{1}_j\subseteq\cdots
\label{e:G sets 2}
\\
I^n_0,\ldots,I^n_s,\cl G^n_{s+1},\ldots,\cl G^n_{s+r}
\mbox{ are pairwise disjoint subsets of }\setG.
\label{e:G sets 3}
\end{eqnarray}
The sets $G^n_j$ are for \pa's own private use and are not formally part of the game.
(If $r=0$, there are no $j$ in range and he does nothing.)
At the start of round~0, he simply puts $G^0_{s+1}=\cdots=G^0_{s+r}=\emptyset$.
Suppose we are at the start of round~$n$, for arbitrary $n<\omega$,
and that \pa\ has defined open $G^n_{j}\supseteq I^n_{j}$
($s+1\leq j\leq s+r$)  satisfying
\eqref{e:G sets 1}--\eqref{e:G sets 3}.
In round~$n$ he plays $(\varepsilon_n,i_n,O_n)$, where
$i_0=0$,
\begin{equation}\label{e:pa's O_n in Tarski}
O_n=\setG\setminus\bigcup_{s+1\leq j\leq s+r}\cl G^n_j,
\end{equation}
and the $\varepsilon_n,i_n$ are chosen so that overall, he plays well.
By \eqref{e:G sets 2}, 
$O_0\supseteq O_1\supseteq\cdots$,
as required for him to play well.
(We remark that if $r=0$ then $O_n=\setG$ for all $n$.)

We check that this is always a legal move for \pa.
Certainly, $O_n$ is open.  We show that it is always non-empty.
For $n=0$ we plainly have $O_0=\setG\neq\emptyset$.
In round $0$, \pa\ plays $i_0=0$,
and \pe\ defines $I^1_0=Z_0\neq\emptyset$ by condition \ref{cond Z2} above.
Since the $I^n_0$ form a chain,
$I^n_0\supseteq I^1_0\neq\emptyset$ for all $n>0$, and by~\eqref{e:G sets 3}
and~\eqref{e:pa's O_n in Tarski},
$I^n_0\subseteq O_n$.  So $O_n\neq\emptyset$ for all $n$.

Player \pe\ continues round $n$
by selecting $Z_n\subseteq P_n$ and defining
$I^{n+1}_{i_n}=I^n_{i_n}\cup Z_n$ according to the rules.

It is now time for \pa\ to define $G^{n+1}_j$ for $j=s+1,\ldots,s+r$.
If $i_n\leq s$,
he leaves the sets unchanged,  defining $G^{n+1}_j=G^n_j$ for all $j$.
Trivially, conditions~\eqref{e:G sets 1}--\eqref{e:G sets 2} continue to hold.
We check \eqref{e:G sets 3}.
First, $Z_n\subseteq P_n$,
so $I^{n+1}_{i_n}$ is disjoint from $I^{n+1}_j$ for $i_n\neq j\leq s$.
Second, if $s+1\leq j\leq s+r$
then $I^{n+1}_{i_n}=I^n_{i_n}\cup Z_n\subseteq I^n_{i_n}\cup O_n$;
by~\eqref{e:G sets 3}, $I^n_{i_n}$ is disjoint from $\cl G^n_j=\cl G^{n+1}_j$,
and by~\eqref{e:pa's O_n in Tarski}, $O_n$ is disjoint from $\cl G^{n+1}_j$
as well.

If instead, $i_n>s$, then \pa\
defines $G^{n+1}_j=G^n_j$ for  $j\neq i_n$,
and uses normality of $X$ to choose an open set $G^{n+1}_{i_n}$
satisfying
\begin{equation}\label{e:A's set}
\overbrace{\cl(G_{i_n}^n)\cup Z_n}^{\rm closed}\subseteq G^{n+1}_{i_n}\subseteq \cl(G^{n+1}_{i_n})\subseteq 
\overbrace{\setG\setminus
\big(\bigcup_{j\leq s}I^n_j\;\;\cup\bigcup_{\genfrac{}{}{0pt}{1}{s+1\leq j\leq s+r}{j\neq i_n}}\cl(G^n_{j})\big)}^{\rm open}.
\end{equation}
We need to check some things here.
First, by condition \ref{cond Z1} above, $Z_n$ is closed and
so the left-hand side of \eqref{e:A's set} is closed.
Similarly, we saw just after \eqref{e:did=0}
that each $I^n_j$ is closed,
so the right-hand side of \eqref{e:A's set} is open.
Second,
it follows from \eqref{e:G sets 3}
that $\cl(G_{i_n}^n)$ is contained in the right-hand side of \eqref{e:A's set}. 
Also $Z_n\subseteq P_n\subseteq O_n$,
and it follows from \eqref{e:Pn} and \eqref{e:pa's O_n in Tarski}
 that
$Z_n$ is contained in the right-hand side of \eqref{e:A's set} as well.
So $G^{n+1}_{i_n}$ can be found as stated.

We also need to check
\eqref{e:G sets 1}--\eqref{e:G sets 3} for the $G^{n+1}_j$.
Condition~\eqref{e:G sets 1} holds because 
$I^{n+1}_{i_n}=I^n_{i_n}\cup Z_n\subseteq G^n_{i_n}\cup Z_n\subseteq G^{n+1}_{i_n}$,
and for $j\neq i_n$ we have
$G^{n+1}_j=G^n_j\supseteq I^n_j=I^{n+1}_j$.
Conditions~\eqref{e:G sets 2} and~\eqref{e:G sets 3} are clear from the definitions and 
\eqref{e:A's set}.

As promised, at the end of play we define
\[
\begin{array}{rcll}
\setG_i&=&\displaystyle\bigcup_{n<\omega}G^n_{s+i}&\mbox{for  }1\leq i\leq r,
\\[14pt]
\setB_j&=&\setI_j&\mbox{for  }1\leq j\leq s,
\\[4pt]
\setB_0&=&\displaystyle\setG\setminus\Big(\bigcup_{1\leq i\leq r}\setG_i
\cup\bigcup_{1\leq j\leq s}\setB_j
\Big)
\\[14pt]
D&=&\displaystyle\cl(\setG)\setminus\bigcup_{1\leq l\leq r}\setG_l.
\end{array}
\]
Note that $\setI_{s+i}\subseteq\setG_i$ for $1\leq i\leq r$
by \eqref{e:G sets 1},
and $\setI_j\subseteq\setB_j$ for $j\leq s$ by the definitions.
Because \pa\ played well, the $\setG_j$ are non-empty (and plainly open)
and the $\setB_j$ are non-empty.
It follows from \eqref{e:G sets 3} that together they partition $\setG$.

For the final piece of the theorem, there are two preliminaries.
First, we observe that each set $\setG_i$
($1\leq i\leq r$) has a nice property.
Each time \pa\ plays $i_n= s+i$ in some round~$n$,
by \eqref{e:G sets 2}, \eqref{e:A's set}, and the definition of $\setG_i$,
for every $m\leq n$ we have
$\cl G^m_{s+i}\subseteq\cl G^n_{s+i}\subseteq G^{n+1}_{s+i}\subseteq\setG_i$.
Since \pa\ played $i_n=s+i$ infinitely often, it follows that
\begin{equation}\label{e:closure inside}
\cl G^m_{s+i}\subseteq\setG_i\quad\mbox{for each }
m<\omega\mbox{ and } 1\leq i\leq r.
\end{equation}
Second, we use this to show that 
\begin{equation}\label{e:value of D}
D=\bigcap_{n<\omega}\cl O_n.
\end{equation}
Note that if $C\subseteq S\subseteq X$ and $C$ is closed,
then $S= C\cup(S\setminus C)\subseteq C\cup\cl(S\setminus C)$;
the right-hand side is closed, so
$\cl S\subseteq C\cup\cl(S\setminus C)$, whence
 $\cl(S)\setminus C\subseteq\cl(S\setminus C)$.
Now, for each $n<\omega$
we have
\[
\begin{array}{rcll}
D&=&\cl(\setG)\setminus\bigcup_{1\leq i\leq r}\setG_i
&\mbox{by definition}
\\
&\subseteq&\cl(\setG)\setminus\bigcup_{1\leq i\leq r}\cl G^n_{s+i}
&\mbox{by \eqref{e:closure inside}}
\\
&\subseteq&\cl\big(\setG\setminus\bigcup_{1\leq i\leq r}\cl G^n_{s+i}\big)
\quad
&
\mbox{by the observation above}
\\
&=&\cl O_n&\mbox{by \eqref{e:pa's O_n in Tarski}.}
\end{array}
\]
So $D\subseteq\bigcap_{n<\omega}\cl O_n$.
Conversely,
we certainly have $\bigcap_{n<\omega}\cl O_n\subseteq\cl O_0=\cl\setG$
 since $O_0=\setG$.
Now fix $i$ with $1\leq i\leq r$.
By \eqref{e:pa's O_n in Tarski}, 
for each $n<\omega$ we have $G^n_{s+i}\cap O_n=\emptyset$,
so as $G^n_{s+i}$ is open,
$G^n_{s+i}\cap\cl O_n=\emptyset$.
It follows that 
\[
\setG_i\cap\bigcap_{n<\omega}\cl O_n=(\bigcup_{n<\omega}G^n_{s+i})\cap\bigcap_{n<\omega}\cl O_n=\emptyset.
\]
This holds for each $i$,
so  $\bigcap_{n<\omega}\cl O_n\subseteq\cl(\setG)\setminus\bigcup_{1\leq i\leq r}\setG_i=D$,
proving \eqref{e:value of D}.

Now we can finish easily.
For each $0\leq j\leq s$, 
we plainly have
$\setB_j\subseteq\setG\setminus\bigcup_{1\leq l\leq r}\setG_l
\subseteq D$.
Since $D$ is closed,
$\did\setB_j\subseteq\cl\setB_j\subseteq D$.
Conversely, by \eqref{e:value of D} and the claim,
$D=\bigcap_{n<\omega}\cl O_n=\did\setI_j\subseteq\did\setB_j$.

Similarly, take $i$ with $1\leq i\leq r$.
Since 
the $\setG_l$ ($1\leq l\leq r$) are pairwise disjoint open subsets of $\setG$, we have
$\cl\setG_i\subseteq\cl(\setG)\setminus\bigcup_{l\neq i}\setG_l$
and hence 
$\cl(\setG_i)\setminus\setG_i\subseteq
\cl(\setG)\setminus\bigcup_{1\leq l\leq r}\setG_l=D$.
Conversely, by \eqref{e:value of D}, the claim, and lemma~\ref{lem:did cl} we have
$D=\bigcap_{n<\omega}\cl O_n=\did\setI_{s+i}\subseteq\did\setG_i\subseteq\cl\setG_i$.
By definition, $D\cap\setG_i=\emptyset$.
So $D\subseteq\cl(\setG_i)\setminus\setG_i$, as required.
\end{proof}

\begin{corollary}\label{cor:Tarski++opens}
Let $\setU$ be an open subspace of a \nice\ space $X$, and
suppose that $\setS_0$, $\setS_1$ are open subsets of $\setU$
such that  $\setU\cap\cl \setS_0\cap\cl \setS_1=\emptyset$
and $\setT=\setU\setminus\cl(\setS_0\cup \setS_1)\neq\emptyset$.
Then  there are \ro\ subsets $\setU_0,\setU_1$ of $\setU$
such that $\setU\cap\cl\setU_0\cap\cl\setU_1=\emptyset$, and for each $i=0,1$:
\begin{enumerate}
\item $\setU\cap\cl \setS_i\subseteq\setU_i$,

\item writing $\setT_i=\setU_i\setminus\cl \setS_i$,
 we have $\setT_i\neq\emptyset$ and
 $\cl(\setT)\setminus\setU\subseteq\cl\setT_i$.
\end{enumerate}
\end{corollary}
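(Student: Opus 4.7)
The plan is to combine theorem~\ref{thm:Tarski++}(\ref{Tarski thm 1}) with the normality lemma~\ref{lem:norm++}. First, I would apply theorem~\ref{thm:Tarski++}(\ref{Tarski thm 1}) with the given $\setT, \setU$ (noting $\setT$ is open since $\cl(\setS_0\cup \setS_1)$ is closed, and nonempty by hypothesis) and with $k=1$, to obtain disjoint nonempty sets $\setI_0, \setI_1\subseteq \setT$ with $\did \setI_i = \cl(\setT)\setminus \setU$ for $i=0,1$. The key observation is that each $\setI_i$ is \emph{closed in the subspace $\setU$}: by lemma~\ref{lem:did cl},
\[
\cl_\setU \setI_i = \setU\cap\cl \setI_i = \setU\cap(\setI_i\cup\did \setI_i) = \setI_i\cup\bigl(\setU\cap(\cl \setT\setminus \setU)\bigr) = \setI_i.
\]
This is what allows the Tarski pieces to be fed into a normality argument.

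Writing $C_i = \setU\cap\cl \setS_i$ (which is closed in $\setU$), I would then check that $C_0\cup \setI_0$ and $C_1\cup \setI_1$ are disjoint closed subsets of $\setU$: $C_0\cap C_1=\emptyset$ is the hypothesis; $\setI_j\cap C_i\subseteq \setT\cap\cl \setS_i=\emptyset$ since $\setT = \setU\setminus\cl(\setS_0\cup \setS_1)$; and $\setI_0\cap \setI_1=\emptyset$ by Tarski. Since $\setU$ is an open subspace of the metric space $X$, the restricted metric makes $\setU$ a normal space (lemma~\ref{lem:ms}); so lemma~\ref{lem:norm++} applied within $\setU$ yields regular open subsets $\setU_0, \setU_1$ of $\setU$ with $\cl_\setU \setU_0\cap\cl_\setU \setU_1=\emptyset$ (equivalently, $\setU\cap\cl \setU_0\cap\cl \setU_1=\emptyset$) and $C_i\cup \setI_i\subseteq \setU_i$ for $i=0,1$.

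Verification of the remaining conditions is immediate: condition (1) holds because $\setU\cap\cl \setS_i = C_i\subseteq \setU_i$; for (2), $\setT_i = \setU_i\setminus\cl \setS_i$ contains $\setI_i\setminus\cl \setS_i = \setI_i$ (as $\setI_i\subseteq \setT$ is disjoint from $\cl \setS_i$), which is nonempty, and $\cl(\setT)\setminus \setU = \did \setI_i\subseteq\cl \setI_i\subseteq\cl \setT_i$. The only real obstacle is the observation that each $\setI_i$ is closed in $\setU$; once that is in hand, the result of Tarski's dissection becomes precisely the kind of input that normality can work with, and the rest is routine set-theoretic bookkeeping. I would also note in passing that regular openness in $\setU$ (as provided by lemma~\ref{lem:norm++} applied to the normal space $\setU$) coincides with definition~\ref{def:ro subset}, since $\setU$ is open in $X$.
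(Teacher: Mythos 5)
Your proposal is correct and follows essentially the same route as the paper's own proof: apply theorem~\ref{thm:Tarski++}(\ref{Tarski thm 1}) to get the sets $\setI_0,\setI_1$, observe that they are closed in the subspace $\setU$ via the same $\cl_\setU\setI_i=\setU\cap(\setI_i\cup\did\setI_i)=\setI_i$ computation, and then feed $\setI_i\cup(\setU\cap\cl\setS_i)$ into lemma~\ref{lem:norm++} in the normal subspace $\setU$. The verifications of conditions (1) and (2) also match the paper's; no issues.
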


\begin{proof}
Since $\setT$ is a non-empty open subset of $\setU$,
we can use theorem~\ref{thm:Tarski++} to choose disjoint non-empty subsets
$\setI_0,\setI_1\subseteq\setT$
such that $\did\setI_0=\did\setI_1=\cl(\setT)\setminus\setU$.

We now work in the subspace $\setU$. 
Recall that  $\cl_{\setU}$ denotes the closure operator in the subspace topology on $\setU$,
so $\cl_{\setU} K=\setU\cap\cl K$ for subsets $K\subseteq\setU$.
The sets
\[
\cl\nolimits_{\setU} \setS_0,
\;
\cl\nolimits_{\setU}\setS_1,
\;
\setI_0,
\;
\setI_1
\]
are 
pairwise disjoint (by assumptions) and closed in $\setU$.
(Each $\setI_i$ is closed in $\setU$ because
by lemma~\ref{lem:did cl},
$\cl_{\setU}\setI_i=\setU\cap\cl\setI_i=\setU\cap(\setI_i\cup\did\setI_i)
=\setU\cap(\setI_i\cup(\cl(\setT)\setminus\setU))=\setU\cap\setI_i=\setI_i$.)
Hence, 
$\setI_0\cup\cl\nolimits_{\setU}\setS_0$ and $\setI_1\cup\cl\nolimits_{\setU}\setS_1$
are disjoint closed subsets of $\setU$.
The subspace $\setU$ is a metric space in its own right, and
so, by lemma~\ref{lem:ms}, normal.
Using lemma~\ref{lem:norm++} in $\setU$,
we can find \ro\ subsets $\setU_0,\setU_1$ of $\setU$ with 
\begin{equation}\label{cor:pf:cond1}
\setI_i\cup\cl\nolimits_{\setU}\setS_i\subseteq\setU_i\subseteq\setU
\quad\mbox{for } i=0,1,
\end{equation}
and 
$\cl\nolimits_{\setU}\setU_0\cap\cl\nolimits_{\setU}\setU_1=\emptyset$.
Working back in $X$ again, this says that
\begin{equation}\label{cor:pf:cond2}
\setU\cap\cl\setU_0\cap\cl\setU_1=\emptyset.
\end{equation}
Now for each $i=0,1$, write $\setT_i=\setU_i\setminus\cl \setS_i$.
By definition, $\setI_i\subseteq\setU_i$.
 Also, $\setI_i\cap(\setU\cap\cl \setS_i)=\emptyset$,
and since $\setI_i\subseteq\setU$, this gives $\setI_i\cap\cl \setS_i=\emptyset$.
Hence, $\setI_i\subseteq\setT_i$, so $\setT_i\neq\emptyset$.
We now obtain
\begin{equation}\label{cor:pf:cond3}
\cl(\setT)\setminus\setU=\did\setI_i\subseteq\cl\setI_i\subseteq\cl\setT_i.
\end{equation}
Lines \eqref{cor:pf:cond1}, \eqref{cor:pf:cond2}, and \eqref{cor:pf:cond3}, 
together with
$\setT_i\neq\emptyset$, establish the corollary.
\end{proof}

\section{Representations of frames over topological spaces}

Our next aim is to use the results of the preceding section to 
construct a `\rep' from an arbitrary \nice\
space to any given finite connected locally connected KD4 Kripke frame.
The notion of \rep\ is chosen so as to preserve $\c L_{\bod\forall}^\mu$-formulas,
and this will allow us to prove completeness theorems in the next two sections.

Until the end of section~\ref{ss:full rby}, we fix a topological space $X$
and a finite Kripke frame $\c F=(W,R)$.
We will frequently regard the elements of $W$
as propositional atoms.

\subsection{Representations}

The following definition 
seems to originate with Shehtman:
see equation (71) in \cite[\S5, p.25]{Sheh:d90}.

\begin{definition}
A~map $\rho:X\to W$ is said to be a \emph{\rep\ of $\c F$ over $X$}
if for every $x\in X$ and $w\in W$ we have
\[
(X,\rho^{-1}),x\models\did w\iff R(\rho(x),w).
\]
\end{definition}

Here, $\rho^{-1}$ assigns an atom $w\in W$ to the 
possibly empty subset $\{x\in X:\rho(x)=w\}$ of $X$.
The condition says that
for every $x\in X$,
the set of points  of $W$ with preimages  under $\rho$ 
in every open \nhd\ of $x$ but distinct from $x$ itself is precisely $R(\rho(x))$.
Equivalently, $\did\rho^{-1}(w)=\rho^{-1}(R^{-1}(w))$ for every $w\in W$,
where $R^{-1}$ is the converse relation of $R$.

Note that  $\rho$ need not be surjective.
Indeed, the empty map is vacuously a \rep\ of $\c F$ over the empty space  ---  and we definitely do allow empty \rep s.

It can be checked that if $\rho:X\to W$ is a \rep\ then $R\restriction\rng\rho$ is transitive.
Endow $W$ with the topology
generated by  $\{R(w):w\in W\}$ (so the open sets are those $A\subseteq W$
such that $a\in A$ implies $R(a)\subseteq A$).
Then every \rep\ of $\c F$ over $X$ is an interior map from $X$ to $W$: that is,
a map that is both continuous and open.  (Many other topological completeness proofs use interior maps.)
The converse, however, does not hold in general.  See \cite{GEG05,LucBry11} for 
more information.

Although Shehtman  uses the term `d-p-morphism'  (when $\rho$ is surjective),
here we will call $\rho$ a `\rep' because it is closely related to the \rep s
of algebras of relations seen in algebraic logic. 
Indeed, if $\rho$ is a surjective \rep\ of $(W,R)$ over $X$
 then $\rho^{-1}$ induces an embedding from $\wp(W)$ into $\wp(X)$
that preserves the algebraic structure with which these power sets can be naturally endowed.

\subsection{Representations over subspaces}
Our main interest is in \rep s over $X$ itself,
but \rep s over subspaces are also useful in proofs.
Given a subspace $U$ of $X$,
a map $\rho:U\to W$ induces a well defined assignment
$\rho^{-1}:W\to\wp(X)$ by $\rho^{-1}(w)=\{x\in X:x\in U$ and $\rho(x)=w\}$, for $w\in W$.
Put simply, preimages under $\rho$ of elements of $W$ are obviously subsets of $U$, but they are also subsets of 
$X$, and so $\rho^{-1}$ can be regarded equally as an assignment into $U$ or $X$, as appropriate. The following easy lemma gives some connections between the two views.

\begin{lemma}\label{lem:bod reps invar}
Let  $U$ be a subspace of $X$ and let $\rho:U\to W$ be a map.
Let $x\in U$ and $w\in W$ be arbitrary.
\begin{enumerate}
\item If $(U,\rho^{-1}),x\models\did w$ then $(X,\rho^{-1}),x\models\did w$.

\item If $U$ is open in $X$, then $(U,\rho^{-1}),x\models\did w$ iff $(X,\rho^{-1}),x\models\did w$.
\end{enumerate}
\end{lemma}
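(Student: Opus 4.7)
The plan is to unpack the topological semantics of $\did w$ directly, noting that this lemma is essentially an instance (specialised to $\did$-atom formulas) of the general principle expressed in lemma~\ref{lem:open subspaces} about open subspaces, with the extra subtlety that part~1 does not require $U$ to be open. The key observation is that $\rho^{-1}(w)\sub U$ holds by the very definition of $\rho^{-1}$, so the preimage sets behave well in passing between $U$ and $X$.

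For part~1, I would assume $(U,\rho^{-1}),x\models\did w$ and take an arbitrary open neighbourhood $O$ of $x$ in $X$. Then $O\cap U$ is an open neighbourhood of $x$ in the subspace topology on $U$, so the hypothesis provides some $y\in(O\cap U)\setminus\{x\}$ with $\rho(y)=w$, i.e.\ $y\in\rho^{-1}(w)$. Since $y\in O\setminus\{x\}$, this witnesses $(X,\rho^{-1}),x\models\did w$. Here one uses only that $x\in U$, not openness of $U$.

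For part~2, assuming $U$ is open in $X$, I would verify the converse direction. Suppose $(X,\rho^{-1}),x\models\did w$ and let $O'$ be an open neighbourhood of $x$ in $U$. Writing $O'=O\cap U$ for some $O$ open in $X$, openness of $U$ ensures that $O'$ is itself open in $X$ and contains $x$. The hypothesis then yields some $y\in O'\setminus\{x\}$ with $y\in\rho^{-1}(w)$; in particular $y\in U$ so $y$ is a legitimate witness in the subspace, giving $(U,\rho^{-1}),x\models\did w$.

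There is no real obstacle: both parts follow from routine unwinding of the semantic clause for $\did$ together with the facts that (i) intersecting an $X$-neighbourhood of $x$ with $U$ produces a $U$-neighbourhood of $x$ whenever $x\in U$, and (ii) when $U$ is open in $X$, every $U$-neighbourhood of $x$ is already an $X$-neighbourhood. The mild asymmetry of the lemma (part~1 needing no openness, part~2 needing it) reflects exactly this asymmetry between (i) and (ii).
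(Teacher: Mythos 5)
Your proof is correct and follows essentially the same route as the paper's: part~1 by intersecting an arbitrary $X$-neighbourhood with $U$ to get a $U$-neighbourhood, and part~2 by using openness of $U$ to promote a $U$-neighbourhood to an $X$-neighbourhood, with the remaining direction of the equivalence supplied by part~1. No gaps.
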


\begin{proof}
For the first part, assume that $(U,\rho^{-1}),x\models\did w$
 and let $O$ be any open \nhd\ of $x$ in $X$.
Then $O\cap U$ is an open \nhd\ of $x$ in $U$, so by assumption,
there is  $y\in O\cap U\setminus\{x\}$ with $(U,\rho^{-1}),y\models w$.
Then $y\in O\setminus\{x\}$ and $(X,\rho^{-1}),y\models w$.
Hence, $(X,\rho^{-1}),x\models\did w$.

For the second part, assume that $(X,\rho^{-1}),x\models\did w$.
Let $N$ be an arbitrary open \nhd\ of $x$ in $U$, so that $N=O\cap U$ for some open \nhd\ $O$ of $x$ in $X$.
As $U$ is assumed open in $X$, we see that $N$ is also open in $X$,
so by assumption, there is $y\in N\setminus\{x\}$ with $(X,\rho^{-1}),y\models w$.
Plainly, $(U,\rho^{-1}),y\models w$.
This shows that $(U,\rho^{-1}),x\models\did w$, and
the converse follows from the first part.
\end{proof}
By part 2 of the lemma, 
if $\rho$ is a \rep\ of $\c F$ over  an open subspace $U$ of $X$, then
$(X,\rho^{-1}),x\models\did w$ iff $R(\rho(x),w)$ for every $x\in U$ and $w\in W$.
So we can work in $(X,\rho^{-1})$ instead of $(U,\rho^{-1})$.
To avoid too much jumping around between subspaces, we will do this below, often without mention.
Part~\ref{lem:simple:3} of the next lemma makes it a little more explicit.
The lemma gives some general information on how \rep s of different generated subframes of 
$\c F$ over different subspaces of $X$ are related.

\begin{lemma}\label{lem:reps simple}
Let $\c G=(W',R')$ be a generated subframe of $\c F$.
Let $T$, $U$, and $U_i$ $(i\in I)$ be open 
subspaces of $X$, with $T\subseteq U=\bigcup_{i\in I}U_i$.
Finally, let $\rho:U\to W'$ be a map.
Then:
\begin{enumerate}
\item\label{lem:simple:1}  $\rho$ is a \rep\ of $\c F$ over $U$ iff it is a \rep\ of $\c G$ over $U$.

\item\label{lem:simple:2} $\rho$ is a \rep\ of $\c F$ over $U$ iff  for each $i\in I$, the restriction
 $\rho\restriction U_i$ is a \rep\ of $\c F$ over $U_i$.

\item\label{lem:simple:3} If $\rho\restriction T$ is a \rep\ of $\c F$ over $T$,
then $(X,\rho^{-1}),x\models\did w$ iff $R(\rho(x),w)$, for each $x\in T$ and $w\in W$.
\end{enumerate}

\end{lemma}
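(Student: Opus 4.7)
The plan is to unpack the definition of \rep\ in each part, using that the truth of $\did w$ at a point depends only on arbitrarily small open neighbourhoods. For part~\ref{lem:simple:1} I would exploit two features of the generated subframe $\c G$: that $R'=R\cap(W'\times W')$, and that $R(w')\sub W'$ for every $w'\in W'$. The forward direction is immediate, since for $w\in W'$ the \rep\ conditions for $\c F$ and $\c G$ coincide. In the backward direction, the condition for $w\in W'$ again follows from the $\c G$-\rep\ condition, while for $w\in W\setminus W'$ both sides are vacuously false: $\rho^{-1}(w)=\emptyset$ since $\rho$ maps into $W'$, and $R(\rho(x),w)$ fails since $\rho(x)\in W'$ and $\c G$ is generated.

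Parts~\ref{lem:simple:2} and~\ref{lem:simple:3} rest on a single observation, which I would prove first: if $V\sub U$ is open in $X$ and $x\in V$, then $(X,\rho^{-1}),x\models\did w$ iff $(X,(\rho\restriction V)^{-1}),x\models\did w$. This is because any open neighbourhood $O$ of $x$ in $X$ can be shrunk to $O\cap V$, which is again an open neighbourhood of $x$ and is contained in $V$; so any witness $y\in\rho^{-1}(w)$ for $\did w$ that lies inside $O\cap V$ is automatically a witness for $(\rho\restriction V)^{-1}(w)$, and the converse direction is trivial since $(\rho\restriction V)^{-1}(w)\sub\rho^{-1}(w)$.

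Granted this observation, part~\ref{lem:simple:2} follows at once: for each $x\in U$, choose some $i$ with $x\in U_i$ and apply the observation with $V=U_i$ to make the two \rep\ conditions at $x$ interchangeable. For part~\ref{lem:simple:3}, the direction ``$R(\rho(x),w)\Rightarrow (X,\rho^{-1}),x\models\did w$'' is immediate from the $T$-\rep\ condition together with the inclusion $(\rho\restriction T)^{-1}(w)\sub\rho^{-1}(w)$; for the converse I would apply the observation with $V=T$ to transport $(X,\rho^{-1}),x\models\did w$ back to $(X,(\rho\restriction T)^{-1}),x\models\did w$, whence the $T$-\rep\ property delivers $R(\rho(x),w)$. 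The whole lemma is verification against the definition together with the elementary locality of $\did$, so no genuine obstacle is anticipated.
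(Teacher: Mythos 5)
Your proof is correct, and since the paper's own proof of this lemma consists of the single word ``Simple,'' your verification is exactly the routine argument the authors intended: part 1 by the generated-subframe bookkeeping ($\rho^{-1}(w)=\emptyset$ and $R(\rho(x),w)$ both failing for $w\in W\setminus W'$), and parts 2 and 3 by the locality of $\did$ under restriction to open $V\subseteq U$. Your key observation is in substance a restatement of lemma~\ref{lem:bod reps invar}(2), which the paper proves just beforehand and licenses using ``often without mention,'' so nothing further is needed.
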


\begin{proof}
Simple.
\end{proof}

\subsection{Representations preserve formulas}

Here, we will show that surjective \rep s
 preserve all formulas of $\c L_{\bod\forall}^{\mu}$.
Since \rep s are like p-morphisms, albeit between different kinds of \str,
this is entirely expected
and the proof is essentially quite standard
--- see \cite[lemma 20]{Sheh:d90}
and \cite[corollary 2.9]{GEG05}, for example.
We do need, however, that $\c F$ is finite.
We will be able to handle larger sublanguages of $\Lbig$
by using the translations of section~\ref{sec:translations}.

Let us explain the setting.
Suppose we are given a \rep\ $\rho:X\to W$ of $\c F$ over $X$.
Recall that $\Var$ is our fixed base set of propositional variables, or atoms.
For each assignment $h:\Var\to\wp(W)$ of atoms in $\Var$ into $W$,
the map $\rho^{-1}\circ h:\Var\to\wp(X)$
is an assignment of atoms into $X$, given of course 
by 
\[(\rho^{-1}\circ h)(p)=\{x\in X:\rho(x)\in h(p)\}, \quad\mbox{ for each } 
p\in \Var.
\]
So $\rho$, or rather $\rho^{-1}$,
gives us a way to transform an assignment into $\c F$
to one into $X$, and then to evaluate a formula
in the resulting model on $X$. 
The following definition encapsulates when we get the same result as in the original model
on $\c F$:

\begin{definition}
Let $\rho:X\to W$ be a map, and let $\varphi$ be a formula of $\c L_{\bo\bod}^{\mu\dit\didt}$. 
We say that \emph{$\rho$ preserves $\varphi$} if
for  every assignment 
$h:\Var\to\wp(W)$ and every $x\in X$,
\begin{equation}\label{e:rho pres phi}
(X,\rho^{-1}\circ h),x\models\varphi\quad\mbox{iff}\quad(W,R,h),\rho(x)\models\varphi.
\end{equation}
\end{definition}

We are now ready for our main preservation result.

\begin{proposition}\label{prop:fmla pres}
Let  $\rho:X\to W$ be
a surjective \rep\ of $\c F$ over $X$.
Then $\rho$ preserves every formula of $\c L_{\bod\forall}^\mu$.
\end{proposition}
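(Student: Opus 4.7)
The plan is to argue by induction on the structure of $\varphi\in\c L_{\bod\forall}^\mu$. The base cases ($\varphi=p$ and $\varphi=\top$) are immediate from the definition of the composed assignment $\rho^{-1}\circ h$, and the boolean cases are routine. The $\forall$ case exploits surjectivity of $\rho$: every $w\in W$ equals $\rho(x)$ for some $x\in X$, so universal quantification over $X$ on the topological side mirrors universal quantification over $W$ on the Kripke side via the inductive hypothesis on the body.

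For $\bod\varphi$, I would combine the defining property of a representation with the finiteness of $W$. In the forward direction, given $w\in R(\rho(x))$, the representation condition gives $x\in\did\rho^{-1}(w)$, so every open \nhd\ of $x$ meets $\rho^{-1}(w)\setminus\{x\}$; the inductive hypothesis on $\varphi$ then transfers topological truth at such a witness to Kripke truth at $w$. For the converse, assume $(W,R,h),\rho(x)\models\bod\varphi$. For each $w\in W$ with $(W,R,h),w\not\models\varphi$ we have $w\notin R(\rho(x))$, hence $x\notin\did\rho^{-1}(w)$, so $x$ has an open \nhd\ $O_w$ meeting $\rho^{-1}(w)$ only possibly at $x$ itself. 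Finiteness of $W$ makes $O=\bigcap\{O_w:(W,R,h),w\not\models\varphi\}$ an open \nhd\ of $x$; any $y\in O\setminus\{x\}$ satisfies $(W,R,h),\rho(y)\models\varphi$, and the inductive hypothesis transfers this back to $(X,\rho^{-1}\circ h),y\models\varphi$, as required.

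The main obstacle will be the $\mu q\varphi$ case, where finiteness of $W$ is used more essentially. Let $f_K:\wp(W)\to\wp(W)$ and $f_T:\wp(X)\to\wp(X)$ be the monotonic functionals from section~\ref{ss:lfp} determined by $\varphi$ on the Kripke and topological sides respectively, with assignments $h$ and $\rho^{-1}\circ h$. The inductive hypothesis applied to $\varphi$ with the $q$-slot set to an arbitrary $S\subseteq W$ (noting $\rho^{-1}\circ h[S/q]=(\rho^{-1}\circ h)[\rho^{-1}(S)/q]$) yields the commutation identity
\[
\rho^{-1}\bigl(f_K(S)\bigr)=f_T\bigl(\rho^{-1}(S)\bigr)\qquad\text{for every }S\subseteq W.
\]
Since $W$ is finite, $LFP(f_K)=f_K^n(\emptyset)$ for some $n<\omega$, and a straightforward induction on $k\leq n$ using the commutation identity (starting from $f_T^0(\emptyset)=\emptyset=\rho^{-1}(\emptyset)$) gives $f_T^k(\emptyset)=\rho^{-1}(f_K^k(\emptyset))$. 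In particular $f_T^n(\emptyset)=\rho^{-1}(LFP(f_K))$; applying $f_T$ once more and invoking the commutation identity and the fact that $LFP(f_K)$ is $f_K$-fixed show that $f_T^n(\emptyset)$ is already a fixed point of $f_T$, hence $f_T^n(\emptyset)\supseteq LFP(f_T)$, while the reverse inclusion is automatic. Therefore $LFP(f_T)=\rho^{-1}(LFP(f_K))$, which gives $x\in LFP(f_T)\iff\rho(x)\in LFP(f_K)$ — exactly the required equivalence \eqref{e:rho pres phi} for $\mu q\varphi$, completing the induction.
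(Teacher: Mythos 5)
Your proof is correct, and it follows the paper's overall inductive structure (same case split into atomic/boolean, $\did$, $\forall$, $\mu$), but the two nontrivial cases are handled by genuinely different arguments. For the derivative case, the paper proves the direction from topological $\did\varphi$ to Kripke $\did\varphi$ directly, using additivity of $\did$ over the finite partition $X=\bigcup_{w\in W}\rho^{-1}(w)$ to extract a single witness world $w$ with $x\in\did(\sem\varphi\cap\rho^{-1}(w))$; you instead prove the contrapositive (Kripke $\bod\varphi$ implies topological $\bod\varphi$) by intersecting finitely many neighbourhoods $O_w$, one for each world falsifying $\varphi$. These are dual uses of the same finiteness of $W$, and both are fine. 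The more substantive difference is in the $\mu q\varphi$ case: the paper runs the approximants $h^\alpha$ and $\eta^\alpha$ through \emph{all} ordinals on both sides and proves $\eta^\alpha(q)=\rho^{-1}(h^\alpha(q))$ by transfinite induction (remarking that all ordinals are needed even though $W$ is finite, since a priori the topological functional might close later). You instead isolate the commutation identity $\rho^{-1}\circ f_K=f_T\circ\rho^{-1}$ as a single consequence of the inductive hypothesis over all assignments, and then observe that finiteness of $W$ forces $f_K$ to close at a finite stage $n$, whence $f_T^n(\emptyset)=\rho^{-1}(LFP(f_K))$ is already a fixed point of $f_T$ and therefore equals $LFP(f_T)$ (each finite approximant lying below the least fixed point by the standard chain argument from the Knaster--Tarski discussion). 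This buys you a cleaner, purely finitary argument that sidesteps transfinite recursion entirely; the paper's version makes the same commutation visible only stage by stage inside its ordinal induction. Both deliver $LFP(f_T)=\rho^{-1}(LFP(f_K))$, which is exactly what the case needs.
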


\begin{proof}
The proof is by induction on $\varphi$.
The atomic and boolean cases are easy and left to the reader.
Let $\varphi$ be a formula, and
inductively assume \eqref{e:rho pres phi} for every assignment 
$h:\Var\to\wp(W)$ and every $x\in X$.
It is sufficient to consider the cases $\did\varphi,$ $\forall\varphi$,
and $\mu q\varphi$.

First, consider $\did\varphi$.
Fix $h,x$.
Suppose  that $(W,R,h),\rho(x)\models\did\varphi$.
Choose $w\in R(\rho(x))$ with
$(W,R,h),w\models\varphi$.
As $\rho$ is a \rep,
$(X,\rho^{-1}),x\models\did w$.
So for every open \nhd\ $O$ of $x$,
there is $y\in O\setminus\{x\}$
with $\rho(y)=w$.
Since $(W,R,h),w\models\varphi$,
for any such $y$
we inductively have $(X,\rho^{-1}\circ h),y\models\varphi$.
It follows that $(X,\rho^{-1}\circ h),x\models\did\varphi$.

Conversely, suppose that $(X,\rho^{-1}\circ h),x\models\did\varphi$.
Let $\sem\varphi=\{y\in X:(X,\rho^{-1}\circ h),y\models\varphi\}$.
As $\c F$ is finite and $\did$ is additive (lemma~\ref{lem:did cl}(\ref{didcl additive})), we have
\begin{align*}
x\in \did \sem\varphi
&=\did(\sem\varphi\cap X)
=\did\Big(\sem\varphi\cap\bigcup_{w\in W}\rho^{-1}(w)\Big)
\\
&=
\did\Big(\bigcup_{w\in W}\big(\sem\varphi\cap\rho^{-1}(w)\big)\Big)
=\bigcup_{w\in W}\did(\sem\varphi\cap\rho^{-1}(w)).
\end{align*}
So we can take $w\in W$
with $x\in\did(\sem\varphi\cap\rho^{-1}(w))$.
Then $(X,\rho^{-1}),x\models\did w$, so as $\rho$ is a \rep,
$R(\rho(x),w)$.
Moreover, $\sem\varphi\cap\rho^{-1}(w)\neq\emptyset$.
Take any $y\in\sem\varphi\cap\rho^{-1}(w)$.
Then $(X,\rho^{-1}\circ h),y\models\varphi$ and $\rho(y)=w$.
Inductively, $(W,R,h),w\models\varphi$.
By Kripke semantics,
$(W,R,h),\rho(x)\models\did\varphi$, as required.

\medskip

Next, consider $\forall\varphi$.
Then $(X,\rho^{-1}\circ h),x\models\forall\varphi$
iff $(X,\rho^{-1}\circ h),y\models\varphi$ for all $y\in X$,
iff $(W,R,h),\rho(y)\models\varphi$  for all $y\in X$ (by the inductive hypothesis \eqref{e:rho pres phi}),
iff $(W,R,h),w\models\varphi$ for all $w\in W$ (since $\rho$ is surjective),
iff $(W,R,h),\rho(x)\models\forall\varphi$.
\medskip

Finally consider the case $\mu q\varphi$, assumed well formed. 
Fix arbitrary $h:\Var\to\wp(W)$.
We  define an assignment $h^\alpha:\Var\to\wp(W)$ for each ordinal $\alpha$.
For each atom $p\neq q$, we set $h^\alpha(p)=h(p)$.  We define $h^\alpha(q)$
by induction on $\alpha$ as follows:
\begin{itemize}
\item $h^0(q)=\emptyset$,

\item $h^{\alpha+1}(q)=\{w\in W:(W,R,h^\alpha),w\models\varphi\}$,

\item $h^\delta(q)=\bigcup_{\alpha<\delta}h^\alpha(q)$ for limit ordinals $\delta$.

\end{itemize}
Of course, $W$ is finite, but we need all ordinals
for the argument below. 
Let $\eta=\rho^{-1}\circ h:\Var\to\wp(X)$.
Define an assignment $\eta^\alpha:\Var\to\wp(X)$ in the same way as for $h^\alpha$:
let $\eta^\alpha(p)=\eta(p)$ for all atoms $p\neq q$ and all $\alpha$,
 and
 \begin{itemize}
\item $\eta^0(q)=\emptyset$,

\item $\eta^{\alpha+1}(q)=\{x\in X:(X,\eta^\alpha),x\models\varphi\}$,

\item $\eta^\delta(q)=\bigcup_{\alpha<\delta}\eta^\alpha(q)$ for limit ordinals
 $\delta$.

\end{itemize}
\claim\  $\eta^\alpha(q)=\rho^{-1}(h^\alpha(q))$ for each ordinal $\alpha$.

\pfclaim By induction on $\alpha$. 
For $\alpha=0$ this is saying that $\rho^{-1}(\emptyset)=\emptyset$, which is true.
Assume the result for $\alpha$ inductively.
So $\eta^\alpha=\rho^{-1}\circ h^\alpha$.  We now obtain
\[
\begin{array}{rcll}
\eta^{\alpha+1}(q)
&=&\{x\in X:(X,\eta^\alpha),x\models\varphi\}&\mbox{ by definition of }\eta^{\alpha+1}
\\
&=&\{x\in X:(X,\rho^{-1}\circ h^\alpha),x\models\varphi\}&\mbox{ since }\eta^\alpha=\rho^{-1}\circ h^\alpha
\\
&=&\{x\in X:(W,R, h^\alpha),\rho(x)\models\varphi\}&\mbox{ by inductive hypothesis }\eqref{e:rho pres phi}
\\
&=&\{x\in X:\rho(x)\in h^{\alpha+1}(q)\}&\mbox{ by definition of }h^{\alpha+1}
\\
&=&\rho^{-1}(h^{\alpha+1}(q)).
\end{array}
\]
For limit $\delta$ we have
\[
\rho^{-1}(h^\delta(q))
=\rho^{-1}(\bigcup_{\alpha<\delta}h^\alpha(q))
=\bigcup_{\alpha<\delta}\rho^{-1}(h^\alpha(q))
=_{IH}\bigcup_{\alpha<\delta}
\eta^\alpha(q)=\eta^\delta(q).
\]
This completes the induction on $\alpha$, and proves the claim.

\medskip
By semantics of $\mu$, we have
$(X,\eta),x\models\mu q\varphi$ iff
$x\in\bigcup_{\alpha\in \rm On}\eta^\alpha(q)$,
iff $x\in \bigcup_\alpha \rho^{-1}(h^\alpha(q))$ by the claim,
iff $\rho(x)\in\bigcup_\alpha h^\alpha(q)$,
iff $(W,R,h),\rho(x)\models\mu q\varphi$.
This completes the induction and proves the proposition. 
\end{proof}

\subsection{Basic \rep s}

Certain very primitive \rep s  called \emph{basic \rep s} will play an important role later, because they
can easily be extended to more interesting \rep s.

\begin{definition}
Let $S,U$ be  open subspaces of $X$, with $S\subseteq U$,
and let $\sigma:S\to W$ be a \rep\ of $\c F$ over $S$.
We say that $\sigma$ is \emph{$U$-basic} if
for every $x\in U$ and $w,v\in W$, if $(X,\sigma^{-1}),x\models\di w\wedge\di v$
then $Rwv$.
\end{definition}
Note that we use $\di$ and not $\did$ here.

\begin{remark}\label{rmk:basic reps}\rm
In the setting of this definition:
\begin{enumerate}
\item Vacuously, if $\sigma$ is empty then it is $U$-basic.
\item 
More generally, but equally trivially,
 if $\rng\sigma$ is contained in 
a nondegenerate cluster $C$ in $\c F$,
then $\sigma$ is $U$-basic.  For,
$(X,\sigma^{-1}),x\models\di w\wedge\di v$
implies that $w,v\in\rng\sigma\subseteq C$, and so $Rwv$ as $C$ is a nondegenerate cluster.
\end{enumerate}
We remark (but will not formally use) that $\sigma$
is $U$-basic iff
$\rng\sigma$ is a (possibly empty) union of $R$-maximal clusters in $\c F$
whose preimages under $\sigma$ have pairwise disjoint closures within $U$.
Moreover,  each
such preimage is a \ro\ subset of $S$.
\end{remark}

\subsection{Full \rep s}

In induction proofs, one often needs a stronger inductive hypothesis
than formally required for the final result. 
This will be the case in proposition~\ref{prop:di rep} below, and 
the  notion of \emph{$T$-full} \rep\
will be used to formulate it.

\begin{definition}\label{def:Tfull rep}
Let  $T\subseteq U$ be open subspaces of~$X$. 
A \rep\ $\rho:U\to W$ of $\c F$ over $U$
is said to be \emph{$T$-full} if:
\begin{enumerate}
\item for every  $x\in \cl(T)\setminus U$
and $w\in W$, we have
$(X,\rho^{-1}),x\models\did w$,

\item if $T$ is non-empty then $\rho:U\to W$ is surjective.
\end{enumerate}
\end{definition}
\noindent Every \rep\ is vacuously $\emptyset$-full.

\subsection{Full representability}\label{ss:full rby}

\begin{definition}\label{def:fully rb did case}
We say that $\c F$ is \emph{fully \rb\ (over $X$)}
if whenever
\begin{enumerate}
\item $U\subseteq X$ is open,

\item $S$ is a \ro\ subset of $U$,

\item $\sigma:S\to W$ is a $U$-basic \rep\ of $\c F$ over $S$,

\item $T=U\setminus\cl S$,
\end{enumerate}
then $\sigma$ extends to a  $T$-full \rep\ $\rho:U\to W$ of $\c F$ over $U$.
\end{definition}

\noindent
Notice that in the \ba\ $RO(U)$ of \ro\ subsets of $U$,
we have $T=-S$, so $\{S,T\}$ is a partition of 1.
That is, $S,T\in RO(U)$, $S\cdot T=0$, and $S+T=1$.

In proposition~\ref{prop:di rep} below, we will fulfil our main aim, to 
prove (surjective) representability of every finite connected locally connected $\axK\axD4$-frame.
We are going to do it by induction on the size of the frame;
we appear to need a stronger inductive hypothesis, namely full representability,
than is needed for the conclusion;
$T$-fullness and extending $\sigma$ are mainly to do with this,
but
the $\sigma$ part
is also helpful in the proof of strong completeness in theorem~\ref{thm:strong did} later.
Note that if $\c F$ is fully \rb\ over $X$, and $X\neq\emptyset$,
then by taking $U=X$ and $S=\sigma=\emptyset$, we see that there exists a
surjective \rep\
of $\c F$ over $X$. So we do obtain our desired conclusion from
the stronger hypothesis of full representability.

\subsection{Main proposition}\label{ss:main rep prop}

The following proposition has relatives 
in the literature: see, e.g., \cite[theorem 3.7]{McKiT44}, \cite[proposition 22]{Sheh:d90}, \cite[lemma 4.4]{LucBry11}, and
\cite[lemma 16]{KudShe14}.
It actually holds for any dense-in-itself
topological space $X$ for which theorem~\ref{thm:Tarski} and
corollary~\ref{cor:Tarski++opens} can be proved.

\begin{proposition}\label{prop:di rep}
Suppose that $X$ is a \nice\ space.
Then every finite connected locally connected $\axK\axD4$ frame $\c F=(W,R)$ is fully \rb\
over $X$.
\end{proposition}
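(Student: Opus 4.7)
The plan is to argue by strong induction on $|W|$. If $|W|=1$, the sole world $w$ is reflexive by seriality, and setting $\rho\colon U\to\{w\}$ as the constant map extends $\sigma$; the representation condition amounts to $U\sub\did U$, which holds by lemma~\ref{lem:did cl}(\ref{didcl part5}) since $U$ is open in the dense-in-itself space $X$, and $T$-fullness at points of $\cl T\setminus U$ follows similarly. If $T=\emptyset$, then $U\sub\cl S$ forces $S=U\cap\int\cl S=U$, and $\rho=\sigma$ works immediately. So we may assume $|W|>1$ and $T\neq\emptyset$.

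\emph{Single-cluster case.} If $\mathcal{F}$ is a single cluster $M=\{w_0,\ldots,w_k\}$ (nondegenerate by seriality), I apply theorem~\ref{thm:Tarski++}(\ref{Tarski thm 2}) with $\setG=T$, $r=0$ and $s=k$ to partition $T$ into nonempty pieces $\setB_0,\ldots,\setB_k$ with $\did\setB_j=\cl T$ for each $j$. I extend $\sigma$ by $\rho(x)=w_j$ on $\setB_j$ and, on the boundary $U\cap\cl S\setminus S$, by any value in $M$ (chosen from $\rng\sigma$ when nonempty). Since $\rng\sigma$ is a union of maximal clusters of $\mathcal{F}=M$, we have $\rng\sigma\in\{\emptyset,M\}$, so no unrepresented world complicates the boundary. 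The representation condition at $x\in U$ and $w\in M$ is $x\in\did\rho^{-1}(w)$ (as $R$ is universal on $M$), and this holds because $\rho^{-1}(w)\supseteq\setB_j$ for the appropriate $j$, so $\did\rho^{-1}(w)\supseteq\did\setB_j=\cl T\supseteq T$, while on $\cl S$ it follows from $\cl\sigma^{-1}(w)=\did\sigma^{-1}(w)\supseteq\cl S$ (a consequence of $\sigma$ being a rep of the single cluster $M$, combined with lemma~\ref{lem:did cl}(\ref{didcl part5})). $T$-fullness drops out of $\did\setB_j\supseteq\cl T\supseteq\cl T\setminus U$, and surjectivity from the $\setB_j$ being nonempty.

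\emph{Multi-cluster case.} If $\mathcal{F}$ has more than one cluster, the plan is to use the connectedness and local connectedness of $\mathcal{F}$ to split it as $\mathcal{F}=\mathcal{F}_0\cup\mathcal{F}_1$ into two connected locally-connected serial subframes of strictly smaller size, and simultaneously dissect $U$ using corollary~\ref{cor:Tarski++opens} into regular open regions $\setU_0,\setU_1$ of $U$ with $U\cap\cl\setU_0\cap\cl\setU_1=\emptyset$ and with boundary behaviour matching the inter-subframe $R$-edges. After verifying that $\sigma\restriction\setU_i$ remains $\setU_i$-basic for $\mathcal{F}_i$, the inductive hypothesis produces representations $\rho_i\colon\setU_i\to W_i$ which combine into the desired $\rho$.

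\emph{Main obstacle.} The hard part is the multi-cluster case. Naively peeling off one maximal cluster $M$ of $\mathcal{F}$ and recursing on $\mathcal{F}\restriction(W\setminus M)$ can destroy both connectedness (non-maximal points that saw $M$ as their only common upward link become disconnected) and seriality (a non-maximal world whose only successors all lie in $M$ becomes a dead end). Designing the decomposition to preserve connectedness, local connectedness, and seriality for \emph{both} $\mathcal{F}_i$---perhaps by iterating the cut, or by absorbing a minimal cluster together with enough of its upward structure into one side---is the principal difficulty. A further point is that the spatial dissection via corollary~\ref{cor:Tarski++opens} must be arranged so that the boundary between $\setU_0$ and $\setU_1$ realises exactly the cross-$\mathcal{F}_i$ $R$-edges in $\mathcal{F}$, and so that $\sigma\restriction\setU_i$ is $\setU_i$-basic with respect to the maximal-cluster structure of $\mathcal{F}_i$ (not merely of $\mathcal{F}$).
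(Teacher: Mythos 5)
Your base case, your $T=\emptyset$ case, and your single-cluster case are all essentially sound (the single-cluster case is a sub-case of the paper's Case~1, handled there with the same appeal to theorem~\ref{thm:Tarski++}(\ref{Tarski thm 2}) with $r=0$). But the multi-cluster case --- which is where virtually all of the work lies --- is not proved: you present it as a plan and then explicitly list its unresolved difficulties as ``the main obstacle''. That is a genuine gap, and moreover the plan as stated cannot be carried out. If $\c F$ is rooted at a world $w_0$ (reflexive or irreflexive), then every generated subframe containing $w_0$ is all of $\c F$, so $\c F$ admits no decomposition into two \emph{proper} connected serial generated subframes covering it; and if the pieces are not generated subframes, they need not be serial or locally connected, and lemma~\ref{lem:reps simple} (which is what lets you restrict and recombine representations) no longer applies. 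The paper avoids this by treating rooted frames with a completely different recursion: it peels off the root cluster, using theorem~\ref{thm:Tarski++}(\ref{Tarski thm 2}) to allocate disjoint open regions $G_{v^\bullet}$ to the point-generated subframes $\c F^*(v^\bullet)$ of the proper successors of the root and nowhere-dense ``dust'' sets $B_{v^\circ}$ (with prescribed derivative $D$) to the root's cluster-mates --- or, for an irreflexive root, carves out a set $I$ with $\did I=\cl T\setminus U$ via part~(\ref{Tarski thm 1}) and recurses on $\c F(w_0)$ over $U\setminus I$. Only non-rooted frames are handled by a binary split.

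Even for the non-rooted case your gluing mechanism is wrong in a specific way. Corollary~\ref{cor:Tarski++opens} produces $\setU_0,\setU_1$ with $U\cap\cl\setU_0\cap\cl\setU_1=\emptyset$, so inside $U$ there is \emph{no} shared boundary on which to ``realise the cross-$\c F_i$ $R$-edges''. The paper instead chooses the two subframes $\c F_0,\c F_1$ (via a minimal connecting chain through $R$-maximal worlds) so that they are generated subframes sharing a maximal cluster $\c F^*(b_0)$, and places a representation of that shared cluster on the third region $M=U\setminus\cl(\setU_0\cup\setU_1)$; each $\rho_i$ is then built over $M\cup\setU_i$ extending this common map, and the inter-subframe relations are realised through $M$, not through a boundary. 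Your worry about peeling off a maximal cluster destroying seriality is a red herring relative to the correct argument, since the recursion only ever descends to generated subframes, which inherit seriality and local connectedness automatically; but the decomposition you would actually need is the one just described, and supplying it (together with the verification that $\sigma$ splits into $U$-basic pieces $\sigma_0,\sigma_1$ with $U\cap\cl S_0\cap\cl S_1=\emptyset$, which is precisely where $U$-basicness is used) is the substance of the proof that your proposal leaves out.
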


\begin{proof}
The proof is by induction on the number of worlds in $\c F$.
Let $\c F=(W,R)$ be a finite connected locally connected $\axK\axD4$ frame, and assume the result inductively for all smaller frames.
Note that $R$ is transitive.
Recall that we write 
\begin{itemize}
\item $R^\circ=\{(w,v)\in W^2: Rwv\wedge Rvw\},$

\item $R^\bullet=\{(w,v)\in W^2:Rwv\wedge\neg Rvw\},
$
\end{itemize}
and for $w\in W$, 
\begin{itemize}
\item  $\c F(w)$ for the subframe $(R(w),R\restriction R(w))$ of $\c F$ with domain $R(w)$,

\item  $\c F^*(w)$
for the subframe $(R^*(w),R\restriction R^*(w))=(R(w)\cup\{w\}, R\restriction R(w)\cup\{w\})$
of $\c F$ generated by $w$.
\end{itemize}

Let $U\subseteq X$ be open, let $S$ be a \ro\ subset of $U$,
and let $\sigma:S\to W$ be a $U$-basic \rep\ of $\c F$ over $S$.
Write 
\[
T=U\setminus\cl S.
\]
We need to extend $\sigma$ to a  $T$-full \rep\ $\rho:U\to W$ of $\c F$ over $U$.

If $T=\emptyset$, then $U\subseteq\cl S$, so $S=\int(U\cap\cl S)=\int U=U$.
Thus, $\sigma:S\to W$ is already a \rep\ of $\c F$ over $U$,
and it is vacuously $T$-full. So we can take $\rho=\sigma$.
We are done. 

So assume from now on that $T\neq\emptyset$.
There are three cases.

\paragraph{\boldmath Case 1: $\c F=\c F^*(w_0)$ for some reflexive ${w_0}\in W$}
Choose such a ${w_0}$ (it may not be unique).
Then $R(w_0)=W$ and ${w_0}\in R^\circ({w_0})$ since $w_0$ is reflexive.
So $R^\circ(w_0)\neq\emptyset$.
Since $T$ is clearly a non-empty open set,
we can use theorem~\ref{thm:Tarski}(\ref{Tarski thm 2}) to partition $T$
into non-empty open sets $G_{v^\bullet}$ (${v^\bullet}\in R^\bullet({w_0})$)
and other non-empty sets $B_{v^\circ}$ $({v^\circ}\in R^\circ({w_0})$)
such that for each ${v^\bullet}\in R^\bullet({w_0})$
and ${v^\circ}\in R^\circ({w_0})$ we have
\begin{equation}\label{e:tarski cond1}
\cl (G_{v^\bullet})\setminus G_{v^\bullet}=\did B_{v^\circ}=\cl( T)\setminus\bigcup_{v\in R^\bullet({w_0})}G_{v}=D,\mbox{ say.}
\end{equation}

For each ${v^\bullet}\in R^\bullet({w_0})$, the frame
$\c F^*(v^\bullet)$  is connected (as it is rooted) and locally connected $\axK\axD4$
(as it is a generated subframe of $\c F$).
Since ${w_0}$ is a world of  $\c F$ but not of $\c F^*(v^\bullet)$, the frame $\c F^*(v^\bullet)$ is smaller than $\c F$.
By the inductive hypothesis, $\c F^*(v^\bullet)$ is fully \rb\ over $X$.
So, taking the  \ro\ subset `$S$' of $G_{v^\bullet}$ to be $\emptyset$ and `$T$'
to be $G_{v^\bullet}\setminus\cl\emptyset=G_{v^\bullet}$,
we can find a $G_{v^\bullet}$-full \rep\ $\rho_{v^\bullet}$ of $\c F^*(v^\bullet)$ over~$G_{v^\bullet}$.

Define $\rho:U\to W$ by:
\[
\rho(x)=
\begin{cases}
\rho_{v^\bullet}(x),&\mbox{if }x\in G_{v^\bullet}\mbox{ for some (unique) }{v^\bullet}\in R^\bullet({w_0}),
\\
{v^\circ},&\mbox{if }x\in B_{v^\circ}\mbox{ for some (unique) }{v^\circ}\in R^\circ({w_0}),
\\
\sigma(x),&\mbox{if }x\in S,
\\
{w_0},&\mbox{otherwise,}
\end{cases}
\]
for each $x\in U$.
The map $\rho$ is well defined because the $G_{v^\bullet}$, the $B_{v^\circ}$, and $S$ are pairwise disjoint,
and plainly it is total and extends $\sigma$.

We aim to show that $\rho$ is a $T$-full \rep\ of $\c F$ over $U$.
The following claim will help.

\claim Let $x\in D$ (see~\eqref{e:tarski cond1}).
Then $(X,\rho^{-1}),x\models\did w$ for every $w\in W$.

\pfclaim
Let $x\in D$ and $w\in W$ be given.
There are two cases.
The first is when $w\in R^\bullet({w_0})$.
Now~\eqref{e:tarski cond1} gives $x\in\cl G_w\setminus G_w$.
As $\rho_w$ is a $G_w$-full \rep\ of $\c F^*(w)$, a frame of which $w$ is a world, we have $(X,\rho_w^{-1}),x\models\did w$,
and hence $(X,\rho^{-1}),x\models\did w$ 
(since $\rho_w\subseteq\rho$).

The second case is when $w\notin R^\bullet({w_0})$. 
Since $w\in W=R(w_0)=R^\bullet({w_0})\cup R^\circ({w_0})$,
we have  $w\in R^\circ({w_0})$.
By~\eqref{e:tarski cond1}, $x\in\did B_w$ (since $x\in D$).
Since $\rho\restriction B_w$ has constant value $w$,
we obtain again that $(X,\rho^{-1}),x\models\did w$.
This proves the claim.

\medskip

We now check that $\rho$ is a  \rep\ of $\c F$ over $U$.
Let $x\in U$ and $w\in W$.
We require
$(X,\rho^{-1}),x\models\did w$ iff $R(\rho(x),w)$.
There are four cases.
\begin{enumerate}
\item Suppose that $x\in G_{v^\bullet}$ for some ${v^\bullet}\in R^\bullet({w_0})$.
Since $G_{v^\bullet}$ is open and $\rho\restriction G_{v^\bullet}=\rho_{v^\bullet}$, a \rep\ over $G_{v^\bullet}$
of the generated subframe $\c F^*(v^\bullet)$ of $\c F$,
lemma~\ref{lem:reps simple} yields
$(X,\rho^{-1}),x\models\did w$ iff  $R(\rho(x),w)$.

\item Suppose that  $x\in B_{v^\circ}$ for some ${v^\circ}\in R^\circ({w_0})$.
Then $\rho(x)={v^\circ}$.
As ${v^\circ}\in R^\circ({w_0})$, we have $R{v^\circ}{w_0}$.
By transitivity of $R$, we have $R(\rho(x),w)$ for every $w\in W$.
So we need to prove that $(X,\rho^{-1}),x\models\did w$ for every $w\in W$.
But $x\in B_{v^\circ}\subseteq D$ by
definition of $D$~\eqref{e:tarski cond1}, so this follows from the claim.

\item If $x\in S$, then since $S$ is open and $\rho\restriction S=\sigma$, a 
\rep\ of $\c F$ over $S$, the result  follows from lemma~\ref{lem:reps simple} again.

\item Suppose finally that $x\in U\setminus(S\cup T)$.
Then $\rho(x)={w_0}$.
Since $R({w_0},w)$ for all $w\in W$, we require that $(X,\rho^{-1}),x\models\did w$ for all $w\in W$ as well.

Now as $S$ is a \ro\ subset of $U$,
by lemma~\ref{lem:ro1} we obtain
$U\setminus S= \cl T$.
Hence, $x\in \cl T\setminus T\subseteq D$ by~\eqref{e:tarski cond1}.
As in case 2, the claim now gives
$(X,\rho^{-1}),x\models\did w$ for all $w\in W$. 
\end{enumerate}

So $\rho$ is indeed a \rep\ of $\c F$ over $U$.
We check that it is $T$-full.
First let $x\in\cl T\setminus U$. 
Then  $x\in \cl T\setminus T\subseteq D$ by~\eqref{e:tarski cond1}.
By the claim, $(X,\rho^{-1}),x\models\did w$ for every $w\in W$, as required.

We also need that $\rho$ is surjective.
Take any $x\in B_{w_0}$.
Then $x\in D$ by definition of $D$ in \eqref{e:tarski cond1}.
By the claim, $(X,\rho^{-1}),x\models\did w$,
and so $\rho^{-1}(w)\neq\emptyset$,
 for every $w\in W$.
Hence, $\rho$ is surjective.

\paragraph{\boldmath Case 2: $\c F=\c F^*(w_0)$ for some irreflexive ${w_0}\in W$}
Choose such a ${w_0}$ (it is unique this time).
Then $W$ is the disjoint union of $\{w_0\}$ and $R(w_0)$.
Using theorem~\ref{thm:Tarski}(\ref{Tarski thm 1}),
select non-empty $I\subseteq T$ with 
\begin{equation}\label{e:I set}
\did I=\cl T\setminus U.
\end{equation}
Write 
\[
\begin{array}{rcl}
U'&=&U\setminus I,
\\
T'&=&T\setminus I.
\end{array}
\]
We aim to use the inductive hypothesis on 
these sets and $\sigma:S\to\c F({w_0})$, so we 
check the necessary conditions.

\Claim1
$U'$ is open, $S$ is a \ro\ subset of $U'$, and $T'=U'\setminus\cl S$.

\pfclaim 
First, $U'$ is open.
For, by lemma~\ref{lem:did cl} and~\eqref{e:I set}, 
\[
U\setminus\cl I=U\setminus(I\cup\did I)=U\setminus(I\cup(\cl(T)\setminus U))=U\setminus I=U',
\]
and the left-hand side is open.

We are given that $S$ is a \ro\ subset of $U$.
Since $S\subseteq U$ and  $I\subseteq T=U\setminus\cl S$,
we have $S\subseteq U\setminus I=U'$.
By lemma~\ref{lem:ro1}(3), $S$ is a \ro\ subset of $U'$.

Finally,
$U'\setminus\cl S=(U\setminus I)\setminus\cl S=(U\setminus\cl S)\setminus I=T\setminus I=T'$.
This proves the claim.  \medskip

\Claim2  $\sigma$ is a $U'$-basic \rep\ of $\c F({w_0})$ over $S$.

\pfclaim
First we show that $\sigma:S\to R({w_0})$.
We know that $\sigma:S\to W=\{{w_0}\}\cup R({w_0})$.
Assume for contradiction that there is some $x\in S$ with $\sigma(x)={w_0}$.
Then plainly, $x\in U$ and $(X,\sigma^{-1}),x\models\di{w_0}$.
As $\sigma$ is a $U$-basic
\rep\ of $\c F$ over $S$, we obtain $R{w_0}{w_0}$, contradicting the choice of $w_0$ as irreflexive.
So indeed, $\rng\sigma\subseteq W\setminus\{{w_0}\}= R({w_0})$.
Since $\sigma$ is a \rep\ of $\c F$ over $S$, by
lemma~\ref{lem:reps simple}  it is also a \rep\  (over $S$) of 
the generated subframe $\c F({w_0})$ of $\c F$.
It is trivially $U'$-basic, since if $x\in U'$, $w,v\in R({w_0})$, and $(X,\sigma^{-1}),x\models\di w\wedge\di v$,
then $x\in U$ and $w,v\in W$ as well, so $Rwv$ since $\sigma$ is $U$-basic. This proves the claim.

\medskip

In summary, $U'$ is open,  $S$ is a \ro\ subset of $U'$,
$\sigma$ is a $U'$-basic \rep\ of $\c F({w_0})$  over $S$,
and $T'=U'\setminus\cl S$.

Now $\c F({w_0})$ is smaller than $\c F$
(since ${w_0}\notin R({w_0})$),
connected (since $\c F$ is locally connected), and locally connected $\axK\axD4$
(since it is a generated subframe of~$\c F$).
By the inductive hypothesis, $\c F({w_0})$ is fully \rb\ over $X$.

So $\sigma$ extends to a 
$T'$-full \rep\ $\rho':U'\to R({w_0})$ of $\c F({w_0})$ over $U'$.
By $T'$-fullness,
\begin{equation}\label{e:2 cases in 1}
(X,\rho'^{-1}),x\models\did v\mbox{ for every } v\in R({w_0}) \mbox{ and }
x\in\cl T'\setminus U'.
\end{equation}
We extend $\rho'$ to a map $\rho:U\to W$ by defining
\[
\rho(x)=
\begin{cases}
\rho'(x),&\mbox{if }x\in U',
\\
w_0,&\mbox{if }x\in I,
\end{cases}
\]
for $x\in U$.
This is plainly well defined and total.
Since $\rho$ extends $\rho'$, it also extends $\sigma$.
We will show that $\rho$ is a $T$-full \rep\ of $\c F$ over $U$.
To do it, we need
another claim.

\Claim3
$\cl T\setminus U\subseteq\cl I\subseteq\cl T'\setminus U'$.

\pfclaim 
By \eqref{e:I set} and lemma~\ref{lem:did cl}, we have
 $\cl T\setminus U=\did I\subseteq\cl I$.
 
Using openness of $T=T'\cup I$, the assumption that $X$ is dense in itself, and
lemma~\ref{lem:did cl}(\ref{didcl part5},\ref{didcl additive}),
we have
$I\subseteq T\subseteq\cl T=\did T=\did T'\cup\did I$.
But by \eqref{e:I set},  $I\cap\did I\subseteq U\cap\cl T\setminus U=\emptyset$.
So in fact, $I\subseteq\did T'\subseteq\cl T'$.
Hence, $\cl I\subseteq\cl T'$.
Since $I\cap U'=\emptyset$ and  $U'$ is open (claim~1), we have $\cl I\cap U'=\emptyset$.
So $\cl I\subseteq\cl T'\setminus U'$,
proving the claim.

\medskip

\Claim4 $\rho$ is a \rep\ of $\c F$ over $U$.

\pfclaim Let $x\in U$.  We require
$(X,\rho^{-1}),x\models\did w$ iff $R(\rho(x),w)$,
for each $w\in W$.

There are two cases here.
The first is when $x\in I$.
Then $\rho(x)={w_0}$, so we require first
that $(X,\rho^{-1}),x\models\did w$ for each $w\in R({w_0})$.
So pick any $w\in R({w_0})$.
By claim~3, $x\in I\subseteq\cl I\subseteq\cl T'\setminus U'$,
so by~\eqref{e:2 cases in 1},
$(X,\rho'^{-1}),x\models\did w$.
As $\rho'\subseteq\rho$,  the result follows.

We also require that
$(X,\rho^{-1}),x\not\models\did w$ for each $w\in W\setminus R({w_0})$ ---
that is,
$(X,\rho^{-1}),x\not\models\did {w_0}$.
But as $x\in U$, we have $x\notin\cl T\setminus U=\did I$ by~\eqref{e:I set}.
Since $\rho^{-1}({w_0})=I$, we do indeed have $(X,\rho^{-1}),x\not\models\did {w_0}$.

The second case is when $x\notin I$.
In this case, $x\in U'$, an open set, and $\rho\restriction U'=\rho'$,
 a \rep\  over $U'$ of the generated subframe $\c F({w_0})$ of $\c F$.
By lemma~\ref{lem:reps simple}, $(X,\rho^{-1}),x\models\did w$ iff $R(\rho(x),w)$
for every $w\in W$,
as required. The claim is proved.

\Claim5 $\rho$ is $T$-full.

\pfclaim
Let $x\in\cl T\setminus U$ and $w\in W$.
We require $(X,\rho^{-1}),x\models\did w$.

Suppose first that $w={w_0}$.
By~\eqref{e:I set}, $x\in\did I$. Since
$I=\rho^{-1}({w_0})$, we obtain $(X,\rho^{-1}),x\models\did {w_0}$.
Suppose instead that $w\in R({w_0})$.
By claim~3, $x\in\cl T'\setminus U'$.
So by \eqref{e:2 cases in 1},
$(X,\rho'^{-1}),x\models\did w$.
As $\rho'\subseteq\rho$,
we obtain $(X,\rho^{-1}),x\models\did w$
as required. 

We must also show that $\rho(U)=W$.
Well, $I\neq\emptyset$, and it follows from claim~3 that  $T'\neq\emptyset$ as well.
As $\rho'$ is $T'$-full, $\rho'(U')=R({w_0})$.
So 
\[
\rho(U)=\rho'(U')\cup\rho(I)=R({w_0})\cup\{{w_0}\}=W,
\] as required.
This proves the claim and completes case 2 of 
proposition~\ref{prop:di rep}. Only case 3 remains, but this is the hardest case.

\paragraph{Case 3: otherwise}
As $\c F$ is finite and connected,
we can choose 
worlds 
$a_0,b_0,a_1,b_1,\ldots,\allowbreak b_{n-1},\allowbreak a_{n}\in W$,
 for some least possible $n<\omega$,
such that $Ra_ib_i$ and $Ra_{i+1}b_i$ for each $i<n$,
each $b_i$ is $R$-maximal (so that $R^\bullet(b_i)=\emptyset$),
 and $W=\bigcup_{i\leq n}R^*(a_i)$.
By the case assumption,  $n\geq1$.

Write $\c F^*({a_0})$ as  $\c F_0=(W_0,R_0)$, say.
Let $\c F_1=(W_1,R_1)$ be the smallest generated subframe of $\c F$ 
containing $a_1,\ldots,a_{n}$.
We have $W_{0}\cup W_1=W$ and $b_0\in W_0\cap W_1$.
Plainly,  $\c F_0$ and $\c F_1$
are connected generated subframes of $\c F$.
Therefore, they are locally connected $\axK\axD4$ frames.
By minimality of $n$, they are proper subframes of $\c F$.
By the inductive hypothesis, $\c F_0$ and $\c F_1$ are  fully \rb\ over $X$.
Our plan is to combine suitable \rep s of them to give a \rep\ of $\c F$ over $U$.

Recall that $S$ is a \ro\ subset of $U$ and $\sigma:S\to W$ is a $U$-basic \rep\ of $\c F$. 
We use $W_0,W_1$ to split $S$ (and, later, $\sigma$) in two.
Let
\[
\begin{array}{rcl}
S_0&=&\sigma^{-1}(W_0) =\{x\in S:\sigma(x)\in W_0\},
\\
S_1&=&S\setminus S_0.
\end{array}
\]
So $\sigma(S_0)\subseteq W_0$ and
$\sigma(S_1)\subseteq W\setminus W_0\subseteq W_1$.
Also, $S_0=S\setminus S_1$.

\Claim1
 $S_0$ and $S_1$ are \ro\ subsets of $U$, and
$U\cap\cl(S_0)\cap\cl(S_1)=\emptyset$.

\pfclaim We prove the last point first.
Suppose for contradiction that there is some $x\in U\cap\cl(S_0)\cap\cl(S_1)$.
As $x\in\cl S_0$, we have $(X,\sigma^{-1}),x\models\di\bigvee_{w\in W_0}w$.
As $\di$ is additive, it follows that there is some $w_0\in W_0$ such that
$(X,\sigma^{-1}),x\models\di w_0$.
Similarly, as $x\in\cl S_1$ and $\sigma(S_1)\subseteq W\setminus W_0$, 
there is some $w_1\in W\setminus W_0$ with
$(X,\sigma^{-1}),x\models\di w_1$.
As $\sigma$ is a $U$-basic \rep, we obtain $Rw_0w_1$.
Since $\c F_0$ is a generated subframe of $\c F$,
this implies that $w_1\in W_0$, a contradiction.
So $U\cap\cl(S_0)\cap\cl(S_1)=\emptyset$ as required.

Now let $i<2$. We show that $S_i$ is \ro\ in $U$.
First
note that $S_i$ is open.
To see this, observe that
\[
\begin{array}{rcll}
S_i&\subseteq& S\cap U\cap\cl S_i
&\mbox{as }S_i\subseteq S\subseteq U\mbox{ by definition and assumption}
\\
&\subseteq& S\cap U\setminus\cl S_{1-i}
&\mbox{by the first part}
\\
&=& S\setminus\cl S_{1-i}
&\mbox{as }S\subseteq U\mbox{ by assumption}
\\
&\subseteq&
S\setminus S_{1-i}
&\mbox{as }S_{1-i}\subseteq\cl S_{1-i}
\\
&=&S_i
&\mbox{by definition of }S_i.
\end{array}
\]
Hence, $S_i=S\setminus\cl S_{1-i}$,  an open set.

It follows that $\cl(S_i)\cap S_{1-i}=\emptyset$,
so $S_i\subseteq S\cap\cl S_i\subseteq S\setminus S_{1-i}=S_i$.
Thus, $S\cap\cl S_i=S_i$,
and so $\int(S\cap\cl S_i)=\int S_i=S_i$ as $S_i$ is open.
So $S_i$ is \ro\ in $S$, and as $S$ is \ro\ in $U$,
lemma~\ref{lem:ro1}(\ref{lem:ro:part4}) yields that $S_i$ is \ro\ in $U$.
The claim is proved.

\medskip

The claim and the assumption at the outset that
$T\neq\emptyset$ are more than enough to apply corollary~\ref{cor:Tarski++opens},
to obtain open subsets $U_i,T_i$ of $U$,
for $i=0,1$, satisfying the following conditions:
\begin{enumerate}
\renewcommand{\theenumi}{C\arabic{enumi}}
\renewcommand{\labelenumi}{C\arabic{enumi}.}

\item\label{ROsets cond1} $U\cap\cl U_0\cap\cl U_1=\emptyset$,

\item\label{ROsets cond2} $U\cap\cl S_i\subseteq U_i$,

\item\label{ROsets cond3} $T_i=U_i\setminus\cl S_i\neq\emptyset$,

\item\label{ROsets cond4} $\cl(T)\setminus U\subseteq\cl(T_i)$,

\item\label{point:Ui ro}
$U_i$ is a \ro\ subset of $U$.
\end{enumerate}
We now work in the \ba\ $RO(U)$ of \ro\ subsets of $U$.
By~\ref{point:Ui ro}, we have $U_0,U_1\in RO(U)$.
We define further elements of $RO(U)$:
\begin{enumerate}
\renewcommand{\theenumi}{C\arabic{enumi}}
\renewcommand{\labelenumi}{C\arabic{enumi}.}
\setcounter{enumi}{5}
\item $M=-(U_0+U_1)$,

\item  $V_i=M+U_i$ for $i=0,1$.
\end{enumerate}
The main property of these sets is as follows.

\Claim2 $\{M,S_0,S_1,T_0,T_1\}$ is a partition of $1$ in the \ba\ $RO(U)$.
That is, the five elements are pairwise disjoint \ro\ subsets of $U$, with
\begin{equation}\label{partition of U}
U=
\setbox0\hbox{$\overbrace{S_0+T_0\strut}^{U_0}+\overbrace{M+S_1+T_1\strut}^{V_1}$}\wd0=0pt\box0\hskip-2pt
\underbrace{\phantom{\strut S_0+T_0+M}}_{V_0}\phantom{+}\underbrace{\phantom{S_1+T_1\strut}}_{U_1}.
\end{equation}

\pfclaim
Let $i<2$.
By claim 1 and condition~\ref{point:Ui ro} above, $S_i,U_i\in RO(U)$.
By this and condition~\ref{ROsets cond3}, 
\begin{equation}\label{e:char Ti}
T_i=U_i\setminus\cl S_i=U_i\cap U\setminus\cl S_i
=U_i\cdot-S_i\in RO(U).
\end{equation}
So $S_i\cdot T_i=\emptyset$
and, since $S_i\subseteq U_i$ by condition~\ref{ROsets cond2},
also
$U_i=U_i\cdot S_i+U_i\cdot-S_i=S_i+T_i$.
Condition~\ref{ROsets cond1} above gives $U_0\cdot U_1=\emptyset$.
By definition, $M=-(U_0+U_1)$, so $M\in RO(U)$ and $M$ is disjoint from
$T_i,S_i$. Also, $U=U_0+U_1+M=S_0+T_0+S_1+T_1+M$.
It is now plain that $M+S_i+T_i=M+U_i=V_i$.
This proves the claim.

\medskip

We aim to apply the inductive hypothesis 
to $V_i,M+S_i,T_i,\c F_i$, for each $i=0,1$.
We will need a $V_i$-basic \rep\ of $\c F_i$ over $M+S_i$,
and the next claim helps us get one.

\Claim3
For each $i<2$ we have $U\cap \cl M\cap \cl S_i=\emptyset$, and
$M+S_i=M\cup S_i$ in $RO(U)$.

\pfclaim By definition, $M=-(U_0+U_1)=U\setminus\cl(U_0+U_1)\subseteq U\setminus U_i$.
Since $U_i$ is open,
$\cl M\cap U_i=\emptyset$.
But $U\cap\cl S_i\subseteq U_i$ by condition~\ref{ROsets cond2} above,
so $U\cap \cl M\cap \cl S_i=\emptyset$.
By lemma~\ref{lem:ro1}, $M+S_i=M\cup S_i$.
This proves the claim.

\medskip
\noindent
So all we need is to find suitable \rep s over $M$ and $S_i$ and take their union.

Clearly, $\c F^*(b_0)$ is a
subframe of $\c F_0$, and so a \emph{proper} subframe of $\c F$.
It is obviously connected (since rooted), and a generated subframe of $\c F$, so a locally connected $\axK\axD4$ frame.
By the inductive hypothesis, it is fully \rb\ over $X$.
So we can find an ($M$-full) \rep\  $\mu:M\to R(b_0)$ of $\c F^*(b_0)$ over $M$.

For each $i<2$ let
\[
\sigma_i=(\sigma\restriction S_i)
\;:\;S_i\to W_i.
\]

\Claim4 For each $i<2$, $\mu\cup\sigma_i: M\cup S_i\to W_i$
is a well defined $V_i$-basic \rep\ of $\c F_i$ over $M\cup S_i$.

\pfclaim
Since $\c F^*(b_0)$ is a generated subframe  of $\c F_i$,
it follows from lemma~\ref{lem:reps simple}(\ref{lem:simple:1}) that
$\mu$ is a \rep\ of $\c F_i$ over $M$.
Similarly, $\sigma_i$ is a \rep\ of $\c F_i$ over $S_i$.
Since  $M$ and $S_i$ are disjoint open sets,
$\mu\cup\sigma_i:M\cup S_i\to W_i$ is well defined and,
by lemma~\ref{lem:reps simple}(\ref{lem:simple:2}),
a \rep\ of $\c F_i$ over $M\cup S_i$.

To prove that it is $V_i$-basic, let
$x\in V_i$ and $v,w\in W_i$ be given,
and suppose that $(X,(\mu\cup\sigma_i)^{-1}),x\models\di w\wedge\di v$.
We require $Rwv$.

Plainly, $x\in\cl(M\cup S_i)=\cl M\cup\cl S_i$, and $x\in V_i\subseteq U$.
But  $U\cap\cl M\cap\cl S_i=\emptyset$ by claim~3.
So there are two possibilities.

The first one is that $x\notin\cl M$.
In this case, we must have $(X,\sigma_i^{-1}),x\models\di w\wedge\di v$.
As $\sigma_i\subseteq\sigma$, we  also have $(X,\sigma^{-1}),x\models\di w\wedge\di v$.
As $\sigma$ is $U$-basic, we obtain $Rwv$.

The other possibility is that  $x\notin\cl S_i$.
So $(X,\mu^{-1}),x\models\di w\wedge\di v$.
Since $\mu$ is a \rep\ of $\c F^*(b_0)$, we have $w,v\in R(b_0)$.
But $b_0$ is $R$-maximal, so $R^\bullet(b_0)=\emptyset$.
Hence, $w\in R^\circ(b_0)$, so $Rwb_0$, and since $Rb_0v$, we deduce $Rwv$ by transitivity. (Essentially we are using that $\c F^*(b_0)$ is a non-degenerate cluster.)
This proves the claim.

\medskip

In summary, for each $i<2$ we have:
\begin{itemize}
\item $V_i$ is  open (by claim 2)

\item $M+S_i,V_i\in RO(U)$ and $M+S_i\subseteq V_i$, so  by lemma~\ref{lem:ro1},
$M+S_i$ is a \ro\ subset of $V_i$

\item working in $RO(U)$, we have
$V_i=(M+S_i)+T_i$ and $(M_i+S_i)\cdot T_i=\emptyset$ by claim~2.
So $T_i=V_i\cdot-(M+S_i)=V_i\cap U\setminus\cl(M+S_i)=V_i\setminus\cl(M+S_i)$.

\item $M+S_i=M\cup S_i$ (by claim 3), and  $\mu\cup\sigma_i:M\cup S_i\to W_i$ is a $V_i$-basic \rep\ of $\c F_i$
over $M+S_i$ (by claim  4)
\end{itemize}
So for each $i<2$, recalling that  $\c F_i$ is fully \rb,
we see that 
$\mu\cup\sigma_i:M\cup S_i\to W_i$ extends to a $T_i$-full
 \rep\ $\rho_i:V_i\to W_i$ of $\c F_i$ over $V_i$.
We have
 \begin{equation}\label{e:T_ifull gives}
(X,\rho_i^{-1}),x\models\did w\quad\mbox{for every }
w\in W_i\mbox{ and }x\in \cl T_i\setminus V_i.
\end{equation}

Finally define
\begin{equation}
\rho=\rho_0\cup\rho_1:U\to W.
\end{equation}
We check first that $\rho$ is well defined and total.
Working in $RO(U)$ again, we have $\dom\rho_0\cap\dom\rho_1=V_0\cap V_1=V_0\cdot V_1=M$ 
by \eqref{partition of U}.
But $\rho_0\restriction M=\mu=\rho_1\restriction M$.
So $\rho$ is well defined.
Also, $V_i=-U_{1-i}=U\setminus\cl U_{1-i}$ (for $i=0,1$) by \eqref{partition of U},
 and $U\cap\cl U_0\cap\cl U_1=\emptyset$
by condition~\ref{ROsets cond1} above,
so
\begin{equation}\label{D0+D1=U}
\dom\rho=V_0\cup V_1=(U\setminus\cl U_1)\cup(U\setminus\cl U_0)=U\setminus(\cl U_1\cap\cl U_0)=U.
\end{equation}
Hence, $\rho$ is total.
Plainly, $\rho$ extends $\sigma$,
since $\rho=\rho_0\cup\rho_1
\supseteq(\mu\cup\sigma_0)\cup(\mu\cup\sigma_1)=\mu\cup\sigma$.

\Claim5 $\rho$ is a \rep\ of
$\c F$ over $U$.

\pfclaim 
Let $i<2$.
Then $\rho\restriction V_i=\rho_i$, a \rep\ of $\c F_{i}$ over $V_i$.
By lemma~\ref{lem:reps simple}(\ref{lem:simple:1}),
this is also a \rep\ of $\c F$ over $V_i$,
which is an open set by claim 2.
By~\eqref{D0+D1=U},  $U=V_0\cup V_1$, so by
lemma~\ref{lem:reps simple}(\ref{lem:simple:2}), 
$\rho$ is a \rep\ of $\c F$ over $U$, proving the claim.

\medskip

\Claim6 $\rho$ is $T$-full.

\pfclaim
Let $x\in\cl T\setminus U$.
We require $(X,\rho^{-1}),x\models\did w$ for every $w\in W$.

For each $i<2$, as $\cl T\setminus U\subseteq\cl T_i$ by condition~\ref{ROsets cond4} above,
and $x\notin U\supseteq V_i$,
we have
$x\in\cl T_i\setminus V_i$.
Since $\rho_i\subseteq\rho$,
it follows from~\eqref{e:T_ifull gives} that
$(X,\rho^{-1}),x\models\did w$ for every $w\in W_i$.
This holds for each $i=0,1$.
Since $W_0\cup W_1=W$, 
we have $(X,\rho^{-1}),x\models\did w$ for every $w\in W$.

Finally, we show that $\rho(U)=W$.
Since 
each $\rho_i$ is a $T_i$-full \rep\ of $\c F_i$ over $V_i$,
and $T_i\neq\emptyset$ by condition~\ref{ROsets cond3},
by~\eqref{D0+D1=U} we obtain $\rho(U)=\rho(V_0)\cup\rho(V_1)=\rho_0(V_0)\cup\rho_1(V_1)=W_0\cup W_1=W$.
This proves the claim, and with it, proposition~\ref{prop:di rep}.
\end{proof}

\begin{remark}\rm We end with some technical remarks on the definition of
`fully \rb' (definition~\ref{def:fully rb did case})
and its relation to the proof just completed.
They are not needed later, and the reader can of course skip them if desired.

It is very helpful throughout the proof
that $U$ is open --- see, e.g., lemma~\ref{lem:reps simple}.
However, we cannot assume in definition~\ref{def:fully rb did case} that $U$ is \ro\ in $X$.
For if we did, then in case~2 of the proof,
we have
$\cl I\subseteq\cl T'\subseteq\cl U'$ by claim~3 and $T'\subseteq U'$, so
$U'\neq U=\int\cl U=\int(\cl U'\cup\cl I)=\int\cl U'$.
Therefore, $U'$ is not \ro\ in $X$, and we can not apply
the inductive hypothesis to it.
We use that $X$ is dense in itself to show that $I\subseteq\cl T'$.

At least according to the construction we gave,
$S$ should be open.
In case 1,
if $S$ is not open then there is $x\in S\setminus\int S\subseteq \cl (U\setminus S)$,
and a little thought shows that $(X,\rho^{-1}),x\models\did {w_0}$ for any such $x$. 
For $\rho$ to be a \rep, we would need $R(\rho(x),w_0)$.
Since $\rho\supseteq\sigma$ and $x\in S$, this says that $R(\sigma(x),w_0)$, which we have no reason to suppose is true.

The problem if $S$ is not \ro\ in $U$ is that,
again in case~1, we used that $U\setminus S=\cl T$.
If this were to fail, there may be 
points $x\in U\setminus (S\cup\cl T)$ (so $x\in U\cap\int\cl S$).
We have to define $\rho$ on these $x$, and defining $\rho(x)={w_0}$ as in the proof
may not give a \rep.
However, as $\sigma$ is $U$-basic, it is possible to define $\rho(x)$ using $\sigma$ instead.
This effectively extends $\sigma$ to $U\cap\int\cl S$.
So we can assume without loss of generality that
$S$ is \ro\ in $U$.  It is therefore easier to do so and avoid the problem completely.

We could just suppose in definition~\ref{def:fully rb did case}
that $S$ is \ro\ in $X$,
but we cannot suppose this of $U$, and we have to work in $RO(U)$,
so there is little gain in doing so.

We need that $\sigma$ is $U$-basic
in order that in case~3, the subsets $S_0,S_1$ have disjoint closures in $U$.
This in turn is needed to apply normality in the proof of 
corollary~\ref{cor:Tarski++opens}.

We cannot assume instead in definition~\ref{def:fully rb did case}
 that $\sigma$ is $X$-basic,
because in case 3, we cannot guarantee that $\mu\cup\sigma_i$ is $X$-basic.
This is because we do not know that $M\cap\cl S_i=\emptyset$,
but only that $U\cap M\cap\cl S_i=\emptyset$.
We could solve this problem by assuming further that $\cl S\subseteq U$ (which implies that $S$ is \ro\ in $X$), but this weakens the proposition sufficiently to
cause trouble in theorem~\ref{thm:strong did} later, where we would need to ensure that $\cl S_n\cup\cl S_{n+1}\subseteq U_n$ for each $n$.

Finally, we mention that actually $\rho(T)=W$ when $T\neq\emptyset$
 --- not only $\rho$ but also $\rho\restriction T$
is surjective.
We might try to drop the second, surjectivity part of definition~\ref{def:Tfull rep} and simply
prove it from the first part, as in cases 1 and 2 of the proof,
but it is not clear how to do this in case 3.
\end{remark}

\section{Weak completeness}

We are now ready to prove our first tranche of main results,
showing that Hilbert systems for various sublanguages
of $\Lbig$ are sometimes sound and always complete over any
non-empty \nice\ space.
Several of the proofs use the translations 
$-^d$ and $-^\mu$ of section~\ref{sec:translations}.
We establish only weak completeness.
We will discuss strong completeness later, in section~\ref{sec:strong c}.

Here and later, we include `$t$' in the name of a Hilbert system
to indicate that it includes the tangle axioms {\bf Fix} and {\bf Ind}
of section~\ref{ss:tangle logics}.
Recall that by lemma~\ref{lem:ms}, metric spaces, regarded as topological spaces,  are Huasdorff and hence T$_D$.

\subsection{Weak completeness for $\c L_\bo^\mu$ and $\c L_\bo^\dit$}\label{ss:weak comp bo case}
The pioneering
result in this field was the theorem of \cite{McKiT44} that the 
$\c L_\bo$-logic of every separable \nice\ space is S4\null.
The assumption of separability was removed in \cite{RS:mm}.
We begin by generalising this theorem,
establishing (weak) completeness results for $\c L_\bo^\mu$ and $\c L_\bo^\dit$
over any \nice\ space.  
We will go on to prove strong completeness in theorem~\ref{thm:str compl boxes}.

\begin{theorem}\label{thm:compl S4mu, S4t}
Let $X$ be a non-empty \nice\ space.
\begin{enumerate}
\item The Hilbert system $\axS4\mu$ is sound and complete over $X$
for $\c L_\bo^\mu$-formulas.

\item The Hilbert system $\axS4t$ is sound and complete over $X$
for $\c L_\bo^\dit$-formulas.
\end{enumerate}

\end{theorem}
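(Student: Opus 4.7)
The plan breaks into a routine soundness verification and a completeness argument that assembles the finite model property of Part~1, the representation proposition~\ref{prop:di rep}, and the translations of section~\ref{sec:translations}.

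For soundness, the S4 axioms are valid on any topological space under $\sem{\bo\varphi}=\int\sem\varphi$. In the $\axS4\mu$ case, the fixed point axiom $\varphi(\mu q\varphi/q)\to\mu q\varphi$ and the fixed point rule are sound because $\sem{\mu q\varphi}$ is by definition the least fixed point of the monotonic operator $S\mapsto\sem\varphi_{h[S/q]}$. In the $\axS4t$ case, the tangle schemes {\bf Fix} and {\bf Ind} are verified directly from clause~\ref{topsem clause 8} of the topological semantics: {\bf Fix} expresses that $\dit\Gamma$ is a post-fixed point of $q\mapsto\bigwedge_{\gamma\in\Gamma}\di(\gamma\wedge q)$, and for {\bf Ind} one takes $S=U\cap\sem\varphi$, where $U$ is an open neighbourhood of the evaluation point on which $\varphi\to\bigwedge_\gamma\di(\gamma\wedge\varphi)$ holds everywhere.

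For completeness of $\axS4\mu$, I would take an $\axS4\mu$-consistent $\varphi\in\c L^\mu_\bo$ and apply theorem~\ref{thm:S4mu sc} to satisfy $\varphi$ at some world $w_0$ of a finite reflexive transitive Kripke model $(W,R,h)$. Passing to the submodel generated by $w_0$ (invoking lemma~\ref{lem:gen submodels}) makes $\c F=(W,R)$ a finite rooted S4 frame; reflexivity gives seriality, so $\c F$ is $\axK\axD4$, and for each $w\in W$ the subframe $\c F(w)$ is rooted at $w$ (since $w\in R(w)$) and hence connected, so $\c F$ is also connected and locally connected. Proposition~\ref{prop:di rep}, applied with $U=X$ and $S=\sigma=\emptyset$, therefore yields a surjective representation $\rho:X\to W$. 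To transfer truth along $\rho$, I would route through the translation $-^d$: by lemma~\ref{lem:-d in refl frames} the formulas $\varphi$ and $\varphi^d$ agree on the reflexive frame $(W,R)$, while by lemma~\ref{lem:ms} the metric space $X$ is T$_D$, so by lemma~\ref{lem:trans equivalent top} they agree on $X$ as well. Since $\varphi^d\in\c L^\mu_\bod$, proposition~\ref{prop:fmla pres} gives that $\rho$ preserves $\varphi^d$, so picking any $x\in\rho^{-1}(w_0)$ (non-empty by surjectivity) yields $(X,\rho^{-1}\!\circ h),x\models\varphi^d$, and the two equivalences then give $(X,\rho^{-1}\!\circ h),x\models\varphi$.

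The $\axS4t$ case reduces to this one via the translation $-^\mu$. Given an $\axS4t$-consistent $\varphi\in\c L^\dit_\bo$, the finite model property for $\axS4t$ over finite S4 frames established in section~\ref{sec:fmp S4} produces a satisfying finite reflexive transitive model; lemma~\ref{lem:mu trans} tells us that $\varphi$ and $\varphi^\mu\in\c L^\mu_\bo$ are equivalent both on transitive Kripke frames and on every topological space, so it suffices to satisfy $\varphi^\mu$ in $X$, which is accomplished exactly as above (applying the $\varphi^d$-detour to $\varphi^\mu$). The main obstacle in this theorem is not the assembly just sketched but the three substantial inputs it rests on --- the finite model property results of Part~1, the representation proposition~\ref{prop:di rep}, and the two translations --- all of which have already been proved.
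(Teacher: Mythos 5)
Your proposal is correct and follows essentially the same route as the paper: finite model property plus generated submodel, proposition~\ref{prop:di rep} for a surjective representation, and the translations $-^d$ and $-^\mu$ combined with lemmas~\ref{lem:-d in refl frames}, \ref{lem:trans equivalent top}, \ref{lem:mu trans} and proposition~\ref{prop:fmla pres} to transfer truth. The only (immaterial) difference is that in part~2 you compose the translations as $(\varphi^\mu)^d$ where the paper uses $(\varphi^d)^\mu$; both land in $\c L^\mu_{\bod}$ and the same chain of equivalences goes through.
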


\begin{proof}
For part 1, soundness is easy to check and indeed we have already mentioned it
in corollary~\ref{cor:t transl top equiv}.
For completeness, let $\varphi$ be an $\c L_\bo^\mu$-formula
that is not a theorem of $\logicbomu$.
By  theorem~\ref{thm:S4mu sc},
we can find a finite  S4 frame $\c F=(W,R)$, 
an assignment $h$ into $\c F$, and a world $w\in W$
with $(W,R,h),w\models\neg\varphi$.  
By replacing $\c F$ by $\c F(w)$, 
we can suppose that $w$ is a root of $\c F$
--- this can be justified in a standard way using lemma~\ref{lem:gen submodels}.
Since $\c F$ is rooted, it is clearly connected.
Since it is reflexive and transitive,
it is a locally connected $\axK\axD4$ frame.
So by proposition~\ref{prop:di rep}, $\c F$ is fully \rb\ over $X$.
So, taking $U=X$ and $S=\sigma=\emptyset$ in the definition of `fully \rb' 
(definition~\ref{def:fully rb did case}),
we may choose an $X$-full, hence surjective, \rep\ $\rho$ of $\c F$ over $X$.
Choose  $x\in X$ with $\rho(x)=w$.
Then
\[
\begin{array}{rcll}
(W,R,h),w\models\varphi
&\mbox{iff}&
(W,R,h),w\models\varphi^d&\mbox{by lemma~\ref{lem:-d in refl frames}, 
since }\c F\mbox{ is reflexive,}
\\
&\mbox{iff}&(X,\rho^{-1}\circ h),x\models\varphi^d
&\mbox{by proposition~\ref{prop:fmla pres}, since $\varphi^d\in\c L_{\bod\forall}^\mu$,}
\\
&\mbox{iff}&(X,\rho^{-1}\circ h),x\models\varphi
&\mbox{by lemma~\ref{lem:trans equivalent top}, since $X$ is T$_D$.}
\end{array}
\]
We obtain $(X,\rho^{-1}\circ h),x\models\neg\varphi$.
Thus, $\varphi$ is not valid over $X$, proving completeness.

The proof of part 2 is similar. For the soundness of the tangle axioms 
see \cite[Theorem 6.1]{FD:apal12}.
For completeness, the differences are:
$\varphi$ is assumed to be an $\c L_\bo^\dit$-formula
that is not a theorem of $\axS4t$;
we use 
the results of section~\ref{sec:fmp S4} in place of theorem~\ref{thm:S4mu sc} to obtain a finite
S4 Kripke model satisfying $\neg\varphi$ at a root;
and having obtained a surjective \rep\ $\rho$ of $\c F$ over $X$
and $x\in X$ with $\rho(x)=w$, we use the additional translation
$-^\mu$ from section~\ref{sec:translations}, as follows.
Note that $\varphi\in\c L_\bo^\dit$,
$\varphi^d\in\c L_\bod^\didt$,
and $(\varphi^d)^\mu\in \c L_\bod^\mu\subseteq\c L_{\bod\forall}^\mu$.
\[
\begin{array}{rcll}
(W,R,h),w\models\varphi
&\mbox{iff}&
(W,R,h),w\models\varphi^d&\mbox{by lemma~\ref{lem:-d in refl frames}, 
since }\c F\mbox{ is reflexive,}
\\
&\mbox{iff}&(W,R,h),w\models(\varphi^d)^\mu
&\mbox{by lemma~\ref{lem:mu trans}, since $\c F$ is transitive,}
\\
&\mbox{iff}&(X,\rho^{-1}\circ h),x\models(\varphi^d)^\mu
&\mbox{by proposition~\ref{prop:fmla pres},
 since $(\varphi^d)^\mu\in\c L_{\bod\forall}^\mu$,}
\\
&\mbox{iff}&(X,\rho^{-1}\circ h),x\models\varphi^d
&\mbox{by lemma~\ref{lem:mu trans} again,}
\\
&\mbox{iff}&(X,\rho^{-1}\circ h),x\models\varphi
&\mbox{by lemma~\ref{lem:trans equivalent top}, since $X$ is T$_D$.}
\end{array}
\]\unskip
\end{proof}

\subsection{Weak completeness for $\c L_{\bo\forall}$ and $\c L_{\bo\forall}^\dit$}\label{ss:weak comp bo forall}

Completeness for languages with $\forall$ follows the same lines,
although soundness requires that the space be connected.

\begin{theorem}\label{thm:compl S4.UC, S4t.UC}
Let $X$ be a non-empty \nice\ space.
\begin{enumerate}
\item The Hilbert system $\axS4.\axU\axC$ is complete over $X$
for $\c L_{\bo\forall}$-formulas, and sound if $X$ is 
connected.%
\footnote{In \cite[theorem 18]{Sheh:everywhere99},
Shehtman states this result when  $X$ is additionally assumed separable.
However, \cite[footnote 7]{KudShe14}
states that \cite{Sheh:everywhere99} ``contains a stronger claim: 
[the $\c L_{\bo\forall}$-logic of $X$ is $\axS4.\axU\axC$] for any connected dense-in-itself separable metric $X$. However, recently we found a gap in the proof of Lemma 17 from that paper. Now we state the main result only for the case 
$X=\R^n$; a proof can be obtained by applying the methods of the present Chapter, but we are planning to publish it separately.''}

\item The Hilbert system $\axS4t.\axU\axC$ is complete over $X$
for $\c L_{\bo\forall}^\dit$-formulas, and sound if $X$ is connected.
\end{enumerate}
\end{theorem}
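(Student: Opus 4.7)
The approach closely parallels the proof of theorem~\ref{thm:compl S4mu, S4t}, with two modifications: we need different sources for a finite Kripke countermodel, and we must verify soundness of the extra axioms $\axU$ and $\axC$.

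For soundness, the $\axS4$ axioms and, in part~2, the tangle axioms $\mathbf{Fix}$ and $\mathbf{Ind}$ are already valid on any topological space by the proof of theorem~\ref{thm:compl S4mu, S4t}. The S5 axioms and rule for $\forall$ are sound because $\forall\varphi$ holds at $x\in X$ iff $\sem\varphi=X$. The axiom $\axU$ is sound in any space: if $\sem\varphi=X$ then $X$ itself is an open \nhd\ witnessing $\bo\varphi$ at every point. The axiom $\axC$ needs connectedness: observing that $\sem{\bo^*\varphi}=\int\sem\varphi$, the antecedent $\forall(\bo^*\varphi\vee\bo^*\neg\varphi)$ says $X=\int\sem\varphi\cup\int\sem{\neg\varphi}$, and as these are disjoint open sets, connectedness of $X$ forces one of them to equal $X$, yielding $\forall\varphi$ or $\forall\neg\varphi$.

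For completeness I treat part~2 ($\axS4t.\axU\axC$); part~1 is identical but skips the tangle translation. Let $\varphi$ be an $\c L_{\bo\forall}^\dit$-formula not provable in $\axS4t.\axU\axC$. By the finite model property established at the end of section~\ref{sec:path conn}, there is a finite path-connected reflexive transitive Kripke model $\c M=(W,R,h)$ and some $w\in W$ with $\c M,w\models\neg\varphi$. Such a finite S4 frame is automatically locally connected: for any $v\in W$ and $v_1,v_2\in R(v)$, reflexivity gives $v\in R(v)$ with $vRv_1$ and $vRv_2$, so $v_1,v,v_2$ is a connecting path within $R(v)$, whence $\c F(v)$ is connected. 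Thus $\c F=(W,R)$ is a finite connected locally connected $\axK\axD4$ frame, and proposition~\ref{prop:di rep} (applied with $U=X$, $S=\sigma=\emptyset$) supplies a surjective \rep\ $\rho:X\to W$; pick $x\in X$ with $\rho(x)=w$.

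The remaining step is the chain of equivalences already used in theorem~\ref{thm:compl S4mu, S4t}: lemma~\ref{lem:-d in refl frames} (by reflexivity) replaces $\varphi$ by $\varphi^d\in\c L_{\bod\forall}^\didt$; lemma~\ref{lem:mu trans} (by transitivity) replaces $\varphi^d$ by $(\varphi^d)^\mu\in\c L_{\bod\forall}^\mu$; proposition~\ref{prop:fmla pres} passes from $\c M,w$ to $(X,\rho^{-1}\circ h),x$, using that $\rho$ is surjective and that the proposition covers $\forall$; and finally lemmas~\ref{lem:mu trans} and~\ref{lem:trans equivalent top} (the latter applicable since $X$ is metric, hence T$_D$) bring us back to $\varphi$, giving $(X,\rho^{-1}\circ h),x\models\neg\varphi$ as required. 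The main obstacle will be checking soundness of $\axC$ over connected $X$ and confirming that the finite model supplied by the FMP fits the hypotheses of proposition~\ref{prop:di rep}; once the small observation about reflexivity forcing local connectedness is in hand, the rest is routine.
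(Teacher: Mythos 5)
Your proposal is correct and follows essentially the same route as the paper: obtain a finite connected S4 countermodel from the finite model property for the .UC systems established in the path-connectedness subsection of Part~1, note that reflexivity forces local connectedness so that proposition~\ref{prop:di rep} applies, and then run the same translation chain ($-^d$, $-^\mu$, proposition~\ref{prop:fmla pres}) as in theorem~\ref{thm:compl S4mu, S4t}. Your explicit verification of soundness for $\axU$ and $\axC$ is a correct expansion of what the paper leaves as "clear".
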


\begin{proof}
For part 1, soundness when $X$ is connected is again clear:
connectedness is needed so that the $C$ axiom is valid in $X$.
For completeness, even when $X$ is not connected,
suppose that $\varphi\in\c L_{\bo\forall}$ is not a theorem of $\axS4.\axU\axC$. 
By the results of section~\ref{sec:path conn},
or by \cite[theorem 10]{Sheh:everywhere99},
 $\axS4.\axU\axC$ has the finite model property, so
we can find a finite connected S4 frame $\c F=(W,R)$, 
an assignment $h$ into $\c F$, and a world $w\in W$
such that $(W,R,h),w\models\neg\varphi$.
The proof that $\varphi$ is not valid in $X$ is now exactly as in
theorem~\ref{thm:compl S4mu, S4t}.

Part 2 is proved similarly, using the results of section~\ref{sec:path conn}
to obtain a finite model.
\end{proof}

We have no results for $\c L_{\bo\forall}^\mu$ because
we are not aware of any completeness theorem for this language
with respect to finite connected S4 frames.
If one is proved in future, we could take advantage of it.

\subsection{Weak completeness for $\c L_{\bod}$ and $\c L_{\bod}^\didt$}\label{ss:weak comp bod and tangle}

In one way this is even easier, as we do not need the translation $\varphi^d$.
But again, soundness requires a condition on the space.

\begin{theorem}\label{thm:compl KD4G1(t)}
Let $X$ be a non-empty \nice\ space.
\begin{enumerate}
\item The Hilbert system $\axK\axD4\axG_1$ is complete over $X$
for $\c L_{\bod}$-formulas, and sound if $\axG_1$ is valid in $X$.

\item The Hilbert system $\axK\axD4\axG_1t$ is complete over $X$
for $\c L_{\bod}^\didt$-formulas, and sound if $\axG_1$ is valid in $X$.
\end{enumerate}
\end{theorem}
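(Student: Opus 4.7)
The plan is to mimic the strategy already used for theorems~\ref{thm:compl S4mu, S4t} and~\ref{thm:compl S4.UC, S4t.UC}: combine a finite model property result from Part~1 with the representation theorem (proposition~\ref{prop:di rep}) and the preservation theorem (proposition~\ref{prop:fmla pres}), using the translation~$-^\mu$ to bridge the tangle connective and the $\mu$-calculus wherever necessary.

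For soundness (assuming $X$ validates $\axG_1$), the K axioms, modus ponens, generalisation and the D~schema $\did\top$ hold on any dense-in-itself space (every non-empty open set is infinite by lemma~\ref{lem:inf}, so in particular every point has a distinct nearby point). The 4~schema $\did\did\varphi\to\did\varphi$ holds because $X$ is metric, hence T$_D$ (recall from section~\ref{ss:validity 2} that T$_D$ is exactly what is needed for 4 in the derivative semantics). Validity of $\axG_1$ is assumed, and the tangle axioms \textbf{Fix} and \textbf{Ind} are sound on any topological space (as shown, e.g., in \cite{FD:apal12}, and directly justified by the post-fixed-point characterisation of $\didt$ used in clause~\ref{topsem clause 8} of the topological semantics).

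For completeness in part~1, suppose $\varphi\in\c L_\bod$ is not a theorem of $\axK\axD4\axG_1$. By the finite model property for $\axK\axD4\axG_1$ established in section~\ref{sec:fmp Gn}, there is a finite locally $1$-connected $\axK\axD4$ frame $\c F=(W,R)$, an assignment $h$ and a world $w\in W$ with $(W,R,h),w\models\neg\varphi$. Replacing $\c F$ by the generated subframe $\c F^*(w)$ (and using lemma~\ref{lem:gen submodels}), we may assume $w$ is a root of $\c F$, so $\c F$ is connected; local $1$-connectedness is preserved since $\c F^*(w)(v)=\c F(v)$ for every $v\in R^*(w)$, and KD4 is preserved for the same reason. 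By fact~\ref{fact:Gn = loc nconn}, $\c F$ validates $\axG_1$. Proposition~\ref{prop:di rep} (applied with $U=X$ and $S=\sigma=\emptyset$) now delivers an $X$-full, hence surjective, representation $\rho:X\to W$ of $\c F$ over $X$. Choose $x\in X$ with $\rho(x)=w$; since $\varphi\in\c L_\bod\subseteq\c L_{\bod\forall}^\mu$, proposition~\ref{prop:fmla pres} yields $(X,\rho^{-1}\circ h),x\models\neg\varphi$, so $\varphi$ is not valid in $X$.

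For part~2, the proof is identical except for two adjustments. First, we use the finite model property for $\axK\axD4\axG_1t$ from section~\ref{sec:fmp Gn} in place of the one for $\axK\axD4\axG_1$, to obtain a finite locally $1$-connected $\axK\axD4$ model satisfying $\neg\varphi$ at $w$, and again root it by generation. Second, the formula $\varphi\in\c L_\bod^\didt$ does not lie in $\c L_{\bod\forall}^\mu$, so to apply proposition~\ref{prop:fmla pres} we first translate: by lemma~\ref{lem:mu trans}, $\varphi$ is equivalent to $\varphi^\mu\in\c L_\bod^\mu\subseteq\c L_{\bod\forall}^\mu$ in every transitive Kripke frame \emph{and} in every topological space. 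Thus $(W,R,h),w\models\neg\varphi^\mu$, and after obtaining the surjective representation $\rho$ and a point $x$ with $\rho(x)=w$ as in part~1, proposition~\ref{prop:fmla pres} gives $(X,\rho^{-1}\circ h),x\models\neg\varphi^\mu$, and a second application of lemma~\ref{lem:mu trans} returns $(X,\rho^{-1}\circ h),x\models\neg\varphi$.

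The principal technical work, namely the construction of representations over arbitrary dense-in-itself metric spaces, is already packaged in proposition~\ref{prop:di rep}; likewise the finite model property is provided by Part~1. The one non-routine point to verify carefully is that the generated subframe used to obtain a root preserves local $1$-connectedness and the KD4 properties, so that proposition~\ref{prop:di rep} is genuinely applicable. Beyond that, the proof is a direct assembly, and no new topological arguments are needed.
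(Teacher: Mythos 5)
Your proposal is correct and follows essentially the same route as the paper: finite model property from Part~1, rooting via a generated subframe, the representation of proposition~\ref{prop:di rep}, the preservation result of proposition~\ref{prop:fmla pres}, and the translation $-^\mu$ (lemma~\ref{lem:mu trans}) to handle $\didt$ in part~2. The only differences are cosmetic — you spell out soundness and the preservation of local connectedness and KD4 under point-generation, and you use $\c F^*(w)$ where the paper writes $\c F(w)$, which is if anything the more careful choice when the root may be irreflexive.
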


\begin{proof}
For part 1, soundness is clear.
For completeness, even when $X$ does not validate $\axG_1$,
suppose that $\varphi\in\c L_\bod$ 
is not a theorem of $\axK\axD4\axG_1$.
As we mentioned in section~\ref{ss:weak models},
$\axK\axD4\axG_1$ has the finite  model property  \cite[theorem 15]{Sheh:d90}, so
we can find a finite  $\axK\axD4\axG_1$ frame $\c F=(W,R)$,
 an assignment $h$ into $\c F$, and a world $w\in W$
such that $(W,R,h),w\models\neg\varphi$.
As usual, by replacing $\c F$ by $\c F(w)$, we can suppose that $\c F$
is  connected.
It is also locally connected
because it validates $\axG_1$ (see fact~\ref{fact:Gn = loc nconn}).
Using proposition~\ref{prop:di rep}, let $\rho$ be a 
surjective \rep\ of $\c F$ over $X$.
Let $x\in X$ satisfy $\rho(x)=w$.
Then $(X,\rho^{-1}\circ h),x\models\neg\varphi$
by proposition~\ref{prop:fmla pres}.
So $\varphi$ is not valid in $X$.

The proof of part 2 is similar, except that 
we use the results of section~\ref{sec:fmp Gn} to obtain a finite model,
and in order to apply proposition~\ref{prop:fmla pres},
we first use the translation $-^\mu$ to turn 
$\varphi\in\c L_\bod^\didt$ into an $\c L_\bod^\mu$-formula $\varphi^\mu$
equivalent to $\varphi$ in transitive frames and in $X$.
\end{proof}

\begin{remark}\label{rmk:shehtman}\rm
Theorem~\ref{thm:compl KD4G1(t)}(1) is related to earlier work of Shehtman
\cite{Sheh:d90}.
In \cite[theorem 23, p.39]{Sheh:d90},
the following is proved for the language $\c L_\bod$:
\begin{quote}
\begin{enumerate}
\item[(i)] 
Let  $X$ be a topological space  having an open set homeomorphic to some $\R^n$, $n>0$.  Then $L(D(X))\subseteq D4\axG_1$
[the $\c L_\bod$-logic of $X$ is contained in $\axK\axD4\axG_1$].

\item [(ii)] If additionally $X$ satisfies
conditions of lemma 2 then  $L(D(X))= D4\axG_1$.
\end{enumerate}
\end{quote}
Lemma 2 \cite[p.3]{Sheh:d90} states the following.
\begin{quote}
Let $X$ be a topological space satisfying the following condition:
for any open $U$ and any $x\in U$ there is open $V\subseteq U$
such that $x\in V$ and $(V\setminus\{x\})$ is connected
[as a subspace of $X$].
Then $X\models \axG_1$. 
\end{quote}
Shehtman's  results (i), (ii) above follow from theorem~\ref{thm:compl KD4G1(t)}(1).
We remark that the converse of his lemma~2 fails in general --- a
counterexample is 
given by the subspace
$X=
\R^2\setminus\{(1/n,y):n\mbox{ a positive integer,}\;y\in\R\}
$
of $\R^2$.
\cite[theorems 3.12, 3.14]{LucBry11} give a characterisation of
when a topological space validates $\axG_n$, for $n\geq1$.

Shehtman \cite[p.43]{Sheh:d90} also states two open problems:

\begin{enumerate}
\item To describe all [$\c L_\bod$-]logics [of] dense-in-itself metric spaces $X$.
In particular, is $[K]D4\axG_1$ the greatest of them?

\item Is theorem 23(ii) extended to the infinite dimensional case?
In particular, does it hold for Hilbert space $\ell_2$ (with the weak or with the
strong topology)?
\end{enumerate}
Theorem~\ref{thm:compl KD4G1(t)}(1) appears to resolve 
problem 2 and the second part of problem 1, both positively.

Shehtman also proved in \cite[theorem 29]{Sheh:d90} that
the $\c L_\bod$-logic of every  zero-dimensional separable \nice\  space
is $\axK\axD4$. This does not follow from theorem~\ref{thm:compl KD4G1(t)}.
\end{remark}

\subsection{Weak completeness for $\c L_{\bod\forall}$ and $\c L_{\bod\forall}^\didt$}\label{ss:weak comp bod forall and tangle}

The following is now purely routine.
\begin{theorem}\label{thm:compl KD4G1(t).UC}
Let $X$ be a non-empty \nice\ space.
\begin{enumerate}
\item The Hilbert system $\axK\axD4\axG_1.\axU\axC$ is complete over $X$
for $\c L_{\bod\forall}$-formulas, and sound if $X$ 
is connected and validates $\axG_1$.

\item The Hilbert system $\axK\axD4\axG_1t.\axU\axC$ is complete over $X$
for $\c L_{\bod\forall}^\didt$-formulas, and sound if $X$ 
is connected and validates $\axG_1$.

\end{enumerate}
\end{theorem}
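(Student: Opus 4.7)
The proof follows the same recipe as Theorems~\ref{thm:compl S4.UC, S4t.UC} and~\ref{thm:compl KD4G1(t)}, combining the finite model property results of Part~1 with the representation theorem (proposition~\ref{prop:di rep}) and the preservation result (proposition~\ref{prop:fmla pres}). Soundness is straightforward: the $\axK\axD4$ axioms are valid in any T$_D$ space, $\axG_1$ is valid by hypothesis, the S5 axioms for $\forall$ together with U are always valid, and C is valid precisely because $X$ is connected. Since metric spaces are Hausdorff, hence T$_D$ (section~\ref{ss:top spaces}), all the axioms of $\axK\axD4\axG_1.\axU\axC$ (and its tangle extension) are valid in $X$ under the stated hypotheses, and their rules preserve validity.

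For completeness of part~1, suppose $\varphi\in\c L_{\bod\forall}$ is not a theorem of $\axK\axD4\axG_1.\axU\axC$. By the finite model property established in section~\ref{sec:fmp Gn} for the system $\axK\axD4\axG_n.\axU\axC$ (specialised to $n=1$), there is a finite $\axK\axD4\axG_1$ Kripke model $\c M=(W,R,h)$ with $w\in W$ such that $\c M,w\models\neg\varphi$, and whose frame $(W,R)$ is path connected (thanks to the scheme C, as shown in section~\ref{sec:path conn}). Being a validator of $\axG_1$, the frame $(W,R)$ is locally 1-connected by fact~\ref{fact:Gn = loc nconn}. So $(W,R)$ is a finite connected locally connected $\axK\axD4$ frame, and proposition~\ref{prop:di rep} yields a surjective representation $\rho:X\to W$ (taking $U=X$, $S=\sigma=\emptyset$ in definition~\ref{def:fully rb did case}). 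Pick $x\in X$ with $\rho(x)=w$. Then proposition~\ref{prop:fmla pres}, applied to the $\c L^\mu_{\bod\forall}$-formula $\varphi$ itself, gives $(X,\rho^{-1}\circ h),x\models\neg\varphi$, so $\varphi$ is not valid in $X$.

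For part~2, the argument is the same except that $\varphi\in\c L^{\didt}_{\bod\forall}$, so before invoking proposition~\ref{prop:fmla pres} we first translate $\varphi$ via $-^\mu$ to the equivalent $\c L^{\mu}_{\bod\forall}$-formula $\varphi^\mu$ (by lemma~\ref{lem:mu trans}, this equivalence holds in every transitive Kripke frame and in every topological space). The finite model is obtained from the finite model property results of section~\ref{sec:fmp Gn} applied to $\axK\axD4\axG_1 t.\axU\axC$; as before, the resulting frame is finite, connected, and locally connected $\axK\axD4$, so proposition~\ref{prop:di rep} applies. Chasing the equivalences:
\[
\c M,w\models\varphi\iff\c M,w\models\varphi^\mu\iff(X,\rho^{-1}\circ h),x\models\varphi^\mu\iff(X,\rho^{-1}\circ h),x\models\varphi,
\]
the first and third by lemma~\ref{lem:mu trans}, and the second by proposition~\ref{prop:fmla pres}. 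Thus $(X,\rho^{-1}\circ h),x\models\neg\varphi$, as required.

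No new obstacle arises: all the heavy lifting (the finite model property with C and $\axG_n$, the representation of finite connected locally connected $\axK\axD4$ frames, and preservation of $\c L^{\mu}_{\bod\forall}$-formulas by surjective representations) has already been done. The only minor point to verify is that the finite model property results of section~\ref{sec:fmp Gn} indeed deliver a \emph{connected} frame in the presence of C, which is exactly the content of the path-connectedness analysis in section~\ref{sec:path conn} as lifted to the tangle setting there.
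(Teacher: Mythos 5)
Your proposal is correct and takes essentially the same approach as the paper, whose own proof of this theorem merely cites the finite model property results of section~\ref{sec:fmp Gn} and leaves the remaining (routine) details to the reader; your write-up supplies exactly those intended details — connectedness of the finite frame from scheme C, local connectedness from $\axG_1$ via fact~\ref{fact:Gn = loc nconn}, the surjective representation from proposition~\ref{prop:di rep}, preservation from proposition~\ref{prop:fmla pres}, and the $-^\mu$ translation for the tangle case. (One trivial point: validity of the D axiom $\did\top$ requires $X$ to be dense in itself, not merely T$_D$, but that is given by hypothesis, so nothing is affected.)
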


\begin{proof}
The finite model property for $\axK\axD4\axG_1.\axU\axC$
 and $\axK\axD4\axG_1t.\axU\axC$
follows from the results of section~\ref{sec:fmp Gn}.
There are no other new elements in the proof, so we leave it to the reader.
\end{proof}

\section{Strong completeness}\label{sec:strong compl}

Here, we will prove that
$\axK\axD4\axG_1t$ is strongly complete over
any non-empty \nice\ space $X$:
 any countable $\axK\axD4\axG_1t$-consistent set
of  $\c L_\bod^\didt$-formulas
is satisfiable over $X$.
The analogous results for $\c L_\bo^\mu$ and the weaker languages $\c L_\bod$ and $\c L_\bo^\dit$ will follow.
The analogous result for $\c L_\bo$ also follows, but this is a known result,  proved recently by
Kremer \cite{Kremer2010:stro}. We will then show that 
strong completeness frequently fails for languages with $\forall$.

\subsection{The problem}

Let us outline a na\"\i ve approach to the problem.
It does not work, but it will illustrate the difficulty we face and motivate the formal proof later.

Let $\Gamma$ be a countable $\axK\axD4\axG_1t$-consistent set
of $\c L_\bod^\didt$-formulas.
For simplicity, assume that $\Gamma$ is maximal consistent.
Write $\Gamma$ as the union of an increasing chain $\Gamma_0\subseteq\Gamma_1\subseteq\cdots$
of finite sets.
Fix $x\in X$.
By weak completeness (theorem~\ref{thm:compl KD4G1(t)}), each $\Gamma_n$ ($n<\omega$) is satisfiable at $x$,
so we can find an assignment $g_n$ on $X$ with $(X,g_n),x\models\Gamma_n$.
Suppose we could build a new assignment $g$ that 
\emph{behaves like $g_n$ for larger and larger $n$,} as we approach $x$.
Then we might hope that $(X,g),x\models\Gamma_n$ for all $n$, and so
$(X,g),x\models\Gamma$.

To define such a $g$, we
choose a countable sequence $X=S_0\supseteq S_1\supseteq\cdots$
of open \nhd s of $x$, such that 

\begin{enumerate}
\renewcommand{\theenumi}{S\arabic{enumi}}
\renewcommand{\labelenumi}{S\arabic{enumi}.}
\item every open \nhd\ of $x$ contains some $S_n$
(that is, the $S_n$ form a `base of open \nhd s' of $x$).
\end{enumerate}
$X$ is a metric space, so we can do this.
Since we can make the $S_n$ as small as we like, and the $\Gamma_n$ are finite sets,
we can suppose that for each $n<\omega$:
\begin{enumerate}
\renewcommand{\theenumi}{S\arabic{enumi}}
\renewcommand{\labelenumi}{S\arabic{enumi}.}
\setcounter{enumi}{1}
\item \label{naive suppose 1}
for each $\bod\varphi\in\Gamma_n$, we have $(X,g_n),y\models\varphi$ 
for every $y\in S_n\setminus\{x\}$,

\item \label{naive suppose 2} for each $\did\varphi\in\Gamma_n$, there is $y\in S_n\setminus\cl S_{n+1}$
with $(X,g_n),y\models\varphi$.
\end{enumerate}
We can now define a new assignment $g$ by `using $g_n$ within $S_n$', for each 
$n<\omega$.
More precisely, we let
$$
g(p)\cap(S_n\setminus S_{n+1})=g_n(p)\cap(S_n\setminus S_{n+1})
$$
for each atom $p$ and each $n<\omega$.
We also need to define $g$ at $x$ itself, but we can use $\Gamma$
to determine truth values of atoms there.

Now we try to prove that $\varphi\in\Gamma$ iff $(X,g),x\models\varphi$
for all formulas $\varphi$, by induction
on $\varphi$.
The atomic and boolean cases are easy.
Consider the case $\did\varphi$.

If $\did\varphi\in\Gamma$, then $\did\varphi\in\Gamma_n$ for all large enough $n$,
so  by~\ref{naive suppose 2}, there is $y\in S_n\setminus\cl S_{n+1}$ with
$(X,g_n),y\models\varphi$.
As $S_n\setminus\cl S_{n+1}$ is open and $g_n$ agrees with $g$ on it,
it follows that $(X,g),y\models\varphi$.
This holds for cofinitely many $n$, so $(X,g),x\models\did\varphi$.

Conversely, if $(X,g),x\models\did\varphi$,
then for infinitely many $n$, there is $y\in S_n\setminus S_{n+1}$
with $(X,g),y\models\varphi$.
\emph{If we could find such a $y\in S_n\setminus\cl S_{n+1}$,}
then as above, $(X,g_n),y\models\varphi$,
and it would follow by~\ref{naive suppose 1} and maximality of $\Gamma$
that $\did\varphi\in\Gamma$.

But it may be that we can only find such $y\in\cl S_{n+1}$.
The truth of $\varphi$ at such $y$
may not be preserved when we change from $g$ to $g_n$,
because it may depend on points in $S_{n+1}$,
and at such points, $g$ agrees with $g_{n+1}$, not $g_n$.
(We cannot just make $S_{n+1}$ smaller to take the witnesses $y$ out of $\cl S_{n+1}$,
because $g$ will then change,
and we may no longer have $(X,g),y\models\varphi$.)

So we would like to arrange a \emph{smooth transition} between $g_n$
and $g_{n+1}$, avoiding unpleasant discontinuities.
It would be sufficient if there is some closed $T_{n+1}\subseteq S_{n+1}$
such that
$g_n$ and $g_{n+1}$ agree on the `buffer zone' $S_{n+1}\setminus T_{n+1}$.
Much of the formal proof below is aimed at achieving
something like this for atoms occurring in $\Gamma_n$ --- see claim~3 especially.

\subsection{Strong completeness for $\c L_\bod^\didt$}

\begin{theorem}[strong completeness]\label{thm:strong did}
Let $X$ be a non-empty \nice\ space. 
Then the Hilbert system $\axK\axD4\axG_1t$ is strongly complete over $X$
for $\c L_{\bod}^\didt$-formulas, and sound if $\axG_1$ is valid in $X$.
\end{theorem}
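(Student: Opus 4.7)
The plan is to satisfy a countable $\axK\axD4\axG_1t$-consistent set $\Gamma$ of $\c L_\bod^\didt$-formulas at a chosen point $x$ of $X$ by a layer-by-layer construction. Enumerate $\Gamma$ as $\{\gamma_n:n<\omega\}$ and set $\Gamma_n=\{\gamma_0,\ldots,\gamma_{n-1}\}$; each $\Gamma_n$ is finite and consistent. By weak completeness (theorem~\ref{thm:compl KD4G1(t)}), extended via the usual point-generated subframe argument, each $\Gamma_n$ is satisfied at a root $w_n$ of a finite rooted model $\c M_n=(W_n,R_n,h_n)$ whose frame is connected and, by fact~\ref{fact:Gn = loc nconn}, locally 1-connected $\axK\axD4$. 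Since $X$ is metric, pick a strictly descending neighbourhood base $V_0\supseteq V_1\supseteq\cdots$ at $x$ with $V_0=X$ and $\bigcap_nV_n\subseteq\{x\}$.

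The core of the proof is an inductive construction producing regular open sets $S_n$ with $x\in S_{n+1}\subseteq\cl S_{n+1}\subseteq S_n\cap V_n$ (set $S_0=X$), and on each open annulus $A_n=S_n\setminus\cl S_{n+1}$ a surjective representation $\rho_n:A_n\to W_n$ of $\c M_n$, delivered by proposition~\ref{prop:di rep}. Setting $g(p)\cap A_n=\rho_n^{-1}(h_n(p))$ for every atom $p$ and each $n$, together with an assignment at $x$ derived from $\Gamma$, defines the assignment $g$ on $X$. The key worry, identified in the naive sketch preceding the theorem, is that formulas of $\Gamma$ involving $\bod$ or $\didt$ evaluated at witnesses lying on the interfaces $\cl S_{n+1}$ might change truth value when $g$ is switched from $h_n\circ\rho_n$ to $h_{n+1}\circ\rho_{n+1}$. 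I will manage this by making the passage between layers smooth in the sense of the $U$-basic extension clause of full representability: the restriction of $\rho_n$ to a thin inner collar of $A_n$ will be chosen to be $S_n$-basic, mapping into a non-degenerate $R_{n+1}$-maximal cluster of $\c M_{n+1}$ (using remark~\ref{rmk:basic reps}), so that by proposition~\ref{prop:di rep} applied inside $S_n$ this basic representation extends to all of $S_n$ as a representation of $\c M_{n+1}$ and, restricted to $A_{n+1}$, becomes our $\rho_{n+1}$.

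To make the stitching actually preserve formula truth, one has to strengthen the inductive hypothesis so that $\c M_n$ and $\c M_{n+1}$ agree on more than just $\Gamma_n$. Concretely, at stage $n$ I will ensure $\c M_n$ satisfies not merely $\Gamma_n$ but the finite closure $\Delta_n$ of $\Gamma_n$ under subformulas together with witnessing formulas forcing each $\did\varphi\in\Delta_n$ to be realised at an $R_n$-successor of $w_n$; this is still consistent, hence finitely satisfiable, and the filtration/definable-reduction machinery from part~1 (section~\ref{sec:fmp Gn}) together with the extendability result of proposition~\ref{prop:di rep} lets us arrange, at the cost of enlarging $\c M_{n+1}$, that a chosen $R_{n+1}$-maximal cluster `absorbs' the outward-pointing traces of $\c M_n$ needed to validate $\Delta_n$. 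With this in hand, a truth-lemma-style induction on formula complexity proves that for every $\varphi\in\Delta_n$ and every $y\in A_n$ (for all sufficiently large $n$), $(X,g),y\models\varphi$ iff $(\c M_n,\rho_n(y))\models\varphi$; the $\bod$ and $\didt$ cases are handled by lemma~\ref{lem:bod reps invar} within each annulus and by the buffer-zone compatibility at interfaces.

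Evaluating at $x$ itself uses that the $V_n$ form a neighbourhood base: truth of $\bod\varphi$ at $x$ follows because every sufficiently small $V_n$ lies in some $S_m$ on which $\varphi$ holds everywhere, while truth of $\didt\Delta$ at $x$ is witnessed by the post-fixed-point characterisation applied to the tail $\bigcup_{n\geq N}A_n$, whose image under the $\rho_n$ contains the $R_n$-successor witnesses provided by the strengthened $\c M_n$. Thus $(X,g),x\models\gamma_m$ for every $m$, which establishes the theorem. The hard part, as indicated above, is the compatibility of the representations across layers: both the geometric side, ensuring that the inner collar of $A_n$ carries a basic representation extendable by proposition~\ref{prop:di rep}, and the model-theoretic side, ensuring the sequence $\c M_n$ can be chosen to cohere well enough that the formula-truth induction goes through uniformly across $n$.
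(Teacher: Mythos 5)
Your overall architecture is the same as the paper's: shrinking shells around the target point, a surjective representation of a finite locally connected $\axK\axD4\axG_1$-model on each shell via proposition~\ref{prop:di rep}, a coherence condition between consecutive shells, and a truth lemma at the centre. But the one genuinely hard step --- making the valuations induced by $\rho_n$ and $\rho_{n+1}$ agree (on the atoms of $\Gamma_n$) on an \emph{open} region between consecutive shells --- is not actually carried out, and the substitute you offer does not work. Your shells $A_n=S_n\setminus\cl S_{n+1}$ are pairwise disjoint and meet only in boundaries, so the truth of $\did\varphi$ at an interface point depends on points governed by $\rho_{n+1}$ and $\c M_{n+1}$; your ``thin inner collar'' is in effect an attempt to reintroduce an open overlap, but you never say how the map on the collar can be a representation into $\c M_n$ and simultaneously seed a representation into $\c M_{n+1}$ in a way that leaves the induced valuation on $L_n$-atoms unchanged. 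The paper achieves exactly this by two devices you are missing: (i) a pigeonhole/K\"onig argument (its Claim~1) passing to a subsequence of the models so that the set of $L_i$-atomic types realised in a designated maximal cluster $\c C_j=(R_j(e_j),\dots)$ is the \emph{same} for all $j\geq i$; and (ii) a fibre-wise construction (its Claim~3) in which, on the open overlap $S_{n+1}=U_n\cap U_{n+1}$, each fibre $D_w=\rho_n^{-1}(w)$ is shown to be dense in itself and is then given its own surjective representation of the subcluster $H_w\subseteq\c C_{n+1}$ of points with the same $L_n$-type as $w$; the union of these is a representation of $\c C_{n+1}$ that matches $\rho_n$ pointwise on $L_n$-types and extends to $U_{n+1}$ by full representability.

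Your proposed replacement --- enlarging $\c M_{n+1}$ so that one of its maximal clusters ``absorbs the outward-pointing traces of $\c M_n$'' --- is not sound: adding worlds to a maximal cluster of a model of $\Gamma_{n+1}$ can create new endless paths and new $\Diamond$-witnesses, so it can falsify negated $\didt$- and $\bod$-formulas in $\Gamma_{n+1}$; there is no reason the enlarged structure still satisfies $\Gamma_{n+1}$ at $w_{n+1}$. Without condition (i) there is also no guarantee that the types realised in $\c C_{n+1}$ even include those realised in $\c C_n$, so no type-preserving matching of the two representations on the collar can exist at all. The witnessing-formula strengthening of $\Delta_n$ you add is unnecessary (surjectivity of $\rho_n$ onto $R_n(w_n)$ already supplies the $\did$- and $\didt$-witnesses at the centre) and does not compensate for the missing coherence mechanism. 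To repair the proof you should (a) make consecutive regions overlap in an open set, (b) prove the type-stabilisation claim by pigeonhole, and (c) build $\rho_{n+1}$ on the overlap by refining the fibres of $\rho_n$ as above.
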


\begin{proof}
For soundness, see theorem~\ref{thm:compl KD4G1(t)}.
For strong completeness, 
let $\Gamma$ be a countable $\axK\axD4\axG_1t$-consistent set of $\c L_\bod^\didt$-formulas.
We show that $\Gamma$ is satisfiable over~$X$.
We can suppose without loss of generality that $\Gamma$ is maximal consistent.
Since $\Gamma$ is countable, we can write it as $\Gamma=\bigcup_{n<\omega}\Gamma_n$, where $\Gamma_0\subseteq\Gamma_1\subseteq\cdots$ is a chain of finite sets.
Let $L_n$ be the finite set of atoms occurring in formulas in $\Gamma_n$, 
for each $n<\omega$. So $L_0\subseteq L_1\subseteq\cdots$.
For each $n<\omega$, as $\Gamma_n$ is $\axK\axD4\axG_1t$-consistent,
by the results of section~\ref{sec:fmp Gn} there is a finite Kripke model $\c M_n=(W_n,R_n,h_n)$
whose frame
$(W_n,R_n)$ validates $\axK\axD4\axG_1$, and a world $w_n\in W_n$
with 
\[
\c M_n,w_n\models\Gamma_n.
\]
We can assume without loss of generality that
the $W_n$ $(n<\omega)$ are pairwise disjoint.
For each $n$,
fix an arbitrary $e_n\in W_n$ with $R_nw_ne_n$ and
such that $e_n$ is $R_n$-maximal --- that is, $R^\bullet_n(e_n)=\emptyset$.

For $i\leq j<\omega$ and $w\in W_j$ write
\[
\begin{array}{rcll}
\tp_i(w)&=&\{p\in L_i:\c M_j,w\models p\}&\in\wp L_i
\\
\tau{}^j_i&=&\{\tp_i(w):w\in R_j(e_j)\}&\in\wp\wp L_i
\end{array}
\]
So $\tp_i(w)$ is the `atomic type' of $w$ in $\c M_j$ with respect to the finite set $L_i$ of atoms.
We do not need to write $\tp_i^j(w)$ since the $W_n$ are pairwise disjoint
so $j$ is determined by $w$.
And
$\tau{}^j_i$ is the set of such types that occur as types of points in the cluster $R_j(e_j)$.

\Claim1 We can suppose without loss of generality that $\tau{}^j_i=\tau{}^i_i$
whenever $i\leq j<\omega$.
 
\pfclaim Essentially K\"onig's tree lemma.
We will define by induction infinite sets $\omega=I_{-1}\supseteq I_0\supseteq I_1\supseteq\cdots$.
We let $i_n=\min I_n$, and we will arrange that
 $0=i_{-1}<i_0<i_1<\cdots$ and $i_n\geq n$ for all $n$.
Let $n<\omega$ and suppose that
we are  given $I_{n-1}$ and $i_{n-1}=\min I_{n-1}\geq n-1$ inductively.
Using that $\wp\wp L_{n} $ is finite, choose infinite $I_{n}\subseteq I_{n-1}\setminus\{i_{n-1}\}$
such that $\tau{}^i_n\in\wp\wp L_n$ is constant for all $i\in I_{n}$.
The term $\tau{}^i_n$ is defined for all $i\in I_n$, because $i\geq\min I_n>i_{n-1}\geq n-1$ and so  $i\geq n$.
Of course define $i_n=\min I_n$. Then $i_n>i_{n-1}$ and $i_n\geq n$ as required.
This completes the definition.
Now replace $\c M_n,w_n,e_n$ by $\c M_{i_n},w_{i_n},e_{i_n}$ for each $n<\omega$.
Do not change $\Gamma_n$ or $L_n$.
Since $n\leq i_n$, we have $\Gamma_n\subseteq\Gamma_{i_n}$,
and consequently we still have $\c M_n,w_n\models\Gamma_n$ for each $n$.
And if $r\leq s<\omega$ we have $i_r,i_s\in I_r$, so
$\tau{}^{i_r}_r=\tau{}^{i_s}_r$,
and consequently after replacement, $\tau{}^r_r=\tau{}^{s}_r$.
This proves the claim.

\medskip

For each $n<\omega$, define the frames
\[
\begin{array}{rcl}
\c F_n&=&(R_n(w_n),R_n\restriction R_n(w_n)),
\\
\c C_n&=&(R_n(e_n),R_n\restriction R_n(e_n)).
\end{array}
\]
$\c F_n$ is a generated subframe of $(W_n,R_n)$,
so also a $\axK\axD4\axG_1$-frame; it is connected since $(W_n,R_n)$ validates $\axG_1$.
As $e_n$ is $R_n$-maximal, 
$\c C_n$ is a nondegenerate cluster, so trivially
a connected $\axK\axD4\axG_1$-frame, and (as $R_n$ is transitive) a generated subframe of 
$\c F_n$.
We conclude from proposition~\ref{prop:di rep} that $\c F_n$ and $\c C_n$ are fully \rb\ over $X$, for all $n<\omega$.

Now fix arbitrary $x_0\in X$.
Let $O$ be an open \nhd\ of $x_0$.
Since $X$ is a metric space, all singletons are closed, and since it is 
dense in itself,  lemma~\ref{lem:inf} tells us that
$O$ is infinite, so we can
pick $y\in O\setminus\{x_0\}$.
Then $O\setminus\{y\}$ is open, $\{x_0\}\subseteq O\setminus\{y\}$,
and $\{x_0\}$ is closed.
By lemma~\ref{lem:ms}, $X$ is normal,
so there is open $P$
with $x_0\in P\subseteq\int\cl P\subseteq \cl P\subseteq O\setminus\{y\}\subset O$ (the last inclusion being strict).
Note that $\int\cl P$ is \ro\ in $X$.
So \emph{every open \nhd\ of $x_0$ properly contains the closure of some \ro\ \nhd\ of $x_0$.}
Using this repeatedly,  we may choose \ro\ subsets $O_n,P_n$  of $X$
(for $n<\omega$)
containing $x_0$, with $O_0=X$, and  with the following properties:
\begin{enumerate}
\item $\cl O_{n+1}\subset P_n$ and $\cl P_n\subset O_n$
(the inclusions are strict)  for each $n<\omega$.

\item $O_n\subseteq N_{1/n}(x_0)$ for each $n>0$.
\end{enumerate}
It follows that  for every open \nhd\ $O$ of $x_0$, there is $n<\omega$ with $O_n\subseteq O$.
That is, the $O_n$ form a base of open \nhd s of $x_0$.

For each $n<\omega$ define open sets
\[
\begin{array}{rcl}
U_n&=&O_n\setminus\cl P_{n+1},
\\
S_n&=&O_n\setminus\cl P_n.
\end{array}
\]
See figure~\ref{fig:cone}.
\begin{figure}[t]
\begin{center}
\includegraphics[width=11cm]{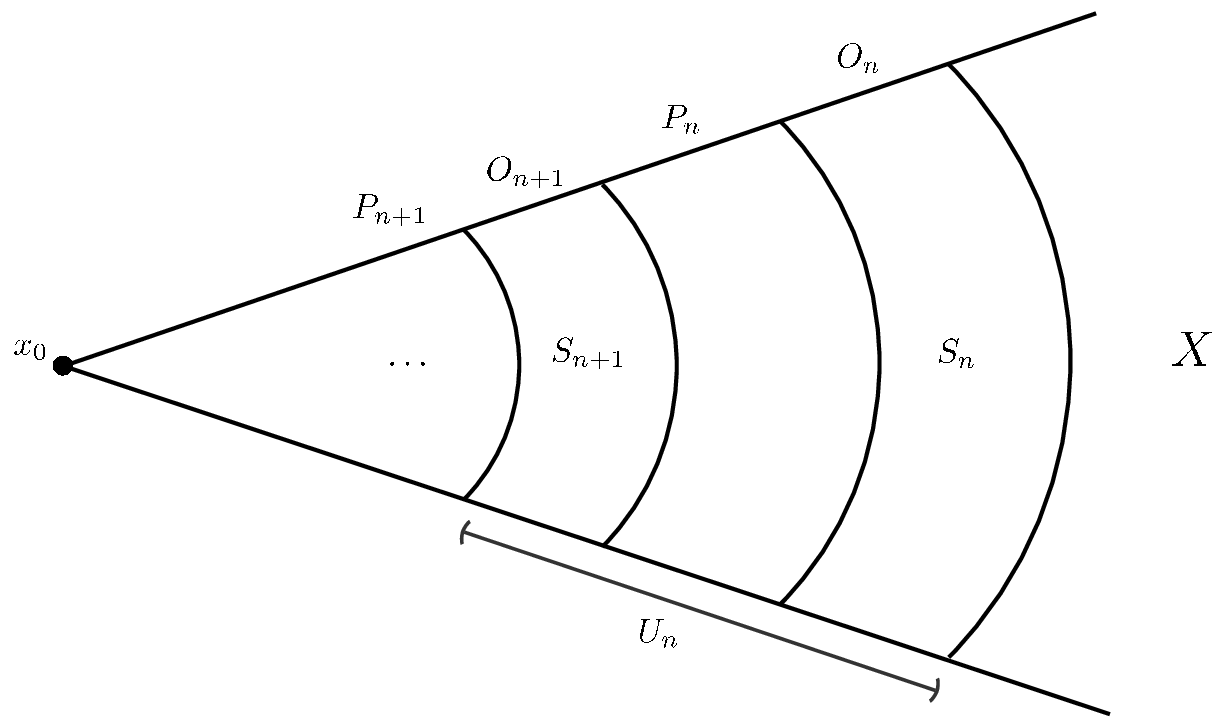}
\end{center}
\caption{rough guide to the sets $O_n,P_n,U_n,S_n$}\label{fig:cone}
\end{figure}
It is easily seen that
\begin{eqnarray}
\bigcup_{n<\omega}(O_n\setminus O_{n+1})&=&X\setminus\{x_0\},\label{e:fact1 on On}
\\
\bigcup_{n\leq m<\omega}U_m&=& O_n\setminus\{x_0\}
 \quad\mbox{ for each }n<\omega.\label{e:fact2 on On}
\end{eqnarray}
The following claim lists some more basic facts about our situation.

\Claim2 For each $n<\omega$:
\begin{enumerate}
\item $U_n\cap U_{n+1}=S_{n+1}\neq\emptyset$.

\item $S_n\cup S_{n+1}\subseteq U_n$,

\item $\cl S_n\cap\cl S_{n+1}=\emptyset$,

\item $S_n$, $S_{n+1}$, and $S_n\cup S_{n+1}$ are \ro\ subsets of $U_n$,

\item $U_n\setminus\cl(S_n\cup S_{n+1})\neq\emptyset$.
\end{enumerate}

\pfclaim 
\begin{enumerate}
\item Easy.

\item From the definitions we have
$S_n=O_n\setminus\cl P_n\subseteq O_n\setminus\cl P_{n+1}=U_n$
and 
$S_{n+1}=O_{n+1}\setminus\cl P_{n+1}\subseteq O_n\setminus\cl P_{n+1}=U_n$.

\item It is clear that
\begin{equation}\label{e:clBn}
\cl S_n\subseteq\cl O_n\setminus P_n.
\end{equation}
Applying this for $n+1$ and $n$ gives
$\cl S_{n+1}\cap\cl S_n\subseteq
\cl O_{n+1}\setminus P_n\subseteq P_n\setminus P_n=\emptyset$.

\item $O_n$ and $P_n$ are \ro\ subsets of $X$,
so by lemma~\ref{lem:ro1}, $S_n=O_n\setminus\cl P_n$ is a 
\ro\ subset  of $X$ too.
Since $\cl S_n\cap\cl S_{n+1}=\emptyset$ by part~2,
lemma~\ref{lem:ro1}(\ref{lem:ro:part2}) 
yields that $S_n\cup S_{n+1}$ is also a \ro\ subset of $X$.
Since each of these three sets is a subset of $U_n$ by part~2,
by lemma~\ref{lem:ro1}(\ref{lem:ro:part3}) it is also \ro\ in $U_n$.

\item By~\eqref{e:clBn} (for $n$ and $n+1$), $\cl S_n$ and $\cl S_{n+1}$ are disjoint
from $P_n\setminus\cl O_{n+1}$,
so by additivity of closure,
$U_n\setminus\cl(S_n\cup S_{n+1})
=U_n\setminus(\cl S_n\cup\cl S_{n+1})\supseteq P_n\setminus\cl O_{n+1}\neq\emptyset$.

\end{enumerate}

\Claim3 There are surjective 
\rep s $\rho_n$ of $\c F_n$ over $U_n$ ($n<\omega$) such that 
\begin{enumerate}
\item  $\rho_n\restriction S_{n+1}$ is a 
 \rep\ of $\c C_n$ over $S_{n+1}$,

\item 
$\tp_n(\rho_n(x))=\tp_{n}(\rho_{n+1}(x))$ for all $x\in S_{n+1}$.
\end{enumerate}

\pfclaim
We define the $\rho_n$ by induction on $n$.
First let $n=0$.
Since $\c C_0$ is fully \rb\ over $X$, we can choose a \rep\
$\sigma:S_1\to\c C_0$. Because $\c C_0$ is a
nondegenerate cluster, $\sigma$ is actually a $U_0$-basic \rep\
(see remark~\ref{rmk:basic reps}).
By claim~2, 
$S_1$ is a \ro\ subset of $U_0$,
and $U_0\setminus\cl S_1\neq\emptyset$.
Now $\c F_0$ is also fully \rb\ over $X$, so $\sigma$ extends 
to a surjective \rep\ $\rho_0$ of $\c F_0$ over $U_0$.
Clearly, condition~1 above is met.

Let $n<\omega$ and assume inductively that for each $m\leq n$,
a surjective \rep\ $\rho_m$ of $\c F_m$
over $U_m$ has been constructed,
such that $\rho_m\restriction S_{m+1}$
is a  \rep\ of $\c C_m$ over $S_{m+1}$ 
and $tp_m(\rho_m(x))=tp_{m}(\rho_{m+1}(x))$ for all $x\in S_{m+1}$ whenever $m<n$.
We will define $\rho_{n+1}$ to continue the sequence.

Note first that since $\c C_n$ is a non-degenerate cluster, $\rho_n\restriction S_{n+1}$ is
$U_n$-basic
 --- see remark~\ref{rmk:basic reps}.
 It is also  surjective.
 For, let $w \in R_n(e_n)$ be given.
Take $x\in S_{n+1}$ (note that $S_{n+1}$ is non-empty by claim~2).
As $\c C_n$ is a non-degenerate cluster,
$R_n(\rho_n(x),w)$, so as $\rho_n\restriction S_{n+1}$ is a \rep, 
$(S_{n+1},(\rho_n\restriction S_{n+1})^{-1}),x\models\did w$.
This certainly implies that $\rho_n(y)=w$ for some $y\in S_{n+1}$.

For each $w\in R_n(e_n)$, define
\[
\begin{array}{rclcl}
D_w&=&\{x\in S_{n+1}:\rho_n(x)=w\}&\subseteq&S_{n+1},
\\
H_w&=&\{v\in R_{n+1}(e_{n+1}):\tp_n(v)=\tp_n(w)\}&\subseteq& W_{n+1},
\\
\c H_w&=&(H_w,R_{n+1}\restriction H_w).
\end{array}
\]
See figure~\ref{fig:maps}.
\begin{figure}[ht]
\begin{center}
\includegraphics[width=11cm]{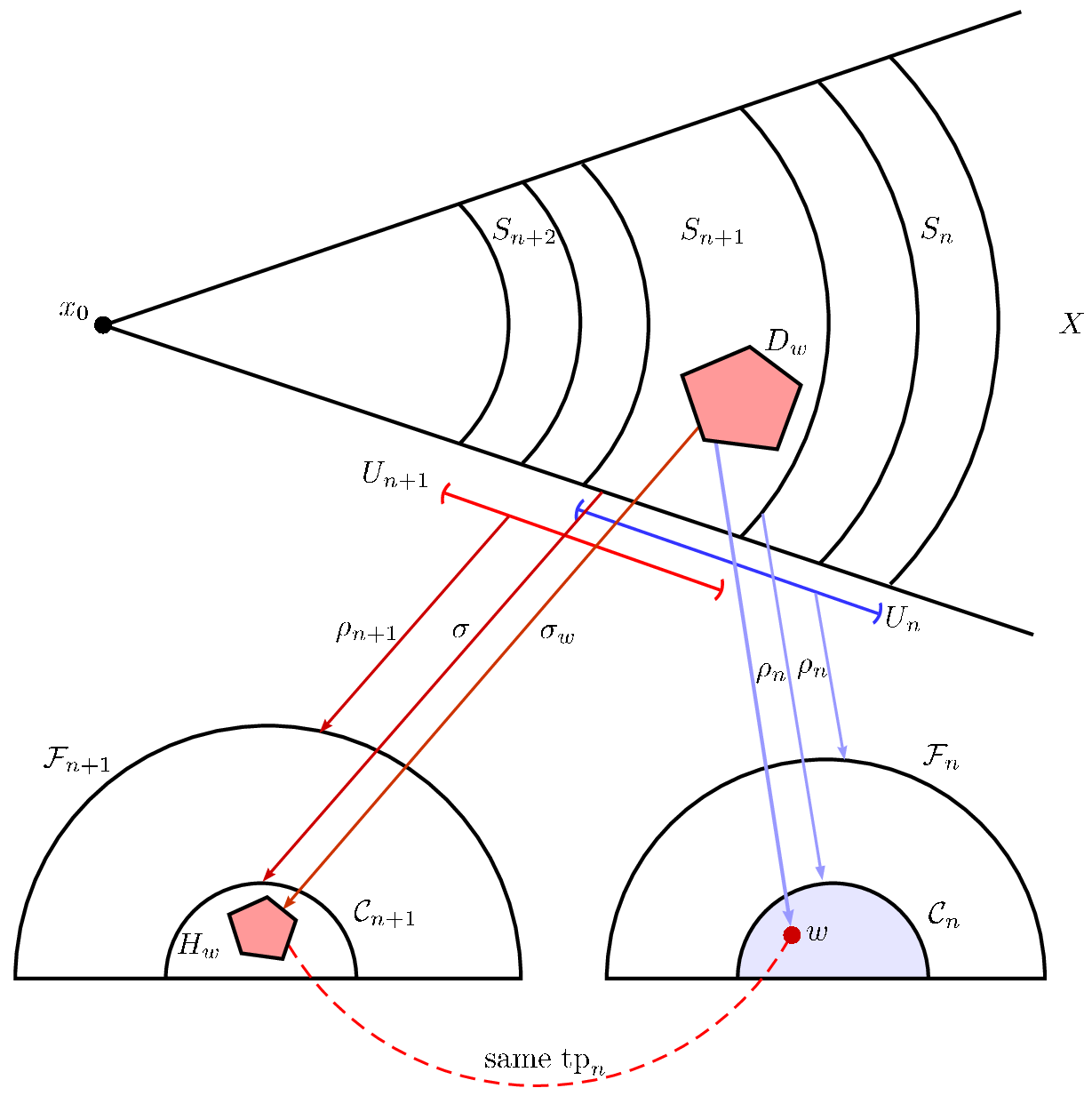}
\end{center}
\caption{illustration for claim 3}\label{fig:maps}
\end{figure}
Because $\rho_n\restriction S_{n+1}$ is surjective onto $\c C_n$,
each set $D_w$ is non-empty,
and plainly, $S_{n+1}$ is partitioned by the $D_w$ ($w\in R_n(e_n)$).
Because $\tau{}^{n+1}_n=\tau{}^n_n$, each $H_w$ is non-empty 
and $\bigcup_{w\in R_n(e_n)}H_w=R_{n+1}(e_{n+1})$.
(The sets $H_w$ may not be pairwise disjoint, but any two of them are equal or disjoint.)

Let $w\in R_n(e_n)$ and consider $D_w$ as a subspace of $X$.
We show that it is dense in itself.
Let $x\in D_w$ and suppose for contradiction that $\{x\}$ is open in $D_w$.
So there is open $O\subseteq X$ with $O\cap D_w=\{x\}$,
and as $S_{n+1}$ is open, we can suppose that $O\subseteq S_{n+1}$.
Now by the inductive hypothesis,
$\rho_n\restriction S_{n+1}$ is a \rep\ of $\c C_n$ over $S_{n+1}$.
Because $\c C_n$ is a non-degenerate cluster, $R_nww$, so
$(X,(\rho_n\restriction S_{n+1})^{-1}),x\models\did w$.
So there is $y\in O\setminus\{x\}$ with
$\rho_n(y)=w$.
But then $y\in O\cap D_w=\{x\}$, a contradiction. 

So $D_w$ is a \nice\ space in its own right.
Since $\c C_{n+1}$
is a nondegenerate cluster, so is its subframe $\c H_w$.
Hence, $\c H_w$ is trivially a finite connected $\axK\axD4\axG_1$ frame.
So by proposition~\ref{prop:di rep}, there is a surjective \rep\
\[
\sigma_w:D_w\to H_w
\]
 of $\c H_w$ over $D_w$.
We have
$(D_{w},\sigma_{w}^{-1}),x\models\did v$ for every $x\in D_w$ and $v\in H_w$.
 By lemma~\ref{lem:bod reps invar},
 \begin{equation}\label{e:subspace rep}
(X,\sigma_{w}^{-1}),y\models\did v\quad
\mbox{ for every }x\in D_w\mbox{ and }v\in H_w.
\end{equation}

Now let 
\[
\sigma=\Big(\bigcup_{w\in R_n(e_n)}\sigma_w\Big)
\;:\;S_{n+1}\to R_{n+1}(e_{n+1}).
\]
The sets $D_w$  partition $S_{n+1}$, so 
$\sigma$ is a well defined and total map.
It has the following property.
Let $x\in S_{n+1}$.
Writing $\rho_n(x)=w$, say,
we have $x\in D_w$ and $\sigma(x)=\sigma_w(x)\in H_w$, so
$\tp\nolimits_n(\sigma(x))=\tp\nolimits_n(w)$
by definition of $H_w$.
That is,
\begin{equation}\label{e:types pres by sig}
\tp\nolimits_n(\sigma(x))=\tp\nolimits_n(\rho_n(x))\quad
\mbox{ for each }x\in S_{n+1}.
\end{equation}

We show that $\sigma$ is a \rep\ of $\c C_{n+1}$ over $S_{n+1}$.
Since $\c C_{n+1}$ is a non-degenerate cluster,
we need show only that
$(X,\sigma^{-1}),x\models\did v$ for every $x\in S_{n+1}$ and 
$v\in R_{n+1}(e_{n+1})$.

So take such $x,v$.
Suppose that $\rho_n(x)=w$, say, so $x\in D_w$.
Choose $w'\in R_n(e_n)$ such that $v\in H_{w'}$ (it may not be unique). 
As $\c C_n$ is a cluster,
$R_n(w,w')$.
As $\rho_n\restriction S_{n+1}$ is a \rep\ of $\c C_n$ over $S_{n+1}$,
we have $(X,(\rho_n\restriction S_{n+1})^{-1}),x\models\did w'$.
That is,
$x\in\did D_{w'}$.
But by~\eqref{e:subspace rep},
$(X,\sigma^{-1}),y\models\did v$ for every $y\in D_{w'}$.
It follows that
$(X,\sigma^{-1}),x\models\did\did v$, and hence  
$(X,\sigma^{-1}),x\models\did v$ as required.

\smallskip

So $\sigma$ is indeed a \rep\ of $\c C_{n+1}$ over $S_{n+1}$.
As $\c C_{n+1}$ is fully \rb\ over $X$, we may
choose a 
\rep\ $\sigma'$ of $\c C_{n+1}$
over $S_{n+2}$.
By claim~2, $S_{n+1}\cap S_{n+2}=\emptyset$, so
by lemma~\ref{lem:reps simple}, $\sigma\cup\sigma'$ is a well defined
\rep\ of $\c C_{n+1}$
over the \ro\ subset $S_{n+1}\cup S_{n+2}$   of
$U_{n+1}$.
Also, $U_{n+1}\setminus\cl(S_{n+1}\cup S_{n+2})\neq\emptyset$.
And since $\c C_{n+1}$ is a nondegenerate cluster,
$\sigma\cup\sigma'$ is $U_{n+1}$-basic (see remark~\ref{rmk:basic reps} again).
We can now use the fact that $\c F_{n+1}$ is fully \rb\ over $X$ to extend $\sigma\cup\sigma'$ is to
a surjective \rep\
$\rho_{n+1}$ of $\c F_{n+1}$ over $U_{n+1}$.
Then
$\rho_{n+1}\restriction S_{n+2}=\sigma'$
is a  \rep\ of $\c C_{n+1}$ over $S_{n+2}$, 
and by~\eqref{e:types pres by sig},
$tp_n(\rho_n(x))=\tp_n(\sigma(x))=tp_{n}(\rho_{n+1}(x))$ for all $x\in S_{n+1}$.
This proves claim~3. 

\bigskip

Let $n<\omega$.  Define an assignment $g_n$ on $U_n$
by 
\begin{equation}\label{e:def g}
g_n(p)=\rho_n^{-1}(h_n(p))\mbox{ for each atom }p.
\end{equation}
By the claim, if $p\in L_n$, then for each $x\in S_{n+1}$
we have
$x\in g_n(p)$ iff $\rho_n(x)\in h_n(p)$,
iff $p\in\tp_n(\rho_n(x))=\tp_n(\rho_{n+1}(x))$,
iff $\rho_{n+1}(x)\in h_{n+1}(p)$, iff $x\in g_{n+1}(p)$.
So
\begin{equation}\label{eq:agree}
S_{n+1}\cap g_n(p)=S_{n+1}\cap g_{n+1}(p)\quad\mbox{ for each }p\in L_n.
\end{equation}
Finally, define an assignment $g$ on $X$
as follows.  Let $p$ be an atom.
\begin{itemize}

\item For $x\in X\setminus\{x_0\}$, define $x\in g(p)$ iff
$x\in g_n(p)$, where $x\in O_n\setminus O_{n+1}$.

Since the $O_n\setminus O_{n+1}$ are pairwise disjoint,   and
$\bigcup_{n<\omega}(O_n\setminus O_{n+1})=X\setminus\{x_0\}$ by \eqref{e:fact1 on On},
this is well defined.

\item Define $(X,g),x_0\models p$ iff $p\in\Gamma$.
\end{itemize}

\Claim4 Let $n<\omega$, let  $x\in U_n$, and 
let $\varphi$ be a formula whose atoms lie in $L_n$.
Then $(X,g),x\models\varphi$ iff $\c M_n,\rho_n(x)\models\varphi$.

\pfclaim 
Let $p\in L_n$ be arbitrary.
Recall that $U_n=O_n\setminus\cl P_{n+1}$.
By definition of $g$,
if $x\in O_n\setminus O_{n+1}$ then  $x\in g(p)$ iff 
$x\in g_n(p)$.
If instead $x\in O_{n+1}$, then 
$x\in O_{n+1}\setminus\cl P_{n+1}=S_{n+1}\subseteq O_{n+1}\setminus O_{n+2}$, 
and since $p\in L_{n+1}$ too, the definition of $g$  gives
$x\in g(p)$ iff $x\in g_{n+1}(p)$.
But by \eqref{eq:agree},
this is iff $x\in g_n(p)$ again.
So $g$ and $g_n$ agree on $U_n$ as far as atoms in $L_n$ are concerned,
and as $U_n$ is open, it follows easily that 
$(X,g),x\models\varphi$ iff $(U_n,g_n),x\models\varphi$.
Since $\rho_n$ is a \rep\ over $U_n$ of the generated subframe $\c F_n$
of $(W_n,R_n)$, by lemma~\ref{lem:reps simple} it is also a \rep\
of $(W_n,R_n)$ over $U_n$.
So by \eqref{e:def g} and proposition~\ref{prop:fmla pres}, $(U_n,g_n),x\models\varphi$
iff $\c M_n,\rho_n(x)\models\varphi$. 
 This proves the claim.

\medskip

\Claim5 For all $\varphi$ we have $(X,g),x_0\models\varphi$ iff $\varphi\in\Gamma$.

\pfclaim
By induction on $\varphi$.  For atoms, the result follows from the definition of~$g$.
The boolean operators are handled in the usual way by induction, using 
the maximal consistency of $\Gamma$; they are the only cases in which the inductive hypothesis is used.

We now tackle the case $\bod\varphi$.
It is sufficient (and seems more intuitive) to deal with $\did\varphi$ instead.
Suppose first that  $\did\varphi\in \Gamma$.
Choose $n<\omega$ such that $\did\varphi\in\Gamma_n$.
Let $i\geq n$ be arbitrary.
Then $\did\varphi\in\Gamma_i$, so $\c M_i,w_i\models\did\varphi$,
and hence there is $v\in R_i({w_i})$ with $\c M_i,v\models\varphi$.
As $\rho_i:U_i\to R_i(w_i)$ is surjective (see claim~3), there is
$x\in U_i$ with $\rho_i(x)=v$.
Since $\did\varphi\in\Gamma_i$, the atoms of $\varphi$ lie in $L_i$,
so claim 4 applies: $(X,g),x\models\varphi$.
We conclude that for every $i\geq n$ there is $x\in U_i$ with $(X,g),x\models\varphi$.
As $U_i\subseteq O_i\setminus\{x_0\}$ and the $O_i$ form a base of \nhd s of $x_0$, 
it follows that $(X,g),x_0\models\did\varphi$.

Conversely, suppose that $(X,g),x_0\models\did\varphi$.
For each $n<\omega$, $O_n$ is an open \nhd\ of $x_0$,
so there is $x\in O_n\setminus\{x_0\}$ with $(X,g),x\models\varphi$.
Since $O_n\setminus\{x_0\}=\bigcup_{n\leq i<\omega}U_i$ by \eqref{e:fact2 on On}, we have $x\in U_i$
for some $i\geq n$.
It follows that there are infinitely many $i<\omega$ such that 
 $(X,g),x\models\varphi$  for some $x\in U_i$.
 Since the atoms of $\varphi$ lie in $L_i$ for cofinitely many $i$,
there must be infinitely many $i$ with $\c M_i,v\models\varphi$ for some $v\in R_i({w_i})$ (by claim~4), 
and so  $\c M_i,w_i\models\did\varphi$ (by Kripke semantics),
and so $\neg\did\varphi\notin\Gamma_i$ (since $\c M_i,w_i\models\Gamma_i$).
Since $\Gamma$ is the union of the chain $\Gamma_0\subseteq\Gamma_1\subseteq\cdots$,
we have  $\neg\did\varphi\notin\Gamma$.
As $\Gamma$ is maximal consistent, it follows that $\did\varphi\in\Gamma$.

Finally, consider the case $\didt\Delta$, where $\Delta$ is 
any non-empty finite set of formulas.
Suppose first that $\did\Delta\in\Gamma$.
We only sketch the proof here, referring the reader
to the case of $\did\varphi$ for more details.
Pick any $\delta\in\Delta$.
Then  as in the case of $\did\varphi$,
each of the following holds for cofinitely many $i<\omega$:
\begin{itemize}
\item $\didt\Delta\in\Gamma_i$
\item $\c M_i,w_i\models\didt\Delta$
\item there is $v\in R_i(w_i)$
with $\c M_i,v\models\delta\wedge\didt\Delta$
\item there is $x\in U_i$ with $(X,g),x\models\delta\wedge\didt\Delta$.

\end{itemize}
As the latter holds for every $\delta\in\Delta$,
it follows that $(X,g),x_0\models\didt\Delta$.

Conversely, suppose $(X,g),x_0\models\didt\Delta$.
Then as in the $\did\varphi$ case, 
there are infinitely many $i<\omega$ such that
$(X,g),x\models\didt\Delta$ for some $x\in U_i$.
Since the atoms of $\didt\Delta$ lie in $L_i$ for cofinitely many $i<\omega$,
it follows by claim~4 that there are infinitely many $i$ 
such that 
there is $v\in R_i(w_i)$
with $\c M_i,v\models\didt\Delta$,
and hence ---
by the semantics of $\didt$ --- $\c M_i,w_i\models\didt\Delta$.
As in the $\did\varphi$ case, we obtain $\neg\didt\Delta\notin\Gamma_i$ 
for infinitely many $i$, so $\neg\didt\Delta\notin\Gamma$,
and so $\didt\Delta\in\Gamma$ by maximal consistency of $\Gamma$.
The claim is proved, and the theorem with it.
\end{proof}

\subsection{Strong completeness for $\c L_\bod$}\label{ss:compactness bod}
We can now easily derive the analogous result
for `modal' $\c L_\bod$-formulas, essentially by showing
that $\axK\axD4\axG_1t$ is a conservative extension of $\axK\axD4\axG_1$.

\begin{theorem}\label{thm:sc bod}
Let $X$ be a non-empty \nice\ space. 
Then the Hilbert system $\axK\axD4\axG_1$ is strongly complete over $X$
for $\c L_{\bod}$-formulas, and sound if $\axG_1$ is valid in $X$.
\end{theorem}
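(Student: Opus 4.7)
Soundness is given by theorem~\ref{thm:compl KD4G1(t)}(1), so only strong completeness requires work. The plan is to reduce to theorem~\ref{thm:strong did} by showing that $\axK\axD4\axG_1t$ is a conservative extension of $\axK\axD4\axG_1$ with respect to $\c L_\bod$-formulas. Given this, any countable $\axK\axD4\axG_1$-consistent set $\Gamma$ of $\c L_\bod$-formulas is automatically $\axK\axD4\axG_1t$-consistent, and since $\Gamma\subseteq\c L_\bod\subseteq\c L_\bod^\didt$, theorem~\ref{thm:strong did} supplies an assignment on $X$ satisfying $\Gamma$ at some point.

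The conservativity claim is the only thing to verify, and it hinges on the observation that the tangle axioms {\bf Fix} and {\bf Ind} are schemes purely about formulas containing $\didt$, and are moreover valid in every transitive Kripke frame (by the standard semantics for $\didt$ in Kripke models as described in section~\ref{ss:Kripke sem}, together with the inductive clauses verified in section~\ref{ss:tangle logics}). Consequently, a finite Kripke frame validates $\axK\axD4\axG_1t$ if, and only if, it validates $\axK\axD4\axG_1$: the two classes of finite frames coincide.

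Thus suppose $\varphi\in\c L_\bod$ and $\axK\axD4\axG_1t\vdash\varphi$. By the finite model property for $\axK\axD4\axG_1t$ (established in section~\ref{sec:fmp Gn}), $\varphi$ is valid in every finite $\axK\axD4\axG_1t$-frame, hence in every finite $\axK\axD4\axG_1$-frame. By the finite model property for $\axK\axD4\axG_1$ (also in section~\ref{sec:fmp Gn}, and originally due to Shehtman), any $\c L_\bod$-formula valid in all finite $\axK\axD4\axG_1$-frames is an $\axK\axD4\axG_1$-theorem. So $\axK\axD4\axG_1\vdash\varphi$, establishing conservativity. Applying this to any finite $\Gamma_0\subseteq\Gamma$ shows that $\bigwedge\Gamma_0$ is consistent with $\axK\axD4\axG_1t$, so $\Gamma$ is $\axK\axD4\axG_1t$-consistent as required.

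The main potential obstacle would have been engineering a direct satisfiability proof for $\c L_\bod$-consistent sets, which would essentially duplicate the intricate buffer-zone construction of theorem~\ref{thm:strong did}. The reduction route avoids this entirely; the only subtlety to check carefully is that the finite model property results from section~\ref{sec:fmp Gn} indeed give the matched pair of completeness statements needed to conclude conservativity, but this is routine given the work already done in Part~1.
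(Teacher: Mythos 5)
Your proposal is correct and follows essentially the same route as the paper: reduce to theorem~\ref{thm:strong did} by showing that $\c L_\bod$-consistency with $\axK\axD4\axG_1$ implies consistency with $\axK\axD4\axG_1t$, using the finite model property for $\axK\axD4\axG_1$ and the fact that every finite $\axK\axD4\axG_1$-frame validates the tangle axioms (the paper even announces the argument as a conservativity result). The only quibble is that validity of $\axK\axD4\axG_1t$-theorems in all finite $\axK\axD4\axG_1t$-frames follows from soundness rather than from the finite model property, but this does not affect the argument.
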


\begin{proof}
For soundness, see theorem~\ref{thm:compl KD4G1(t)}.
For strong completeness, 
let $\Gamma$ be a countable $\axK\axD4\axG_1$-consistent set of $\c L_\bod$-formulas.
Let $\Gamma_0\subseteq\Gamma$ be finite and put $\gamma=\bigwedge\Gamma_0$.
Then $\gamma$ is $\axK\axD4\axG_1$-consistent, so 
by the results of section~\ref{sec:fmp Gn}
it is satisfied in some finite $\axK\axD4\axG_1$-frame $\c F$.
Plainly, $\c F$ is also a $\axK\axD4\axG_1t$-frame, and it follows that
$\gamma$ is $\axK\axD4\axG_1t$-consistent.
So $\Gamma$ is $\axK\axD4\axG_1t$-consistent.
By theorem~\ref{thm:strong did}, $\Gamma$ is satisfiable over $X$.
\end{proof}

\subsection{Strong completeness for $\c L_\bo^\dit$ and $\c L_\bo^\mu$}\label{sec:strong c}
This also follows, using the translations $-^d$ and $-^t$ of section~\ref{sec:translations}.

\begin{theorem}\label{thm:str compl boxes}
Let $X$ be any \nice\ space.
\begin{enumerate}
\item The Hilbert system $\axS4t$ is sound and strongly complete over $X$
for $\c L_\bo^\dit$-formulas.

\item \label{stro compl boxes part2}
The Hilbert system $\logicbomu$ is sound and strongly complete over $X$
for $\c L_\bo^\mu$-formulas.

\item (Kremer, \cite{Kremer2010:stro})
The Hilbert system $\axS4$ is sound and strongly complete over $X$
for  $\c L_\bo$-formulas.

\end{enumerate}

\end{theorem}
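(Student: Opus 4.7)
The plan is to reduce all three parts to theorem~\ref{thm:strong did} via the translations $-^d$ and $-^t$ of section~\ref{sec:translations}, together with the finite-model-property results of Part~1. Soundness of $\axS4t$, $\axS4\mu$ and $\axS4$ over any topological space is already noted earlier in the paper (see theorem~\ref{thm:compl S4mu, S4t} and corollary~\ref{cor:t transl top equiv}).

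For Part~1, let $\Gamma\subseteq\c L_\bo^\dit$ be countable and $\axS4t$-consistent. I will show that $\Gamma^d=\{\gamma^d:\gamma\in\Gamma\}\subseteq\c L_\bod^\didt$ is $\axK\axD4\axG_1t$-consistent, then apply theorem~\ref{thm:strong did}. Take any finite $\Delta\subseteq\Gamma$. By the FMP for $\axS4t$ from section~\ref{sec:fmp S4}, $\bigwedge\Delta$ is satisfied at some world $w$ of a finite S4 model; passing to the submodel generated by $w$, the frame is a rooted, reflexive, transitive frame. Reflexivity gives $w\in R(w)$ for every $w$, and transitivity gives $R(v)\subseteq R(w)$ for $v\in R(w)$, so each $\c F(w)$ is rooted at $w$, hence connected; thus the frame is locally $1$-connected, and by fact~\ref{fact:Gn = loc nconn} validates $\axG_1$. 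Together with seriality (from reflexivity) and transitivity, this makes it a $\axK\axD4\axG_1$-frame. Lemma~\ref{lem:-d in refl frames} now yields $\bigwedge\Delta^d$ satisfied at the same world. Since $\mathrm{Fix}$ and $\mathrm{Ind}$ are sound over every transitive frame, $\axK\axD4\axG_1t$ is sound over $\axK\axD4\axG_1$-frames, so $\bigwedge\Delta^d$ is $\axK\axD4\axG_1t$-consistent, as required. Theorem~\ref{thm:strong did} now gives an assignment on $X$ and a point satisfying $\Gamma^d$, and because the metric space $X$ is T$_D$, lemma~\ref{lem:trans equivalent top} transfers satisfaction from $\Gamma^d$ back to $\Gamma$.

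For Part~2, given a countable $\axS4\mu$-consistent $\Gamma\subseteq\c L_\bo^\mu$, consider $\Gamma^t\subseteq\c L_\bo^\dit$. To show $\Gamma^t$ is $\axS4t$-consistent, suppose for contradiction that some finite $\Delta\subseteq\Gamma$ had $\axS4t\vdash\neg\bigwedge\Delta^t$. Then by soundness of $\axS4t$ over finite S4 frames (section~\ref{sec:fmp S4}), $\bigwedge\Delta^t$ is refuted in every finite S4 frame; fact~\ref{fact:DO} says $\delta\leftrightarrow\delta^t$ holds in all finite transitive frames, so $\bigwedge\Delta$ is also refuted in every finite S4 frame, and theorem~\ref{thm:S4mu sc} would give $\axS4\mu\vdash\neg\bigwedge\Delta$, contradicting $\axS4\mu$-consistency of $\Gamma$. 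Part~1 applied to $\Gamma^t$ produces a model of $\Gamma^t$ on $X$, and corollary~\ref{cor:t transl top equiv} transfers this to a model of $\Gamma$. Part~3 is immediate, since $\axS4t$ is an extension of $\axS4$ (same Hilbert axioms T, 4, K, same rules, plus the tangle machinery), so every $\axS4$-consistent set is $\axS4t$-consistent and Part~1 applies directly.

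The main obstacle is the consistency-preservation step in Part~1: showing that a rooted finite S4 frame is actually a $\axK\axD4\axG_1$-frame. This requires recognising local connectedness of such frames (via reflexive transitivity) and the observation that the tangle axioms, although not part of $\axK\axD4\axG_1$, are nonetheless sound over all transitive frames and therefore over the class of $\axK\axD4\axG_1$-frames. Once this conservativity-style fact is in hand, the rest is a routine invocation of the earlier theorems.
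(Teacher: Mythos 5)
Your Parts 1 and 2 are correct and follow essentially the paper's own route: reduce to theorem~\ref{thm:strong did} for $\c L_\bod^\didt$ via the translations $-^d$ and $-^t$, using the finite model property to transfer consistency between systems and lemma~\ref{lem:trans equivalent top} (resp.\ corollary~\ref{cor:t transl top equiv}) to transfer satisfaction back. Your Part~2 is phrased contrapositively where the paper argues directly, but the content is identical; and your explicit verification that a finite reflexive transitive frame is locally $1$-connected (hence a $\axK\axD4\axG_1$-frame) just spells out what the paper dismisses with ``plainly''.

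There is, however, a genuine error in your Part~3. You write that since $\axS4t$ extends $\axS4$, ``every $\axS4$-consistent set is $\axS4t$-consistent''. The implication goes the wrong way: if the theorems of $H$ are contained in those of $H'$, then $H'$-consistency implies $H$-consistency, not conversely --- adding axioms can only make more sets inconsistent. What you need is the nontrivial conservativity statement that for $\dit$-free formulas, $\axS4$-consistency already implies $\axS4t$-consistency, and this requires an argument. The correct one is exactly the pattern you used in Part~1 and flagged in your closing paragraph: take a finite $\Gamma_0\subseteq\Gamma$, use the finite model property of $\axS4$ to satisfy $\bigwedge\Gamma_0$ in a finite S4 frame, observe that such a frame validates the tangle axioms \textbf{Fix} and \textbf{Ind} (they are sound over all transitive frames), and conclude that $\bigwedge\Gamma_0$ is $\axS4t$-consistent; then Part~1 applies. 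This is what the paper means by ``showing in the same way that for $\c L_\bo$-formulas, $\axS4$-consistency implies $\axS4t$-consistency''. The conclusion you want is true, but the justification ``immediate, since $\axS4t$ is an extension of $\axS4$'' is not a proof of it.
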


\begin{proof}
Soundness is clear in all cases: cf.~theorem~\ref{thm:compl S4mu, S4t}.
We prove strong completeness.
For part~1,
let $\varphi$ be an $\axS4t$-consistent $\c L_\bo^\dit$-formula.
By the results of section~\ref{sec:fmp S4},
$\varphi$ is satisfiable in some finite S4 Kripke frame $\c F$.
Recall from section~\ref{sec:translations} the translation $-^d$:
it takes $\c L_\bo^\dit$-formulas to $\c L_\bod^\didt$-formulas.
Since $\c F$ is reflexive, it follows from  lemma~\ref{lem:-d in refl frames}
that $\varphi^d$ is
equivalent to $\varphi$ in $\c F$.
So $\varphi^d$ is satisfiable in $\c F$.
Plainly, $\c F$ is also a $\axK\axD4\axG_1t$ frame,
so $\varphi^d$ is $\axK\axD4\axG_1t$-consistent.

Since $-^d$ commutes with $\wedge$,
it is now easily seen that if $\Gamma\subseteq\c L_\bo^\dit$ is a countable 
$\axS4t$-consistent set then
$\Gamma^d=\{\gamma^d:\gamma\in\Gamma\}\subseteq\c L_\bod^\didt$ is a countable
$\axK\axD4\axG_1t$-consistent set.  
By theorem~\ref{thm:strong did}, $\Gamma^d$ is satisfiable over $X$.
Since $X$ is T$_D$, by lemma~\ref{lem:trans equivalent top}
each $\gamma\in\Gamma$ is equivalent to $\gamma^d$ in $X$, so
$\Gamma$ is also satisfiable over $X$.

\medskip

For part~2,
for a set $\Gamma\subseteq\c L_\bo^\mu$ 
we write $\Gamma^t=\{\gamma^t:\gamma\in\Gamma\}\subseteq\c L_\bo^\dit$,
where the translation $-^t:\c L_\bo^\mu\to\c L_\bo^\dit$  is as in section~\ref{ss:DO09}.
Let $\Gamma\subseteq\c L_\bo^\mu$ be a countable $\axS4\mu$-consistent set.
Let $\Gamma_0\subseteq\Gamma$ be any finite subset.
By assumption, the formula $\bigwedge\Gamma_0$ is $\axS4\mu$-consistent.
So by theorem~\ref{thm:S4mu sc},
there is a finite $\axS4$ frame $\c F$ in which $\bigwedge\Gamma_0$ is satisfied.
By fact~\ref{fact:DO}, 
$\varphi^t$ is equivalent to $\varphi$ in
$\c F$, for each $\varphi\in\c L_\bo^\mu$.
So  $\bigwedge(\Gamma_0^t)$ is also satisfied in $\c F$.
Since $\c F$ is plainly an $\axS4t$ frame,
it follows that
$\bigwedge(\Gamma_0^t)$ is $\axS4t$-consistent.
As $\Gamma_0$ was arbitrary, $\Gamma^t$ is $\axS4t$-consistent.

By part~1, $\Gamma^t$ is satisfied in $X$.
But by corollary~\ref{cor:t transl top equiv}, 
each $\gamma\in\Gamma$ is equivalent to $\gamma^t$ in $X$.
So $\Gamma$ is also satisfied in $X$.

\medskip
Part~3 can be proved similarly,
by showing in the same way that for $\c L_\bo$-formulas, $\axS4$-consistency
implies $\axS4t$-consistency, and then appealing to part 1.
\end{proof}

\subsection{Universal modality}

\def\g{{\sf g}}

We do not include the universal modality in our strong completeness results,
for good reason.

\begin{theorem}\label{thm:comp fails}
There is a set $\Sigma$ of
$\c L_{\bo\forall}$-formulas such that
for every non-empty compact locally connected \nice\ space $X$,
each finite subset of $\Sigma$ is satisfiable in $X$, but $\Sigma$ as a whole is not.
\end{theorem}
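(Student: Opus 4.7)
\smallskip
\noindent\textbf{Proof plan.} The strategy is to exhibit an explicit countable set $\Sigma$ whose role is to describe an infinite family of ``well-separated'' witnesses that cannot coexist in a compact space. The natural choice uses countably many fresh atoms $p_0,p_1,p_2,\dots$ and perhaps one auxiliary atom $q$; $\Sigma$ will contain, for each $n<\omega$, a formula asserting the existence of a point where $p_n$ holds (via $\exists p_n$), together with schematic $\forall$-axioms of the form $\forall(p_n\to\bo p_n)$ and $\forall\neg(p_n\wedge p_m)$ for $n\ne m$, forcing the $p_n$-regions to be pairwise disjoint open sets. To make the obstruction genuinely use compactness rather than simply cardinality of $X$, one extra $\forall$-schema is then imposed saying that every point of $X$ either realises some $p_n$ or lies in the closure of every `tail' $\bigvee_{m\ge n}p_m$; in the presence of $\forall$ this collective assertion forces any infinite disjoint family witnessing $\Sigma$ to have a strict accumulation point, which is precisely what compactness supplies and what the disjointness/openness conditions exclude.

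\smallskip
\noindent\textbf{Step 1: finite satisfiability.} For any finite $\Sigma_0\subseteq\Sigma$, only finitely many atoms $p_0,\dots,p_N$ are mentioned, and only finitely many instances of the tail-schema appear. I will construct a finite rooted connected locally connected $\axS4$-frame $\c F$ together with an assignment $h$ satisfying $\Sigma_0$ at some world: take $\c F$ to be a small cluster-based model with $N{+}1$ distinguished worlds realising each $p_n$, with $R$ chosen so that all $\forall$-conditions hold. Since such $\c F$ validates $\axS4.\axU\axC$ (connected locally connected S4), theorem~\ref{thm:compl S4.UC, S4t.UC} is not quite needed here; instead I go directly through proposition~\ref{prop:di rep} (or rather its S4 analogue used in theorem~\ref{thm:compl S4.UC, S4t.UC}) to produce a surjective representation $\rho:X\to\c F$, whence $(X,\rho^{-1}\circ h)$ satisfies $\Sigma_0$ at a point.

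\smallskip
\noindent\textbf{Step 2: unsatisfiability in any compact locally connected \nice\ $X$.} Assume for contradiction that some topological model $(X,g)$ and some $x\in X$ satisfy $\Sigma$. By the $\exists p_n$ formulas, choose $x_n\in g(p_n)$ for each $n<\omega$. By the $\forall(p_n\to\bo p_n)$ and pairwise-disjointness formulas, the $g(p_n)$ are pairwise disjoint open sets, so the $x_n$ are pairwise distinct. Compactness of $X$ yields (passing to a subsequence if necessary) a convergent subsequence $x_{n_k}\to x^*$. Local connectedness and density-in-itself then let me pick arbitrarily small connected open neighbourhoods $U$ of $x^*$ meeting infinitely many $g(p_{n_k})$; since each $g(p_n)$ is clopen-in-$U$ relative to the other $p_m$'s (by openness and disjointness), $U$ decomposes into infinitely many non-empty disjoint open pieces, contradicting connectedness of $U$ (or contradicting the tail-axiom directly by producing a point of $U$ realising no $p_n$ yet forced by the $\forall$-schema to lie in the closure of the tail, violating openness plus disjointness at $x^*$).

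\smallskip
\noindent\textbf{Anticipated main obstacle.} The delicate part is the precise calibration of $\Sigma$: the $\forall$-schemas must be strong enough to convert ``infinitely many disjoint open witnesses'' into a genuine compactness contradiction, yet weak enough that any \emph{finite} fragment $\Sigma_0$ is realised by a small finite connected locally connected $\axS4$-model. The reliance on local connectedness (not merely compactness) is what lets a single axiom of the form $\forall(\cdots)$ do the work, because local connectedness rules out the ``discrete spray'' models (like the Cantor set inside $[0,1]$) that would otherwise satisfy an infinite disjoint open family without topological pathology at the accumulation point. Once $\Sigma$ is chosen so that its tail-axiom becomes vacuous on any finite fragment (because only finitely many $p_n$ are mentioned) but forces the accumulation-point contradiction globally, the two halves of the argument go through as sketched.
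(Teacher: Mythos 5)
Your overall strategy --- countably many disjoint witnesses, compactness to get an accumulation point, local connectedness to get a contradiction with connectedness of a small neighbourhood --- is the right shape, and it is the shape of the paper's proof. But two steps in your proposal do not go through as written. First, your ``tail-schema'' is not a set of $\c L_{\bo\forall}$-formulas: both ``realises some $p_n$'' (over all $n<\omega$) and ``lies in the closure of $\bigvee_{m\ge n}p_m$'' are infinite disjunctions, and there is no finite formula expressing either. Adding auxiliary atoms for the tails does not help, because you can only axiomatise one inclusion ($p_m\to q_n$); without the converse the closure condition becomes vacuous.

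Second, and more seriously, even granting the witnesses, your contradiction does not materialise. Pairwise disjoint non-empty open sets $g(p_0),g(p_1),\dots$ accumulating at a point $x^*$ are perfectly consistent with $X=[0,1]$ (take $g(p_n)=(\frac1{n+2},\frac1{n+1})$, accumulating at $0$): a connected open neighbourhood $U$ of $x^*$ meets infinitely many of the $g(p_n)$, but $U$ does \emph{not} thereby decompose into disjoint open pieces, because the sets $g(p_n)\cap U$ do not cover $U$ and are not closed in $U$ --- their boundary points lie in no $g(p_m)$. What you actually need is a \emph{formula-definable, non-empty, proper, clopen} subset of a connected neighbourhood of $x^*$, and your axioms ($p_n\to\bo p_n$ plus disjointness) only deliver openness, never closedness. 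This is exactly the gap the paper's construction is engineered to close: it works with $\di p_i$ (whose extension is a closure, hence automatically closed) and adds the axiom $\forall(\di p_i\wedge\bo\neg\bb\to\bo\di p_i)$, together with three ``colour'' atoms $\r,\g,\bb$ and the axioms $\exists(\di p_i\wedge\di\r\wedge\di\g)$ and $\forall\neg(\di\r\wedge\di\g\wedge\di\bb)$, which force the accumulation point $z$ to satisfy $\bo\neg\bb$; inside a connected neighbourhood where $\neg\bb$ holds throughout, $\{x:x\models\di p_i\}$ is then both closed and open, non-empty (it contains $x_i$) and proper (it misses $x_j$ by $\forall\neg(\di p_i\wedge\di p_j)$), contradicting connectedness. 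Without some device of this kind that manufactures a definable clopen set, the compactness/local-connectedness argument cannot be completed.
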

\emph{Compact} means that
if $\c S$ is a set of open sets with $\bigcup\c S=X$,
then $X=\bigcup\c S_0$ for some finite $\c S_0\subseteq\c S$.
\emph{Locally connected} means that every open neighbourhood of a point $x$ 
contains a connected (in the subspace topology) open neighbourhood of $x$.
An example of a compact locally connected \nice\ space is
the subspace  $[0,1]$ of $\R$.

\begin{proof}
The proof is based on the following model $\c M=(W,R,h)$, where 
we suppose that
$\Var=\{\r,\g,\bb\}\cup\{p_i:i<\omega\}$.
\begin{enumerate}
\item $W=\{a_n,b_n:n<\omega\}$, where the $a_n$ and $b_n$ are pairwise distinct

\item $R$ is the reflexive closure of $\{(a_n,b_n),(a_n,b_{n+1}):n<\omega\}$

\item $h(\r)=\{b_{3n}:n<\omega\}$,
$h(\g)=\{b_{3n+1}:n<\omega\}$,
$h(\bb)=\{b_{3n+2}:n<\omega\}$,
and $h(p_n)=\{b_{3n},b_{3n+1}\}$ for each $n<\omega$.
\end{enumerate}

\begin{figure}[ht]
\setlength{\unitlength}{.6pt}
\begin{picture}(100,140)(-60,-45)
\put(0,0){\circle{6}}

\multiput(0,0)(80,0){6}{
\put(42,77){\vector(1,-2){37}}
\put(38,77){\vector(-1,-2){37}}
\put(40,80){\circle{6}}
\put(80,0){\circle{6}}
}

\put(15,80){$a_0$}
\put(95,80){$a_1$}
\put(175,80){$a_2$}
\put(255,80){$a_3$}
\put(335,80){$a_4$}
\put(415,80){$a_5$}

\put(-25,0){$b_0$}
\put(55,0){$b_1$}
\put(135,0){$b_2$}
\put(215,0){$b_3$}
\put(295,0){$b_4$}
\put(375,0){$b_5$}
\put(455,0){$b_6$}


\put(0,-40){$p_0$}
\put(80,-40){$p_0$}
\put(240,-40){$p_1$}
\put(320,-40){$p_1$}
\put(480,-40){$p_2$}

\put(0,-20){$\r$}
\put(240,-20){$\r$}
\put(480,-20){$\r$}

\put(80,-20){$\g$}
\put(320,-20){$\g$}

\put(160,-20){$\bb$}
\put(400,-20){$\bb$}

%
%

\put(500,41){\vector(-1,-2){19}}
\put(520,40){$\cdots$}

\end{picture}
\caption{$\c M$}\label{fig:w}
\end{figure}
\noindent The model is  shown in figure~\ref{fig:w} ---
it goes off to the right forever, roughly repeating after every three steps.
Of course $R$ is reflexive.
Note that the underlying frame is connected.

We let $\Sigma$ be the set comprising the following formulas:
\begin{enumerate}
\renewcommand{\theenumi}{$\Sigma$\arabic{enumi}}
\renewcommand{\labelenumi}{$\Sigma$\arabic{enumi}.}

%
%
%


\item\label{sigma2}  $\exists( \di p_i\wedge\di\r\wedge\di\g)$ 
 for each $i<\omega$
 

\item\label{sigma3a}   $\forall\neg(\di p_i\wedge\di p_j)$ for $i<j<\omega$
 








\item\label{sigma10} $\forall\neg(\di\r\wedge\di\g\wedge\di\bb)$

\item \label{sigma15} $\forall(\di p_i\wedge\bo\neg\bb\to\bo\di p_i)$
for $i<\omega$.
\end{enumerate}
They are plainly valid  in $\c M$.
Hence $\Sigma$ is satisfied  in $\c M$, at every point.
Moreover, any finite subset  $\Sigma_0\subseteq\Sigma$ is satisfied in a finite submodel of $\c M$
obtained by taking a large enough `initial segment' of $\c M$ ending on the right at a $b$-world.  
Check especially formulas of the form $\forall\exists$.
In particular, \ref{sigma15} is valid
in such a submodel. Or one can use that it is a generated submodel.
The submodel is finite and its frame
validates $\axS4.\axU\axC$, so every formula satisfied in it ---
for example, $\bigwedge\Sigma_0$ --- is $\axS4.\axU\axC$-consistent.
Hence, by theorem~\ref{thm:compl S4.UC, S4t.UC}, 
every finite subset of $\Sigma$ is satisfiable in $X$.

Assume for contradiction that $\Sigma$ is satisfied in some model $(X,h)$ on $X$.
Below, we will write $x\models\varphi$ instead of $(X,h),x\models\varphi$.
By \ref{sigma2}, for each $i<\omega$ there is $x_i\in X$ 
with $x_i\models \di p_i\wedge\di\r\wedge\di\g$. 
As $X$ is compact, it contains a point $z$ such that 
for every open \nhd\ $N$ of $z$, the set $\{i<\omega:x_i\in N\}$ is infinite.
Then $z\models\di\r\wedge\di\g$ as well.
By \ref{sigma10}, 
$z\models\bo\neg\bb$.
As $X$ is locally connected,
there is  a connected open neighbourhood $N$ of $z$ 
with $y\models\neg\bb$ for all $y\in N$.

Take $i<j<\omega$ with $x_i,x_j\in N$.
Let 
$U=\{x\in N:x\models\di p_i\}$.
Then $U$ is an open subset of $N$, because
for every $u\in U$ we have $u\models\di p_i\wedge\bo\neg\bb$,
and \ref{sigma15} gives $u\models\bo\di p_i$.
And $N\setminus U$ is also open, because $U'=\{x\in X:x\models\di p_i\}$ is closed
and $N\setminus U=N\setminus U'$.
 We have $x_i\in U$, but by \ref{sigma3a},
$x_j\in N\setminus U$.
So $N$ is the union of two disjoint non-empty open sets ($U$ and $N\setminus U$), contradicting
its connectedness.
\end{proof}

\begin{corollary}\label{cor:stro compl fails}
Let $X$ be a non-empty compact locally connected \nice\ space,
and $\c L\subseteq\Lbig$ a language
containing $\c L_{\bo\forall}$ or $\c L_{\bod\forall}$.
Then no Hilbert system for $\c L$
is sound and strongly complete over $X$.
\end{corollary}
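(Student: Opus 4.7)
The plan is to reduce the corollary immediately to Theorem~\ref{thm:comp fails}, which already provides the witness to failure of compactness in the semantic sense.  Assume for contradiction that some Hilbert system $H$ for $\c L$ is both sound and strongly complete over $X$.  Let $\Sigma \subseteq \c L_{\bo\forall}$ be the set supplied by Theorem~\ref{thm:comp fails}.

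I would handle the two hypotheses on $\c L$ separately.  If $\c L \supseteq \c L_{\bo\forall}$, then $\Sigma$ is already a countable subset of $\c L$, and I work with it directly.  If instead $\c L \supseteq \c L_{\bod\forall}$ but not necessarily $\c L_{\bo\forall}$, I replace $\Sigma$ by $\Sigma^d = \{\sigma^d : \sigma \in \Sigma\}$, using the translation of section~\ref{sec:translations}.  Since the $\sigma$ are $\c L_{\bo\forall}$-formulas and $-^d$ commutes with booleans and $\forall$ while turning $\bo$ into an $\c L_{\bod}$-expression, each $\sigma^d$ lies in $\c L_{\bod\forall}\subseteq\c L$.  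Moreover, $X$ is a metric space, hence T$_D$ by lemma~\ref{lem:ms} and the comments of section~\ref{ss:metric spaces}, so by lemma~\ref{lem:trans equivalent top} each $\sigma$ is equivalent to $\sigma^d$ in $X$.  Consequently $\Sigma^d$ inherits exactly the satisfiability profile of $\Sigma$ in $X$: every finite subset is satisfiable in $X$, but $\Sigma^d$ as a whole is not.

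In either case I then argue as follows.  Let $\Gamma \in \{\Sigma, \Sigma^d\}$ be the set just selected; it is countable and lies in $\c L$.  For any finite $\Gamma_0 \subseteq \Gamma$, Theorem~\ref{thm:comp fails} (or its $-^d$-transform) gives that $\bigwedge\Gamma_0$ is satisfiable in $X$, so $\neg\bigwedge\Gamma_0$ is not valid in $X$; by soundness of $H$ over $X$, $\neg\bigwedge\Gamma_0$ is not an $H$-theorem, so $\bigwedge\Gamma_0$ is $H$-consistent.  Since this holds for every finite $\Gamma_0$, the set $\Gamma$ is $H$-consistent in the sense of section~\ref{ss:validity 2}.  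Strong completeness of $H$ over $X$ now forces $\Gamma$ to be satisfiable in $X$, directly contradicting the second half of Theorem~\ref{thm:comp fails}.

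The argument is entirely routine once Theorem~\ref{thm:comp fails} is available; there is no real obstacle.  The only point that deserves a line of explanation is the case $\c L_{\bod\forall}\subseteq\c L$, which needs the T$_D$ property of metric spaces to invoke lemma~\ref{lem:trans equivalent top} and transfer the theorem from $\c L_{\bo\forall}$ to $\c L_{\bod\forall}$.
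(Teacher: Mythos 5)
Your proposal is correct and follows exactly the paper's own argument: take the set $\Sigma$ from Theorem~\ref{thm:comp fails} (translated by $-^d$ into $\c L_{\bod\forall}$ if needed, which the paper also does, relying implicitly on the T$_D$ property and lemma~\ref{lem:trans equivalent top} that you make explicit), deduce $H$-consistency of $\Sigma$ from soundness plus finite satisfiability, and then contradict strong completeness. The only difference is that you spell out the translation step in more detail than the paper's parenthetical remark; the substance is identical.
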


\begin{proof}
Assume for contradiction that
the Hilbert system $H$ is sound and strongly complete over $X$.
Let $\Sigma$ be as in theorem~\ref{thm:comp fails}
(use the translation $-^d$ if necessary to ensure it is a set of $\c L$-formulas).
Since every finite subset of $\Sigma$ is  satisfiable in $X$,
and $H$ is sound over $X$, it follows that $\Sigma$ is $H$-consistent.
But $H$ is strongly complete over $X$, so $\Sigma$ is satisfiable over $X$, 
contradicting the theorem.
\end{proof}

\section{Conclusion}\label{sec:the end}

This paper has presented some completeness theorems
for various spatial logics over \nices\  spaces.
Table~\ref{table1} summarises them.
The numbers in parentheses refer to our earlier results.
The first line of the table is of course known, included here to give a more complete picture. 
For handy reference, table~\ref{table2} 
summarises the ingredients of each logic.

\begin{table}[ht]
\begin{center}
\begin{tabular}{lllll}
\hline
Language&Logic&sound&complete&strongly complete
\\
\hline
$\c L_\bo$&$\axS4$&yes&yes \cite{McKiT44}&yes \cite{Kremer2010:stro}
\\[3pt]
$\c L_\bo^\mu$&$\axS4\mu$&yes
&yes \eqref{thm:compl S4mu, S4t}&yes  \eqref{thm:str compl boxes}
\\[3pt]
$\c L_\bo^\dit$&$\axS4t$&yes
&yes \eqref{thm:compl S4mu, S4t}&yes  \eqref{thm:str compl boxes}
\\[3pt]
$\c L_{\bo\forall}$&$\axS4.\axU\axC$&if $X$ connected
 &yes \eqref{thm:compl S4.UC, S4t.UC}&not in general \eqref{cor:stro compl fails}
\\[3pt]
$\c L_{\bo\forall}^\dit$
&$\axS4t.\axU\axC$  &if $X$ connected 
 &yes \eqref{thm:compl S4.UC, S4t.UC}&not in general \eqref{cor:stro compl fails}
\\[3pt]
$\c L_\bod$
& $\axK\axD4\axG_1$  &if $\axG_1$ valid in $X$ 
 &yes \eqref{thm:compl KD4G1(t)}&yes \eqref{thm:sc bod}
\\[3pt]
$\c L_{\bod}^\didt$
& $\axK\axD4\axG_1t$  &if $\axG_1$ valid in $X$ 
 &yes \eqref{thm:compl KD4G1(t)}&yes \eqref{thm:strong did}
\\[3pt]
$\c L_{\bod\forall}$
& $\axK\axD4\axG_1.\axU\axC$  &if $X$ connected \& validates $\axG_1$
 &yes \eqref{thm:compl KD4G1(t).UC}&not in general \eqref{cor:stro compl fails}
\\[3pt]
$\c L_{\bod\forall}^\didt$
& $\axK\axD4\axG_1t.\axU\axC$  &if $X$ connected \& validates $\axG_1$
 &yes \eqref{thm:compl KD4G1(t).UC}&not in general \eqref{cor:stro compl fails}
\\[5pt]
\hline
\end{tabular}
\caption{Soundness and completeness for a non-empty \nice\ space $X$}\label{table1}
\end{center}
\end{table}

\begin{table}[ht]
\begin{center}
\begin{tabular}{ll}
\hline
S4&$\bo\varphi\to\varphi$, $\bo\varphi\to\bo\bo\varphi$
\\
$\mu$&fixed point axiom and rule: see definition~\ref{def:mu systems}
\\
$t$&tangled closure axioms from section~\ref{ss:tangle logics}
\\
U&$\forall\varphi\to\bo\varphi$, S5 axioms for $\forall$, 
$\forall$-generalisation rule
\\
C&$\forall(\Box^*\ph\lor\Box^*\neg\ph)\to(\forall\ph\lor\forall\neg\ph)$,
where $\bo^*\varphi=\ph\land\Box\ph$
\\
$\axG_1$&all uniform substitution instances of
$\big(\bod\bigvee_{i=0}^1\bo Q_i\big)
\to\bigvee_{i=0}^1\bod\neg Q_i$,
\\
&\hskip1em where $Q_i=p_i\wedge\neg p_{1-i}$ ($i=0,1$)
\\[2pt]
\hline
\end{tabular}
\caption{Parts of the logics}\label{table2}
\end{center}
\end{table}

There are of course many problems left open by our work, and we present some of them here.

\subsection{Extensions}

\begin{problem}
Can the results be extended to  more general topological spaces?
\end{problem}
For example, consider the topological space $T$ defined as follows.
For ordinals $\alpha,\beta$ write ${}^\alpha\beta$
for the set of all maps $f:\alpha\to\beta$.
The set of points of $T$ is $\bigcup_{n\leq\omega}{}^n2$,
and the open sets are unions of sets of the form
$\{f\in T:f\supseteq g\}$ for some  $g\in\bigcup_{n<\omega}{}^n2$. 
This space is not even T$_D$, though it is T0 
(that is, no two distinct points have the same open \nhd s)
and dense in itself.
\begin{problem}
What is the logic of $T$ in the various languages discussed above?
\end{problem}

\begin{problem}
Can the results be extended to stronger languages,
for example, the mu-calculus with $\bod$ and/or \pa,
languages with the difference modality or graded modalities, 
hybrid languages, and so on?
\rm Results of Kudinov \cite{Kud06:topdiff,Kud08:Rnfa}
are relevant.
Recently, Kudinov and Shehtman
\cite{KudShe14}
 proved numerous
results about logics of topology with 
$\bo$, $\bod$, \pa, and the `difference modality' $[\neq]$.
In particular, they determine the logic of $\R^n$ for $n\geq2$
in the language with $\bod$ and $[\neq]$.
However, results for general \nices\ spaces appear to be lacking.

\end{problem}

\subsection{Strong completeness}\label{subsec:strongcomp}

 Our definition of strong completeness 
is limited to countable sets of formulas. 
We have not investigated the extent to which the strong completeness results 
in section~\ref{sec:strong compl}  generalise
to uncountable sets, but
an argument based on the Erd\H os--Rado theorem \cite{ErdRad56}
will show that   for any given dense-in-itself topological space $X$ and any Hilbert system
$H$ that is sound over $X$,  there is an (uncountable) cardinal $\kappa$ such that the set
$\{\di p_i:i<\kappa\}\cup\{\bo\neg(p_i\wedge p_j):i<j<\kappa\}$
is $H$-consistent but not satisfiable in $X$. So strong completeness
will fail over any given $X$, for large enough sets of formulas.

\begin{problem}
Let $X$ be a \nice\ space.
For which uncountable cardinals $\kappa$
can our strong completeness results for $X$ be extended
to sets of at most $\kappa$ formulas?
\end{problem}

Our strong completeness results for languages with $\bod$ are limited to 
logics with $\axG_1$. We could ask for more:
\begin{problem}
Let $X$ be a \nice\ space and let $\c L$ be 
$\c L_\bod$ or $\c L_{\bod}^{\didt}$.  Is the $\c L$-logic 
of $X$ strongly complete over $X$?
\end{problem}
By theorems~\ref{thm:strong did} and~\ref{thm:sc bod},
the answer is `yes' if $X$ validates $\axG_1$.

We saw in corollary~\ref{cor:stro compl fails}
that  in the language $\c L_{\bo\forall}$, 
there are many dense-in-themselves
metric spaces over which  $\axS4.\axU\axC$ is not strongly complete.
So we ask:

\begin{problem}
Can strong completeness for languages with \pa\ 
be proved for each \nice\ space in some reasonably large
class, and for $\R^n$ for $n\geq1$?  
\end{problem}

\begin{problem}
Is $\axS4.\axU\axC$ strongly complete for Kripke semantics in the language $\c L_{\bo\forall}$? 
\end{problem} 

Even without \pa,
the example in section~\ref{ss:can frame}
can be used to show that strong completeness
fails in Kripke semantics
for all our systems for languages containing $\c L_{\bo}^{\dit}$.
But we saw that strong completeness does hold for some of these systems over
dense-in-themselves
metric spaces.
Taking the example of  $\axS4t$ for $\c L_{\bo}^\dit$, it is striking that this logic 
is sound and complete for two different 
 semantics (the class of finite  S4 frames, 
 and any non-empty  \nice\ space), but  strongly complete  for only the latter.

\begin{problem}
Is there any general connection between strong completeness for topological semantics and for
Kripke semantics?
\end{problem}

\subsection{Complexity}
Decidability of the logics in table~\ref{table1}
follows from the finite model property results of section~\ref{sec:fmp}
and their finite (schema) axiomatisations.
But we have not investigated their complexity.
\begin{problem}
What is the complexity of the logics discussed in this paper?
\end{problem}
Of course, the complexity of some are known (e.g., S4 is {\sc PSpace}-complete).

\section*{Acknowledgments}

We would like to thank Nick Bezhanishvili for
stimulating conversations and for graciously sharing his interest in 
spatial semantics for the mu-calculus, and Valentin Shehtman for 
some inspiring lectures on spatial logic and help with references. 
The second author thanks the 
School of Mathematics and Statistics at the Victoria University of Wellington
for their warm and generous hospitality 
during his visit in January--March 2014
when the authors worked on the 
research presented here,
and the UK EPSRC for an overseas travel grant,
EP-L020750-1, which supported this visit.

\bibliographystyle{amsplain}
\bibliography{paper}

\end{document}